\numberwithin{equation}{section}
\theoremstyle{plain}
\newtheorem{theorem}{Theorem}[section]
\newtheorem{lemma}[theorem]{Lemma}
\newtheorem{proposition}[theorem]{Proposition}
\newtheorem{corollary}[theorem]{Corollary}
\theoremstyle{remark}
\newtheorem{remark}[theorem]{Remark}
\newtheorem*{remark*}{Remark}
\newcommand{\md}{\mathrm{d}}
\newcommand{\e}{\epsilon}
\newcommand{\R}{\mathbb{R}}
\newcommand{\p}{\partial}
\renewcommand{\v}{\mathrm v}
\newcommand{\ex}{\mathbb{E}}
\newcommand{\Pe}{\mathrm P}
\newcommand{\wW}{\widetilde{W}}
\newcommand{\cC}{\mathcal C}
\newcommand{\Cunit}{\cC_{[0,1]}}
\newcommand{\CI}{\mathcal C_{I}}
\newcommand{\Ctem}{\mathcal C_{\mathrm{tem}}}
\newcounter{statement}
\newenvironment{statement}{\setcounter{statement}{\value{equation}} \list{(\theequation)}{\usecounter{equation}} \setcounter{equation}{\value{statement}}}{\endlist}
\newenvironment{note}{\color{gray}}{\ignorespacesafterend}
\begin{document}
	\title[Effect of noise on RDE with non-Lipschitz drift]{Effect of small noise on the speed of reaction-diffusion equations with non-Lipschitz drift} 
	\author[C. Barnes]{Clayton Barnes}
	\address[C. Barnes]{Faculty of Industrial Engineering and Management, Technion, Israel Institute of Technology, Haifa 3200003, Israel} 
	\email{cbarnes@campus.technion.ac.il} 
	\author[L. Mytnik]{Leonid Mytnik}
	\address[L. Mytnik]{Faculty of Industrial Engineering and Management, Technion, Israel Institute of Technology, Haifa 3200003, Israel} 
	\email{leonid@ie.technion.ac.il}  
	\author[Z. Sun]{Zhenyao Sun}
	\address[Z. Sun]{School of Mathematics and Statistics \\ Beijing Institute of Technology \\ Beijing 100081 \\ China} %
	\email{zhenyao.sun@gmail.com}
\begin{abstract}
	We consider the $[0,1]$-valued solution $(u_{t,x}:t\geq 0, x\in \mathbb R)$ to the one dimensional stochastic reaction diffusion equation with Wright-Fisher noise 
\[
	\partial_t u
	= \partial_x^2 u + f(u) + \epsilon \sqrt{u(1-u)} \dot W.
\]
Here, $W$ is a space-time white noise, $\epsilon > 0$ is the noise strength, and $f$ is a continuous function on $[0,1]$ satisfying 
$\sup_{z\in [0,1]}|f(z)|/ \sqrt{z(1-z)} < \infty.$ 
	We assume the initial data satisfies $1 - u_{0,-x} = u_{0,x} = 0$ for $x$ large enough.
	Recently, it was proved in (Comm. Math. Phys. \textbf{384} (2021), no. 2) that the front of $u_t$ propagates with a finite deterministic speed $V_{f,\epsilon}$, and under slightly stronger conditions on $f$, the asymptotic behavior of $V_{f,\epsilon}$ was derived as the noise strength $\epsilon$  approaches $\infty$. 
	In this paper we complement the above result by obtaining the asymptotic behavior of 
	$V_{f,\epsilon}$ as the noise strength $\epsilon$ approaches $0$:
	for a given $p\in [1/2,1)$, if $f(z)$ is non-negative and is comparable to $z^p$ for sufficiently small $z$, then $V_{f,\epsilon}$ is comparable to $\epsilon^{-2\frac{1-p}{1+p}}$ for sufficiently small $\epsilon$. 
\end{abstract}
\maketitle
	
\section{Introduction} \label{sec:I}
\subsection{Background and motivation}
In 1937, Fisher \cite{Fisher} and Kolmogorov, Petrovsky, Piskunov \cite{KPP} independently studied the wave propagation properties arising from the FKPP equation 
on $\R_+\times \R$
\begin{equation}
	\label{eq:I.04}
	\p_th = \p_{x}^2 h + f(h).
\end{equation}
Fisher was interested in how quickly an advantageous gene (or virus) would propagate through a population living in a linear habitat, such as a shoreline (or train). 
The solution $h$ measures the proportion of the population carrying this advantageous gene as the biological system evolves.
Under a mild assumption on the Lipschitz function $f$ with $f(0)=f(1)= 0$, for any velocity $\mathrm v$ greater or equal to the minimal velocity 
\begin{equation}
\label{eq:minv_1}
\mathrm v_\mathrm{min} = \sqrt{2f'(0)},
\end{equation}
 there exists a traveling wave solution $h(t,x)= F_\mathrm v(x - \mathrm vt)$ with wave profile denoted by $F_\mathrm v$. 
With Heaviside initial data $h(0, x) = \mathbf{1}_{x \leq 0}$, the shifted solution $h(t, x + m(t))$  converges uniformly to the wave profile $F_{\mathrm v_\mathrm{min}}$ with the shift $m(t)$ having asymptotic speed $\mathrm v_{\mathrm{min}}$.
These results have been generalized to include a wider class of initial conditions, a more detailed description of the lower order terms of the wave position, and tail behavior of the wave shape; see \cite{BacaerN,Bramson1983Convergence,McKean1, McKean2}.

Because the FKPP equation is noiseless, it can be thought to represent a mean field approximation of a microscopic reaction-diffusion process
as motivated from a statistical physics perspective \cite{BrunetDerrida2001Effiect}.
Consequently, there has been recent interest in understanding analogous questions regarding propagating speed of waves for solutions to the FKPP equation with Wright-Fisher noise, given by the SPDE on $\R_+\times \R$:
\begin{equation}\label{eq:I.1} 
	\partial_t u 
	= \partial_x^2 u + f(u) + \epsilon\sqrt{u(1-u)}  \dot W
\end{equation}
where $f$ is a continuous function on $[0,1]$ with $f(0)= f(1) = 0$ satisfying some regularity conditions, and $W$ is a space-time white noise on  $\R_+\times \R$.  
 Similar to \eqref{eq:I.04}, the solution $u$ represents the proportion of the population exhibiting the gene, but now the equation incorporates the random interaction among  the populace. 
The  noise term $\epsilon\sqrt{u(1-u)}\dot{W}$ is motivated by the assumption that these interactions are affected by i.i.d.\ mean zero random variables independent of time and space, while the variance of the outcome is proportional to the rate of interaction between those with the gene and those without the gene, which is $u(1-u).$
The function $f$ continues to describe the deterministic  evolution of the population exhibiting the gene.

The example of $f(u)=u(1-u)$ was extensively studied in the literature and weak uniqueness, compact interface property, finite speed of the front propagation and other properties were established (see \cite{ConlonDoering2005travelling, MuellerSowers,  {Shiga1988Stepping}, Tribe1995Large}).
Moreover there has been a great interest in the asymptotic behavior of the speed of the front propagation,  $V_{f,\e}$. For a large class of Lipschitz functions $f$, including $f(u)=u(1-u)$, and also for more general noise coefficients, 
 Mueller, Mytnik, and Quastel, in~\cite{MuellerMytnikQuastel2011Effect}, proved the Brunet-Derrida conjecture (see \cite{BrunetDerrida1997Shift}) on the asymptotic of $V_{f,\e}$ for small $\e$. 
 
\begin{note}
For $g \in C(\R, [0, 1])$, let 
\[
R(g) := \sup\{ x : g(x) > 0\}, \, L(g) = \inf\{x : g(x) < 1\},
\]
and let $\mathcal{D}(\R, [0,1])$ be the subset with $g(x) \overset{x \to -\infty}{\longrightarrow} 1$ and $g(x) \overset{x \to \infty}{\longrightarrow} 0$.
Elements in $\mathcal{D}$ depict a wave decreasing from a height of one to zero; we say such a function $g$ has compact interface when $R(g), L(g)$ are finite.
\end{note}

Another motivation for the study of the stochastic reaction diffusion equation is its duality relation to the branching-coalescing Brownian motion.  
Consider a system of particles moving as independent one-dimensional Brownian motions on $\R$ with generator $\partial_x^2$. 
Assume that each particle independently branches with rate $1$ into a random number of particles according to an offspring law $(q_k)_{k\in \mathbb Z_+}$, and each pair of particles independently coalesce at rate $\epsilon^2$ according to their intersection local time. 
 Denote by $(x_i(t): t\geq 0, i\leq N_t)$ the positions of the particles  
where $N_t$ is the number of all particles at time $t\geq 0$.
In the case of binary branching, i.e. $q_2 = 1$, the following duality relation is due to~\cite{Shiga1988Stepping} (see also \cite{DoeringMuellerSmereka2003Interacting}):
Let $u$ be a solution to the SPDE \eqref{eq:I.1} with $f(u)=u(1-u)$.
	Assume that the random field $u$, as well as its driving noise $W$, is independent of the particle system.
Then
\begin{equation}
\label{eq:dual_sh_1}
\ex\Big[\prod_{i = 1}^{N_0}(1-u_{t,x_i(0)})\Big]=\ex\Big[\prod_{i = 1}^{N_t}(1-u_{0,x_i(t)})\Big], \quad t\geq 0.  
\end{equation}

This duality relation was first constructed to study the weak uniqueness of the stochastic FKPP equation. 
It can also be used to study the propagation of the extremal particle in the branching-coalescing Brownian motion. 
Assume that the particle system starts with a single particle at the position $0$. 
Let $R(t)$ be the position of  the rightmost particle in the system at time $t$.  
From the above duality, by taking $u_{0,\cdot}=\mathbf{1}_{(-\infty, 0)}$ and using symmetry, one can get 
$\Pe(R(t)>x) =\ex[u_{t,x}]$ for every $(t,x)\in \R_+\times \R.$ 
Then from the existence of the speed of front propagation $V_{f,\e}$ for $u$, one can derive the upper bound on the speed of $R(t)$: 
for any $\delta>0$,
\begin{equation}
\mathrm P\big(R(t) \leq (V_{f,\epsilon} + \delta)t\big)
\xrightarrow[t\to \infty]{} 1.
\end{equation}

One expects that the duality \eqref{eq:dual_sh_1} holds also for more general drift function $f$ when it takes the form
\begin{equation}\label{eq:I.2}
f(z)= 1 - z - g(1-z), \quad z\in [0,1],
\end{equation}
where $g(z)=\sum_{n=0}^\infty z^n q_n, z\in [0,1],$ is the probability generating function of the offspring law in the branching-coalescing system. 
This is established for some offspring laws with finite first moment, see \cite[Theorem 1]{AthreyaTribe2000Uniqueness}. 
The case of the offspring law being heavy-tailed, without existence of the first moment, is of particular interest. 
For example, if one considers the following heavy-tailed offspring law
\begin{equation}
	q_0 =q_1 = 0; \quad q_n = \frac{-1}{(n-1)!}\prod_{k=0}^{n-2} (k-p), \quad n \in \mathbb Z \cap [2,\infty)
\end{equation}
where $p \in (0,1)$ is a given constant, then, the drift function $f$ defined via  \eqref{eq:I.2} takes the form 
\begin{equation} \label{eq:I.3} f(z) = z^p(1-z), \quad z\in [0,1],\end{equation} 
which is not Lipschitz at $0$. 
This $(q_n)_{n=1}^\infty$ is also known (see \cite{ChristophSchreiber2000Scaled} for example) as the law of $1 + S_p$ where $S_p$ is a Sibuya random variable with parameter $p$. 
As far as we know, 
the Shiga duality 
relation \eqref{eq:dual_sh_1} for such cases has  not been proved yet. 
However, we do conjecture that this duality holds for some  H\"older drift functions $f$ having representation~\eqref{eq:I.2}, and this gives us another motivation to study the SPDE \eqref{eq:I.1} with 
non-Lipschitz drift functions.

In fact, one of such cases has been studied  recently in~\cite{MuellerMytnikRyzhik2019TheSpeed},  where weak solutions $u$ to the SPDE \eqref{eq:I.1} are investigated  under the conditions that  
\begin{equation}
\label{asp:0}
\text{$f$ is continuous, and} \sup_{z\in [0,1]}\frac{|f(z)|}{\sqrt{z(1-z)}}
<\infty 
\end{equation}
and that the initial value has compact interface, that is, $u_{0,-x} -1 = u_{0,x} = 0$ for large enough $x$.
This includes examples like \eqref{eq:I.3} with parameter $p \in [1/2, 1)$.
Note that in the deterministic case of $\epsilon = 0$, the solution to the reaction diffusion equation~\eqref{eq:I.04}, with $f$ given by~\eqref{eq:I.3} and with a non-trivial initial value, does exist, but it does not exhibit propagating waves with a finite linear speed as in the case when $f$ is Lipschitz.  
Intuitively, this is clear from~\eqref{eq:minv_1} as $f'(0)$ is infinite. 
In fact, such solutions $h$ do not have super-linear speed either, 
as it follows from a similar argument in \cite{AguirreEscobedo} that 
$\inf_{x \in \R}h(t, x) \to 1$ as $t \to \infty$.
However, this behavior changes drastically when introducing the Wright-Fisher noise. 
 In \cite{MuellerMytnikRyzhik2019TheSpeed}, the authors 
 established the weak uniqueness, compact interface property, and finiteness of front propagation speed for \eqref{eq:I.1} when $\epsilon > 0$.
 In particular, they proved that there exists a deterministic $V_{f,\epsilon}\in \mathbb R$, which only depends on the drift function $f$ and the noise strength $\epsilon$, so that 
\begin{equation} \label{eq:speed}
	\frac{\sup\{x\in \mathbb R:u_{t,x}\neq 0\}}{t} \xrightarrow[t\to \infty]{} V_{f,\epsilon}, \quad \text{a.s.}
\end{equation}

 It is then natural to study  the asymptotics of  the speed of this propagation in terms of the strength of the noise. 
 In \cite{MuellerMytnikRyzhik2019TheSpeed}, 
 the authors studied
 the asymptotic behavior of $V_{f,\epsilon}$, when $\epsilon$ goes to $\infty$, under a condition slightly stronger than \eqref{asp:0}.
As for the small $\e$,  the case of 
Lipschitz $f$ was treated
already in ~\cite{MuellerMytnikQuastel2011Effect}, and it was shown there   
how fast $V_{f,\e}$ converges to $V_{f,0}$ as $\e\downarrow 0$.
 In this paper, we complement the above results and consider the asymptotic behavior of $V_{f,\epsilon}$ when $\epsilon$ converges to $0$ and $f$ is not necessarily Lipschitz. 
According to our discussion about the deterministic case of $\e=0$, it is intuitively clear that if, for example, $f$ is given by~\eqref{eq:I.3}, then $V_{f,\e}$ should converge to $\infty$ as 
$\e\downarrow 0$. Our main result shows this, but also answers the much more delicate question: {\it At what rate does $V_{f,\e}$  converge to $\infty$ as $\e\downarrow 0$?} 

\subsection{Main result}
 To state our main result we  need to introduce the following conditions on $f$. 
 \begin{statement}
 \item \label{asp:1}
	 $f$ is non-negative and there exists  $p_0\in [1/2,1)$  such that  $\liminf_{z\downarrow 0} f(z)/z^{p_0} > 0$.  
 \item \label{asp:2}  There exists  $p\in [1/2,1)$ such that
	$\limsup_{z\downarrow 0} f(z)/z^p < \infty$.
\end{statement}
Note that \eqref{eq:I.3} is an example of $f$ satisfying \eqref{asp:0}-\eqref{asp:2} with $p_0 = p$.    Let us now state our main result.  
\begin{theorem} \label{thm:LU} 
		Suppose that $f$ is a function on $[0,1]$ satisfying \eqref{asp:0}.
		For every $\epsilon > 0$, denote by $V_{f,\epsilon}$ the propagation speed of the SPDE \eqref{eq:I.1} given as in \eqref{eq:speed}.  
	\begin{itemize}
	\item[(a)]
		If $f$ satisfies \eqref{asp:1}, then  $ \liminf_{\epsilon \downarrow 0}\epsilon^{2\frac{1-p_0}{1 + p_0}} V_{f,\epsilon} > 0.$ 
	\item[(b)]
If $f$ satisfies \eqref{asp:2}, then
$
	\limsup_{\epsilon \downarrow 0}\epsilon^{2\frac{1-p}{1 + p}} V_{f,\epsilon}
	< \infty.
$
\end{itemize}
\end{theorem}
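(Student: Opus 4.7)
Since $f(u)\sim u^p$ near $0$, the leading-edge dynamics of \eqref{eq:I.1} admit a one-parameter family of scaling symmetries; tuning the parameter to absorb $\e$ produces the exponent $2(1-p)/(1+p)$. Set
\[
  \mu:=\e^{4/(1+p)},\qquad \beta:=\mu^{(p-1)/2},\qquad \alpha:=\beta^2=\mu^{p-1},
\]
so $\alpha,\beta\uparrow\infty$ as $\e\downarrow 0$. For a solution $u$ of \eqref{eq:I.1} define $w(s,y):=\mu^{-1}u(s/\alpha,\,y/\beta)$; using the standard scaling $\dot W(t,x)\stackrel{d}{=}\sqrt{\alpha\beta}\,\dot{\wW}(\alpha t,\beta x)$ of space--time white noise, one verifies that $w$ solves
\[
  \p_s w=\p_y^2 w+\mu^{-p}f(\mu w)+\sqrt{w(1-\mu w)}\,\dot{\wW}
  \qquad\text{on }[0,\mu^{-1}].
\]
The fronts of $u$ and $w$ are in bijection via $(t,x)\leftrightarrow(s/\alpha,y/\beta)$, giving $V_{f,\e}=(\alpha/\beta)V^{(w)}=\e^{-2(1-p)/(1+p)}V^{(w)}$, where $V^{(w)}$ is the asymptotic front speed of $w$. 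Theorem~\ref{thm:LU} thus reduces to proving that $V^{(w)}$ is bounded below away from $0$ in part~(a) and above from $\infty$ in part~(b), \emph{uniformly} in small $\mu$.

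\textbf{Reduction via comparison.} Under \eqref{asp:1}--\eqref{asp:2}, pick $c,C,z_0>0$ and construct $\underline f\le f\le\bar f$ satisfying \eqref{asp:0}, with $\underline f(z)=c(z\wedge z_0)^p$ (tapered to $0$ at $z=1$) and $\bar f(z)=Cz^p$ on $[0,z_0]$ (extended so $\bar f\ge f$ on $[0,1]$). A pathwise monotone coupling of \eqref{eq:I.1} (two solutions driven by the same white noise with ordered drifts remain ordered) gives $V_{\underline f,\e}\le V_{f,\e}\le V_{\bar f,\e}$. After rescaling, the $\bar w$- (resp. $\underline w$-) drift equals $Cw^p$ (resp. $cw^p$) on the leading-edge region $w\le z_0/\mu$ and is bounded by $\mu^{-p}\|f\|_\infty$ otherwise. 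It suffices therefore to prove uniform speed bounds for these two canonical rescaled SPDEs.

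\textbf{The two bounds and main obstacle.} For part~(b), the guiding heuristic is Brunet--Derrida: the Wright--Fisher noise effectively cuts off $u$ at scale $\sim\mu$, reducing the (infinite) slope of $f$ at $0$ to an effective Lipschitz constant $f(\mu)/\mu\sim\mu^{p-1}=\e^{4(p-1)/(1+p)}$; the classical FKPP formula $\sqrt{2L}$ with $L=\mu^{p-1}$ then reproduces the claimed exponent. A rigorous $\mu$-uniform upper bound on $V^{(\bar w)}$ can be obtained, for instance, by constructing an exponentially decaying super-solution for $\Pe(\bar w_{s,y}>0)$, or by dominating $\bar w$ with a branching Brownian motion whose branching mechanism is calibrated to the noise-induced cutoff. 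For part~(a), we use a block/regeneration scheme: in each space--time block of bounded size $L\times T$ just ahead of the current leading edge, the Wright--Fisher noise spawns with probability $\rho>0$ a nontrivial seed of $\underline w$, which under the $cw^p$ drift grows to macroscopic value within time $T$ and spreads to the right boundary of the block before extinction. Iterating along successive blocks yields linear-in-time advancement with rate independent of $\mu$. The one-block estimate localizes, on a finite interval, to an accessibility bound for an SDE of the form $\md X=cX^p\,\md t+\sqrt{X(1-\mu X)}\,\md B$, which for $p\in[1/2,1)$ holds with $\rho>0$ uniform in $\mu$. The chief difficulty is precisely this uniformity as $\mu\downarrow 0$: the rescaled state space $[0,\mu^{-1}]$ is unbounded in the limit and the noise coefficient $\sqrt{w(1-\mu w)}$ degenerates to $\sqrt w$ only in the leading edge. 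We bypass passage to the formal limit (a superprocess-type SPDE on $[0,\infty)$) by decoupling the leading edge from the bulk and working in the small-$w$ regime, where the equation is $\mu$-universal. The most subtle endpoint is $p=1/2$, where the above SDE sits on the boundary of accessible behaviour at $0$ and the block parameters $L,T$ must be tuned carefully so that a typical seed survives and explodes within the block with probability bounded below uniformly in $\mu$.
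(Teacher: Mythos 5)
Your scaling computation is correct: the change of variables $w_{s,y}:=\mu^{-1}u_{s/\alpha,y/\beta}$ with $\mu=\e^{4/(1+p)}$, $\beta=\mu^{(p-1)/2}$, $\alpha=\beta^2$ does produce the normalized equation with unit noise and drift $\mu^{-p}f(\mu w)$, and the relation $V_{f,\e}=\e^{-2(1-p)/(1+p)}V^{(w)}$ follows. Up to this point the proposal is sound and identical in spirit to Lemma~\ref{rescaling}. The problem is what comes next: the theorem reduces to showing that $V^{(w)}$ is bounded above and below away from $0$ and $\infty$ \emph{uniformly} in $\mu$, and these two uniform bounds are precisely where the work of the paper lives; the proposal asserts them but does not prove them.

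For part~(a), the block/regeneration scheme you describe (nucleation of a seed, growth under the $cw^p$ drift, spreading across a block, iteration) is a plausible programme but each step is a nontrivial SPDE estimate, and the localization to a one-dimensional accessibility SDE $\md X=cX^p\,\md t+\sqrt{X(1-\mu X)}\,\md B$ loses the spatial structure that is essential for front propagation. None of the block estimates are carried out, and it is not clear they hold uniformly at $p=1/2$, as you acknowledge. The paper avoids all of this by a clean reduction: since $f\geq cf^{(\delta)}$ with $f^{(\delta)}(z)=z^p\mathbf 1_{z\leq\delta}$ and the concave function $z^p$ dominates a Lipschitz tent function $H(\cdot;2\ve^q,\ve^{qp})$ below $\delta$, comparison plus rescaling lands on the \emph{fixed} Lipschitz equation $\partial_t v=\partial_x^2 v+H(v;1,1/2)+\gamma\sqrt{v(1-2\ve^q v)}\dot W$, and \cite[Theorem~1.1]{MuellerMytnikQuastel2011Effect} immediately gives a $\mu$-uniform speed bound. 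This sidesteps your whole regeneration argument. For part~(b), the situation is worse: no Lipschitz majorant of $f$ with $f(0)=0$ exists, so the comparison strategy cannot even begin, and both of your suggested routes (an exponentially decaying super-solution for $\Pe(\bar w_{s,y}>0)$, or domination by a calibrated branching Brownian motion) are left as stubs. The latter, in particular, would not obviously give a finite speed since dropping the coalescence of the dual particle system removes the very mechanism that slows the front. The paper's actual proof of (b) is a genuinely different and substantial construction: decompose $u=v+w$ with $v$ solving the SPDE with a moving Dirichlet boundary at speed $\mathrm v\sim\e^{-2(1-p)/(1+p)}$, use Dawson's Girsanov transform on time windows of length $T\sim\e^{4(1-p)/(1+p)}$ to remove the non-Lipschitz drift increment $f(u)-f(v)$, control the boundary mass $A_t$ and the support of $w$ (Propositions~\ref{thm:C}--\ref{thm:B}), and close via an updating-frontier scheme. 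Your proposal does not identify this decomposition, the Girsanov step, or the need to chop time into intervals of a carefully balanced length, and as written it would not yield a proof of (b).
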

 If the drift function $f$ satisfies \eqref{asp:0}-\eqref{asp:2}  with $p_0 = p$, then Theorem \ref{thm:LU} implies that there exists $\epsilon_0>0$ and $c,C>0$ such that 
\[
	c\e^{-2\frac{1-p}{1+p}} \leq V_{f,\e} \leq C \e^{-2\frac{1-p}{1+p}}, \quad \e\in (0,\e_0).
\]
Note that the exponent $-2\frac{1-p}{1+p}$ shows up in both the upper bound and the lower bound, and therefore cannot be improved.
	This exponent appears when we analyze the free-boundary travelling wave problem \eqref{eq:I.4} below. 
In the next subsection, we give some comments on the proof strategy for our main result.

\subsection{Proof strategy} \label{sec:PS}
For the lower bound, we simply replace the drift $f$ by some smaller Lipschitz drift $H\leq f$.  
The comparison principle then gives us a lower bound $V_{H,\epsilon} \leq V_{f,\epsilon}$. 
Of course, by choosing different Lipschitz functions $H$, one can obtain a family of lower bounds.
To obtain the optimal one, we take $H$ depending on the noise strength $\epsilon$ in a certain way so that $H'(0)$ is comparable to $\epsilon^{-4\frac{1-p}{1+p}}$. 
	(Recall that $\sqrt{2H'(0)}$ is the minimal traveling wave velocity for the FKPP equation \eqref{eq:I.04} with drift $f$ being replaced by $H$.) 
We then use a known result on the propagation speed of stochastic FKPP equation \cite{MuellerMytnikQuastel2011Effect} to get the desired lower bound.

For the upper bound, the strategy of replacing the drift by Lipschitz functions is not fruitful because for any Lipschitz function $H$ greater than a drift function $f$ satisfying  \eqref{asp:0}-\eqref{asp:2}, it always holds that $H(0)>0$.
For a solution $u$ corresponding to such a drift $H$, the state $0$ is not locally stable anymore, and typically, $\sup\{x:u_{t,x}\neq 0\}$ is not even finite.
Instead, we use a similar strategy as in \cite{MuellerMytnikQuastel2011Effect} to decompose our solution $u$ as 
\[ u=v+w,\]
where $v$ is a weak solution to the SPDE \eqref{eq:I.1} with a moving Dirichlet boundary condition on the line $\{(t,x): x = \mathrm vt\}$, i.e.  
\[
\begin{cases}
	\partial_t v 
	=\partial^2_x v + f(v) + \epsilon \sqrt{v(1-v)} \dot W^v,
	& x < \mathrm v t,
	\\v
	= 0,
	& x\geq \mathrm v t.
\end{cases}
\]
For the details on the above decomposition of $u$ see Proposition~\ref{thm:C}. Let us just note that since $u,v,w$ are defined on the same probability space, then the white noises 
  $W^v$ and $W$ are also defined on the same probability space. Also, note that the velocity of the moving boundary $\mathrm v$ is left to be chosen.

It is intuitively clear that if one chooses $\mathrm v$ to be larger than $V_{f,\epsilon}$ then the deviation between $u$ and $v$, which is $w$, should be small and not propagate; and if one chooses $\mathrm v$ to be smaller than $V_{f,\epsilon}$, then $w$ will be large and will propagate. 
Therefore, by searching for a balanced value $\mathrm v$ so that $w$ lies in between those two phases, one can obtain a good estimation on $V_{f,\epsilon}$.

An insight from \cite{MuellerMytnikQuastel2011Effect} suggests that such a value $\mathrm v$ can be predicted by finding the solution $(F, \mathrm v)$ to a free-boundary travelling wave problem  
\begin{equation}\label{eq:I.4}
	\begin{cases}
	\varrho_{t,x} = F(x-\mathrm vt) \geq 0,
	\\ \partial_t \varrho = \partial_x^2 \varrho + f(\varrho), &\quad x< \mathrm vt,
	\\ \varrho = 0, &\quad x\geq \mathrm vt,
		\\ \lim_{x\uparrow \mathrm vt}\partial_x \varrho_{t,x} = -\epsilon^2.
	\end{cases}
\end{equation}
Replacing the drift $f$ in \eqref{eq:I.4} by some approximating Lipschitz functions, the solution $(F,\mathrm v)$ is computable using a similar argument used in \cite[Proof of Proposition 2.1]{MuellerMytnikQuastel2011Effect}, and one can calculate that the balancing value $\mathrm v$ should be comparable to $\epsilon^{-2\frac{1-p}{1+p}}$  
	for small $\epsilon$, which gives us another intuitive explanation for the exponent $-2\frac{1-p}{1+p}$.

To analyze the behavior of $w$ under this balancing value $\mathrm v\sim \epsilon^{-2\frac{1-p}{1+p}}$, we observe that it satisfies the following equation
\[
	\partial_t w = \partial_x^2 w + f(u) - f(v) + \epsilon \sqrt{u(1-u)} \dot W - \epsilon \sqrt{v(1-v)} \dot W^v + \dot A_t \delta_{\mathrm vt}(x) 
\]
where $A_t$ is the accumulated mass of $v$ being ``killed'' at its boundary before time $t\geq 0$; note that, as  it is shown in Proposition~\ref{thm:C},  up to a certain 
stopping time, $w$ can be constructed in a way that it  satisfies an SPDE similar to the above but driven by a single noise $W^w$.  
Note that $f$ is typically not Lipschitz, so unlike in \cite{MuellerMytnikQuastel2011Effect} we cannot control the drift term $f(u) - f(v)$ by $\|f\|_{\text{Lip}}w$.
To overcome this, we use Dawson's Girsanov transformation and remove this drift term under a new probability measure.
However, similarly to what often happens for finite dimensional diffusion processes, one cannot control the Radon-Nikodym derivative in Dawson's Girsanov transformation for a long time. So we need to chop off the time into small intervals $\{[nT,(n+1)T):n\in \mathbb Z_+\}$, and only perform Dawson's Girsanov transformation on each of those intervals.
By choosing the parameter $T$ small enough, the transformed $w$ will then serve as a good approximation of the original $w$ on each of those intervals. 
On the other hand, in order to get a reasonably good upper bound for the long time propagation speed, we cannot take $T$ too small either. 
So a balanced value has 
 to be chosen for this interval length $T$.

 Our philosophy of choosing such a value for $T$ works as follows.  
Consider the random field $w$ by standing at the moving boundary $\{(t,x):x=\mathrm vt\}$, that is to say, consider the random field $(w_{t,\mathrm vt+x}:(t,x)\in \mathbb R_+\times \mathbb R)$.
Say $L$ is a typical distance for the support of this random field to travel in a time interval of length $T$.
We want our $T$ to be chosen so that this $L$ not only can be explained by the (parabolic) thermal diffusivity, but also does not give excess speed. That is, we want both $L \sim \sqrt{T}$ and $L/T \lesssim \mathrm v$.   
Recalling our choice of $\mathrm v \sim \epsilon^{-2\frac{1-p}{1+p}}$, 
we end up choosing $T\sim \epsilon^{4\frac{1-p}{1+p}}$. It turns out that this is also a time span on which  Dawson's Girsanov transformation argument works. 
We hope this idea, of performing Dawson's Girsanov transformations on time intervals with a balanced length, can also be useful for finding propagation speed in other spacial stochastic models.  

Note that we only considered the Wright-Fisher noise $\sqrt{u(1-u)}\dot W$. 
It would be interesting to also consider more general noise $\sigma(u) \dot W$. 
We comment here that both in the proofs of our result Theorem \ref{thm:LU} and of \cite[Theorem 1.1]{MuellerMytnikRyzhik2019TheSpeed}, the Wright-Fisher noise is not essential: what has been really used is the property that $\sqrt{z(1-z)} \sim \sqrt{z}$ for small $z$.
However, to explore the most general conditions for the noise term $\sigma$  is out of the scope of the current paper.

\subsection{Paper outline}
The rest of the paper is organized as follows. 
In Section \ref{sec:P}, we recall some preliminary terminology including the solution concept for the SPDE \eqref{eq:I.1}.
In Section \ref{sec:L}, we give the proof of Theorem \ref{thm:LU}(a).
We give the proof of Theorem \ref{thm:LU}(b) in Section~\ref{sec:U} while the proofs of the results used for its proof  
are given in Sections \ref{sec:H}-\ref{sec:B}.

\section{Preliminary} \label{sec:P} 
In this section, we recall some preliminary terminology including the solution concept to the SPDE \eqref{eq:I.1}.
We first give some notation.
We say a filtered probability space $(\Omega, \mathcal G, (\mathcal{F}_t)_{t \geq 0}, \mathrm P)$ satisfies the usual hypotheses if $(\Omega, \mathcal G, \mathrm P)$ is a complete probability space with right-continuous filtration $(\mathcal {F}_t)_{t\geq 0}$ satisfying $\{A\in \mathcal G:\mathrm P(A) = 0\} \subset \mathcal F_0$.
We impose the usual hypotheses on every filtered probability spaces that will be considered in this paper.
Given such a space, denote by $\mathscr M_\mathrm{loc}$ the family of adapted continuous local martingales.
For any continuous semi-martingale $M$, denote by $\langle M \rangle$ its quadratic variation. 
Given two continuous semi-martingales $M, N$, let $\langle M, N\rangle$  denote their quadratic covariation.
In this paper, we say $g$ is a random field if it is an $\mathbb R$-valued stochastic process indexed by $\mathbb R_+\times \mathbb R$.
Denote by $\mathscr L^2_\mathrm{loc}$ the family of predictable random fields $g$ satisfying
\[ \iint_0^t g^2_{s,y} \mathrm ds\mathrm dy < \infty, \quad t\geq 0, \quad \text{a.s.} \]
Let $\mathcal B_F(\mathbb R)$ be the collection of Borel subsets of $\mathbb R$ with finite Lebesgue measure.
We say $W=(W_s(A):A \in \mathcal B_F(\mathbb R),s\in \mathbb R_+)$ is a white noise if it is an adapted orthogonal martingale measure so that for any $A,B\in \mathcal B_F(\mathbb R)$ almost surely 
\[
	\langle W_{\cdot}(A),W_{\cdot}(B)\rangle_{t} 
= t \cdot \operatorname{Leb}(A\cap B), 
	\quad t\geq 0,
\] 
where $\operatorname{Leb}(\cdot)$ is the Lebesgue measure on $\mathbb R$.
Given a white noise $W$, Walsh's stochastic integral for $W$ is a map from $\mathscr L_{\mathrm{loc}}^2$ to $\mathscr M_{\mathrm{loc}}$ which will be denoted by  
\[
	g
	\mapsto \iint_0^\cdot g_{s,y}W(\mathrm ds\mathrm dy).
\]
We refer our reader to \cite{Iwata1987AnInfinite, Walsh1986An-introduction} for more details.

\begin{note}
In this paper, we will consider the propagation speed for solutions of the following SPDE  
\begin{equation} \label{eq:I.1}
	\partial_t u 
	= \partial_x^2 u + f(u) + \epsilon \sigma(u) \dot W
\end{equation}
where $\epsilon > 0$,
$W$ is a white noise,
$\sigma(z) := \sqrt{z(1-z)}\mathbf 1_{[0,1]}(z)$ for each $z\in \mathbb R$; 
and $f$ is a real-valued function on $\mathbb R$.
We always assume that $f$ satisfies the following conditions:	
\begin{statement}
\item \label{asp:1} 
$f(0) = f(1) = 0$;
\item \label{asp:2}
$\liminf_{z \downarrow 0} f(z) / z^p > 0;$  
\item \label{asp:3}
	$f$ is H\"older continuous on $[0,1]$ with exponent $p\in [1/2,1)$, i.e.
\[
	\sup_{x,y\in [0,1]}  \frac{| f(x) - f(y) |}{|x-y|^p}< \infty.
\]
\end{statement}
\end{note}

Let us now be precise about the solution concept for the SPDE \eqref{eq:I.1}. 
Denote by $\mathcal C_{\mathrm{tem}}$ the space of continuous functions $g$ on $\mathbb R$ such that 
\[\|g\|_{(-\lambda)}:= \sup_{x\in \mathbb R} |e^{-\lambda |x|} g(x)| < \infty, \quad \forall \lambda > 0.\]
Let $\mathcal C_{\mathrm{tem}}$ be equipped with the topology generated by the norms   $(\|\cdot\|_{(-\lambda)}: \lambda > 0)$,  and set $\mathcal C_{\mathrm{tem}}^+$ as the collection of non-negative elements in $\mathcal C_{\mathrm{tem}}$. 
Let $\cC(\R_+, \Ctem)$ be the space of continuous $\Ctem$-valued paths with the topology of uniform convergence on bounded time sets. 
We say a Borel function $f$ on $\mathbb R$ satisfies the linear growth condition if 
$
	\sup_{z\in \mathbb R}|f(z)|/(1+|z|) < \infty.
$
Assume that Borel functions $f$ and $\sigma$ on $\mathbb R$ satisfy the linear growth condition.
We say that $(\Omega, \mathcal G, (\mathcal F_t)_{t\geq 0}, \mathrm P, u, W)$ is a weak solution to the SPDE 
\begin{equation}
	\label{eq:P.1}
	\partial_t u 
	= \partial_x^2 u + f(u) + \sigma(u)  \dot W,
\end{equation}
if $(\Omega, \mathcal G, (\mathcal F_t)_{t\geq 0}, \mathrm P)$ is a filtered probability space on which a predictable random field $u$ and a white noise $W$ are defined so that $(u_{t,\cdot}:t\geq 0)\in\cC(\R_+, \mathcal C_{\text{tem}})$ and 
\begin{equation}\label{eq:P.2} 
	u_{t,x} = \iint_0^t G_{s,y;t,x} M^u(\mathrm ds\mathrm dy) 
	\quad \text{a.s.}
	\quad (t,x)\in (0, \infty)\times \mathbb R,
\end{equation}
where 
\begin{equation}\label{eq:P.3}
	G_{s,y;t,x}
	:= \frac{e^{-\frac{(x-y)^2}{4(t-s)}}}{\sqrt{4\pi (t-s)}}\mathbf 1_{s<t}, \quad (s,y), (t,x)\in \mathbb R_+\times \mathbb R
\end{equation}
and 
\[
M^u(\mathrm ds\mathrm dy) 
:= u_{0,y} \delta_0(\mathrm ds)\mathrm dy 
+ f(u_{s,y})\mathrm ds\mathrm dy 
+ \sigma(u_{s,y}) W(\mathrm ds\mathrm dy).
\]
Here, the right hand side of \eqref{eq:P.2} is a mixture of the classical integral and Walsh's stochastic integral defined in an obvious way using linearity.
With some abuse of notation, 
we sometimes just use the random field $u$  
to represent the weak solution if there is no risk of confusion.  
We refer our reader to \cite[Theorem 2.1]{Shiga1994Two} for an equivalent definition.

Given a subset $\hat{\mathcal C} \subset \Ctem$, we say a weak solution $u$ to the SPDE \eqref{eq:P.1} is a $\hat{\mathcal C}$-valued weak solution if $(u_{t,\cdot}:t\geq 0)$ is a $\hat{\mathcal C}$-valued process. 
We say the weak existence of the SPDE \eqref{eq:P.1} holds in 
 $\hat\cC$ 
for an initial condition $g\in \hat{\mathcal C}$ if there exists 
a $\hat \cC$-valued weak solution $u$ to \eqref{eq:P.1} 
such that $u_{0,\cdot} = g$. 
We say the weak uniqueness of the SPDE \eqref{eq:P.1} holds in $\hat\cC$ for an initial condition $g\in \hat{\mathcal C}$ if, whenever $u$ and $u'$ are two $\hat{\mathcal C}$-valued weak solutions to the SPDE \eqref{eq:P.1} such that $u_{0,\cdot} = g$ and $u'_{0,\cdot}= g$, the $\Ctem$-valued processes $(u_{t,\cdot}:t\geq 0)$ and $(u'_{t,\cdot}:t\geq 0)$ induce the same 
law on $\cC(\R_+, \Ctem)$.

The weak existence, weak uniqueness, and compact propagation property of the SPDE \eqref{eq:I.1} under condition \eqref{asp:0} and $\epsilon > 0$ is studied in \cite{MuellerMytnikRyzhik2019TheSpeed}. 
Denote by 
 $\Cunit$  
the space of continuous functions on $\R$ taking values in $[0,1]$.  
Denote by $\CI$ the family of functions $g$ in $\Cunit$ with compact interface, i.e. $g\in \Cunit$ and $-\infty < L(g) < R(g) < \infty$ where  
$
	L(g)
	:= \inf \{x\in \mathbb R: g(x) \neq 1\}$ 
and 
	$R(g)
	:= \sup\{x\in \mathbb R: g(x)\neq 0\}.
$

	\begin{theorem}[\cite{MuellerMytnikRyzhik2019TheSpeed}]\label{intro:thm_existenceUniqueness}
	For any $\epsilon > 0$ and function $f$ satisfying \eqref{asp:0}, the following holds.
\begin{enumerate} 
\item 
	For any initial condition $g\in \CI$, the weak existence and weak uniqueness of \eqref{eq:I.1} 
	holds in $\Cunit$. 
\item 
	For any $\Cunit$-valued weak solution $u$ to \eqref{eq:I.1} with $u_{0,\cdot} \in \CI$, it holds that \[\mathbb E\Big[\sup_{s\in [0,t]} |R(u_{s,\cdot}) - L(u_{s,\cdot})|\Big] < \infty, \quad t\geq 0.\]
	In particular, for any initial condition $g\in \CI$, the weak existence and weak uniqueness of \eqref{eq:I.1} 
	holds in $\CI$. 
\item 	
	There exists a deterministic $V_{f,\epsilon}\in \mathbb R$ such that for any $\CI$-valued weak solution $u$ to \eqref{eq:I.1}, 
\[
	\lim_{t\to \infty} \frac{R(u_t)}{t} = V_{f,\epsilon}, \quad \text{a.s.}
\]
\end{enumerate}
\end{theorem}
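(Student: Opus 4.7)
The plan is to handle the three parts of Theorem \ref{intro:thm_existenceUniqueness} in sequence via Lipschitz approximation, a Dawson-Girsanov change of measure, and an ergodic argument.

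\textbf{Weak existence and interface moment bound.} I would approximate $f$ by a sequence of Lipschitz functions $f_n$ on $[0,1]$ with $f_n \to f$ uniformly and $\sup_n \sup_{z \in [0,1]} |f_n(z)|/\sqrt{z(1-z)} < \infty$. For each $n$, the Wright-Fisher SPDE with Lipschitz drift $f_n$ admits a $\Cunit$-valued weak solution $u^{(n)}$ with initial data $g\in\CI$ by classical theory (the noise coefficient vanishes at $\{0,1\}$, so $[0,1]$ is invariant). Tightness of $(u^{(n)})$ in $\cC(\R_+,\Ctem)$ follows from Kolmogorov-type estimates on the mild form \eqref{eq:P.2} using heat-kernel bounds and Burkholder-Davis-Gundy; each limit point is verified to be a $\Cunit$-valued weak solution via the continuity of $f$ and dominated convergence. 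For the interface moment bound, I would adapt the Shiga-Tribe line of argument: test against an exponential weight functional such as $\Phi(u_t) = \int e^{\lambda|x|}(u_{t,x}\wedge(1-u_{t,x}))\,\mathrm dx$ for small $\lambda > 0$. Applying It\^o's formula and using \eqref{asp:0} to dominate the drift contribution by the martingale variance, $\ex[\Phi(u_t)]$ grows at most linearly in $t$, and a Doob maximal inequality controls the supremum over $s\leq t$, giving $\ex[\sup_{s\leq t}(R(u_{s,\cdot}) - L(u_{s,\cdot}))] < \infty$.

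\textbf{Weak uniqueness via Dawson-Girsanov.} The H\"older regularity of $f$ prevents a classical Yamada-Watanabe argument, so my plan is to remove the drift via Dawson's Girsanov transformation. Write $f(u) = \epsilon\sqrt{u(1-u)}\cdot h(u)$, where $h(z) := f(z)/(\epsilon\sqrt{z(1-z)})$ is bounded by \eqref{asp:0}. The Girsanov density built from $h(u_{s,y})$ transforms any $\Cunit$-valued weak solution of \eqref{eq:I.1} into a weak solution of the driftless Wright-Fisher SPDE $\p_t u = \p_x^2 u + \epsilon\sqrt{u(1-u)}\,\dot{\widetilde W}$, for which weak uniqueness is classical via the duality with coalescing Brownian motions. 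The Novikov condition on $[0,T]$ reduces to the boundedness of $h$ combined with the finite expected integral $\ex[\iint_0^T u_{s,y}(1-u_{s,y})\,\mathrm ds\,\mathrm dy] < \infty$, which follows from the compact interface bound above; truncation stopping times $\tau_\delta$ at which $u$ is bounded away from $\{0,1\}$ on its effective support handle the degeneracy of the noise, and one then passes to $\delta \downarrow 0$. Uniqueness in $\CI$ follows from uniqueness in $\Cunit$ together with the a.s.\ finiteness of the width.

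\textbf{Existence of the deterministic speed.} Take $u_{0,\cdot} = \mathbf 1_{(-\infty,0]}$ (smoothing the discontinuity if needed to enter $\CI$) and set $R_t := R(u_{t,\cdot})$, $L_t := L(u_{t,\cdot})$, $W_t := R_t - L_t$. Applying the strong Markov property at time $t$ and coupling a fresh Heaviside-initial solution at the level $L_t$ using the same noise increment $W|_{[t,\infty)}$, the comparison principle yields $R_{t+s} \geq L_t + \tilde R_s$, where $\tilde R_s$ is, conditionally on $\mathcal F_t$, distributed as $R_s$; hence
\[
	\ex[R_{t+s}] \geq \ex[R_t] + \ex[R_s] - \ex[W_t].
\]
Fekete-type reasoning gives $V_{f,\epsilon} := \lim_t \ex[R_t]/t \in (-\infty,\infty]$, and a comparison with a constant-drift SPDE that dominates $f$ yields a crude upper bound showing $V_{f,\epsilon}$ is finite. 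For almost-sure convergence, I would apply Liggett's subadditive ergodic theorem to the strong-Markov array along a fixed time grid $t = nT$, and invoke a tail $0$--$1$ law for the white noise $W$ to conclude that the limit is deterministic; a monotonicity argument extends convergence from the grid to all $t$, and comparison sandwiching between two Heaviside-initial solutions shifted by $L(g)$ and $R(g)$ shows the limit does not depend on the particular $\CI$ initial datum $g$.

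\textbf{Main obstacle.} I expect weak uniqueness to be the principal difficulty, because the combination of the non-Lipschitz drift $f$ with the degenerate Wright-Fisher noise $\sqrt{u(1-u)}$ rules out both standard pathwise-uniqueness techniques and, in general, a direct Shiga-type duality (as emphasized in the excerpt). Making Dawson's Girsanov argument rigorous on finite time intervals, verifying the true martingale property of the density uniformly, and handling the boundary degeneracy near $\{u\in\{0,1\}\}$ through the truncation stopping times is the technical heart of the proof.
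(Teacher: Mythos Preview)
This theorem is not proved in the present paper: it is quoted verbatim from \cite{MuellerMytnikRyzhik2019TheSpeed} and only Remark~\ref{thm:P.2} (the comparison argument showing $V_{f,\epsilon}$ does not depend on the initial datum) is justified here. Consequently there is no in-paper proof to compare your proposal against; your sketch should be read as a reconstruction of the argument in \cite{MuellerMytnikRyzhik2019TheSpeed}, and on that level it is broadly correct---Lipschitz approximation for existence, an a~priori interface-width moment valid for \emph{any} weak solution, Dawson--Girsanov removal of the drift to reduce uniqueness to the driftless Wright--Fisher SPDE, and a subadditive/ergodic argument for the speed. Your identification of weak uniqueness as the crux, made possible precisely by hypothesis~\eqref{asp:0}, matches the cited paper.

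One genuine gap: your Novikov verification is misstated. The Girsanov exponent is $\tfrac12\iint_0^T h(u_{s,y})^2\,\mathrm ds\,\mathrm dy$ with $h(z)=f(z)/(\epsilon\sqrt{z(1-z)})$, and \eqref{asp:0} gives only $h^2\le (K/\epsilon)^2$; hence the exponent is bounded by $(K/\epsilon)^2\int_0^T\!\bigl(R(u_s)-L(u_s)\bigr)\,\mathrm ds$, \emph{not} by $\iint u(1-u)$. Novikov therefore asks for an \emph{exponential} moment of the time-integrated interface width, which your first-moment bound does not supply. The standard cure---and what \cite{MuellerMytnikRyzhik2019TheSpeed} does---is to localize: stop at $\tau_n=\inf\{t:R(u_t)-L(u_t)\ge n\}$, on which the exponent is deterministically bounded so the density is a true martingale, deduce that the law of $u$ on $[0,\tau_n]$ is determined, and then let $n\to\infty$ using the interface bound (which implies $\tau_n\uparrow\infty$ a.s.\ for every weak solution). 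You allude to stopping times but attach them to the wrong issue (the degeneracy near $\{0,1\}$ rather than the unbounded spatial integral); once you redirect the localization to controlling the interface width, the argument goes through.
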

\begin{remark} \label{thm:P.2}
We refer to $V_{f, \epsilon}$ as the speed of the traveling front of the SPDE \eqref{eq:I.1}.
We emphasize here that $V_{f,\epsilon}$ depends only on the drift function $f$ and the noise strength $\epsilon$.
To see that  it is independent of the initial value $g$, we note that for any other $\tilde g\in \CI$, there exists a constant $c\in \mathbb R$ such that 
\[
	g(x + c) \leq \tilde g(x) \leq g(x-c), \quad x\in \mathbb R.
\]
Therefore, using the comparison principle and the weak uniqueness, for any weak solutions $u$ and $\tilde u$ to the SPDE \eqref{eq:P.1} with $u_{0,\cdot} = g$ and $\tilde u_{0,\cdot} = \tilde g$ respectively, random field $\tilde u$ will be dominated stochastically by $(u_{t,x-c}:(t,x)\in \mathbb R_+\times\mathbb R)$ from above, and by $(u_{t,x+c}:(t,x)\in \mathbb R_+\times \mathbb R)$ from below. 
This indicates that the fronts of $u$ and $\tilde u$ have the same speed.
\end{remark}
Note that in Remark \ref{thm:P.2} we used the comparison principle in the absence of the Lipschitz condition.
This is justified by the following lemma whose variants have already appeared in the literature, see \cite[Theorem 2.6]{Shiga1994Two} and \cite[p. 412]{MuellerMytnikQuastel2011Effect} for example.
\begin{lemma}[Comparison Principle]\label{intro:comparison}
	Let $f,\tilde f$ and $\sigma$ be continuous functions on $\mathbb R$ satisfying the linear growth condition.  
	Assume that $\tilde f\leq f$ on $\mathbb R$.
	Let $g, \tilde g \in \Ctem$ satisfy $\tilde g \leq g$ on $\mathbb R$. 
	Then, there exists a weak solution $u$ to the SPDE \eqref{eq:P.1} with $u_{0,\cdot} = g$, and a weak solution $\tilde u$ to the SPDE  
	\begin{equation}\label{eq:P.15} 
	\partial_t \tilde u = \partial_x^2 \tilde u^2 + \tilde f(\tilde u) + \sigma(\tilde u) \tilde W 
\end{equation} 
	with $\tilde u_{0,\cdot} = \tilde g$,  so that the random field $\tilde u$ 
	is stochastically dominated by the random field $u$, i.e. $\tilde u$ and $u$ can be coupled in one probability space so that $\tilde u_{t,x}\leq u_{t,x}$ for every $t\geq 0$ and $x\in \mathbb R$ almost surely.
\end{lemma}
\begin{remark*}
	Under the condition of the above lemma, if we further assume that the weak uniqueness holds for both the SPDEs \eqref{eq:P.1} and \eqref{eq:P.15}, then  the weak solution $\tilde u$ to SPDE \eqref{eq:P.15} with initial value $\tilde g$ is stochastically dominated by  the weak solution $u$ to the SPDE \eqref{eq:P.1} with initial value $g$.
\end{remark*}
One can prove the above lemma by following the routine arguments in the proof of \cite[Theorem 2.6]{Shiga1994Two}.
Note that when $\tilde f(0) = \sigma(0) = 0$ and $\tilde g\equiv 0$, it actually follows directly from \cite[Theorem 2.6]{Shiga1994Two}.
\begin{note}
	Zhenyao: I personally haven't go though the proof of the above lemma. I hope we are not making any mistakes here.
\end{note}

Another general result that will be used very often is the following rescaling lemma of the SPDE \eqref{eq:P.1} which can be proved using a similar argument as in \cite[Section 4.1]{MuellerMytnikRyzhik2019TheSpeed}.
\begin{lemma}[Rescaling]\label{rescaling}
	Suppose that Borel functions $f$ and $\sigma$ on $\mathbb R$ satisfies the linear growth condition.
	Suppose that $u$ is a weak solution to the SPDE \eqref{eq:P.1}.  
	Let $\alpha, \beta > 0$, and $v_{t,x} := \beta u_{\alpha^{-4}t,\alpha^{-2}x}$ for each $(t,x)\in \mathbb R_+\times \mathbb R$.
	Then there exists a white noise $W^v$ such that $v$ 
 is a weak solution to the SPDE
\[
	\p_t v = \p_x^2v + \alpha^{-4}\beta f(\beta^{-1} v) + \alpha^{-1} \beta \sigma(\beta^{-1} v)\dot{W}^v.
\]
\end{lemma}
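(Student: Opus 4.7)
The plan is to verify directly that $v$ satisfies the mild form \eqref{eq:P.2} associated with the target SPDE, by rescaling each of the three terms in the mild form for $u$ after constructing an appropriate white noise $W^v$ built out of $W$.

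First I would introduce the time-changed filtration $\mathcal F^v_t := \mathcal F_{\alpha^{-4}t}$ and define the candidate noise by
\[
	W^v([0,t]\times A) := \alpha^{3}\, W\bigl([0,\alpha^{-4}t]\times \alpha^{-2}A\bigr),
	\qquad t\geq 0,\ A\in\mathcal B_F(\mathbb R),
\]
where $\alpha^{-2}A:=\{\alpha^{-2}x:x\in A\}$. A direct covariance computation gives $\langle W^v_\cdot(A),W^v_\cdot(B)\rangle_t=t\,\mathrm{Leb}(A\cap B)$, so $W^v$ is a white noise adapted to $(\mathcal F^v_t)$. Starting from indicator integrands and extending by linearity and an $L^2$-approximation argument, I would then establish the stochastic scaling identity
\[
	\iint_0^{\alpha^{-4}t} \psi_{s',y'}\, W(\mathrm ds'\mathrm dy')
	= \alpha^{-3}\iint_0^{t} \psi_{\alpha^{-4}s,\alpha^{-2}y}\, W^v(\mathrm ds\mathrm dy),
\]
valid for every predictable $\psi\in\mathscr L^2_{\mathrm{loc}}$.

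With $W^v$ in hand, I would write the mild form for $u$ at the rescaled space-time point $(\alpha^{-4}t,\alpha^{-2}x)$, multiply through by $\beta$, and rewrite each term through the substitution $s'=\alpha^{-4}s$, $y'=\alpha^{-2}y$. Combined with the elementary heat-kernel identity $G_{\alpha^{-4}s,\alpha^{-2}y;\alpha^{-4}t,\alpha^{-2}x}=\alpha^{2}\,G_{s,y;t,x}$, ordinary changes of variables transform the initial and drift integrals into $\int G_{0,y;t,x}\,v_{0,y}\,\mathrm dy$ and $\iint_0^t G_{s,y;t,x}\,\alpha^{-4}\beta\, f(\beta^{-1}v_{s,y})\,\mathrm ds\mathrm dy$, respectively, while the stochastic scaling identity transforms the noise integral into $\iint_0^t G_{s,y;t,x}\,\alpha^{-1}\beta\,\sigma(\beta^{-1}v_{s,y})\,W^v(\mathrm ds\mathrm dy)$. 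Summing the three pieces yields exactly the mild form associated with the target SPDE.

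The main technical obstacle is the careful justification of the stochastic scaling identity for general predictable integrands, which I would handle by a monotone-class / Itô isometry argument using the covariance structure of $W^v$, together with the observation that $\psi\in\mathscr L^2_{\mathrm{loc}}$ for $W$ if and only if its time-space rescaling lies in the analogous class for $W^v$ (the relevant Jacobians cancel cleanly with the factor $\alpha^{3}$ in the definition of $W^v$). That $v\in\cC(\mathbb R_+,\Ctem)$ follows immediately from $v$ being a deterministic rescaling of the path of $u$, together with the identity $\|v_{t,\cdot}\|_{(-n)}=\beta\,\|u_{\alpha^{-4}t,\cdot}\|_{(-n\alpha^{2})}$, which transfers both continuity in $t$ and temperedness in $x$ from $u$ to $v$. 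Predictability of $v$ with respect to $(\mathcal F^v_t)$ is inherited from the corresponding property of $u$ through the time change.
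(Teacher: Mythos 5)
Your proposal is correct and follows essentially the same argument that the paper invokes (by reference to \cite[Section 4.1]{MuellerMytnikRyzhik2019TheSpeed}): construct the rescaled white noise $W^v$, then verify the mild form for $v$ by applying the parabolic scaling of the heat kernel and of Walsh's integral to each of the three terms in the mild form for $u$. The scaling exponents you compute (the factor $\alpha^3$ in the definition of $W^v$, the $\alpha^{-3}$ in the stochastic Fubini-type identity, and the $\alpha^2$ from the kernel) all check out, as does the norm identity $\|v_{t,\cdot}\|_{(-n)}=\beta\|u_{\alpha^{-4}t,\cdot}\|_{(-n\alpha^2)}$ used to transfer the $\cC(\R_+,\Ctem)$ property.
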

 
	We end this section by collecting some notations for function spaces.  
	Given a locally compact separable metric space $E$, we denote by $\cC(E)$ the space of continuous functions on $E$.  
	We write $\cC_\mathrm b(E)$, $\cC_0(E)$, and $\cC_\mathrm c(E)$, respectively, for the space of continuous functions on $E$ that are bounded, vanishing at $\infty$, and having compact support, respectively.
	Use $\cC_{\bullet}(E)$ to represent one of $\cC(E)$, $\cC_\mathrm b(E)$, $\cC_0(E)$ or $\cC_\mathrm c(E)$. 
	If $E\subset \mathbb R^d$, we define $\cC_{\bullet}^0(E) := \cC_\bullet(E)$, and inductively 
	\[\cC_{\bullet}^n(E) := \{ \phi \in \cC_\bullet(E): \partial_{x_k} \phi\in \cC_\bullet^{n-1}(E), \forall k =1,\dots,d   \}, \quad n\in \mathbb N,\]  
	and
	$\cC_{\bullet}^\infty(E) := \cap_{n=1}^\infty \cC_\bullet^n(E)$.
	If $E= \mathbb T\times \mathbb R$ with the time interval $\mathbb T\subset \mathbb R_+$, we define 
	\[\cC_{\bullet}^{1,2}(E) := \{\phi\in \cC_\bullet(E): \partial_t \phi, \partial_x^2\phi \in \cC_\bullet(E)\}.\]  

\begin{note}
\section{Lower bound} \label{sec:L}
In this section, we give the proof of Theorem~\ref{thm:LU}(a).
Our general strategy will be to use a rescaling lemma of the SPDE \eqref{eq:I.1} to move the $\epsilon$ away from the noise term.  
A comparison principle will then reduce the proof of the lower bound to showing that a particular SPDE has a positive speed. The rescaling result is Lemma \ref{rescaling} below. For any function $f$ satisfying \eqref{asp:1}, there exists constants $a>0$ and $0 < \delta < 1$ such that
\[
az^p\mathbf{1}(0 \leq z \leq \delta) \leq f(z) 
\]
for all $z \in [0, 1].$
From the comparison inequality (Lemma \ref{intro:comparison}) $V_{f_1, \e} \geq V_{f_2, \e}$ if $f_1 \geq f_2$, assuming the two corresponding solutions have the same initial condition. Consequently, it suffices to demonstrate Theorem~\ref{thm:LU}(a) in the case that
\[
f(z) = az^p\mathbf{1}(0 \leq z \leq \delta).
\]
By the rescaling Lemma \ref{rescaling} below, for any fixed $d > 0$ we see
\(
d^{1/2}V_{d^{-1}f, d^{-1/4}\epsilon} = V_{f, \epsilon},
\)
so that
$\liminf_{\epsilon \downarrow 0}\e^{2\frac{1 - p}{1 + p}}V_{f, \e}$ is positive if and only if $\liminf_{\epsilon \downarrow 0}\e^{2\frac{1 - p}{1 + p}}V_{df, \e}$ is positive. Therefore we can assume without loss of generality that $a = 1$ and for the remainder of this section take $f(z) = z^p\mathbf{1}(0 \leq z \leq \delta).$
\begin{lemma}[Rescaling]\label{rescaling}
	Let $\e \in (0, 1)$ and $u_{0,z} = \mathbf{1}_{z < 0}.$ 
	Let $u_{t, x}$ be a mild solution to \eqref{eq:I.1} with initial condition $u_{0,\cdot}$. 
	For $a, b > 0$ there exists a white noise $W^v$ such that $v_{t, x} := b\cdot u_{a^{-4}t, a^{-2} x}$ solves a mild form of the SPDE
\[
\p_t v_{t, x} = \p_x^2v_{t,x} + f_v(v_{t, x}) + b^{1/2}a^{-1}\e\sigma_v(v_{t, x})\dot{W}^v_{t, x}
\]
with initial condition $bu_{0,z}$, where $f_v(z) = a^{-4}bf\big(\frac{1}{b}\cdot z\big), \sigma_v(z) = \sqrt{z(1 - b^{-1}z)}\mathbf{1}_{z \in [0, b]}.$
\end{lemma}
\begin{proof}
The proof of this rescaling Lemma is done in \cite[Section 4.1]{MuellerMytnikRyzhik2019TheSpeed}. Essentially, one writes the three integral terms of the mild form expression and then applies the  rescaling properties of the Gaussian kernel and white noise to rewrite each term.
\end{proof}
\begin{lemma}\label{thm:L.1}
	Let $c  = c_{\eqref{eq:lb_scale}}$ be given according to Proposition \ref{thm:L.3},$\epsilon\in (0,\delta^{1/q}c^{1/q})$ and $u_{0,z} = \mathbf 1_{z< 0}$.
	Then there exists a usual probability space $(\Omega, \mathcal G,(\mathcal F_t)_{t\geq 0}, \mathbb P)$
	and $(u,\underline u, W^u, W^{\underline u} )$ defined on it such that
	\begin{enumerate} 
		\item 
	$W^u$ and $W^{\underline u}$ are white noises;
		\item 
	$u$ is a solution to \eqref{eq:I.1} with initial data $u_{0, \cdot}$ and white noise $W$; 
		\item 
	$\underline u$ is a solution to
\begin{align} 
&  \partial_t \underline u_{t,x} = \partial_x^2 \underline u_{t,x} + \underline f_{c, \e}(\underline u_{t,x}) + \epsilon \sqrt{\underline u_{t,x}(1-\underline u_{t,x})} \dot W^{\underline u}_{t,x}, \quad t\geq 0, x\in \mathbb R
\end{align}
	with initial data $u_{0, \cdot}$, where 
\begin{align} 
& \underline f_{c, \e}(z): = \begin{cases}
0, & \quad z\in (-\infty, 0],
\\c^{1-p}\epsilon^{-(1-p)q} z, & \quad z \in (0,\frac{\epsilon^q}{2c}],
\\- c^{1-p}\epsilon^{-(1-p)q}(z - \frac{\epsilon^q}{c}), &\quad z\in (\frac{\epsilon^q}{2c}, \frac{\epsilon^q}{c}],
\\ 0 , & \quad z\in (\frac{\epsilon^q}{c}, \infty),
\end{cases} 
\end{align}
 and $q : = \frac{4}{1+p}$; and
\item
 $\underline u_{t,x} \leq u_{t,x}$ for all $t\geq 0$ and $x\in \mathbb R$.
\end{enumerate}
 \end{lemma}

\begin{proof}
	This is the weak comparison principle for SPDEs, see Shiga \cite[Corollary 2.4]{Shiga1994Two}
	\footnote{LM: Shiga's result is for Lipschitz coefficients. Here the noise coefficient is Holder. 
	In fact we explain a page above in {\it Comparison  Principle} that one can construct two weak solutions via limit of approximations by solutions to equations with Lipschitz coefficients where  comparison holds. So part of the lemma is also existence of a probability space on which these limiting weak  solutions will be defined. We should somehow combine the comparison principle above with this lemma.  Do I miss something?}. One only has to verify the analytic fact that  $\underline f_{c, \e}\leq f$. Notice that $f(z)$ is concave, and the $\underline{f}_{c, \e}$ is a linear tent function. Hence it suffices to check the values of $f$ and $\underline{f}_{c, \e}$ at the endpoints of the intervals in the piecewise definition above.
\end{proof}
 
\begin{corollary} \label{thm:L.2}
	 Fix $c > 0$, $q : = \frac{4}{1+p}$, $\epsilon \in (0,c^{1/q})$ and $u_{0, z} = \mathbf 1_{z<0}$. Let $(\underline u, W^{\underline u})$, $(\Omega, \mathcal G, (\mathcal F_t)_{t\geq 0}, \mathrm P)$ be as in Lemma \ref{thm:L.1}.
	Let $a := c^{\frac{1 - p}{4}}\epsilon^{- \frac{(1-p)q}{4}}$ and $b := \frac{c}{2\epsilon^q}$.
	Then there exists  an $\mathcal F_{t}$-white noise $W^{\tilde  u}$ such that $\tilde  u_{t,x}:= b u_{a^{-4}t,a^{-2}x}$ is a mild solution to
\begin{align} \label{eq:u_tilda}
	 \partial_{t} {\tilde  u}_{t,x} =
	\partial_x^2 {\tilde  u}_{t,x} +  {\tilde f}({\tilde  u}_{t,x}) +  c^{\frac{1}{2} - \frac{1 + p}{4}} \cdot\tilde{ \sigma}_{q, c, \e}({\tilde  u}_{t,x}) \dot W_{t,x}^{\tilde{ u}}, \quad t\geq 0, x\in \mathbb R,
\end{align}
	with initial data $bu_{0, \cdot}$. 
	Here 
\begin{align} 
& {\tilde  f}(z) :=
\begin{cases}
0, & \quad z\in (-\infty, 0],
\\z, & \quad z\in (0, \frac{1}{4}],
\\\frac{1}{2}- z, & \quad z\in (\frac{1}{4}, \frac{1}{2}],
\\0, & \quad z\in (\frac{1}{2}, \infty),
\end{cases}  
\end{align}
and 
\begin{align} \label{eq:sigma_tilda}
& {\tilde \sigma}_{q, c, \e}(z) :=  2^{-\frac{1}{2}} \sqrt{z(1 -  2c^{-1}\epsilon^q z)} \mathbf 1_{z\in [0,b]}, \quad z\geq 0.
\end{align}
\end{corollary} 
\begin{proof}
	Apply the rescaling result of Lemma \ref{rescaling}.
\end{proof}

\begin{proposition}\label{thm:L.3}
	There exists $c=c_{\eqref{eq:lb_scale}}, \e_0 \in (0, c_{\eqref{eq:lb_scale}}^{1/q})$, such that for each $\epsilon \in (0,\e_0)$, and $\tilde { u}^\epsilon$ as in Corollary \ref{thm:L.2}, we have
\begin{equation}
\label{eq:lb_scale}
\liminf_{t\to \infty} 
\frac{R(\tilde { u}^\epsilon_t)}{t} \geq 2 
- \frac{\pi^2}{|\log c^2|^2} - \frac{2\pi^2[11\log|\log c| - \log \alpha(|\log c|^{-3})}{|\log c^2|^3} =: \delta > 0, \quad \text{a.s.}
\end{equation}
where $q = 4/(1 + p)$ and $\alpha(a)$ is the largest $\alpha$ such that $(1-a) u \mathbf 1_{u\leq a} \leq {\tilde f}(u)$.
\end{proposition}
\begin{remark}
The rescaling result of Corollary \ref{thm:L.2} reduces the proof of the lower bound to 
showing that for a chosen positive $c$, the solution to equation \eqref{eq:u_tilda} has a 
speed bounded away from zero for arbitrarily small $\e.$ Furthermore, only the noise coefficient of \eqref{eq:u_tilda} and the initial condition depends on $\e.$  The drift ${\tilde{f}}$
remains unchanging with ${\tilde f}'(0) = 1$ and is Lipschitz with Lipschitz constant one.
\end{remark}
The proof of Proposition \ref{thm:L.3} follows from the proposition below. The coefficient $b = c/\e^q$  of the initial condition grows as $\e$ approaches zero. 
By the comparison principle, for small $\e$, we can couple
 a solution $u^{(1)}$ of \eqref{eq:u_tilda} with initial condition $u^{(1)}_{0, z} = b\mathbf 1(z < 0)$ on the same probability space as a solution $u^{(2)}$ of \eqref{eq:u_tilda}, with initial condition $u_{0, z}^{(2)} = 1(z <0)$, such that 
 $u^{(1)}_{t, x} \geq u^{(2)}_{t, x}$ for all $t, x$, almost surely. Therefore $R({u}^{(1)}_t) \geq R(u^{(2)}_t)$ and Proposition \ref{thm:L.3} follows from the proposition below. 

\begin{proposition}{\cite[Theorem 1.1]{MuellerMytnikQuastel2011Effect}}\label{thm:lb_sigma}
Assume that $u_{t, x}$ is a solution to the SPDE
\[
\p_t u_{t, x} = \p_x^2u_{t, x} + g(u_{t, x}) + \e\sigma(u_{t, x})\dot{W}
\]
with $u_{0, x} = \mathbf{1}{(x < 0)}$. Assume that $g$ is a Lipschitz function with Lipschitz constant one and satisfies
the standard KPP conditions:
\[
g(0)  = g(1) = 0; g'(0) = 1; \, 0< g(u) \leq u, u \in (0, 1).
\]
Further assume that \[\sigma^2(u) \leq u.\]
Then there exists an  $\e_0$ independent of $\sigma$, such that for all $\e \in (0, \e_0)$
\[
\liminf_{t \to \infty}\frac{R(u_{t, x})}{t} \geq 2 - \frac{\pi^2}{|\log \e^2|^2} - \frac{2\pi^2[11\log|\log\e| - \log \alpha(|\log \e|^{-3})]}{|\log \e^2|^3}.
\]
Here $\alpha(a)$ is the largest $\alpha$ such that 
\[
(1 - a)u\mathbf{1}_{(u \leq \alpha)} \leq g(u).
\]
\end{proposition}
\textbf{L: We need to be careful, since the initial condition of our model depends on $\epsilon$.}
\begin{proof}[Sketch]
	See \cite[Section 1]{MuellerMytnikQuastel2011Effect} for the set-up and relevant definitions.
	Proposition \ref{thm:lb_sigma} is the statement of \cite[Theorem 1.1]{MuellerMytnikQuastel2011Effect} except for the fact that the lower bound in Proposition \ref{thm:lb_sigma} holds for all $\sigma$ that satisfy $\sigma^2(u) \leq u.$
	This fact follows from the proofs of \cite[Lemmas 4.1 and 4.2]{MuellerMytnikQuastel2011Effect}.
	(The proof of \cite[Lemma 4.2]{MuellerMytnikQuastel2011Effect} is in the last section of  \cite{MuellerMytnikQuastel2011Effect}). 
	To see this is the case, note that the functions $h, \rho$ and constants $\gamma, \delta$ defined in the proof of \cite[Lemma 4.2]{MuellerMytnikQuastel2011Effect} are independent of $\sigma.$
	Consequently, the inequalities of \cite[(10.7)-(10.12)]{MuellerMytnikQuastel2011Effect} also hold assuming \cite[(10.6)]{MuellerMytnikQuastel2011Effect} holds.
	Under our assumption that $\sigma^2(u) \leq u$, it follows that (10.6) in \cite{MuellerMytnikQuastel2011Effect} does indeed hold.
Hence the constant $C_{(4.18)}$ given in the statement of \cite[Lemma 4.2]{MuellerMytnikQuastel2011Effect} is valid for all $\sigma$ with $\sigma^2(u) \leq u$, and the result follows.
\end{proof}

\begin{proof}[Proof of Proposition \ref{thm:L.3}]
	For fixed $c$, $\widetilde {{\sigma}}_{q, c, \e}$ satisfies the assumptions of Proposition \ref{thm:lb_sigma} for any small $\e$. So there exists a $c_0$ such that the lower bound for the speed of 
${ \tilde u^\e}$ with equation \eqref{eq:I.1} is $2 - \frac{\pi^2}{|\log c_0^{(1-p)/2}|^2}$ (minus a smaller order correction) for any small enough $\e$, and any $c \in (0, c_0]$. 
Simply choose $c_{\eqref{eq:lb_scale}} = c_0$ so that this lower bound for the speed is positive, then the result follows.
\end{proof}

\begin{proof}[Proof of Theorem \ref{thm:LU}(a)]
	 Let $u$ and $\underline u$ be given by Lemma \ref{thm:L.1} and let $a$, $b$, $q$, and $\tilde {{u}}$  be given by Corollary \ref{thm:L.2}.
    For each $t\geq 0$,
\begin{align} 
& R(\underline u_t) 
= \sup\{x\in \mathbb R: \underline u_{t,x} > 0\}
= \sup\{a^{-2}x\in \mathbb R: b\underline u_{a^{-4}(a^4t),a^{-2}x} > 0\}
\\&\label{eq:L.05}= a^{-2} \sup\{x\in \mathbb R: b\tilde{ u}_{a^4t,x} > 0\}
= a^{-2}R(\tilde { u}_{a^4t}).
\end{align}
	Now let $\epsilon_0$ and $c_{\eqref{eq:lb_scale}}$ be given by Proposition \ref{thm:L.3}. Then for $\epsilon \in (0,\epsilon_0)$ we have
\begin{align} 
& \mathrm V_\epsilon 
= \lim_{t\to \infty} \frac{R(u_t)}{t} 
\overset{\text{Lemma \ref{thm:L.1}}}\geq \liminf_{t\to \infty} \frac{R(\underline u_t)}{t}
\overset{\eqref{eq:L.05}}= \liminf_{t\to \infty} \frac{a^{-2}R(\tilde {u}_{a^4 t})}{t}
\\&= \liminf_{t\to \infty} \frac{a^{2}R(\tilde {u}_{a^4 t})}{a^4t}
\overset{\text{Proposition \ref{thm:L.3}}}\geq a^{2} \delta = \delta c_{\eqref{eq:lb_scale}}^{1/2-\frac{1 +p}{2}}\epsilon^{- 2\frac{1-p}{1+p}}.
\qedhere
\end{align}
\end{proof}
\end{note}
\section{Proof of Theorem~\ref{thm:LU}(\texorpdfstring{$\rm a$}{a})} \label{sec:L} 
	Note that we only have to prove the result for every   $f \in \{f^{(\delta)} : \delta \in (0,1/2]\}$  where
\[
	f^{(\delta)}(z) 
	:=  z^p\mathbf{1}_{0 \leq z \leq \delta} + (2\delta^p-\delta^{p-1}z)\mathbf 1_{\delta<z\leq 2\delta}, 
	\quad z\in [0,1].
\]
This is because for any general function $f$ satisfying \eqref{asp:0} and \eqref{asp:1}, there exists $c>0$ and   $0 < \delta \leq 1/2$  such that
\[
	f(z)\geq cf^{(\delta)}(z), \quad z\in [0,1]. 
\]
Using Lemmas~\ref{intro:comparison} and \ref{rescaling} we have  
\[
	V_{f, \epsilon} =c^{1/2}V_{c^{-1}f, c^{-1/4}\epsilon} \geq  c^{1/2}V_{f^{(\delta)}, c^{-1/4}\epsilon}.
\]
Therefore, to show $\liminf_{\epsilon \downarrow 0}\e^{2\frac{1 - p}{1 + p}}V_{f, \e}$ is positive we only have to show $\liminf_{\epsilon \downarrow 0}\e^{2\frac{1 - p}{1 + p}}V_{f^{(\delta)}, \e}$ is positive.
So in the remainder of this section, without loss of generality, let us fix an arbitrary   $\delta \in (0,1/2]$  and assume that $f \equiv f^{(\delta)}$.

 The idea of the proof is to use the comparison principle and the rescaling lemma for the SPDEs to replace our non-Lipschitz drift $f$ by some continuous tent function  
\[
	H(z;l,h) 
	:= 
\begin{cases}
	0, &\quad z\in (-\infty, 0],
	\\ \frac{h}{l}z, &\quad z \in (0,l), 
	\\ h, &\quad z = l,
	\\ 2h-\frac{h}{l} z, &\quad z \in (l,2l),
	\\ 0, & \quad z\in [l, \infty).
\end{cases}
\]
Here, the  parameters $l\in (0,1/2]$  and  $h>0$ of this tent function will be chosen more precisely later. 
This will allow us to analyze the speed of the system using the following result from \cite{MuellerMytnikQuastel2011Effect}.
For any $\beta > 0$, define $\mathcal C_{I,\beta} := \{\beta g: g\in \CI\}$.  
\begin{lemma}[\cite{MuellerMytnikQuastel2011Effect}]\label{thm:lb_sigma}
	There exists a $\gamma_0>0$ so that the following statement holds.
	Suppose that 
	\begin{itemize}
	\item 
		$\gamma\in (0,\gamma_0)$ and $\beta>0$; 
	\item 
		$\sigma$ is a non-negative function on $\mathbb R_+$ such that $\sigma^2$ is Lipschitz and that $\sigma^2(z) \leq z$ for every $z\in \mathbb R_+$; 
\item 
	for any initial condition $g\in \mathcal C_{I,\beta}$, the weak existence and the weak uniqueness of the SPDE
		\begin{equation} \label{eq:L.1}
			\p_t v= \p_x^2 v +   H(v; 1/2,1/2)  + \gamma\sigma(v)\dot{W}
\end{equation}
holds in $\cC_{I,\beta}$.  
	\end{itemize}
	Then for any $\mathcal C_{I,\beta}$-valued weak solution $v$ to \eqref{eq:L.1}, it holds that 
	\begin{equation} \label{eq:L.2}
		\liminf_{t \to \infty}\frac{R(v_{t, \cdot})}{t} \geq 2 - \frac{\pi^2}{|\log \gamma^2|^2} - \frac{2\pi^2[11\log|\log\gamma| - \log(1/2)]}{|\log \gamma^2|^3}, \quad \text{a.s.}
\end{equation}
\end{lemma}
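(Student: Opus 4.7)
The plan is to reduce this statement to \cite[Theorem 1.1]{MuellerMytnikQuastel2011Effect}, making two small extensions: allowing general noise coefficients $\sigma$ with $\sigma^2$ Lipschitz and $\sigma^2(u)\le u$, and allowing initial data in $\mathcal C_{I,\beta}$ rather than the specific Heaviside function used there.

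First I would verify that the tent drift $H(\cdot;1,1/2)$ fits the standard KPP hypotheses of MMQ: it is Lipschitz with constant $1$, vanishes at $0$ and $1$, is strictly positive on $(0,1)$ with right-derivative equal to $1$ at $0$, and satisfies $H(u;1,1/2)\le u$ on $[0,1]$ (explicitly, $H(u;1,1/2)=u$ for $u\in(0,1/2]$ and $H(u;1,1/2)=1-u\le u$ for $u\in[1/2,1)$). Because $H(u;1,1/2)=u$ on the whole left half-interval $(0,1/2]$, the quantity $\alpha(a)$ appearing in MMQ, defined as the largest $\alpha$ with $(1-a)u\mathbf 1_{u\le \alpha}\le H(u;1,1/2)$, satisfies $\alpha(a)\ge 1/2$ for every $a\in(0,1)$. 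Substituting $\alpha(|\log\gamma|^{-3})\ge 1/2$ into the MMQ lower bound produces exactly the constant $-\log(1/2)$ appearing in \eqref{eq:L.2}.

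Next I would justify the extension from $\sigma(u)=\sqrt{u(1-u)}$ to any $\sigma$ satisfying our hypotheses. Reading through \cite[Lemmas 4.1 and 4.2]{MuellerMytnikQuastel2011Effect} and their proofs, the noise coefficient enters only through the quadratic-variation bound
\[
	\Big\langle \iint_0^{\cdot}\sigma(v_{s,y})\,W(\mathrm ds\,\mathrm dy)\Big\rangle_t
	\;=\;\iint_0^t\sigma^2(v_{s,y})\,\mathrm ds\,\mathrm dy
	\;\le\;\iint_0^t v_{s,y}\,\mathrm ds\,\mathrm dy;
\]
all cut-off functions, test functions and the threshold $\gamma_0$ depend only on the drift. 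Hence the MMQ argument yields \eqref{eq:L.2} for every admissible $\sigma$, uniformly in $\beta$.

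Finally I would handle the initial condition. Given any $g\in\mathcal C_{I,\beta}$, pick $M$ large enough that $g(x)\ge \mathbf 1_{x\le -M}$ on $\R$; by Lemma \ref{intro:comparison} applied with the common drift $H(\cdot;1,1/2)$, one can couple $v$ with a $[0,1]$-valued weak solution $\tilde v$ of \eqref{eq:L.1} starting from $\mathbf 1_{x\le -M}$ so that $\tilde v\le v$. The existence of such a $\mathcal C_I$-valued solution follows by a standard approximation (e.g.\ replacing $\sigma(u)$ by $\sigma(u)\mathbf 1_{u\in[0,1]}$, which preserves the one-sided bound $\sigma^2\le u$). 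Since $R(\tilde v_{t,\cdot})\le R(v_{t,\cdot})$, the MMQ lower bound for $\tilde v$ transfers directly to $v$, up to the fixed translation by $M$ which is absorbed in the $\liminf$. The main obstacle will be the careful bookkeeping required to confirm that the Brunet--Derrida-type second-moment computations in \cite[Section 10]{MuellerMytnikQuastel2011Effect} really use only the one-sided bound $\sigma^2(u)\le u$ rather than the full identity $\sigma^2(u)=u(1-u)$; this line-by-line verification is where most of the work of writing out the proof would actually lie.
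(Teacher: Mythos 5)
Your proposal follows the same route as the paper's own justification (the remark immediately after the lemma): reduce to MMQ Theorem~1.1, check that the tent drift $H(\cdot;1,1/2)$ satisfies the KPP hypotheses with $\alpha(a)\ge 1/2$, and observe that the lower-bound argument in MMQ uses only $\sigma^2(u)\le u$ together with a Lipschitz bound on $\sigma^2$, not the full identity $\sigma^2(u)=u(1-u)$. The $\alpha(a)\ge 1/2$ computation is correct (in fact $\alpha(a)=1/(2-a)$), and the paper likewise defers the line-by-line check of MMQ's Section~10 that you flag at the end, so on that score you and the paper are on equal footing.

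There are two small points to fix in your initial-data reduction. First, to find $M$ with $\mathbf 1_{x\le -M}\le g$ on $\mathbb R$ you need $g\ge 1$ far to the left, i.e.\ $\beta\ge 1$; for $\beta<1$ your comparison is unavailable. You would have to observe either that the hypothesis ``weak existence in $\mathcal C_{I,\beta}$'' plausibly fails there (since $H(\beta;1,1/2)>0$ the drift pushes any candidate solution above $\beta$), making the conclusion vacuous, or simply restrict to $\beta\ge 1$, noting that the only place the lemma is invoked (Step~4 of the proof of Theorem~\ref{thm:LU}(a)) always has $\beta=1/(2\varepsilon^q)>1$. Second, after replacing $\sigma$ by $\sigma\mathbf 1_{[0,1]}$ to construct the $\mathcal C_{I}$-valued $\tilde v$, you should note explicitly that $\tilde v$ also solves \eqref{eq:L.1} with the \emph{unmodified} $\sigma$, since the two coefficients coincide on the range of $\tilde v$; only then does Lemma~\ref{intro:comparison}, which requires the same noise coefficient in both equations, combined with weak uniqueness, give $\tilde v\le v$. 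Both points are easy patches and do not change the overall strategy, which matches the paper's.
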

\begin{remark}
	Lemma \ref{thm:lb_sigma} is a corollary of \cite[Theorem 1.1]{MuellerMytnikQuastel2011Effect} except that now the function $\sigma$ is not required to satisfy \cite[(1.5)]{MuellerMytnikQuastel2011Effect} and the parameter $\gamma_0$ is universal.  
	We justify this by observing that condition \cite[(1.5)]{MuellerMytnikQuastel2011Effect} is actually not needed in the proof of the lower bound of \cite[Theorem 1.1]{MuellerMytnikQuastel2011Effect}, and that the parameter $\gamma_0$, chosen as the $\epsilon_0$ from \cite[Lemma 4.1]{MuellerMytnikQuastel2011Effect}, is only related to the drift function $f$, which, in our case, is the fixed tent function $H(\cdot;1,1/2)$.  
\end{remark}

\begin{proof}[Proof of Theorem \ref{thm:LU}(a)]
\emph{Step 1.}
Let us fix the value $\gamma := \min\{\gamma_0, e^{-4}\}$ where $\gamma_0$ is given as in Lemma \ref{thm:lb_sigma}.
	One can easily check now the right hand side of \eqref{eq:L.2} is larger than $1$.

	\emph{Step 2.}
	Let $q:=4/(1+p)$ and define $\epsilon_0 >0$ so that   $(\epsilon_0/(\sqrt{2}\gamma))^q = \delta$. 
	Fix an arbitrary $\epsilon  \in (0,\epsilon_0)$ and define $\varepsilon :=\epsilon/(\sqrt{2} \gamma)$.
	Observe that   $\varepsilon^q \leq \delta$. 
		As a consequence, we have that the tent function   $H(\cdot; \varepsilon^q, \varepsilon^{qp})\leq f(\cdot)$.   

\emph{Step 3.}
Let $u$ be a $\CI$-valued weak solution to the SPDE \eqref{eq:I.1}. 
Now, from Step 2 and the comparison principle, we can construct a $\CI$-valued weak solution $\underline u$
to the SPDE 
\begin{equation} \label{eq:L.3}
	\partial_t \underline u = \partial_x^2 \underline u +   H(\underline u; \varepsilon^q, \varepsilon^{qp})  + \epsilon \sqrt{\underline u(1-\underline u)}\dot W^{\underline u}
\end{equation}
where $ W^{\underline u}$ is a white noise, so that the random field $\underline u$ is stochastically dominated by $u$.

\emph{Step 4.} Define the random field 
\[
	v_{t,x} := \beta \underline u_{\alpha^{-4}t, \alpha^{-2}x}, \quad (t,x)\in \mathbb R_+\times \mathbb R
\]
where $\alpha := \varepsilon^{-(1-p)q/4}$ and $\beta:= 1/(2\varepsilon^q)$. 
Then, we
can easily verify from Lemma~\ref{rescaling} that there exists a white noise $W^{v}$ such that 
$v$
is a $\mathcal C_{I,\beta}$-valued weak solution to the SPDE
\begin{equation} \label{eq:L.4}
	\partial_t v = \partial_x^2 v +   H(v;1/2,1/2)  + \gamma \sqrt{v(1-2\varepsilon^q v)} \dot W^v. 
\end{equation}

	\emph{Step 5}. We now verify from Lemma \ref{thm:lb_sigma} and Step 1 that $\liminf_{t\to \infty} \frac{R(v_{t,\cdot})}{t} \geq 1$  almost surely.
Note that the weak uniqueness of the SPDE \eqref{eq:L.4} in $\mathcal C_{I,\beta}$ is inherited from the weak uniqueness of the SPDE \eqref{eq:L.3} in $\CI$, which is justified by Theorem \ref{intro:thm_existenceUniqueness}(2).

\emph{Final Step}. 
Note that for any $t\geq 0$,
\begin{align}
	&R(\underline u_{t,\cdot}) 
	= \sup\{x\in \mathbb R: \underline u_{t,x} \neq 0\}
	= \sup\{\alpha^{-2}x\in \mathbb R: \beta\underline u_{\alpha^{-4}(\alpha^4t),\alpha^{-2}x} \neq 0\}
	\\&= \alpha^{-2}\sup\{x\in \mathbb R: v_{\alpha^4t,x} \neq 0\} 
	= \alpha^{-2} R(v_{\alpha^4t,\cdot}).
\end{align}
Therefore, 
\begin{align} 
& V_{f,\epsilon} 
= \lim_{t\to \infty} \frac{R(u_{t,\cdot})}{t} 
\overset{\text{Step 3}}\geq \liminf_{t\to \infty} \frac{R(\underline u_{t,\cdot})}{t}
= \liminf_{t\to \infty} \frac{\alpha^{-2}R(v_{\alpha^4 t,\cdot})}{t}
\\&= \liminf_{t\to \infty} \frac{\alpha^{2}R(v_{\alpha^4 t,\cdot})}{\alpha^4t}
\overset{\text{Step 5}}\geq \alpha^2 = \varepsilon^{-2\frac{1-p}{1+p}} 
= (\sqrt{2}\gamma)^{2\frac{1-p}{1+p}} \cdot \epsilon^{-2\frac{1-p}{1+p}}. 
\end{align}
Finally, noticing that $\epsilon$ is arbitrarily chosen from $(0,\epsilon_0)$, and that $\gamma= \min \{\gamma_0, e^{-4}\}$ is independent of the choice of this $\epsilon$, by taking $\epsilon \downarrow 0$, we get 
\[
	\liminf_{\epsilon \downarrow 0} \epsilon^{2\frac{1-p}{1+p}}V_{f,\epsilon} \geq (\sqrt{2}\gamma)^{2\frac{1-p}{1+p}} > 0. \qedhere
\]
\end{proof}

\section{proof of Theorem \ref{thm:LU}(\texorpdfstring{$\rm b$}{b})} \label{sec:U} 
	From now on, we write  $\sigma(z) = \sqrt{z(1-z)}$ for $z\in [0,1]$ since we will only consider the Wright-Fisher noise.
We first assume 
without loss of generality 
that $f = \tilde f$ where \[\tilde f(z) := z^p \wedge \sqrt{1-z}, \quad z\in [0,1].\]
We can do this because for any general $f$ satisfying \eqref{asp:0} and \eqref{asp:2}, it holds that
\[
	K:=\sup_{z\in [0,1]} f(z)/ \tilde f(z) < \infty.
\]
By using Lemmas~\ref{intro:comparison} and \ref{rescaling}, 
we can then verify that  
\[
	\epsilon^{2\frac{1-p}{1+p}}V_{f,\epsilon}
	=\epsilon^{2\frac{1-p}{1+p}} \sqrt{K} V_{f/K, \epsilon/ K^{1/4}}
	\leq  K^{\frac{1}{1+p}} \tilde \epsilon^{2\frac{1-p}{1+p}} V_{\tilde f, \tilde \epsilon} 
\]
where $\tilde \epsilon = \epsilon/ K^{1/4}$.
From here, it is clear that if Theorem~\ref{thm:LU}(b) holds for $f = \tilde f$, then it also holds for every $f$ satisfying \eqref{asp:0} and \eqref{asp:2}.

	To get an upper bound for the speed, we will construct a sequence of updating frontiers  and control the propagation of $u$ using an updating procedure. 
	The updating frontiers are shifts of a non-increasing function $\tilde F\in \mathcal C_I$ which will be specified  below in \eqref{eq:U.5}.
	More precisely, the $n$-th updating frontier will be defined as
\[
\tilde F^{(n)}(x) 
:= \tilde F(x-n d\mathrm v T), 
\quad x\in \mathbb R, \quad n \in \mathbb Z_+
\] 
	where $d,\mathrm v, T>0$ are parameters that will be specified  below  in \eqref{eq:U.6} and \eqref{eq:U.3}.
	Define $\xi_0 = 0$, and inductively for each $n\in \mathbb Z_+$, construct a stopping time $\xi_{n+1}$ and a $\mathcal C_I$-valued process $t\mapsto \tilde u_t$ on $[\xi_n,\xi_{n+1})$, with a driving space-time white noise $W^{\tilde u}$, such that
\[
\begin{cases}
	&\partial_t \tilde u = \partial_x^2 \tilde u + f(\tilde u) + \epsilon \sigma(\tilde u)  \dot W^{\tilde u}, \quad t\in [\xi_n,\xi_{n+1})
	\\ & \tilde u_{\xi_n,\cdot} = \tilde F^{(n)}(\cdot)
	\\ & \xi_{n+1}
= (\xi_n+T)\wedge \inf \{t\geq \xi_n: \tilde u_{t,x} > \tilde F^{(n+1)}(x) \text{ for some }x\in \mathbb R\}.
\end{cases}
\]
	Note that the $\mathcal C_I$-valued process $(\tilde u_t)_{t\geq 0}$ is not continuous anymore, because it may jump at the stopping times $(\xi_n)_{n\in \mathbb N}$.
	By the comparison principle, $\tilde u$ will travel faster than the original process. 
	This allows us to get an upper bound of $V_f(\epsilon)$ by calculating the speed of the new process $\tilde u$.
	However, in order to get a reasonably good upper bound, we need to choose   $\tilde F$,  $d$, $\mathrm v$ and $T$, parameters in this updating procedure,    carefully according to the noise strength $\epsilon$. 
		So, for the sake of precision, let us first give our choice of   $\tilde F, d, \mathrm v$  and $T$ here, along with several other quantities that will be used throughout the rest of the paper.  
\begin{statement}
\item \label{eq:U.1}
	Let us fix a constant  $\theta \in (1/2,1)$ and define
\[ 
	\kappa
	:=( p^{\frac{p}{1-p}}- p^{\frac{1}{1-p}}) \theta^{\frac{p}{p-1}} (1-\theta)^{\frac{1}{p-1}}.
	\]
	Since $p\in [1/2,1)$ we can verify that $\kappa>0$ and $\kappa^{p-1} \leq 1$. 
\item \label{eq:U.13}
	Fix a $\mathcal K>0$ large enough so that
\[
	2^5 \sum_{n=1}^\infty \exp( -2^{-22} e^{-2\theta(2-\theta)} \mathcal K e^{(2\theta-1) n} ) \leq 1/8.
\]
\item \label{eq:U.2}
	Let us fix a constant $\gamma>0$ small enough so that 
\[ 
	2^7 \sqrt{\gamma} \exp\{3\gamma \nu\} 
	\leq 1/8;
	\qquad
	\nu^p \leq \nu/4;
	\qquad
	\mathcal K/ \gamma \geq 2.
\]
where
$
	\nu:= 2^4+ (2^5 \mathcal K  + 2^{14} \mathcal K^{1/2}) \gamma^{-1}. 
$
\item \label{eq:U.6}
	Define $k := \mathcal K/ \gamma$ and $d := k+\nu + 1$.
\end{statement}
\begin{statement}
\item \label{eq:U.3}
	For each $\epsilon > 0$ let us define $\varepsilon$, $\mathrm v$, $T$ and $L$ so that the following hold: 
	\[ 
		\varepsilon = \gamma \epsilon^2,
		\quad \varepsilon = \kappa\mathrm v^{\frac{p+1}{p-1}},
		\quad T = \mathrm v^{-2},
		\quad L = \mathrm v^{-1}.
	\]
\item \label{eq:U.4}
	Let us fix an $\epsilon_0>0$ small enough so that for any $\epsilon \in (0,\epsilon_0)$, \[\nu \varepsilon L \leq 1/4 \quad \text{and} \quad 2^{13} \sqrt{\epsilon^2 L} \leq 1/4.\]
\item\label{eq:U.5}
	For each $\epsilon > 0$, define $ F(x):= \frac{\varepsilon}{\theta \mathrm v}(e^{-\theta \mathrm v x}-1) \mathbf 1_{x\leq 0}$ and $\tilde F(x) := 1\wedge F(x)$ for every $x\in \mathbb R.$
\end{statement}
\begin{remark*}
	The constants $\theta, \kappa, \mathcal K, \gamma, \nu,k, d $ and $\epsilon_0$ above are independent of the noise strength $\epsilon$. 
	We are choosing those constants in a technical way, far from their optimal choice, in order to simplify several formulations below.
\end{remark*}
\begin{remark*}
	The variables $\varepsilon, \mathrm v, T, L, F(\cdot)$ and $\tilde F(\cdot)$  are chosen depending on the noise strength $\epsilon$. 
	The intended intuition behind those variables are discussed in Subsection \ref{sec:PS}.  
	In particular, one can verify from \eqref{eq:U.3} that the speed of the moving boundary is $\mathrm v= (\kappa^{-1} \gamma)^{-\frac{1-p}{1+p}} \epsilon^{-2\frac{1-p}{1+p}}$, the length of the time interval to apply the Girsanov transformation is $T = (\kappa^{-1} \gamma)^{2\frac{1-p}{1+p}} \epsilon^{4\frac{1-p}{1+p}}$, and the typical distance for the solution to travel in a time interval of length $T$ is $L = \sqrt{T} = \mathrm vT$. 
\end{remark*}
	With the choice of the above quantities, we can verify the following proposition whose proof is postponed to Section \ref{sec:H}.

\begin{proposition} \label{thm:H}
	For any $\epsilon \in (0,\epsilon_0)$ and 
	any $\CI$-valued weak solution $u$ 
	to the SPDE \eqref{eq:I.1} on a 
	filtered probability space 
	$(\Omega, \mathcal G, (\mathcal {F}_t)_{t \geq 0}, \mathrm P)$ with 
	$u_{0,\cdot} = \tilde F$, 
	it holds that
\[
	\mathrm P\big( \forall (t,x)\in [0,T]\times \mathbb R, u_{t,x} \leq   \tilde F^{(1)}(x)  \big) \geq 1/2. 
\]
\end{proposition}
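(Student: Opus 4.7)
The plan is to combine three ingredients: a deterministic supersolution property of the explicit profile $F$, Dawson's Girsanov transformation to neutralize the non-Lipschitz drift $f$ on the short interval $[0,T]$, and heat-kernel tail estimates under the resulting driftless measure.

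First I would verify that $F(\cdot - d\mathrm v t)$, translating at speed $d\mathrm v$, is a supersolution of the deterministic PDE $\partial_t u = \partial_x^2 u + f(u)$ in the tail region where $F < 1$. A direct computation gives, for $x < 0$,
\[
\partial_x^2 F(x) + d\mathrm v\,\partial_x F(x) = (\theta - d)\mathrm v\,\varepsilon\,e^{-\theta \mathrm v x},
\]
which is strongly negative since $d = k+\nu+1 \gg \theta$. The nonlinear term $f(F)$ is dominated by $F^p$, and the scaling $\varepsilon = \kappa \mathrm v^{(p+1)/(p-1)}$ together with the explicit $\kappa$ in \eqref{eq:U.1} is tuned so that $F(x)^p \leq (d-\theta)\mathrm v\,\varepsilon\,e^{-\theta\mathrm v x}$ on the window $x \in (-\nu L, 0)$, giving a genuine supersolution margin.

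Next I would introduce the predictable density $\phi_{s,y} := f(u_{s,y})/\bigl[\epsilon\sqrt{u_{s,y}(1-u_{s,y})}\bigr]$, with the convention that the ratio vanishes when the denominator does. By \eqref{asp:0}, $|\phi|$ is uniformly bounded by $C/\epsilon$. The relevant spatial window of $u$ on $[0,T]$ has extent of order $L = \mathrm v^{-1}$, and $T = \mathrm v^{-2}$, so the Dol\'eans exponential
\[
\mathcal E_T := \exp\Bigl\{\iint_0^T \phi_{s,y}\,W(\mathrm{d}s\,\mathrm{d}y) - \tfrac12\iint_0^T \phi_{s,y}^2\,\mathrm{d}s\,\mathrm{d}y\Bigr\}
\]
has its inverse $L^1$-controlled by $\mathbb E[\mathcal E_T^{-1}] \leq e^{3\gamma\nu}$, thanks to the quantitative choice of $\gamma$ in \eqref{eq:U.2} and $\varepsilon = \gamma\epsilon^2$. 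Setting $\mathrm{d}\tilde{\mathrm P}/\mathrm{d}\mathrm P = \mathcal E_T$, under $\tilde{\mathrm P}$ the random field $u$ solves the driftless SPDE $\partial_t u = \partial_x^2 u + \epsilon\sqrt{u(1-u)}\dot{\tilde W}$ for the new white noise $\tilde W$.

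With the drift removed, I would estimate the $\tilde{\mathrm P}$-probability of a violation of the pointwise bound. The stochastic integral representation \eqref{eq:P.2} (now for the driftless equation) splits $u_{t,x}$ into a deterministic initial-data part $\int G_{0,y;t,x}(1\wedge F)(y)\,\mathrm{d}y$, controlled pointwise by $F(x - d\mathrm v T)$ via the supersolution margin from the first step, and a stochastic part whose second moment is bounded by $\iint_0^t G_{s,y;t,x}^2\,\epsilon^2\,u_{s,y}(1-u_{s,y})\,\mathrm{d}s\,\mathrm{d}y$. I would dyadically partition $\{x \geq d\mathrm v T\}$ and bound the stochastic contribution shell-by-shell: Gaussian tails of $G^2$ combined with the constant $\mathcal K$ from \eqref{eq:U.13} make the geometric sum at most $1/8$, while a similar maximal-inequality estimate in the transition window $[d\mathrm v T - \nu L, d\mathrm v T)$ contributes another $1/8$ via $2^7\sqrt\gamma e^{3\gamma\nu}$ from \eqref{eq:U.2}. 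Transferring back by Cauchy--Schwarz, $\mathrm P(\mathrm{bad}) \leq \sqrt{\mathbb E[\mathcal E_T^{-1}]}\,\sqrt{\tilde{\mathrm P}(\mathrm{bad})} \leq 1/2$.

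The \emph{main obstacle} is the simultaneous calibration of three competing scales: $T$ must be small enough that $\mathcal E_T^{-1}$ is integrable under the non-Lipschitz drift; $\mathrm v$ must be large enough that $F$ retains the supersolution margin $(d-\theta)\mathrm v\,\varepsilon\,e^{-\theta\mathrm v x}$ above $F^p$; and the transition window $\nu L$ must be wide enough for the driftless tail estimates to close. The uniquely determined balance $\mathrm v \sim \epsilon^{-2(1-p)/(p+1)}$ is the crux of Theorem~\ref{thm:LU}(b), and matching the ambient constants through \eqref{eq:U.1}--\eqref{eq:U.4} is the technical heart that Sections~\ref{sec:H}--\ref{sec:B} will unpack.
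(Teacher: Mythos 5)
Your plan replaces the paper's decomposition $u=v+w$ by a direct application of Dawson's Girsanov transformation to $u$ itself, with density $h_{s,y}=f(u_{s,y})/[\epsilon\sigma(u_{s,y})]$. This is where the proposal breaks. By \eqref{asp:0} one does have $|h|\lesssim 1/\epsilon$, but the support of $h_{s,\cdot}$ is the whole interface of $u_s$, not a window of size $L=\mathrm v^{-1}$: already for the initial data, $1\wedge F<1$ on an interval of length $\sim (\theta\mathrm v)^{-1}\log(\mathrm v/\varepsilon)\sim \mathrm v^{-1}\log\mathrm v$, and this set is random and can grow. Even bounding it deterministically by $\mathrm v^{-1}\log\mathrm v$, the space-time integral is of order $T\cdot(\mathrm v^{-1}\log\mathrm v)\cdot\epsilon^{-2}\sim \mathrm v^{-3-\frac{p+1}{p-1}}\log\mathrm v$ (using $T=\mathrm v^{-2}$, $\epsilon^2=\varepsilon/\gamma$, $\varepsilon=\kappa\mathrm v^{\frac{p+1}{p-1}}$). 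The exponent $-3-\frac{p+1}{p-1}$ is $0$ at $p=1/2$ and strictly positive for $p>1/2$, so this quantity diverges as $\epsilon\downarrow 0$. Hence the Novikov-type condition that Lemma~\ref{thm:W.1} requires fails, and $\mathbb E[\mathcal E_T^{-1}]\leq e^{3\gamma\nu}$ does not hold for the transform you describe; that bound in the paper is obtained for a very different density.

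The paper's construction is essential here, not a technical convenience. Girsanov is applied to $w=u-v$, where $v$ solves the SPDE~\eqref{eq:H.1} with a moving Dirichlet boundary at $x=\mathrm vt$. The density is $h=f^w/(\epsilon\sigma^w)\mathbf 1_{\sigma^w>0,\,t\leq T}$ with $f^w\leq w^p$ (using the H\"older estimate on $f$, valid because $v+w\leq\nu\varepsilon L\leq 1/4$ before $\tau_2$) and $\sigma^w\geq\sqrt{w/2}$; the indicator in $f^w$ restricts the support to the deterministic window $[-L,\mathrm vT+L]$, so $|h|\leq\sqrt2\,\epsilon^{-1}(\nu\varepsilon L)^{p-1/2}\mathbf 1_{x\in(-L,\mathrm vT+L),\,t\leq T}$ and the space-time integral collapses to the constant $3\gamma\kappa^{2p-2}\nu^{2p-1}$, independent of $\epsilon$. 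This is the step you would have to recover, and the moving-boundary object $v$, the killing process $A$, and Proposition~\ref{thm:S} on the support of $w$ under the transformed measure are precisely what make it work. Your supersolution computation is a side observation: the profile $\varrho_{t,x}=F(x-\mathrm vt)$ in the paper travels at speed $\mathrm v$, not $d\mathrm v$, and solves \eqref{eq:H.06} exactly with the dominating linear drift $\bar f$; the extra factor $d=k+\nu+1$ is an accounting of the $w$-bound $\nu\varepsilon L$, the $v$-fluctuation allowance $k\varepsilon Le^{-\theta\mathrm v(\cdot)}$, and the identity $L=\mathrm vT$, not a margin in a supersolution inequality for $F(\cdot-d\mathrm vt)$.
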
	
Below we show that this proposition is sufficient for the proof of Theorem~\ref{thm:LU}(b). 

\begin{proof}[Proof of Theorem~\ref{thm:LU}(b)]
	Fix an arbitrary $\epsilon \in (0,\epsilon_0)$, and let $u$ be a $\CI$-valued weak solution to the SPDE \eqref{eq:I.1} with   $u_{0,\cdot} = \tilde F$.  
\begin{note}
	Let $c>0$ be given by Proposition \ref{thm:H}.
	Define $\tilde {\mathrm v} := c \mathrm v$.
\end{note}
	Let the process $\tilde u$ be constructed using the updating procedure described at the beginning of this section with parameters   $\tilde F$,  $d$, $\mathrm v$ and $T$ given as in \eqref{eq:U.1}--\eqref{eq:U.5}. 
	The corresponding updating times 
	are denoted by $(\xi_n)_{n\geq 0}$. 
	By the comparison principle, without loss of generality, we assume that $\tilde u$ and $u$ are constructed on the same filtered probability space $(\Omega, \mathcal G,(\mathcal F_t)_{t\geq 0}, \mathrm P)$  
	such that $\tilde u_{t,x}\geq u_{t,x}$ for each $(t,x)\in \mathbb R_+\times \mathbb R$. 
	Note, from the strong Markov property, that $(\xi_{n+1}-\xi_n)_{n\in \mathbb Z_+}$ is a sequence of i.i.d. random variables.
	Also note from Proposition \ref{thm:H} that
\[
	\mathrm P( \xi_1=T) 
	\geq \mathrm P\big(\forall (t,x)\in [0,T] \times \mathbb R, u_{t,x} \leq   \tilde F^{(1)}(x)  \big) 
	\geq 1/2.
\]
	So by the strong law of large numbers, we have almost surely
\[
	\lim_{n\to \infty} \frac{\xi_n}{n}
	=\mathbb E[\xi_1]
	\geq T \cdot \mathrm P(\xi_1 = T) 
	\geq  T/2.
\]
	Also observe that from the way $\tilde u$ is constructed, we always have
\[
	R(\tilde u_{\xi_n,\cdot}) 
	= R(  \tilde F^{(n)} )
	= n d \mathrm v T, 
\quad n\in \mathbb N.
\]
	Now we can verify that 
\begin{align}
	&V_{f,\epsilon}
	:=\lim_{t\to \infty} \frac{R(u_t)}{t}
	\leq \liminf_{t\to \infty}  \frac{R(\tilde u_t)}{t}
	\leq \liminf_{n\to \infty} \frac{R(\tilde u_{\xi_n})}{\xi_n}
	\\&\leq 2 d \mathrm v
	= 2d(\kappa^{-1}\gamma \epsilon^2)^{-\frac{1-p}{1+p}}.
\end{align}
	Finally, note that $d$, $\kappa$ and $\gamma$ are independent of the choice of $\epsilon\in (0,\epsilon_0)$, and hence we are done.
\end{proof}

\section{proof of Proposition \ref{thm:H}}\label{sec:H}
	Let us fix 
	an arbitrary $\epsilon\in (0,\epsilon_0)$. 
	Let $\varepsilon, \mathrm v, L, T, F, k$ and $\nu$ be given as in \eqref{eq:U.1}-\eqref{eq:U.5}. 
	The function $F$ plays an important role in the updating procedure described in 
	Section \ref{sec:U}.
	The main reason we choose $F$ as in \eqref{eq:U.5} is given by the following analytical lemma.	
	Let us 
	define
	\begin{equation} \label{eq:H.04}
	\varrho_{t,x}
	:= F(x-\mathrm vt), 
	\quad t\geq 0, x\in \mathbb R;
	\quad
	\bar{f}(z) 
	:= (1-\theta)\theta \mathrm v^2 z+ (1-\theta)\mathrm v \varepsilon,
	\quad z\in \mathbb R_+.
\end{equation}
 As explained in the beginning of Section \ref{sec:U}, we only consider the case $f = \tilde f$. 

\begin{lemma} \label{thm:H.05}
	For every $z \in [0,1]$, it holds that $\bar f(z) \geq f(z)$.
	Moreover, $(\varrho_{t,x}: t\geq 0, x\in \mathbb R)$ is the solution to the PDE
\begin{equation}\label{eq:H.06}
	\begin{cases}
	\partial_t \varrho = \partial_x^2 \varrho + \bar{f}(\varrho), &\quad x< \mathrm vt,
	\\ \varrho = 0, &\quad x\geq \mathrm vt,
\end{cases}
\end{equation}
with initial condition $\varrho_{0,\cdot} = F$.
\end{lemma}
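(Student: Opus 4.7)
The proof naturally splits into two parts: an analytic inequality $\bar{f}\geq f$ on $[0,1]$, and a direct verification that $\varrho_{t,x}=F(x-\mathrm v t)$ solves the free boundary PDE \eqref{eq:H.06}. Neither step presents deep difficulties; the content is essentially that $\kappa$ in \eqref{eq:U.1} was chosen precisely so that the linearization $\bar f$ is tangent to the power function $z\mapsto z^p$.

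For the inequality, I would first observe that $f(z) = z^p \wedge \sqrt{1-z} \leq z^p$ on $[0,1]$, so it suffices to show $\bar f(z) \geq z^p$ for $z \geq 0$. Setting $\psi(z) := z^p - (1-\theta)\theta \mathrm v^2 z$, a routine calculus exercise gives the unique maximizer $z^{*} = \bigl(\tfrac{p}{(1-\theta)\theta \mathrm v^{2}}\bigr)^{1/(1-p)}$ and maximum value
\begin{equation}
\psi(z^{*})
= p^{p/(1-p)}(1-p)\,\theta^{p/(p-1)}(1-\theta)^{p/(p-1)}\mathrm v^{-2p/(1-p)}.
\end{equation}
The inequality $\bar f(z) \geq z^p$ is then equivalent to $\psi(z^{*}) \leq (1-\theta)\mathrm v \varepsilon$, which after rearranging becomes $\varepsilon \geq p^{p/(1-p)}(1-p)\,\theta^{p/(p-1)}(1-\theta)^{1/(p-1)}\mathrm v^{(p+1)/(p-1)}$. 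Using the identity $p^{p/(1-p)} - p^{1/(1-p)} = p^{p/(1-p)}(1-p)$, this is exactly the equality $\varepsilon = \kappa \mathrm v^{(p+1)/(p-1)}$ from \eqref{eq:U.3}, so the inequality holds (with equality achieved at $z = z^{*}$).

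For the PDE verification, I would substitute $y := x - \mathrm v t$ and compute directly. On $\{x \geq \mathrm v t\}$ we have $y \geq 0$ and $F(y) = 0$, confirming the Dirichlet boundary condition. On $\{x < \mathrm v t\}$ we have $\varrho_{t,x} = \tfrac{\varepsilon}{\theta\mathrm v}(e^{-\theta\mathrm v y}-1)$, so
\begin{equation}
\partial_t \varrho = \mathrm v\varepsilon e^{-\theta \mathrm v y},
\qquad
\partial_x^2 \varrho = \theta\mathrm v\varepsilon\, e^{-\theta\mathrm v y},
\end{equation}
while $\bar f(\varrho) = (1-\theta)\theta\mathrm v^2 \cdot \tfrac{\varepsilon}{\theta\mathrm v}(e^{-\theta\mathrm v y}-1) + (1-\theta)\mathrm v\varepsilon = (1-\theta)\mathrm v\varepsilon\, e^{-\theta\mathrm v y}$. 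Adding the last two gives exactly $\mathrm v\varepsilon e^{-\theta\mathrm v y} = \partial_t \varrho$. The initial condition $\varrho_{0,\cdot} = F$ and continuity at the boundary $\{x=\mathrm v t\}$ (where both sides equal $F(0)=0$) are immediate.

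The only mildly delicate point is bookkeeping the exponents $p/(p-1)$, $1/(p-1)$, $(p+1)/(p-1)$ and matching them to the definition of $\kappa$; otherwise both halves are straightforward once the ansatz $F(x)\propto e^{-\theta\mathrm v x}-1$ is in hand. Indeed, this exponential ansatz is forced by the requirement that the PDE with linear drift $\bar f$ admits a traveling wave with speed $\mathrm v$ vanishing at the origin, and the two-parameter freedom in $(\theta,\mathrm v)$ is what permits the calibration $\varepsilon=\kappa\mathrm v^{(p+1)/(p-1)}$ that makes $\bar f$ the smallest such linear majorant of $z^p$ — which is exactly the geometric content of the lemma.
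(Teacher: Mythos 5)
Your proof is correct and takes essentially the same route as the paper: both reduce the inequality $\bar f \geq f$ to the tangency condition between the affine function $\bar f$ and the concave map $z\mapsto z^p$ (you phrase it as maximizing $\psi(z) = z^p - (1-\theta)\theta\mathrm v^2 z$, the paper phrases it as checking the single point $z_0$ where the slopes match; these are the same computation), and both verify the PDE by substituting $y = x - \mathrm v t$ and computing $\partial_t\varrho$, $\partial_x^2\varrho$, $\bar f(\varrho)$ directly. The algebra tying $\psi(z^*)$ back to $\kappa$ and $\varepsilon = \kappa\mathrm v^{(p+1)/(p-1)}$ also checks out.
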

\begin{proof}
	\emph{Step 1.}
	It can be verified directly that $\varrho$ satisfies \eqref{eq:H.06}.
\begin{note}
In fact, for any $x< \mathrm vt$,
\begin{align}
	&\partial_t \varrho_{t,x} - \partial_{x}^2 \varrho_{t,x}
	= \partial_t F(x-\mathrm vt) - \partial_{x}^2 F(x-\mathrm vt)
	\\& = \partial_t \Big(\frac{\varepsilon}{\theta\mathrm v} (e^{-\theta \mathrm v(x-\mathrm vt)} - 1)\Big) - \partial_{x}^2 \Big(\frac{\varepsilon}{\theta\mathrm v} (e^{-\theta \mathrm v(x - \mathrm vt)} - 1)\Big)
	\\& =  (1 -\theta) \mathrm v \varepsilon e^{-\theta \mathrm v(x - \mathrm vt)} 
	= (1-\theta)\mathrm v (\varepsilon + \theta \mathrm v F(x-\mathrm vt)) 
	 = \bar f(\varrho_{t,x}).
\end{align}
It is also clear that $\varrho_{t,x}= 0$ for all $x\geq \mathrm vt$.
\end{note}

	\emph{Step 2.}
	To finish the proof, we show that $\bar f(z) \geq z^p$ for all $z\geq 0$. 
	Note that 
\begin{itemize} 
	\item 
	$\bar f$ is a linear function with slope $(1-\theta) \theta \mathrm v^2$; and 
\item
	$z\mapsto z^p$ is a concave function on $[0,\infty)$. 
\end{itemize}
	So we only have to show that $\bar f(z_0)\geq z_0^p$ where $z_0>0$ solves $\partial_z z^p |_{z = z_0} = (1-\theta) \theta \mathrm v^2$.
	Actually, it is easy to calculate that $z_0 = \big(p^{-1}(1-\theta) \theta \mathrm v^2\big)^{\frac{1}{p-1}}$.
	From this, and how $\kappa$ and $\varepsilon$ are defined in \eqref{eq:U.1} and \eqref{eq:U.3}, we can verify that $\bar f(z_0) - z_0^p  = 0$.
\begin{note}
	\begin{align}
		&\bar f(z_0) - z_0^p 
		= (1-\theta)\mathrm v (\varepsilon + \theta \mathrm v z_0) - z_0^p
		\\& = (1-\theta)\mathrm v \Big(\varepsilon + \theta \mathrm v \big(p^{-1}(1-\theta) \theta \mathrm v^2\big)^{\frac{1}{p-1}}\Big) - (p^{-1}(1-\theta) \theta \mathrm v^2)^{\frac{p}{p-1}}
		\\& =\varepsilon(1-\theta)\mathrm v +  p^{\frac{1}{1-p}}\big((1-\theta) \theta \mathrm v^2\big)^{\frac{p}{p-1}}- p^{\frac{p}{1-p}}\big((1-\theta) \theta \mathrm v^2\big)^{\frac{p}{p-1}}
		\\& =\kappa\mathrm v^{\frac{p+1}{p-1}} (1-\theta)\mathrm v +  p^{\frac{1}{1-p}}\big((1-\theta) \theta \mathrm v^2\big)^{\frac{p}{p-1}}- p^{\frac{p}{1-p}}\big((1-\theta) \theta \mathrm v^2\big)^{\frac{p}{p-1}}
		\\& =( p^{\frac{p}{1-p}}- p^{\frac{1}{1-p}}) \theta^{\frac{p}{p-1}} (1-\theta)^{\frac{1}{p-1}}  \mathrm v^{\frac{p+1}{p-1}} (1-\theta)\mathrm v +  (p^{\frac{1}{1-p}} - p^{\frac{p}{1-p}})\big((1-\theta) \theta \mathrm v^2\big)^{\frac{p}{p-1}}
		\\& = 0.
		\qedhere
	\end{align}
\end{note}
\end{proof}
	 Recall that $\sigma(z)= \sqrt{z(1-z)}$ for $z\in [0,1]$. 
	To build a connection between $\varrho$ and $u$ we use the following two SPDEs:
	\begin{equation} \label{eq:H.1}
\begin{cases}
	\partial_t v 
	=\partial^2_x v + f(v) + \epsilon \sigma(v) \dot W^v,
	& x < \mathrm v t,
	\\v
	= 0,
	& x\geq \mathrm v t;
\end{cases}
\end{equation}
and
\begin{equation}\label{eq:H.15} 
\begin{cases}
	\partial_t \bar v 
	=\partial^2_x \bar v + \bar f(\bar v) + \epsilon \sigma(\bar v) \dot W^{\bar v},
	& x < \mathrm v t,
	\\\bar v
	= 0,
	& x\geq \mathrm v t.
\end{cases}
\end{equation}

Let us be precise about the solution concept of \eqref{eq:H.1} and \eqref{eq:H.15} by first introducing a kernel $G^{(\mathrm v)}$. 
For each $(s,y)\in \mathbb R_+\times \mathbb R$, let $B = (B_t)_{t\geq s}$ be a one dimensional Brownian motion with generator $\partial_x^2$ initiated at time $s$ and position $y$ defined on a filtered probability space with probability measure denoted as $\Pi_{s,y}$. 
	In the sequel, we will use $\Pi_{s,y}$ for the expectation with respect to the measure $\Pi_{s,y}$ in addtion to for the measure itself.
	Let us define
	\begin{equation}
	\label{def:rho}
	\rho := \inf\{t: B_t \geq \mathrm vt \}.
	\end{equation}
 Denote by $\mathrm b\mathscr B(\mathbb R)$ the space of all bounded Borel functions on $\mathbb R$. 
	It can be verified that for each $0\leq s< t< \infty$ and $y < \mathrm vs$ there exists a unique continuous map $x \mapsto G^{(\mathrm v)}_{s,y;t,x}$ from $(-\infty,\mathrm v t)$ to $(0,\infty)$ such that
\begin{equation} 
	\int_{-\infty}^{\mathrm vt} G^{(\mathrm v)}_{s,y;t,x} \varphi(x)\mathrm dx = \Pi_{s,y}[\varphi( B_t); t < \rho], \quad \varphi \in b\mathscr B(\mathbb R).
\end{equation}
	The precise expression of $G^{(\mathrm v)}$ can be calculated using the reflection principle
	and the Girsanov transformation 
	for the Brownian motion (see \cite[Proof of Lemma 6.2]{MuellerMytnikQuastel2011Effect}).
	We define $G^{(\mathrm v)}_{s,y;t,x} = 0$ 
	on $\{(s,y;t,x): 0\leq s< t, y < \mathrm vs, x < \mathrm vt\}^c$ for convention.
\begin{note}
	For any $(s,y),(t,x)\in \mathbb R_+\times \mathbb R$ we have
\begin{equation}
	G^{\mathrm v}_{s,y;t,x} 
	= \rho^{(1)}_{s,y-\mathrm vs;t,x- \mathrm vt} - \rho^{(-1)}_{s,y - \mathrm vs; t, x- \mathrm vt}
\end{equation}
	where $\rho^{(i)}_{s,y;t,x} := e^{-\frac{\mathrm v}{2}(x - y) - \frac{\mathrm v^2}{4}(t-s)} G_{s,y;t,i x}\mathbf 1_{y,x\leq 0}$.
\end{note}

We say $(\Omega, \mathcal G, (\mathcal F_t)_{t\geq 0}, \mathrm P, v, W^v)$ is a weak solution to the SPDE \eqref{eq:H.1}, if $(\Omega, \mathcal G, (\mathcal F_t)_{t\geq 0}, \mathrm P)$ is a filtered probability space on which a predictable random field $v$ and a white noise $W^v$ are defined so that $(v_{t,\cdot}:t\geq 0)$ is a $\Ctem$-valued continuous process satisfying  
\begin{align} 
	&  v_{t,x} 
	= \iint_0^t G^{(\mathrm v)}_{s,y;t,x} M^v(\mathrm ds\mathrm dy),
	\quad \text{a.s.} 
	\quad t>0, x\in \mathbb R,
\end{align}
	where 
\begin{equation} \label{eq:H.2}
	M^v(\mathrm ds\mathrm dy) 
	:= v_{0,y} \delta_0(\mathrm ds) \mathrm dy + f(v_{s,y})\mathrm ds\mathrm dy + \sigma(v_{s,y})W^v(\mathrm ds\mathrm dy).
\end{equation}
With some abuse of notation,  
we sometimes only use the random field $v$  
to represent a weak solution to the SPDE \eqref{eq:H.1} if there is no risk of confusion.  
Given a subset $\hat{\mathcal C} \subset \Ctem$, we say a weak solution $v$ to the SPDE \eqref{eq:H.1} is a $\hat{\mathcal C}$-valued weak solution if $(v_{t,\cdot}:t\geq 0)$ is a $\hat{\mathcal C}$-valued process.  
The concept of 
weak solution to the SPDE \eqref{eq:H.15}
is given in a similar way.

	The main idea behind the proof of Proposition \ref{thm:H} is that $v$ can be shown to satisfy the property which is similar to that desired for $u$ in Proposition \ref{thm:H}; 
	and if $u$ and $v$ have the same initial value   $\tilde F$  then they can be coupled in such a way that they don't deviate from each other ``too much" before time $T$. 
	This coupling is described in the following proposition whose proof is postponed to Section \ref{sec:C}.   
	The difference between $u$ and $v$ in the coupling will be controlled by a 
	random field $w$. 

\begin{proposition}\label{thm:C}
	There exists $(v, W^v; \bar v, W^{\bar v}; w, W^{w}; u,W^u)$ defined 
	on a filtered probability space $(\Omega, \mathcal G, (\mathcal F_t)_{t\geq 0}, \mathrm P)$ 
	such that the followings holds. 
\begin{enumerate}
\item 
		$W^v$, $W^{\bar v}$, $W^w$ and $W^u$ are space-time white noises adapted to the same filtration $(\mathcal F_t)_{t\geq 0}$. 
	Furthermore, $W^v$ and $W^w$ are independent of each other, that is to say, the two families of random variables $\{W^v_t(A):t\geq 0, A \in \mathcal B_F(\mathbb R)\}$ and $\{W^w_t(A):t\geq 0, A\in \mathcal B_F(\mathbb R)\}$ are independent. 
\item 
	$v$ is a $\Cunit$-valued weak solution to the SPDE \eqref{eq:H.1} with   $v_{0,\cdot} = \tilde F$.  
\item
$\bar v$  is a $\mathcal C_{\mathrm {tem}}^+$-valued weak solution to the SPDE \eqref{eq:H.15} with $\bar v_{0,\cdot}=F$.
\item
	Almost surely $\bar v \geq v$ on $\mathbb R_+\times \mathbb R$.
\item 
	$u$ is a $\CI$-valued weak solution to the SPDE \eqref{eq:I.1} with    $W=W^u$ and  $u_{0,\cdot} = \tilde F$.  
\item 
	$w$ is a non-negative predictable random field such that $(w_{t,\cdot}:t\geq 0)$ is a $\Ctem$-valued continuous process, and for every
	$\phi \in \cC_{\mathrm{c}}^\infty(\mathbb R_+ \times \mathbb R)$  
	and $t\geq 0$,
\begin{equation} \label{eq:H.3}
\begin{multlined}
	\int \phi_{t,x}w_{t,x} \mathrm dx 
	 = \iint_0^t w_{s,y}(\partial_s \phi_{s,y} + \partial_y^2 \phi_{s,y}) \mathrm ds\mathrm dy  + \int_0^t \phi_{s,\mathrm vs}\mathrm dA_s
	 \\ + \iint_0^t \phi_{s,y} \big(f^w_{s,y}\mathrm ds\mathrm dy + \epsilon \sigma^{w}_{s,y}W^{w}(\mathrm ds\mathrm dy) \big), \quad \text{a.s.}
\end{multlined}
\end{equation}
Here, $f^w$ and $\sigma^w$ are random fields defined as follows: for every $(s,y)\in \mathbb R_+\times \mathbb R$,
\begin{align} 
	& f_{s,y}^w 
	:= |f(v_{s,y} + w_{s,y}) - f(v_{s,y})| \mathbf 1_{y\in [ - L,  \mathrm vT + L], v_{s,y}+w_{s,y}\leq \nu \varepsilon L},
	\\ & \sigma_{s,y}^w 
	:= \sqrt{|\sigma(v_{s,y} + w_{s,y})^2 - \sigma (v_{s,y})^2| \vee \frac{w_{s,y}}{2}};
\end{align}
	and $(A_t)_{t\geq 0}$ is an adapted non-decreasing continuous process 
	such that for every $t\geq 0$,
\[
	A_t = \iint_0^t \Pi_{s,y}(\rho \leq t) M^v(\mathrm ds\mathrm dy), \quad \text{a.s.}
\]

	where $M^v$ is defined in \eqref{eq:H.2}.

\item
	It holds almost surely that 
	\[
		u = v+ w \quad \text{on} \quad [0,\tau]\times \mathbb R.
	\]
	Here the optional time 
	\begin{equation}
	\tau:=  \min \{T, \tau_1, \tau_2\}
	\end{equation}
	  is defined using
\begin{align} 
	& \tau_1 
	:= \inf \Big\{t \in [0,T]: \int_0^t \mathrm ds \int_{[-L,\mathrm vT+L]^c}  w_{s,y}\mathrm dy >0 \Big\}, 
      \\& \tau_{2} 
	:= \inf \big\{t \in [0,T]: v_{t,x} + w_{t,x} \geq \nu \varepsilon L \text{ for some } x\in [ -L, \mathrm vT + L]\big\},
\end{align} 
	with the convention that the infimum of the empty set is infinite.
\end{enumerate}
\end{proposition}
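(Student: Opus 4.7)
The plan is to construct the eight objects in three stages: first $v$, $\bar v$, and the boundary-flux process $A$; then the defect field $w$ driven by a fresh noise $W^w$; and finally $u := v+w$ together with a white noise $W^u$ so that $u$ solves \eqref{eq:I.1}, which is then extended past $\tau$.

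Stage 1. On some filtered probability space I would construct $v$ and $\bar v$ as weak solutions to \eqref{eq:H.1} and \eqref{eq:H.15} with initial data $1\wedge F$ and $F$, by a standard Walsh-type truncation/approximation scheme using the Dirichlet Green's kernel $G^{(\mathrm v)}$ (paralleling \cite{MuellerMytnikQuastel2011Effect}). Since $1\wedge F\le F$ and $\bar f\ge f$ on $[0,1]$ by Lemma~\ref{thm:H.05}, I would couple them by driving the approximations with a common noise (taking $W^{\bar v}=W^v$) and passing to the limit to obtain $v\le \bar v$, in the same way that Lemma~\ref{intro:comparison} is justified. The process $A_t$ is then defined by the stated stochastic integral, and its non-decreasing continuous character I would verify by testing the weak form of \eqref{eq:H.1} against the $t$-parametrised function $y\mapsto \Pi_{s,y}(\rho\le t)$ and identifying $A_t$ with the cumulative mass of $v$ absorbed at the moving boundary $\{x=\mathrm v s\}$ on $[0,t]$.

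Stage 2. I would enlarge the space to carry two further independent white noises $W^w$ and $W^{(3)}$, and construct a non-negative predictable random field $w$ satisfying the weak equation \eqref{eq:H.3} driven by $W^w$ with the singular boundary source $\mathrm dA_s\,\delta_{\mathrm v s}$. The coefficients $f^w$ and $\sigma^w$ depend on $w$ non-Lipschitzly (through $f$, $\sigma$, and the $\vee\, w/2$), so I would use a truncation-and-limit scheme: regularise $f^w,\sigma^w$ to bounded Lipschitz coefficients, mollify the line source into a density concentrated on a shrinking strip around $\{x=\mathrm v s\}$, solve the approximating SPDE by standard means, and pass to the limit using tightness in $\cC(\R_+,\Ctem)$ (from moment bounds inherited from the $\Ctem^+$-regularity of $\bar v\ge v$ and the linear-growth bound \eqref{asp:0} on $f,\sigma$). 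The limiting martingale measure is identified in the usual way of \cite[Section~2]{Shiga1994Two}.

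Stage 3. Setting $u := v+w$ on $[0,\tau]\times\R$ and adding the weak equations for $v$ and $w$, the boundary term $\int \phi_{s,\mathrm v s}\,\mathrm dA_s$ in \eqref{eq:H.3} cancels the Dirichlet absorption of $v$ (since $\Pi_{s,y}(\rho\le t)$ encodes exactly that lost mass), so $u$ satisfies a source-free weak equation. On $\{t\le\tau\}$ the truncation in $f^w$ is inactive, and by \eqref{eq:U.4} we have $v+w\le \nu\varepsilon L\le 1/4$; since $f(z)=z^p$ is monotone on this range, $f^w=f(u)-f(v)$ without the absolute value, and the martingale part of $u$'s equation has quadratic variation $(\sigma(v)^2+(\sigma^w)^2)\,\mathrm ds\mathrm dy$, which exceeds $\sigma(u)^2\,\mathrm ds\mathrm dy$ by a non-negative discrepancy coming only from the $\vee\, w/2$ in $\sigma^w$. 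I would then invoke the orthogonal martingale-representation theorem to combine $\sigma(v)W^v+\sigma^w W^w$ with a suitable portion of $W^{(3)}$ (using $W^{(3)}$ to absorb this discrepancy and to complete orthonormality on the degeneracy set $\{\sigma(u)=0\}$) into a single white noise $W^u$ driving $u$, so that $u$ solves \eqref{eq:I.1} on $[0,\tau]$. Past $\tau$ I would extend $u$ to a global $\CI$-valued weak solution of \eqref{eq:I.1} using weak existence (Theorem~\ref{intro:thm_existenceUniqueness}) and the strong Markov property at $\tau$. The main obstacle I expect is this martingale-representation step: producing a legitimate white noise $W^u$ from the weighted combination of $W^v$, $W^w$ and $W^{(3)}$ requires careful bookkeeping on the exceptional set where the $\vee\, w/2$ is active in $\sigma^w$, and it is precisely for this flexibility that the auxiliary noise $W^{(3)}$ must be built into Stage~2.
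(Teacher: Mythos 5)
Your overall architecture matches the paper's: first build $v$, $\bar v$ and $w$ on a common space (using a comparison coupling for $v\le\bar v$ and the boundary-flux process $A$ from Lemma~\ref{thm:C.1}), then set $u=v+w$ on $[0,\tau]$, extend past $\tau$ by the strong Markov property, and finally manufacture $W^u$ by a martingale-representation argument that combines $W^v$, $W^w$ and a fresh independent noise on the degeneracy set. The boundary-term cancellation when you add the weak equations for $v$ and $w$ is also the right mechanism. However, there is a concrete error in your Stage~3 that would sink the white-noise identification as written.

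You claim that on $[0,\tau]$ the drift truncation is inactive but that the diffusion coefficient produces an excess: ``the martingale part of $u$'s equation has quadratic variation $(\sigma(v)^2+(\sigma^w)^2)\,\mathrm{d}s\,\mathrm{d}y$, which exceeds $\sigma(u)^2\,\mathrm{d}s\,\mathrm{d}y$ by a non-negative discrepancy coming only from the $\vee\,w/2$,'' and that $W^{(3)}$ can ``absorb this discrepancy.'' This is wrong on both counts. First, on $[0,\tau]$ there is \emph{no} discrepancy: for $y\notin[-L,\mathrm{v}T+L]$ one has $w_{s,y}=0$, and for $y\in[-L,\mathrm{v}T+L]$ the stopping rule gives $v_{s,y}+w_{s,y}\le\nu\varepsilon L\le 1/4$, whence $\sigma(v+w)^2-\sigma(v)^2=w(1-2v-w)\ge w/2\ge0$, so the maximum in $\sigma^w$ and the absolute value in $f^w$ are \emph{both} inactive and $\sigma(v)^2+(\sigma^w)^2=\sigma(u)^2$ \emph{exactly}. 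This algebraic identity is precisely what the paper isolates as its Step~3, and it is the crux that makes the construction of $W^u$ possible. Second, even if such a strict excess did occur, adjoining an independent white noise could not repair it: to write $\sigma(v)\,\mathrm{d}W^v+\sigma^w\,\mathrm{d}W^w$ as $\sigma(u)\,\mathrm{d}W^u$ for a genuine white noise $W^u$ one needs the space-time quadratic variation density of the combined martingale measure to equal $\sigma(u)^2$ exactly; adding an independent source can only \emph{increase} the quadratic variation, so it can compensate a deficit but never an excess. The extra noise (the paper's $\widetilde W$) is needed only to extend $u$ past $\tau$ and to fill in $W^u$ on the set $\{s>\tau\}\cup\{\sigma(v_{s,y}+w_{s,y})=0\}$, where no information is carried by the scaled combination; the ``discrepancy-absorbing'' role you assign to it does not exist and cannot be made to exist. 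Fixing this requires replacing your heuristic with the verification that the truncations in $f^w$ and $\sigma^w$ are all inactive on $[0,\tau]\times\mathbb{R}$, after which the quadratic-variation bookkeeping closes cleanly as in the paper's Step~4.
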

\begin{remark*}
	In the above proposition, $\tau_1$ and $\tau_2$ are the stopping times for the field $w$ getting too large. 
	In particular, $\tau_1$ is the stopping time when the support of $w$ can not be contained in $[-L,\mathrm vT+L]$, and $\tau_2$ is the stopping when the maximum of $v+w$ on $[-L, \mathrm vT+L]$ exceeds  the level $\nu \varepsilon L$. 
\end{remark*}

	We will show that $v$ satisfies a similar property which we desired for $u$.  
	This is done in the following proposition whose proof is postponed to Section \ref{sec:V}.
\begin{proposition}\label{thm:V}
		Let $v$ be  given by Proposition \ref{thm:C}. 
		Then $\mathrm P \big( \tau_{3}< T\big) < 1/8$ where 
		\begin{equation}
			\tau_3 
			:= \inf\{t\in [0,T]: v_{t,x}\geq F(x-\mathrm v t) + k \varepsilon L e^{-\theta \mathrm v(x - \mathrm vt)} \mathbf 1_{x\leq \mathrm vt} \text{ for some } x\in (-\infty, \mathrm vt]\}.
		\end{equation}	
	\end{proposition}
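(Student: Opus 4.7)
The plan is to prove the stronger analogous bound with $\bar v$ in place of $v$, since $v\le\bar v$ almost surely by Proposition~\ref{thm:C}(4). Let $\tilde w:=\bar v-\varrho$. By Lemma~\ref{thm:H.05}, $\bar f$ is affine with slope $(1-\theta)\theta\mathrm v^2$, so $\bar f(\bar v)-\bar f(\varrho)=(1-\theta)\theta\mathrm v^2\tilde w$; moreover $\bar v$ and $\varrho$ share the initial datum $F$ and both vanish on the moving boundary $\{x=\mathrm vt\}$. Subtracting the mild forms of \eqref{eq:H.15} and \eqref{eq:H.06} therefore yields the linear Volterra equation
\[
\tilde w_{t,x}=\iint_0^t G^{(\mathrm v)}_{s,y;t,x}\bigl[(1-\theta)\theta\mathrm v^2\tilde w_{s,y}\,\mathrm ds\,\mathrm dy+\epsilon\sigma(\bar v_{s,y})\,W^{\bar v}(\mathrm ds\,\mathrm dy)\bigr],
\]
which resolves, after absorbing the linear drift into $U_{s,y;t,x}:=e^{(1-\theta)\theta\mathrm v^2(t-s)}G^{(\mathrm v)}_{s,y;t,x}$, into the stochastic convolution $\tilde w_{t,x}=\epsilon\iint_0^t U_{s,y;t,x}\sigma(\bar v_{s,y})\,W^{\bar v}(\mathrm ds\,\mathrm dy)$. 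The task reduces to showing that with probability at least $7/8$, $\sup_{t\in[0,T],\,x\le\mathrm vt}e^{\theta\mathrm v(x-\mathrm vt)}\tilde w_{t,x}\le k\varepsilon L$.

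The next step is a weighted second-moment estimate. Taking expectations in the Volterra equation and invoking uniqueness for the associated deterministic linear problem gives $\mathbb E[\tilde w]=0$, so $\mathbb E[\bar v]=\varrho$. Combined with $\sigma^2(z)\le z$ and It\^o's isometry,
\[
\mathbb E[\tilde w_{t,x}^2]\le\epsilon^2\iint_0^t U_{s,y;t,x}^2\,F(y-\mathrm vs)\,\mathrm ds\,\mathrm dy.
\]
Using the explicit reflection–Girsanov representation of $G^{(\mathrm v)}$ (as in \cite[Proof of Lemma 6.2]{MuellerMytnikQuastel2011Effect}) together with the bound $F(y-\mathrm vs)\le(\varepsilon/(\theta\mathrm v))e^{-\theta\mathrm v(y-\mathrm vs)}\mathbf 1_{y\le\mathrm vs}$, the Gaussian factors in $U^2$ pair with the exponential decay of $F$ and, after completing the squares, produce a schematic estimate of the form $\mathbb E\bigl[(e^{\theta\mathrm v(x-\mathrm vt)}\tilde w_{t,x})^2\bigr]\le C_\theta\,\epsilon^2\varepsilon L\,e^{-(2\theta-1)\mathrm vz}$ with $z:=(\mathrm vt-x)_+$. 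Applying Burkholder--Davis--Gundy to the stochastic convolution promotes this to matching bounds on all $L^{2m}$-norms, and hence to Gaussian-type tails for the weighted field.

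Finally, I would partition the target region into moving-frame shells $S_n:=\{(t,x):t\in[0,T],\,z\in[(n-1)L,nL]\}$, $n\in\mathbb N$, the scales $T=\mathrm v^{-2}$ and $L=\mathrm v^{-1}$ from \eqref{eq:U.3} being the natural parabolic ones. On each $S_n$ a Kolmogorov-type chaining, fed by spatial–temporal regularity estimates for the kernel $U$, converts the pointwise tail bounds above into
\[
\mathrm P\Bigl(\sup_{(t,x)\in S_n}e^{\theta\mathrm v(x-\mathrm vt)}\tilde w_{t,x}>k\varepsilon L\Bigr)\le 2^5\exp\bigl(-2^{-22}e^{-2\theta(2-\theta)}\mathcal K\,e^{(2\theta-1)n}\bigr),
\]
the exponent being precisely the one \eqref{eq:U.13} was engineered to sum. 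A union bound over $n$ then gives $\mathrm P(\tau_3<T)<1/8$. The main obstacle in this plan is this last step: promoting the pointwise $L^{2m}$ bounds to a uniform supremum bound on $S_n$ while handling the singular moving Dirichlet boundary $\{x=\mathrm vt\}$ demands delicate gradient and modulus-of-continuity estimates for the killed kernel $U$, and it is this step that forces the specific balance between the parabolic scales $T$ and $L$ and the drift speed $\mathrm v$ fixed in \eqref{eq:U.3}.
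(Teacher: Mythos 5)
Your blueprint---replace $v$ by $\bar v$ via Proposition~\ref{thm:C}(4), set $\tilde w := \bar v - \varrho$, exploit the affine drift $\bar f$ so that $\tilde w$ reduces to a stochastic convolution against the killed kernel, and control the weighted supremum shell-by-shell with the exponent $e^{(2\theta-1)n}$ feeding into \eqref{eq:U.13}---coincides with the paper's strategy: the paper's $\tilde Z_{t,x}=e^{-\alpha t}(\bar v_{t,x}-\varrho_{t,x})$ with $\alpha=\theta(1-\theta)\mathrm v^2$ is precisely $e^{-\alpha t}\tilde w_{t,x}$, and your shells $S_n$ are its $\Gamma_n$. The gap lies in how you propose to control the noise coefficient $\sigma(\bar v)$. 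You bound it only in expectation, via $\mathbb E[\sigma(\bar v_{s,y})^2]\le\mathbb E[\bar v_{s,y}]=F(y-\mathrm vs)$, and then invoke Burkholder--Davis--Gundy to produce $L^{2m}$-moment bounds and ``Gaussian-type tails.'' But Burkholder--Davis--Gundy gives $\mathbb E[|\tilde w_{t,x}|^{2m}]\lesssim C_m^{2m}\,\mathbb E\bigl[\bigl(\epsilon^2\iint_0^t U^2\sigma(\bar v)^2\bigr)^m\bigr]$, and the $m$-th moment of the random quadratic variation is not controlled by the $m$-th power of its first moment; the quadratic variation is a nonlinear functional of $\bar v$ that feeds back into the very field you are trying to estimate. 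The chaining lemmas (Lemma~\ref{thm:V.1}, Corollary~\ref{thm:V.2}) require the integrand to be dominated almost surely pointwise by a deterministic $\eta$, precisely so that the exponential-martingale argument yields the $e^{-z^2/2^{12}}$ tail, and your expectation bound does not supply this.

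The paper closes this loop with a stopping-time localization that your plan omits. Up to $\tilde\tau_3$ one has the pathwise bound $\bar v_{t,x}\le(2+k)\varepsilon Le^{-\theta\mathrm v(x-\mathrm vt)}\mathbf 1_{x\le\mathrm vt}$, hence $\epsilon^2\sigma(\bar v_{t,x})^2\le\eta_{t,x}:=2kL\epsilon^2\varepsilon e^{-\theta\mathrm v(x-\mathrm vt)}\mathbf 1_{x\le\mathrm vt}$ deterministically (using $k\ge2$). Truncating the integrand at $\sqrt{\eta}$ gives a field $Z$ that agrees with $\tilde Z$ on $[0,\tilde\tau_3]$ and satisfies the hypotheses of Corollary~\ref{thm:V.2} with this $\eta$. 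The kernel modulus-of-continuity estimate you correctly anticipate (Lemma~\ref{thm:V.3}) then yields $B_n\le 2^{10}kL^2\epsilon^2\varepsilon e^n$, and the threshold-to-fluctuation ratio scales like $\sqrt{\mathcal K}\,e^{(\theta-1/2)n}$, producing the advertised tail. So the obstacle you flag at the end (regularity estimates for the killed kernel) is real but secondary; the primary missing idea is the pathwise cap on $\sigma(\bar v)$ via a stopping time, which converts the moment bound into the pointwise bound the chaining lemma actually needs.
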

	
	From Proposition \ref{thm:C} (7), the difference between $u$ and $v$ can be controlled by the process $w$ up to the stopping time $\tau$.
	We use the following two propositions to control this stopping time. 
	Their proofs are postponed later to Sections \ref{sec:W} and \ref{sec:B} respectively.
\begin{proposition} \label{thm:W}
	Let $\tau_1$ be given by Proposition \ref{thm:C}.
	Then it holds that 
	$
	\mathrm P(\tau_1 < T) < 1/8.
	$
\end{proposition}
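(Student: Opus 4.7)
The plan is to control the $L^1$-mass of $w$ escaping the window $[-L,\mathrm{v}T+L]$ during the time interval $[0,T]$ by combining the mild representation of $w$ with Dawson's Girsanov transformation (performed precisely on the window $[0,T]$, as alluded to in the Introduction) and heat-kernel tail estimates. Since $w_0 \equiv 0$, the source $dA_s$ is supported on the moving line $\{(s,\mathrm{v}s):s\in[0,T]\}\subset [0,T]\times[0,\mathrm{v}T]$, and $f^w$ carries the explicit cutoff $\mathbf 1_{y\in[-L,\mathrm{v}T+L]}$, the only mechanisms producing mass outside the window are thermal diffusion of the interior sources and the stochastic integral driven by $\epsilon\sigma^w\dot W^w$ where $w$ is already positive.

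The concrete steps are as follows. First, I would test \eqref{eq:H.3} against suitable approximations of the heat kernel $G_{s,y;t,x}$ defined in \eqref{eq:P.3} and pass to the limit to obtain the mild form
\begin{equation*}
w_{t,x} = \int_0^t G_{s,\mathrm{v}s;t,x}\, dA_s + \iint_0^t G_{s,y;t,x}\, f^w_{s,y}\, ds\, dy + \iint_0^t G_{s,y;t,x}\, \epsilon \sigma^w_{s,y}\, W^w(ds\, dy).
\end{equation*}
Next, I would apply Dawson's Girsanov on $[0,T]$ with density $\mathcal{Z}_T = \exp\{-\iint_0^T (f^w/(\epsilon\sigma^w))\, W^w(ds\, dy) - \frac{1}{2}\iint_0^T (f^w/(\epsilon\sigma^w))^2\, ds\, dy\}$, verifying Novikov's condition using the bounds $|f^w|\le Cw^p$ (which follows from the sublinear growth of $f$ together with the cutoff $v+w\le\nu\varepsilon L$ built into $f^w$) and $\sigma^w\ge\sqrt{w/2}$, so that $(f^w/\sigma^w)^2\le C w^{2p-1}\le C(\nu\varepsilon L)^{2p-1}$. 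This last quantity is finite precisely because $p\ge 1/2$ and $\nu\varepsilon L\le 1/4$ by \eqref{eq:U.4}. Under the transformed measure $\tilde{\mathrm P}$, the drift term disappears and $w$ is driven only by $dA$ and a stochastic integral against a new white noise $\tilde W$. Using BDG for Walsh's stochastic integral, heat-kernel tail bounds (our choices $L=\mathrm{v}^{-1}$ and $T=\mathrm{v}^{-2}$ in \eqref{eq:U.3} give $L^2/T=1$, producing a universal Gaussian decay factor for points at distance $\ge L$ from the interior), and an explicit moment bound on $A_T$ derived from its formula and from $v\in[0,1]$, I expect to obtain an estimate of the shape
\begin{equation*}
\tilde{\mathrm E}\Bigl[\int_0^T\int_{[-L,\mathrm{v}T+L]^c} w_{s,y}\, dy\, ds\Bigr] \le C\varepsilon^{2}LT \cdot e^{-c},
\end{equation*}
from which Markov's inequality at a suitable threshold, combined with a bounded $\tilde{\mathrm P}$-to-$\mathrm P$ conversion (using a second-moment bound on $\mathcal Z_T^{-1}$), produces $\mathrm P(\tau_1 < T) < 1/8$, after the remaining universal constants are absorbed into the large parameter choices made in \eqref{eq:U.13} and \eqref{eq:U.2}.

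The main obstacle will be controlling the Radon--Nikodym density $\mathcal Z_T$ uniformly on $[0,T]$. The potential degeneracy of $\sigma^w$ near $\{w=0\}$ combined with the $p$-H\"older character of $f$ makes Novikov's condition delicate, and this step is exactly where the threshold $p=1/2$ in Theorem~\ref{thm:LU} enters; the smallness of $\nu\varepsilon L$ guaranteed by \eqref{eq:U.4} and the fact that the window length $T=\mathrm v^{-2}$ has been chosen precisely to match the thermal diffusion scale $L^2$ are both essential in making this step go through.
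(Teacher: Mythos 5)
Your outline correctly identifies the drift-removal via Dawson's Girsanov (with the bound $(f^w/\sigma^w)^2\lesssim (\nu\varepsilon L)^{2p-1}$, hinging on $p\ge 1/2$), the role of the moment bound $\mathbb E^{\mathrm Q}[A_t-A_r]\le \varepsilon(t-r)$, and the Cauchy--Schwarz conversion from $\mathrm Q$ back to $\mathrm P$; these all match the paper's proof. However, there is a genuine gap at the heart of the argument: the passage from a first-moment estimate on $\int_0^T\int_{[-L,\mathrm vT+L]^c}w_{s,y}\,dy\,ds$ to a bound on $\mathrm P(\tau_1<T)$ via ``Markov's inequality at a suitable threshold'' cannot work. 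The event $\{\tau_1<T\}$ is $\{\int_0^T\!\int_{[-L,\mathrm vT+L]^c}w_{s,y}\,dy\,ds>0\}$, a positivity event, not a level-exceedance event, and no moment bound (first, second, or via BDG) controls $\mathrm P(X>0)$ from above when $X$ is a nonnegative random variable that can be arbitrarily small. If $w$ evolved by the heat equation with additive noise, its support would be all of $\mathbb R$ at every positive time and $\tau_1$ would vanish identically; the first moment would still be small, yet $\mathrm P(\tau_1<T)=1$.

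The missing ingredient is that the driftless equation obtained after the Girsanov step,
\begin{equation*}
\partial_t w = \partial_x^2 w + \sigma^w\,\dot{\tilde W}^w + \delta_{\mathrm vt}(x)\,\dot A_t,
\end{equation*}
has a superprocess-like noise, $\sigma^w\ge\sqrt{w/2}$, and it is this structure that yields the \emph{compact support property}: the support of $w$ can genuinely stay inside the window with high probability. The paper makes this precise through Proposition~\ref{thm:S}, which bounds
\begin{equation*}
\mathrm Q\Big(\int_0^{\tilde T}\!\!\int_{[a,b]^c}\tilde w_{s,y}\,dy\,ds>0\Big)
\le
\mathbb E^{\mathrm Q}\Big[\iint_0^{\tilde T}\big(\zeta_{\tilde T-s,b-y}^\vartheta+\zeta_{\tilde T-s,y-a}^\vartheta\big)\,\mu(ds\,dy)\Big]
\end{equation*}
by testing the SPDE against the unique solution $\phi$ of the log-Laplace PDE $\partial_t\phi=-\partial_x^2\phi+\tfrac12(\vartheta\phi)^2-\psi$ (Lemma~\ref{thm:S.1}), applying It\^o's formula to $e^{-M_t^{(n)}}$, and exploiting $\tilde\sigma^2\ge\vartheta^2\tilde w$ so the nonlinear term has the favourable sign. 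This is Tribe's super-Brownian-motion support argument, and it replaces the naive Markov step; nothing in a BDG or heat-kernel-tail estimate can substitute for it. You would need to carry out this log-Laplace argument (or prove Proposition~\ref{thm:S} by some other route) to close the proof.
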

	
\begin{proposition} \label{thm:B}
	Let $\tau_1$ and $\tau_2$ be given by Proposition \ref{thm:C}.
	Let $\tau_3$ be given by Proposition \ref{thm:V}.
	Then it holds that $\mathrm P(\tau_2 < T, \tau_3 \geq T, \tau_1 \geq T) < 1/8$.
\end{proposition}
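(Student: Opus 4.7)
The plan is to work on the event $E := \{\tau_1 \geq T\} \cap \{\tau_3 \geq T\}$ and show $\mathrm P(E \cap \{\tau_2 < T\}) < 1/8$. On $E \cap \{\tau_2 < T\}$, the field $v + w$ first reaches the level $\nu \varepsilon L$ at some $(\tau_2, x_0)$ with $x_0 \in [-L, \mathrm v T + L]$. Since $\tau_3 \geq T$ on $E$, the pointwise bound of Proposition~\ref{thm:V}, together with the explicit form of $F$ in \eqref{eq:U.5}, the identity $\mathrm v L = 1$, and the fact that the box $[-L, \mathrm v T + L]$ has width $2L$, yields $v_{\tau_2, x_0} \leq C_\theta\, k\, \varepsilon L$ for a constant $C_\theta$ depending only on $\theta$. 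The constraints on $\nu, k, \mathcal K, \gamma$ in \eqref{eq:U.1}--\eqref{eq:U.6} (in particular the $2^5 \mathcal K/\gamma = 32 k$ summand in $\nu$) are arranged so that $\nu \geq 4 C_\theta k$, hence $w_{\tau_2, x_0} \geq \nu \varepsilon L / 2$. So it suffices to prove
\[
\mathrm P\Bigl(E \cap \Bigl\{ \sup_{t \leq \tau_2 \wedge T,\; x \in [-L, \mathrm v T + L]} w_{t, x} \geq \tfrac{\nu \varepsilon L}{2}\Bigr\}\Bigr) < \tfrac{1}{8}.
\]

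I would then turn the weak formulation \eqref{eq:H.3} into a mild form for $w$ by testing against $\cC_{\mathrm c}^\infty$ functions approximating the free heat kernel $G_{\cdot, \cdot; t, x}$, which is annihilated by the backward operator $\partial_s + \partial_y^2$. This produces $w_{t, x} = B_{t, x} + D_{t, x} + N_{t, x}$, where
\[
B_{t, x} := \int_0^t G_{s, \mathrm v s; t, x}\, dA_s,\quad D_{t, x} := \iint_0^t G_{s, y; t, x} f^w_{s, y}\, ds\, dy,\quad N_{t, x} := \iint_0^t G_{s, y; t, x} \epsilon \sigma^w_{s, y}\, W^w(ds\, dy).
\]
The drift $D$ is bounded deterministically on $\{t \leq \tau_2 \wedge T\}$: from $v + w \leq \nu \varepsilon L$ one gets $f^w \leq (\nu \varepsilon L)^p$, so $D_{t, x} \leq T (\nu \varepsilon L)^p$. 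Plugging the scalings $T = \mathrm v^{-2}$, $L = \mathrm v^{-1}$, $\varepsilon = \kappa \mathrm v^{(p+1)/(p-1)}$ from \eqref{eq:U.1}--\eqref{eq:U.3} reduces this to $\nu^{p-1} \kappa^{p-1} \cdot (\nu \varepsilon L)$, which is at most $\nu \varepsilon L / 4$ by $\nu^p \leq \nu/4$ and $\kappa^{p-1} \leq 1$.

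For the remaining two terms, I would further decompose $A_s$ according to the three pieces of $M^v$ in \eqref{eq:H.2} (initial mass, drift, and a $W^v$-martingale), exploiting the independence of $W^v$ and $W^w$ from Proposition~\ref{thm:C}(1) to handle the two martingale contributions to $B$ and $N$ on independent factors of the probability space. The absolutely continuous parts of $B$ are controlled deterministically by substituting the $v$-bound from Proposition~\ref{thm:V} into $\iint G_{s, \mathrm v s; t, x}\, \Pi_{s, y}(\rho \leq t)\,(\cdots)\, ds\, dy$; they contribute $O(k \varepsilon L)$, absorbed once more by the $32 k$ summand of \eqref{eq:U.2}. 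The two martingale contributions are handled by an exponential-Chebyshev/dyadic chaining argument over a countable dense net of points in $[0, T] \times [-L, \mathrm v T + L]$, whose tail sum takes the shape $2^5 \sum_{n \geq 1} \exp\bigl(-c\, \mathcal K\, e^{(2\theta - 1) n}\bigr)$, which is precisely the quantity made $\leq 1/8$ by the choice of $\mathcal K$ in \eqref{eq:U.13}. The main obstacle is the $W^v$-martingale piece of $B$: unlike $N$, it is not the value of an ambient space-time stochastic integral at $(t, x)$ but is propagated there via $G_{s, \mathrm v s; t, x}$, giving a one-dimensional stochastic integral along the moving boundary line $\{x = \mathrm v s\}$. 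A sharp Burkholder-type estimate must therefore combine the Gaussian decay of $G_{s, \mathrm v s; t, x}$ in $x - \mathrm v s$ with the exponentially decaying profile of $v$ near the absorbing boundary provided by Proposition~\ref{thm:V}; this is the delicate scaling that forces the factor $e^{-2\theta(2-\theta)}$ in the choice of $\mathcal K$.
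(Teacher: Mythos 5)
Your reduction to bounding $\sup w$ on the boxed region is fine (modulo a small constant slip: you need $\nu \ge 2C_\theta k$, not $4C_\theta k$, and the $32k$ summand in $\nu$ supplies that with little room to spare), and the deterministic bound on your $D$ term matches the paper's treatment of $Z^{(3)}$. The substantive issue is your choice to decompose $w$ alone rather than $v+w$. Writing $w$ in mild form produces the boundary term
\[
B_{t,x} = \int_0^t G_{s,\mathrm v s; t,x}\,\mathrm dA_s,
\]
and this is a genuine obstacle. The kernel $G_{s,\mathrm v s;t,x}$ has a $\sqrt{t-s}$-singularity at $s=t$ (with no Gaussian help when $x$ is near $\mathrm v t$, which it is for $(t,x)\in[0,T]\times[-L,\mathrm v T+L]$), and $A$ is a non-trivial, non-anticipating increasing process depending on $v$ through $M^v$. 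You would need \emph{pathwise} regularity of $A$ near $t$ — not merely the first-moment bound $\mathbb E[A_t-A_r]\le\varepsilon(t-r)$ — to control this supremum. You acknowledge the $W^v$-martingale piece of $B$ as ``the main obstacle'' and gesture at a Burkholder/scaling argument, but you do not carry it out, and I do not believe it closes without substantial new work. The specific constants you invoke are also misplaced: the series $\sum_n\exp(-c\mathcal K e^{(2\theta-1)n})$ from \eqref{eq:U.13} is the tail estimate used for $\tau_3$ in Proposition~\ref{thm:V}, not the one that arises in the chaining for the martingale pieces here (those use the flat bound $2^5 e^{-z^2/2^{12}}$ with $z=2^8$ from Lemma~\ref{thm:V.1}).

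The paper avoids all of this by observing that the weak formulations of $v$ (Lemma~\ref{thm:C.1}(1)) and $w$ (\eqref{eq:H.3}) carry the killing term with \emph{opposite signs}: it enters as $-\int_0^t\phi_{s,\mathrm v s}\,\mathrm dA_s$ for $v$ and $+\int_0^t\phi_{s,\mathrm v s}\,\mathrm dA_s$ for $w$, so it cancels in the sum. One therefore gets a clean mild form for $v+w$ with the free heat kernel and no boundary contribution at all, which is then split into five pieces $Z^{(1)},\dots,Z^{(5)}$: the initial condition, two drift terms ($f(v)$ truncated by the Proposition~\ref{thm:V} envelope, and $f^w$), and two independent stochastic integrals. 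The first three are bounded deterministically; the last two by the chaining lemma. This cancellation is the key structural idea of the proof, and it is what your decomposition misses.
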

	We are now ready to give the proof of Proposition \ref{thm:H} using Propositions \ref{thm:C}-\ref{thm:B}. 
\begin{proof}[Proof of Proposition \ref{thm:H}]
	Thanks to the weak uniqueness, we only have to prove the desired result for a specific $\CI$-valued weak solutions with initial value   $\tilde F$. 
	So, let us take the weak solution $u$ to the SPDE~\ref{eq:I.1}  given as in Proposition \ref{thm:C}. 
	Let  
	  also $v,w, \tau_1, \tau_2$ be as in Proposition \ref{thm:C}, and $\tau_3$ as in Proposition \ref{thm:V}.
	To get the desired result
	 we only have to verify that
\begin{equation} \label{eq:H.4} 
	\bigcap_{i=1,2,3}\{\tau_i \geq T\} 
	\subset \big\{ \forall (t,x)\in [0,T]\times \mathbb R, u_{t,x} \leq \tilde F^{(1)}(x)\big\},
\end{equation}
since by Propositions \ref{thm:V}--\ref{thm:B},
\begin{align}
	&\mathrm P\Big(\bigcap_{i=1,2,3}\{\tau_i \geq T\}\Big) 
	 = 1- \mathrm P( \{\tau_1 < T\} \cup  \{\tau_2 < T, \tau_1 \geq T, \tau_3 \geq T\}\cup \{\tau_3 < T\}) 
	 \\&\geq 1 - \big(\mathrm P(\tau_1 < T) + \mathrm P( \tau_2< T, \tau_1 \geq T, \tau_3 \geq T) + \mathrm P( \tau_3< T) \big)
	 \geq 1/2.
\end{align}
	In the rest of the proof, we verify \eqref{eq:H.4}. 
	First note that for any $x\in \mathbb R$ and $l>0$, 
\begin{align}
	&F(x - l) - F(x) 
	= \frac{\varepsilon}{\theta \mathrm v}(e^{-\theta \mathrm v (x-l)}-1) \mathbf 1_{x-l\leq 0} - \frac{\varepsilon}{\theta \mathrm v}(e^{-\theta \mathrm v x}-1) \mathbf 1_{x\leq 0}
	\\ 
	\label{eq:prop41_a}
	&\geq \frac{\varepsilon}{\theta \mathrm v}e^{-\theta \mathrm v x}(e^{\theta \mathrm v l} - 1) \mathbf 1_{x\leq 0}
	\geq \varepsilon le^{-\theta \mathrm v x} \mathbf 1_{x\leq 0}
	\geq \varepsilon l \mathbf 1_{x\leq 0}.
\end{align}
	Then notice that almost surely on the event $\cap_{i=1,2,3}\{\tau_i \geq T\}$, we have 
\begin{align}
	u_{t,x} 
	&= v_{t,x}+w_{t,x},
	& \quad t\in [0,T], x\in \mathbb R;
	\\ w_{t,x} 
	&\leq \nu \varepsilon L \mathbf 1_{x\in [-L, \mathrm vT+L]},
	& \quad t\in [0,T], x\in \mathbb R;
	\\ v_{t,x} 
	&\leq F(x- \mathrm v t) + k \varepsilon L e^{-\theta \mathrm v(x-\mathrm vt)} \mathbf 1_{x \leq \mathrm vt},
	& \quad t\in [0,T], x\in \mathbb R.
\end{align}
Therefore, almost surely on event $\cap_{i=1,2,3}\{\tau_i \geq T\}$, we have that for any $(t,x)\in [0,T]\times \mathbb R$,
\begin{align}
	&u_{t,x}  = v_{t,x} + w_{t,x} 
	\leq F(x- \mathrm v t - kL)  + \nu \varepsilon L \mathbf 1_{x\in [-L, \mathrm vT+L]}
	\\ & \leq F(x- \mathrm v T - kL)  + \nu  \varepsilon L \mathbf 1_{x - \mathrm vT- kL\leq 0}
	\leq F\big(x - \mathrm vT - (k+\nu)L\big).
\end{align}
	In the second   inequality  above, we used the fact that $F$ is non-increasing and that $k\geq 1$. 
The third inequality follows easily by~\eqref{eq:prop41_a}. 
 
Finally, noticing that $u_{t,x}\leq 1$ and according to \eqref{eq:U.3} that $L = \mathrm vT$, \eqref{eq:H.4} follows. 
\end{proof}

\section{proof of Proposition \ref{thm:C}} \label{sec:C}
	The main idea is that the SPDE \eqref{eq:H.1} can be written equivalently as
\[
	\partial_t v = \partial_x^2 v + f(v) +  \epsilon \sigma(v) \dot W^v  - \delta_{\mathrm vt}(x) \dot A_t
\]
	where $(A_t)_{t\geq 0}$ is 
		this adapted, real-valued, continuous, non-decreasing process for Proposition \ref{thm:C} (6).
We will refer to $(A_t)_{t\geq 0}$ as the killing process of $v$ at its boundary.
		The existence of this killing process is given by the next lemma.
	Recall that, under probability $\Pi_{s,y}$, $(B_r)_{r\geq s}$ is a Brownian motion with generator $\partial_x^2$ initiated at time $s$ and position $y$, and $\rho$ is given by \eqref{def:rho}. 
\begin{lemma} \label{thm:C.1}
	Suppose that  
	$v$ is a $\Cunit$-valued weak solution to the SPDE \eqref{eq:H.1} with $v_{0,\cdot} =   \tilde F$. 
	Then 
\begin{enumerate}
	\item for each $\phi \in \cC_{\mathrm c}^{1,2}(\mathbb R_+\times \mathbb R)$ and $t\geq 0$ it holds almost surely that,
\begin{equation}
\begin{multlined}
\label{eq:6.1.1}
	\int \phi_{t,x} v_{t,x} \mathrm dx
	=  \iint_0^t \phi_{s,y} M^v(\mathrm ds\mathrm dy) + \iint_0^t v_{s,y}(\partial_s \phi_{s,y} +
	\partial^2_y\phi_{s,y}) \mathrm ds \mathrm dy  
	\\ - \iint_0^t  \Pi_{s,y}[\phi_{\rho, B_\rho};t\geq \rho]M^v(\mathrm ds\mathrm dy)
\end{multlined}
\end{equation}
	where $M^v$ is given by \eqref{eq:H.2} and $\rho$ is defined in ~\eqref{def:rho}; 
\item	
	there exists an adapted, real-valued, almost surely non-decreasing  continuous  process $(A_t)_{t\geq 0}$ satisfying that for each $t\geq 0$ and bounded Borel measurable function $\psi$ on $\mathbb R_+$, 
\begin{equation} \label{eq:C.2} 
	 \int_0^t \psi_{s}\mathrm dA_s=\iint_0^t  \Pi_{s,y}[\psi_{\rho};t\geq \rho]M^v(\mathrm ds\mathrm dy), \quad a.s. 
\end{equation}
\end{enumerate}
\end{lemma}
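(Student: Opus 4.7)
My plan for part (1) is to derive the weak formulation from the mild form of $v$ by integrating a pointwise Dynkin-type identity against $M^v$. Fix $\phi\in \cC_{\mathrm c}^{1,2}(\mathbb R_+\times \mathbb R)$ and $0\leq s< t$; apply It\^o's formula to $(u, B_{u\wedge \rho})$ under $\Pi_{s,y}$ on $[s,t]$ and take expectations to obtain
\[
\phi_{s,y} = \int G^{(\mathrm v)}_{s,y;t,x}\phi_{t,x}\,\mathrm dx + \Pi_{s,y}\bigl[\phi_{\rho, B_\rho};\rho\leq t\bigr] - \int_s^t\!\!\int G^{(\mathrm v)}_{s,y;u,z}(\partial_u \phi_{u,z} + \partial_z^2 \phi_{u,z})\,\mathrm dz\mathrm du.
\]
Integrate both sides against $M^v(\mathrm ds\mathrm dy)$ over $[0,t]\times\mathbb R$. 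Using the mild formula $v_{t,x}=\iint_0^t G^{(\mathrm v)}_{s,y;t,x}M^v(\mathrm ds\mathrm dy)$ and a stochastic Fubini interchange converts the first and third right-hand terms into $\int \phi_{t,x}v_{t,x}\,\mathrm dx$ and $\iint_0^t v_{u,z}(\partial_u + \partial_z^2)\phi_{u,z}\,\mathrm du\,\mathrm dz$ respectively, yielding \eqref{eq:6.1.1}. The only mildly technical point is the stochastic Fubini, but since $\phi$ has compact support and $v\in[0,1]$, the required $L^2$ bounds follow from the boundedness of $\sigma$ and $f$ on $[0,1]$.

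For part (2) I would set
\[
A_t := \iint_0^t \Pi_{s,y}(\rho\leq t)\,M^v(\mathrm ds\mathrm dy), \quad t\geq 0,
\]
which is a priori well-defined because the integrand is predictable, takes values in $[0,1]$, and the martingale part of $M^v$ is controlled in $L^2$ on bounded space-time regions. For the identity \eqref{eq:C.2}, by linearity and a monotone-class argument it suffices to check the case $\psi=\mathbf 1_{[0,r]}$ with $r\leq t$: since $\rho>s$ under $\Pi_{s,y}$, one has $\Pi_{s,y}(\rho\leq r)=0$ for $s>r$, so the right-hand side collapses to $A_r$, matching $\int_0^t \psi_s\,\mathrm dA_s$.

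The main obstacle is showing that $t\mapsto A_t$ admits a continuous, non-decreasing modification despite the signed martingale contribution inside $M^v$. My strategy is to reread \eqref{eq:6.1.1} as a mass-balance: choose a sequence $\phi_n\in \cC_{\mathrm c}^{1,2}$ with $\phi_n\uparrow 1$ and $(\partial_s + \partial_y^2)\phi_n\to 0$ uniformly on compacts, and use the compact-interface property of $v$ from Theorem \ref{intro:thm_existenceUniqueness} to pass to the limit, obtaining
\[
\int v_{t,x}\,\mathrm dx = \iint_0^t v_{0,y}\delta_0(\mathrm ds)\mathrm dy + \iint_0^t f(v_{s,y})\,\mathrm ds\mathrm dy + \iint_0^t \sigma(v_{s,y})\,W^v(\mathrm ds\mathrm dy) - A_t.
\]
This exhibits $A_t$ as a continuous semimartingale. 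Monotonicity is then extracted by applying the same limiting argument on any sub-interval $[s_1,t_1]$: since $v$ is non-negative with Dirichlet boundary on the line $\{x=\mathrm v t\}$, the only mechanism by which the total mass $\int v_{t,x}\,\mathrm dx$ can decrease after accounting for the reaction and noise contributions is loss through the boundary, so $A_{t_1}-A_{s_1}\geq 0$. Continuity follows from the semimartingale representation, or alternatively from dominated convergence applied to the defining integral, using the continuity of $t\mapsto \Pi_{s,y}(\rho\leq t)$ (valid since $\rho$ is atomless in $(0,\infty)$ under $\Pi_{s,y}$ for $y<\mathrm v s$).
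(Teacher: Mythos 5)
Your treatment of part (1) matches the paper's: the Dynkin-type identity from It\^o's formula and optional sampling (the paper's Step 3), integrated against $M^v$ and interchanged with stochastic Fubini. That part is fine.

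Part (2) has a genuine gap. The mass-balance identity you want to derive by taking $\phi_n\uparrow 1$,
\[
\int v_{t,x}\,\mathrm dx = \int v_{0,y}\,\mathrm dy + \iint_0^t f(v_{s,y})\,\mathrm ds\mathrm dy + \iint_0^t \sigma(v_{s,y})\,W^v(\mathrm ds\mathrm dy) - A_t,
\]
is not usable here because both $\int v_{t,x}\,\mathrm dx$ and $\int v_{0,y}\,\mathrm dy$ are infinite: the initial datum is $1\wedge F$, which equals $1$ for all sufficiently negative $x$, and $v$ remains $\Cunit$-valued with $v_{t,x}\to 1$ as $x\to -\infty$. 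The "compact interface'' property from Theorem \ref{intro:thm_existenceUniqueness} does not help -- it says only that the transition region $[L(u_t),R(u_t)]$ is compact, not that $u_t$ is integrable, and in any case it is stated for solutions of \eqref{eq:I.1}, not for the Dirichlet-boundary solution $v$ of \eqref{eq:H.1}. So your limiting argument produces the meaningless statement $\infty = \infty - A_t$, and the subsequent "the only mechanism for mass loss is the boundary'' reasoning is exactly what has to be proved, not an independent observation.

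What is actually needed is a family of test functions that localize near the moving boundary $\{x=\mathrm v t\}$ rather than approximate $\mathbf 1$. The paper takes $\phi^{(m)}_{s,y}=\varphi_{(y-\mathrm v s)m}$ with $\varphi$ a compactly supported bump with $\varphi(0)=1$. Plugging $\phi^{(m)}$ into \eqref{eq:6.1.1}, one term ($\mathrm I^{(m)}$) involves $\iint_0^t\phi^{(m)}M^v$ and $\int\phi^{(m)}_{t}v_t$; since $\phi^{(m)}\downarrow 0$ pointwise on $\{y<\mathrm v s\}$, it tends to $0$ in probability by monotone convergence and the dominated-convergence criterion for stochastic integrals. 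After extracting an a.s.\ convergent subsequence along a fixed pair $r<t$, the increment $A_t-A_r$ becomes the limit of $\int_r^t\!\int_{\mathrm v s-1/m}^{\mathrm v s} v_{s,y}\,(\partial_s+\partial_y^2)\phi^{(m)}_{s,y}\,\mathrm dy\,\mathrm ds$, and for the chosen $\varphi$ that integrand is eventually non-negative. That explicit sign calculation is the core of the monotonicity proof and cannot be replaced by a total-mass heuristic. Your reduction of \eqref{eq:C.2} to indicator functions $\mathbf 1_{[0,r]}$ is correct as far as it goes, but note that passing from such indicators to general bounded Borel $\psi$ by monotone class already uses that $A$ is non-decreasing (so that $\mathrm dA$ is a measure), so the monotonicity has to come first, not last.
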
	 

\begin{proof}[Proof of Lemma \ref{thm:C.1} (1)]
	\emph{Step 1.} 
	Using the stochastic Fubini theorem (cf. \cite[Lemma 2.4]{Iwata1987AnInfinite} for example) we can verify that for all $ t\geq 0$
	\begin{equation} 
		\int v_{t,x} \phi_{t,x} \mathrm dx 
		=  \int  \mathrm dx   \iint_0^t \phi_{t,x}  G^{(\mathrm v)}_{s,y;t,x}  M^v(\mathrm ds\mathrm dy) 
		= \iint_0^t M^v(\mathrm ds\mathrm dy) \int \phi_{t,x} G^{(\mathrm v)}_{s,y;t,x}  \mathrm dx, \quad \rm a.s. 
	\end{equation}
	
	\emph{Step 2.}
	Using the stochastic Fubini theorem again we can verify that  for all $t\geq 0$
	\begin{align} 
		&\iint_0^t  (\partial_r\phi_{r,x}+\partial_x^2\phi_{r,x}) v_{r,x} \mathrm dr\mathrm dx
		=\iint_0^t \mathrm dr\mathrm dx \iint_0^r  (\partial_r\phi_{r,x}+\partial_x^2\phi_{r,x}) G^{(\mathrm v)}_{s,y;r,x} M^v(\mathrm ds\mathrm dy)
		\\& = \iint_0^t M^v(\mathrm ds\mathrm dy) \iint_s^t G^{(\mathrm v)}_{s,y;r,x}(\partial_r\phi_{r,x} + \partial_x^2\phi_{r,x}) \mathrm dr\mathrm dx, \quad \rm a.s. 
	\end{align}
	
	\emph{Step 3.}
	We show that for each $(s,y)\in \mathbb R_+\times \mathbb R$,
	\begin{equation} 
		\int G^{(\mathrm v)}_{s,y;t,x}\phi_{t,x}\mathrm dx+\Pi_{s,y}[\phi_{\rho, B_{\rho}}; t\geq \rho] 
		= \phi_{s,y}+ \iint_s^t G^{(\mathrm v)}_{s,y;r,x} (\partial_r \phi_{r,x}+\partial_x^2\phi_{r,x}) \mathrm dr\mathrm dx.
	\end{equation} 
	In fact, according to Ito's formula (see \cite[p.~147]{RevuzYor1999Continuous} for example), we know that under probability $\Pi_{s,y}$,
	\begin{align} 
		& \phi_{t,B_t} - \phi_{s,y} - \int_s^t (\partial_r \phi_{r,x} + \partial_x^2 \phi_{r,x})|_{x=B_r} \mathrm dr
		= \int_s^t \partial_x \phi_{r,x} |_{x=B_r}\mathrm dB_r, \quad t\geq s,
	\end{align}
	is a zero-mean $L^2$-martingale.
	Then, according to optional sampling theorem (see \cite[Theorem 7.29]{Kallenberg2002Foundations} for example) we have 
	\begin{align} 
		&  \Pi_{s,y}[\phi_{t\wedge \rho, B_{t\wedge \rho}}] 
		= \phi_{s,y} + \int_s^t \Pi_{s,y}[(\partial_r\phi_{r,x}+\partial_x^2\phi_{r,x})|_{x=B_r}; r< \rho]\mathrm dr.
	\end{align}
	
	\emph{Step 4.}
	We note from the fact $\phi\in \cC^{1,2}_c(\mathbb R_+\times \mathbb R)$ and that $v,f,\sigma$ take values in $[0,1]$, the following stochastic integral
	\[
	\iint_0^t \phi_{s,y} M^v(\mathrm ds dy)
	= \int \phi_{0,y} v_{0,y}  \mathrm dy  + \iint_0^t \phi_{s,y}  f(v_{s,y})\mathrm ds\mathrm dy  + \iint_0^t \phi_{s,y} \sigma(v_{s,y})W^v(\mathrm ds\mathrm dy)
	\]
	is well-defined.
	
	\emph{Final Step.}
	We verify that almost surely, 
	\begin{align} 
		&  \int v_{t,x} \phi_{t,x} \mathrm dx 
		\overset{\text{Step 1}}=   \iint_0^t M^v(\mathrm ds\mathrm dy) \int G^{(\mathrm v)}_{s,y;t,x} \phi_{t,x} \mathrm dx 
		\\ &\overset{\text{Step 3}}=   \iint_0^t M^v(\mathrm ds\mathrm dy) \Big(\phi_{s,y} + \iint_s^t G_{s,y;r,x}^{(\mathrm v)} (\partial_r\phi_{r,x}+\partial_x^2\phi_{r,x}) \mathrm dr\mathrm dx - \Pi_{s,y}[\phi_{\rho, B_{ \rho}}; t\geq  \rho]\Big) 
		\\ &\begin{multlined}
			\overset{\text{Steps 2 and 4}}= \iint_0^t \phi_{s,y}M^v(\mathrm ds\mathrm dy) + \iint_0^t  (\partial_r\phi_{r,x}+\partial_x^2\phi_{r,x}) v_{r,x} \mathrm dr\mathrm dx 
			\\ - \iint_0^t \Pi_{s,y}[\phi_{ \rho, B_{ \rho}}; t\geq \rho] M^v(\mathrm ds\mathrm dy)
		\end{multlined}
	\end{align}
	as desired.
\end{proof}
\begin{proof}[Proof of Lemma \ref{thm:C.1} (2)]
	For each $t\geq 0$, choose
	a $\phi \in \cC_\mathrm c^{1,2}(\mathbb R_+\times \mathbb R)$ such that $\phi_{s,\mathrm vs} = 1$ for every $s\in [0,t]$. 
	Use  this $\phi$ in~\eqref{eq:6.1.1} to 
	get that for each $t\geq 0$ the following
	random variable is well defined:
	\begin{align}
		&\tilde A_t
		:= \iint_0^t \Pi_{s,y}[t\geq \rho] M^v(\mathrm ds\mathrm dy)
		\\&=-\int \phi_{t,x} v_{t,x} \mathrm dx+\iint_0^t \phi_{s,y} M^v(\mathrm ds\mathrm dy) + \iint_0^t v_{s,y}(\partial_s \phi_{s,y} + \partial^2_y\phi_{s,y}) \mathrm ds \mathrm dy.
	\end{align}
	It's easy to see that $(\tilde A_t)_{t\geq 0}$ has a continuous modification which will be denoted by $(A_t)_{t\geq 0}$.  
	To see that $(A_t)_{t\geq 0}$ is almost surely non-decreasing,
	define \[\phi^{(m)}_{t,x} := \varphi_{(x-\mathrm vt)m}, \quad (t,x)\in \mathbb R_+\times \mathbb R, m\in \mathbb N\]
	where
	\[
	\quad \varphi_x :=   \begin{cases} 
		0, & x\in 
		[1,\infty), 
		\\ (18x^2+6x+1)(1-x)^3,  
		& x \in [0,1],
		\\(x+1)^3, 
		& x \in [-1,0],
		\\ 0, & x \in (-\infty,-1].
	\end{cases}
	\]
	Use $\phi^{(m)}$ instead of $\phi$ in~\eqref{eq:6.1.1} to get that, for each $m\in \mathbb N$ and $t\geq 0$,  
	\[
	A_t = \mathrm I^{(m)}_t + \mathrm {II}^{(m)}_t,\quad {\rm a.s.}
	\]
	where
	\[
	\mathrm I^{(m)}_t:=\iint_0^t \phi_{s,y}^{(m)} M^v(\mathrm ds\mathrm dy) -
	\int \phi_{t,x}^{(m)}v_{t,x}\mathrm dx 
	\]
	and
	\[
	\mathrm {II}^{(m)}_t:=\iint_0^t v_{s,y}(\partial_s \phi_{s,y}^{(m)} + \partial^2_y\phi_{s,y}^{(m)}) \mathrm ds \mathrm dy. 
	\]
	Observe that $\phi_{s,y}^{(m)} \downarrow 0$ as $m\uparrow \infty$ on
	$\{(s,y)\in \mathbb R_+\times \mathbb R: y < \mathrm vs\}$. 
	This allows us to use the monotone convergence theorem and \cite[Proposition 17.6]{Kallenberg2002Foundations} to get that for each $t\geq 0$,
	$\mathrm{I}_t^{(m)}$  
	converges
	to $0$ in probability as $m\to \infty$.
	Fix
	arbitrary $r< t$ in $\mathbb R_+$. 
	\cite[Lemma 4.2]{Kallenberg2002Foundations} allows us to choose an unbounded $\mathbf{N}\subset \mathbb N$ so that $\mathrm I_t^{(m)} - \mathrm I_r^{(m)}$ convergence to $0$ almost surely as $m\to \infty, m\in \mathbf{N}$.
	Now we have almost surely
	\begin{equation}
		\label{eq:W.61}  
		\begin{split}
			&A_t - A_r = \lim_{m\to \infty, m\in \mathbf{N}} (\mathrm {II}_t^{(m)} - \mathrm {II}_r^{(m)})
			\\& = \lim_{m\to \infty, m\in \mathbf{N}} \int_r^t  \mathrm ds \int_{\mathrm vs-\frac{1}{m}}^{\mathrm vs} v_{s,y} 
			\cdot 3 \big(1+(y-\mathrm vs)m\big)\Big(-\mathrm vm \big(1+(y-\mathrm vs)m\big) + 2m^2\Big) \mathrm dy 
			\\& \geq \lim_{m\to \infty, m\in \mathbf{N}} \int_r^t  \mathrm ds \int_{\mathrm vs-\frac{1}{m}}^{\mathrm vs} v_{s,y} 
			\cdot 3 \big(1+(y-\mathrm vs)m\big)(-\mathrm vm + 2m^2) \mathrm dy 
			\geq 0.
		\end{split}
	\end{equation}
	From this and the fact that $(A_t)_{t\geq 0}$ has continuous sample path, we have  that $t\mapsto A_t$ is non-decreasing almost surely. 
	
	Denote by $\mathrm b\mathscr B(\mathbb R_+)$ the space of bounded Borel functions on $\mathbb R_+$. 
	Fix a time $t\geq 0$ and define $\mathscr H:= \{\psi \in \mathrm b \mathscr B(\mathbb R_+):  \eqref{eq:C.2} \text{ holds for }\psi \}$.
	From the definition of $(A_t)_{t\geq 0}$ and the fact that it has non-decreasing sample path almost surely, we can verify that  $\mathscr K \subset \mathscr H$ where $\mathscr K$ is given by \eqref{eq:T.63}.
	One can verify from monotone convergence theorem and \cite[Proposition 17.6]{Kallenberg2002Foundations} that $\mathscr H$ is a monotone vector space in the sense of \cite[p.~364]{Sharpe1988General}. 
	Also observe that $\mathscr K$ is closed under multiplication.
	Therefore using monotone class theorem (\cite[Theorem A0.6]{Sharpe1988General}) we get $\mathrm b \mathscr B(\mathbb R_+) = \sigma(\mathscr K) \subset \mathscr H$. 
\end{proof}

\begin{note}
	\begin{proof}[Proof of Lemma \ref{thm:W.2}]
		Denote by $m_t := e^{H_t}, t\in [0,\infty]$ where
		\[
		H_t := \iint_0^t h_{s,y} W(\mathrm ds\mathrm dy)-\frac{1}{2}\iint_0^t h_{s,y}\mathrm ds\mathrm dy,\quad t\geq 0.
		\]
		From Ito's formula \cite[Theorem 3.3 on p.~147]{RevuzYor1999Continuous} and \cite[Theorem 18.23]{Kallenberg2002Foundations}, we have 
		\[
		m_t - 1 = \iint_0^t m_s h_{s,y} W(\mathrm ds\mathrm dy), \quad t\geq 0
		\]
		is a uniformly integrable martingale.
		Observe that 
		\[
		\frac{\mathrm d\mathrm Q|_{\mathcal F_t}}{\mathrm d\mathrm P|_{\mathcal F_t}}= \mathrm P[m_\infty|\mathcal F_t] = m_t, \quad t\geq 0.
		\]
		Thanks to \cite[Proposition 18.20]{Kallenberg2002Foundations}, there is no need to distinguish between the quadratic (co)variations defined in $\mathrm P$ and $\mathrm Q$. 
		
		Let us now verify that $W'$ is a white noise under probability $\mathrm Q$. 
		To do this we only have to observe that 
		\begin{enumerate}
			\item for each $t\geq 0$ and disjoint $A,B\in \mathcal B_F(\mathbb R)$ almost surely $W'_t(A\cup B) = W'_t(A) + W'_t(B)$; 
			\item for any $A \in \mathcal B_F(\mathbb R)$, $(W'_t(A))_{t\geq 0}$ is continuous $\mathrm Q$-martingale; and
			\item for any $A\in \mathcal B_F(\mathbb R)$ and $t\geq 0$ almost surely $\langle W'_\cdot(A) \rangle_t = t \operatorname{Leb}(A)$.
		\end{enumerate}
		Here, (2) is due to \cite[Theorem 18.19]{Kallenberg2002Foundations} and the fact that the quadratic covariation between  $(W'_t(A))_{t\geq 0}$ and $(m_{t})_{t\geq 0}$ is zero.
		
		Denote by $\mathscr M_{\mathrm {loc}}^\mathrm P$ and $\mathscr M_{\mathrm {loc}}^{\mathrm Q}$ the collection of continuous local martingales with respect to probabilities $\mathrm P$ and $\mathrm Q$ respectively. 
		Consider a map $\Psi$ from $\mathscr L_{\mathrm{loc}}^2$ to $\mathscr M_{\mathrm {loc}}^\mathrm P$ given by
		\[
		\Psi: g \in \mathscr L_{\mathrm{loc}}^2 \mapsto \mathrm P\text{-}\iint_0^\cdot g_{s,y} W'(\mathrm ds\mathrm dy).
		\]
		It turns out $\Psi$ is also a map from $\mathscr L_{\mathrm{loc}}^2$ to $\mathscr M_{\mathrm {loc}}^\mathrm Q$.
		In fact, for each $g\in \mathscr L_{\mathrm{loc}}^2$, from \cite[Theorem 18.19]{Kallenberg2002Foundations} and the fact that the quadratic covariation between  $\Psi(g)$ and $(m_{t})_{t\geq 0}$ is zero, we have $\Psi(g)\in \mathscr M_{\mathrm {loc}}^\mathrm Q$.
		We now observe that the followings hold: 
		\begin{enumerate}
			\item for each $g,g'\in \mathscr L_{\mathrm {loc}}^2$ and $\lambda,\lambda'\in \mathbb R$, $\Psi(\lambda g + \lambda' g) = \lambda \Psi(g) + \lambda' \Psi(g')$; 
			\item For each $0\leq a\leq b < \infty$, $A\in \mathcal B_F(\mathbb R)$ and real-valued bounded $\mathcal F_a$ measurable random variable $X$,  it holds that $\Psi(g)= X \cdot (W'_{b\wedge\cdot }(A)- W'_{a\wedge \cdot}(A))$ where for each $(s,y)\in \mathbb R_+\times \mathbb R$, $g_{s,y} := X \mathbf 1_{(a,b]}(s)\mathbf 1_A(y)$;
			\item For each $g\in \mathscr L_{\mathrm {loc}}^2$, $\langle \Psi(g)\rangle =\iint_0^\cdot g_{s,y}^2\mathrm ds \mathrm dy$; and 
			\item For each $g\in \mathscr L_{\mathrm {loc}}^2$ and optional time $\sigma$, $\Psi(g)_{\sigma \wedge \cdot} = \Psi(g^\sigma)$ where $g^\sigma_{s,y}:= \mathbf 1_{s\leq \sigma} g_{s,y}$ for each $(s,y)\in\mathbb R_+\times \mathbb R$.
		\end{enumerate}
		Finally, note that a map $\Psi$ from $\mathscr L_{\mathrm{loc}}^2$ to $\mathscr M_{\mathrm {loc}}^\mathrm Q$ satisfying the above (1)-(4) is unique (c.f. \cite[Lemma 2.1]{Iwata1987AnInfinite}) and must coincide with the map
		\[
		g \in \mathscr L_{\mathrm{loc}}^2 
		\mapsto \mathrm Q\text{-}\iint_0^\cdot g_{s,y} W'(\mathrm ds\mathrm dy).
		\qedhere
		\] 
	\end{proof}
\end{note}

\begin{proof}[Proof of Proposition \ref{thm:C}]
	\emph{Step 1.} 
	Using a strategy similar to the proof of \cite[Proposition 5.1]{MuellerMytnikQuastel2011Effect}, we can verify that 
	there exists a filtered probability space $(\Omega, \mathcal G,(\mathcal F_t)_{t\geq 0}, \mathbb P)$ and stochastic elements $(v, W^v; \bar v, W^{\bar v}; w, W^w)$ on it such that 
\begin{itemize}
\item
	$W^v$, $W^{\bar v}$ and $W^w$ are white noises where $W^w$ is independent of $W^v$; and
	
\item
	(2), (3), (4) and (6) of Proposition \ref{thm:C} hold. 
\end{itemize}
	Lemma \ref{thm:C.1} is used here to justify that the second term on the right hand side of \eqref{eq:H.3} is well-defined.
	
	\emph{Step 2.}
	Define optional time $\tau$ as in Proposition \ref{thm:C} (7) using $v$ and $w$ constructed in Step 1. 
	Extending the space $(\Omega, \mathcal G, (\mathcal F_t)_{t\geq 0}, \mathrm P)$ if necessary, we can construct a pair 
	$(u,\widetilde W)$  
	so that
\begin{itemize}
	\item
$\wW$ is a white noise independent of $(v, W^v; \bar v, W^{\bar v}; w, W^w)$;
\item
$u$  is a $\Cunit$-valued weak solution to the SPDE  
\begin{equation}
\begin{cases}	
	u = v + w,  
	&\quad \text{ on } [0,\tau]\times \mathbb R,
	\\ \partial_t u = \partial_x^2 u + f(u) + \epsilon \sigma(u) \dot {\wW}, & \quad \text{ on } [\tau, \infty)\times \mathbb R. 
\end{cases}
\end{equation}
\end{itemize}
\begin{note}	
	More precisely, extending $\Omega$ if necessary, we can construct an $\mathcal F$-white noise $\wW$ which is independent of $(v, W^v; w, W^w)$, and an adapted $[0,1]$-valued continuous random field $u$ so that for each $(t,x)\in \mathbb R_+\times \mathbb R$ almost surely on the event $\{t\leq \tau\}$, $u_{t,x} =  v_{t,x} + w_{t,x}$; and for each $t\geq 0$ and $\phi\in C_c^\infty(\mathbb R_+\times \mathbb R)$ almost surely on the event $\{t>\tau\}$,
\begin{equation}\label{eq:C.3}
\begin{multlined}
	\int \phi_{t,x} u_{t,y} \mathrm dy - \int \phi_{\tau,y} u_{\tau,y} \mathrm dy = \iint_\tau^t \phi_{s,x} \Big( f(u_{s,y})\mathrm ds\mathrm dy + \epsilon \sigma(u_{s,y}){\wW}(\mathrm ds\mathrm dy)\Big)
	\\+  \iint_\tau^t u_{s,y}(\partial_s \phi_{s,y} + \partial^2_y\phi_{s,y}) \mathrm ds \mathrm dy.
\end{multlined}
\end{equation}
\end{note}
	The existence of such $u$ after the optional time $\tau$ is due to \cite[Theorem 2.6]{Shiga1994Two}.

\emph{Step 3.}
We will show that almost surely 
\begin{equation}\label{eq:C.3}
	\sigma^w_{s,y} = \sqrt{\sigma(v_{s,y}+w_{s,y})^2 - \sigma(v_{s,y})^2}, \quad  (s,y)\in [0,\tau] \times \mathbb R
\end{equation}
and
\[
	f^w_{s,y} = f(v_{s,y}+w_{s,y})-f(v_{s,y}), \quad (s,y)\in [0,\tau]\times \mathbb R.
\]
This is obvious for $(s,y) \in [0,\tau]\times [-L, \mathrm vT + L]^c$ since in this case $w_{s,y} = 0$.
Let us now consider the case $(s,y)\in [0,\tau]\times [-L, \mathrm vT+L]$.
Note that in this case, from the definition of $\tau$ and \eqref{eq:U.4} we have $v_{s,y} + w_{s,y}\leq \nu \varepsilon L  \leq 1/4$. 
	We also observe that for any $\mathsf v, \mathsf w\in [0,1]$ satisfying $\mathsf v+\mathsf w \leq 1/4$, we have
	$ \mathsf w/2 \leq \sigma(\mathsf v+\mathsf w)^2 - \sigma(\mathsf v)^2$
	and $0\leq f(\mathsf v+\mathsf w) - f(\mathsf v)$,
	and therefore
\[
	\sqrt{|\sigma(\mathsf v + \mathsf w)^2 - \sigma (\mathsf v)^2| \vee \frac{\mathsf w}{2}}
	=\sqrt{\sigma(\mathsf v + \mathsf w)^2 - \sigma(\mathsf v)^2}
\]
	and $|f(\mathsf v+\mathsf w) - f(\mathsf v)| = f(\mathsf v+\mathsf w) - f(\mathsf v).$
	Thus, the desired result in this step follows.

\emph{Step 4.}
	We can verify that there exists a white noise $W^u$ so that for any $g \in \mathscr L_\mathrm{loc}^2$, 
\begin{equation} \label{eq:C.4}
\begin{multlined}
	\iint_0^t g_{s,y}W^u(\mathrm ds\mathrm dy)
	= \iint_0^t \frac{g_{s,y}\mathbf 1_{B_{s,y}} }{\sigma(v_{s,y}+w_{s,y})} \big(\sigma(v_{s,y}) W^v(\mathrm ds\mathrm dy)  + \sigma^w_{s,y} W^w(\mathrm ds \mathrm dy)\big) 
	\\ + \iint_0^t g_{s,y}\mathbf 1_{B_{s,y}^c}  {\wW}(\mathrm ds\mathrm dy),
	\quad t\geq 0, \; {\rm a.s.}, 
\end{multlined}
\end{equation}
	where for each $(s,y)\in \mathbb R_+\times \mathbb R$ the event $B_{s,y}:= \{s\leq \tau, \sigma(v_{s,y} + w_{s,y})>0\}$.
	To see this, one only have to calculate the quadratic variation of the right hand side of \eqref{eq:C.4} using \eqref{eq:C.3} and the fact that $W^{w}, W^v$ and $\wW$ are mutually independent.

\emph{Final step.}
Observe from Step 4 that for any $(t,x)\in \mathbb R_+\times \mathbb R$, 
\begin{equation} 
	\iint_0^t G_{s,y;t,x}\sigma(u_{s,y}) W^u(\mathrm ds\mathrm dy)
	= \iint_0^t G_{s,y;t,x} \Big(\sigma(v_{s,y}) W^v(\mathrm ds\mathrm dy)  + \sigma^w_{s,y} W^w(\mathrm ds \mathrm dy)\Big)
\end{equation}	
	holds almost surely on the event $\{t\leq \tau\}$; also
\begin{equation} 
	\iint_\tau^t G_{s,y;t,x}\sigma(u_{s,y}) W^u(\mathrm ds\mathrm dy) = \iint_\tau^t G_{s,y;t,x}\sigma(u_{s,y}) {\wW}(\mathrm ds\mathrm dy) 
\end{equation}
	holds almost surely on the event $\{t > \tau\}$.
	We can then verify that 
  $u$ is a $\Cunit$-valued weak solution to the SPDE \eqref{eq:I.1} with $W=W^u$, $u_{0,\cdot} =   \tilde F $.  
	Thus, Proposition \ref{thm:C} (5) follows from Theorem \ref{intro:thm_existenceUniqueness} (2).
\begin{note}
	Let us now verify that $u$ and $W^u$, constructed in Steps 2 and 4 respectively, satisfy Proposition \ref{thm:C}(3).
	Now by \eqref{eq:H.3}, \eqref{eq:T.455}, \eqref{eq:C.2}, \eqref{eq:T.47} and Step 2 we get that almost surely on the event $\{t\leq \tau\}$,
\begin{equation} \label{eq:T.485}
\begin{multlined}
	\int \phi_{t,y} u_{t,y} \mathrm dy - \int \phi_{0,y}u_{0,y}\mathrm dy= \iint_0^t \phi_{s,y} \Big( f(u_{s,y})\mathrm ds\mathrm dy + \epsilon \sigma(u_{s,y})W^u(\mathrm ds\mathrm dy)\Big)
	\\+  \iint_0^t u_{s,y}(\partial_s \phi_{s,y} + \partial^2_y\phi_{s,y}) \mathrm ds \mathrm dy.
\end{multlined}
\end{equation}
	From \eqref{eq:C.3} and \eqref{eq:T.48} we get that \eqref{eq:T.485} also holds almost surely on the event $\{t> \tau\}$.
\end{note}
\end{proof}
	
\begin{note}
	We now give the proof of Lemma \ref{thm:C.1}.
\begin{proof}[Proof of Lemma \ref{thm:C.1} (1)]
\emph{Step 1.} 
	Using the stochastic Fubini theorem (cf. \cite[Lemma 2.4]{Iwata1987AnInfinite} for example) we can verify that almost surely
\begin{equation} 
	\int v_{t,x} \phi_{t,x} \mathrm dx 
	=  \int  \mathrm dx   \iint_0^t \phi_{t,x}  G^{(\mathrm v)}_{s,y;t,x}  M^v(\mathrm ds\mathrm dy) 
	= \iint_0^t M^v(\mathrm ds\mathrm dy) \int \phi_{t,x} G^{(\mathrm v)}_{s,y;t,x}  \mathrm dx. 
\end{equation}
	
\emph{Step 2.}
	Using the stochastic Fubini theorem again we can verify that almost surely
\begin{align} 
	&\iint_0^t  (\partial_r\phi_{r,x}+\partial_x^2\phi_{r,x}) v_{r,x} \mathrm dr\mathrm dx
	=\iint_0^t \mathrm dr\mathrm dx \iint_0^r  (\partial_r\phi_{r,x}+\partial_x^2\phi_{r,x}) G^{(\mathrm v)}_{s,y;r,x} M^v(\mathrm ds\mathrm dy)
	\\& = \iint_0^t M^v(\mathrm ds\mathrm dy) \iint_s^t G^{(\mathrm v)}_{s,y;r,x}(\partial_r\phi_{r,x} + \partial_x^2\phi_{r,x}) \mathrm dr\mathrm dx.
\end{align}
		
\emph{Step 3.}
	We show that for each $(s,y)\in \mathbb R_+\times \mathbb R$,
\begin{equation} 
	\int G^{(\mathrm v)}_{s,y;t,x}\phi_{t,x}\mathrm dx+\Pi_{s,y}[\phi_{\rho, B_{\rho}}; t\geq \rho] 
	= \phi_{s,y}+ \iint_s^t G^{(\mathrm v)}_{s,y;r,x} (\partial_r \phi_{r,x}+\partial_x^2\phi_{r,x}) \mathrm dr\mathrm dx.
\end{equation} 
	In fact, according to Ito's formula (see \cite[p.~147]{RevuzYor1999Continuous} for example), we know that under probability $\Pi_{s,y}$,
\begin{align} 
	& \phi_{t,B_t} - \phi_{s,y} - \int_s^t (\partial_r \phi_{r,x} + \partial_x^2 \phi_{r,x})|_{x=B_r} \mathrm dr
	= \int_s^t \partial_x \phi_{r,x} |_{x=B_r}\mathrm dB_r, \quad t\geq s,
\end{align}
	is a zero-mean $L^2$-martingale.
	Then, according to optional sampling theorem (see \cite[Theorem 7.29]{Kallenberg2002Foundations} for example) we have 
\begin{align} 
	&  \Pi_{s,y}[\phi_{t\wedge \rho, B_{t\wedge \rho}}] 
	= \phi_{s,y} + \int_s^t \Pi_{s,y}[(\partial_r\phi_{r,x}+\partial_x^2\phi_{r,x})|_{x=B_r}; r< \rho]\mathrm dr.
\end{align}
		
		\emph{Step 4.}
		We note from the fact $\phi\in C^{1,2}_c(\mathbb R_+\times \mathbb R)$ and that $v,f,\sigma$ take values in $[0,1]$, the following stochastic integral
		\[
		\iint_0^t \phi_{s,y} M^v(\mathrm ds dy)
		= \int \phi_{0,y} v_{0,y}  \mathrm dy  + \iint_0^t \phi_{s,y}  f(v_{s,y})\mathrm ds\mathrm dy  + \iint_0^t \phi_{s,y} \sigma(v_{s,y})W^v(\mathrm ds\mathrm dy)
		\]
		is well-defined.
		
		\emph{Final Step.}
		We verify that almost surely, 
		\begin{align} 
			&  \int v_{t,x} \phi_{t,x} \mathrm dx 
			\overset{\text{Step 1}}=   \iint_0^t M^v(\mathrm ds\mathrm dy) \int G^{(\mathrm v)}_{s,y;t,x} \phi_{t,x} \mathrm dx 
			\\ &\overset{\text{Step 3}}=   \iint_0^t M^v(\mathrm ds\mathrm dy) \Big(\phi_{s,y} + \iint_s^t G_{s,y;r,x}^{(\mathrm v)} (\partial_r\phi_{r,x}+\partial_x^2\phi_{r,x}) \mathrm dr\mathrm dx - \Pi_{s,y}[\phi_{\tau, B_\tau}; t\geq \tau]\Big) 
			\\ &\begin{multlined}
				\overset{\text{Steps 2 and 4}}= \iint_0^t \phi_{s,y}M^v(\mathrm ds\mathrm dy) + \iint_0^t  (\partial_r\phi_{r,x}+\partial_x^2\phi_{r,x}) v_{r,x} \mathrm dr\mathrm dx 
				\\ - \iint_0^t \Pi_{s,y}[\phi_{\tau, B_\tau}; t\geq \tau] M^v(\mathrm ds\mathrm dy)
			\end{multlined}
		\end{align}
		as desired.
\end{proof}
\begin{proof}[Proof of Lemma \ref{thm:C.1} (2)]
	For each $t\geq 0$, choosing a $\phi \in C_c^{1,2}(\mathbb R_+\times \mathbb R)$ such that $\phi_{s,\mathrm vs} = 1$ for every $s\in [0,t]$. 
	Put this $\phi$ into Lemma \ref{thm:C.1} (1), we get that the following real-valued random variable is well defined:
	\begin{align}
		&\tilde A_t
		:= \iint_0^t \Pi_{s,y}[t\geq \rho] M^v(\mathrm ds\mathrm dy)
		 \\&=-\int \phi_{t,x} v_{t,x} \mathrm dx+\iint_0^t \phi_{s,y} M^v(\mathrm ds\mathrm dy) + \iint_0^t v_{s,y}(\partial_s \phi_{s,y} + \partial^2_y\phi_{s,y}) \mathrm ds \mathrm dy.
	\end{align}
	It's easy to see that $(\tilde A_t)_{t\geq 0}$ has a continuous modification which will be denoted as $(A_t)_{t\geq 0}$. 
	To see $(A_t)_{t\geq 0}$ is almost surely non-decreasing, we put a sequence of $(\phi^{(m)})_{m\in \mathbb N} \subset C_c^{1,2}(\mathbb R_+\times \mathbb R)$ into Lemma \ref{thm:C.1} (1), where $\phi^{(m)}_{t,x} := \varphi_{(x-\mathrm vt)m}, (t,x)\in \mathbb R_+\times \mathbb R, m\in \mathbb N$ and
	\[
	\quad \varphi_x :=   \begin{cases} 
		0, & x\in [1,\infty)
		\\{\rm smooth}, & x \in [0,1],
		\\ 1+2x+x^2, & x \in [-1,0],
		\\ 0, & x \in (-\infty,-1].
	\end{cases}
	\]
	Then we get for each $m\in \mathbb N$ and $t\geq 0$, almost surely
	$
	A_t = \mathrm I^{(m)}_t + \mathrm {II}^{(m)}_t
	$
	where
	\[
	\mathrm I^{(m)}_t:=\iint_0^t \phi_{s,y}^{(m)} M^v(\mathrm ds\mathrm dy) - \int \phi_{t,y}^{(m)}v_{t,y}\mathrm dy
	\]
	and
	\[
		\mathrm {II}^{(m)}_t:=\iint_0^t v_{s,y}(\partial_s \phi_{s,y}^{(m)} + \partial^2_y\phi_{s,y}^{(m)}) \mathrm ds \mathrm dy. 
	\]
	Observe that $\phi_{s,y}^{(m)} \downarrow 0$ as $m\uparrow \infty$ on $\{(s,y)\in \mathbb R_+\times \mathbb R: x < \mathrm vs\}$.
	This allows us to use monotone convergence theorem and \cite[Proposition 17.6]{Kallenberg2002Foundations} to get that for each $t\geq 0$, $I_t^{(m)}$ convergence to $0$ in probability as $m\to \infty$.
	Fix two arbitrary $r< t$ in $\mathbb R_+$. 
	\cite[Lemma 4.2]{Kallenberg2002Foundations} allows us to choose an unbounded $N\subset \mathbb N$ so that $\mathrm I_t^{(m)} - \mathrm I_r^{(m)}$ convergence to $0$ almost surely as $m\to \infty, m\in N$.
	Now we have almost surely
	\begin{align}
		&A_t - A_r = \lim_{m\to \infty, m\in N} (\mathrm {II}_t^{(m)} - \mathrm {II}_r^{(m)})
		\\&\label{eq:W.61}= \lim_{m\to \infty, m\in N} \int_r^t  \mathrm ds \int_{\mathrm vs-\frac{1}{m}}^{\mathrm vs} v_{s,y}\Big(-2\mathrm vm \big(1+(y-\mathrm vs)m\big) + 2m^2\Big) \mathrm dy
		\\&\geq \lim_{m\to \infty, m\in N} \int_r^t  \mathrm ds \int_{\mathrm vs-\frac{1}{m}}^{\mathrm vs} v_{s,y}(-2\mathrm vm + 2m^2) \mathrm dy
		\geq 0.
	\end{align}
	From this and the fact that $(A_t)_{t\geq 0}$ has continuous sample path, we have almost surely $t\mapsto A_t$ is non-decreasing. 
	
	Denote by $\mathrm b\mathscr B(\mathbb R_+)$ the space of bounded Borel functions on $\mathbb R_+$. 
	Fix a time $t\geq 0$ and define $\mathscr H:= \{\psi \in \mathrm b \mathscr B(\mathbb R_+):  \eqref{eq:C.2} \text{ holds for }\psi \}$.
	From the definition of $(A_t)_{t\geq 0}$ and the fact that it almost surely has non-decreasing sample path, we can verify $\mathscr K \subset \mathscr H$ where
	\begin{equation}\label{eq:T.63}
		\begin{multlined}
			\mathscr K := \Big\{ \sum_{k\in \mathbb N} n_k \mathbf 1_{(t_k,t_{k+1}]}:  (n_k)_{k\in \mathbb N} \subset \mathbb R \text{ is bounded}, 
			\\ (t_k)_{k\in \mathbb N} \subset \mathbb R_+\text{ is unbounded and strictly increasing}\Big\}.
		\end{multlined}
	\end{equation}
	One can verify from monotone convergence theorem and \cite[Proposition 17.6]{Kallenberg2002Foundations} that $\mathscr H$ is a monotone vector space in the sense of \cite[p.~364]{Sharpe1988General}. 
	Also observe that $\mathscr K$ is closed under multiplication.
	Therefore using monotone class theorem (\cite[Theorem A0.6]{Sharpe1988General}) we get $\mathrm b \mathscr B(\mathbb R_+) = \sigma(\mathscr K) \subset \mathscr H$. 
\end{proof}
\end{note}
\section{proof of Proposition \ref{thm:W}} \label{sec:W}
	Let us write \eqref{eq:H.3} in the following short form:
\begin{equation}
\label{eq:06_07_2}
	\partial_t w = \partial^2_x w + f^w + \sigma^w \dot W^w + \delta_{\mathrm vt}(x) \dot A_t.
\end{equation}
	The first step of the proof is to remove the drift term $f^w$ using  Dawson's  Girsanov transformation. 
	We summarize this transformation in the following lemma.  
	We refer the reader to \cite[Section 10.2.1]{PratoZabczyk2013Stochastic} for its proof.
	Notice that in this section, since we are dealing with more than one probability measure, we sometimes write ``$\mathrm P\text{-}\iint$'' for the stochastic integral to emphasize the underlying probability measure $\mathrm P$. 

\begin{lemma} \label{thm:W.1}
	Suppose that $W$ is a white noise defined 
	on a filtered probability space $(\Omega, \mathcal G, (\mathcal F_t)_{t\geq 0}, \mathrm P)$.
	Suppose that $h$ is a real-valued predictable random field satisfying 
\[ 
	\mathbb E\Big[\exp\Big\{\frac{1}{2} \iint_0^\infty h_{s,y}^2\mathrm ds\mathrm dy\Big\}\Big]
	< \infty. 
\]
	Then under the probability measure $\mathrm Q$ given by
\[
	\mathrm d\mathrm Q
	:= \exp\Big\{\iint_0^\infty h_{s,y}W(\mathrm ds\mathrm dy) - \frac{1}{2} \iint_0^\infty h_{s,y}^2 \mathrm ds\mathrm dy\Big\} \mathrm d\mathrm P,
\]
	there exists a white noise $\tilde W$ satisfying that for each $g\in \mathscr L_{\mathrm{loc}}^2$ almost surely
	\begin{equation} \label{eq:intQ} 
	\mathrm Q\text{-}\iint_0^t g_{s,y} \tilde W(\mathrm ds\mathrm dy) 
	=\mathrm P\text{-}\iint_0^t g_{s,y}W(\mathrm ds\mathrm dy)  - \iint_0^t h_{s,y} g_{s,y} \mathrm ds\mathrm dy.
\end{equation}
\begin{note}
	$\tilde W$ is an $\mathcal F$-adapted \footnote{change the filtration notation here} space-time white noise where for each $t\geq 0$ and each Borel subset $A$ of $\mathbb R$ with finite Lebesgue measure,
\begin{align} \label{eq:WSI.62}
	& \tilde W_t(A):=W_t(A) - \iint_0^t \mathbf 1_{y\in A}h_{s,y}\mathrm ds\mathrm dy.  
\end{align} 
	Furthermore for any real-valued predictable random field $g$ satisfying
\[
	\iint_0^t g_{s,y}^2 \mathrm ds\mathrm dy < \infty, \quad t\geq 0, \quad \text{a.s. }
\] 
	it holds for each $t\geq 0$ that,  almost surely
\begin{align} \label{eq:WSI.61}
	\iint_0^t g_{s,y} \tilde W(\mathrm ds,\mathrm dy) 
	=\iint_0^t g_{s,y}W(\mathrm ds\mathrm dy)  - \iint_0^t h_{s,y} g_{s,y} \mathrm ds\mathrm dy.
\end{align}
\end{note}
\end{lemma}

\begin{remark*}
	Let $\mathrm Q$ and $\mathrm P$ be the probability measure in Lemma \ref{thm:W.1}. 
	One can verify that $\mathrm Q$ and $\mathrm P$ are mutually absolute continuous. 
	In other word, $A\subset \Omega$ is a $\mathrm Q$-null set if and only if $A$ is a 
	$\mathrm P$-null set.
	Therefore, the filtered probability space $(\Omega, \mathcal G, (\mathcal F_t)_{t\geq 0}, \mathrm Q)$ also satisfies the usual hypotheses; 
	and there is no need to distinguish between ``$\mathrm P$-a.s.'' and ``$\mathrm Q$-a.s.''.  
\end{remark*}

	Later in the proof of Proposition \ref{thm:W}, we will construct a new probability measure $\mathrm Q$, using Lemma \ref{thm:W.1}, under which $w$ will satisfy
\begin{equation}
\label{eq:06_07_1}
	\partial_t w = \partial^2_x w  + \sigma^w \dot {\tilde W}^w+ \delta_{\mathrm vt}(x) \dot A_t, \quad t\in [0,T], x\in \mathbb R
\end{equation}
	where $\tilde W^w$ is a white noise under $\mathrm Q$.
	In order to study the support of $w$ under this new probability, we will need the following   proposition. 
	In what follows, we say that a random measure $\mu$ on a Polish space $S$ has finite mean if its mean measure 
	$
	(\mathbb E\mu) (\cdot):= \mathbb E[\mu(\cdot)]
	$
	is a finite measure on $S$. 
	For more on random measures see~ \cite{Kallenberg2017Random}. 
\begin{proposition}\label{thm:S}
	Let $\tilde T>0$ be arbitrary.
	Suppose that $\tilde w$ is an adapted non-negative continuous random field, defined on a filtered probability space $(\Omega, \mathcal G,  (\mathcal F_t)_{t\geq 0},  \mathrm Q)$, such that $(\tilde w_{t,\cdot}: t\geq 0)$ is a $\mathcal C_{\mathrm {tem}}$-valued continuous process, and  for each $t\in [0,\tilde T]$ and $\phi \in C_\mathrm c^\infty ([0,\tilde T]\times \mathbb R)$, 
	\begin{equation}\label{eq:W.1}
			\int \phi_{t,x}\tilde w_{t,x} \mathrm dx 
			= \iint_0^t {\tilde w}_{s,y}(\partial_s \phi_{s,y} + \partial_y^2 \phi_{s,y}) \mathrm ds\mathrm dy  + \iint_0^t \phi_{s,y}  \big( \tilde \sigma_{s,y} W(\mathrm ds\mathrm dy)+\mu(\mathrm ds\mathrm dy)\big), \;\rm a.s. 
\end{equation}
	Here  $\tilde \sigma$ is a predictable random field, $W$ is a white noise, and $\mu$ is a random measure on   $[0,\tilde T] \times\mathbb R$  with finite mean. 
	Suppose that 
	there exist 
	deterministic $\tilde \vartheta\geq \vartheta > 0$ satisfying that almost surely $ \tilde \vartheta \sqrt{{\tilde w}}\geq \tilde \sigma  \geq \vartheta \sqrt{{\tilde w}}$ on $\mathbb R_+ \times \mathbb R$.
	Then for each $-\infty \leq a< b \leq \infty$ it holds that
	\begin{align}
		\mathrm Q\Big( \int_0^{\tilde T} \mathrm ds\int_{[a,b]^c}  {\tilde w}_{s,y}\mathrm dy > 0\Big) 
		\leq \mathbb E^\mathrm Q \Big[\iint_0^{\tilde T} (\zeta_{\tilde T-s,b-y}^{\vartheta}+ \zeta^\vartheta_{\tilde T-s,y-a}) \mu (\mathrm ds\mathrm dy) \Big]
	\end{align}
	where
	\begin{equation} \label{eq:W.15}
		\zeta_{s,y}^{\vartheta} := 
		\begin{cases}
			0, & s\geq 0, y =\infty;
			\\ \frac{2^8 \sqrt{s}}{\vartheta^2 y^3} e^{- \frac{y^2}{2^4s}}, & s\geq 0,y>0;
			\\ \infty, & s\geq 0, y\leq 0.
		\end{cases}
	\end{equation}
\end{proposition}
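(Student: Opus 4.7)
The strategy is the log-Laplace duality between the SPDE~\eqref{eq:W.1} and a semilinear parabolic PDE, a classical technique in super-Brownian-motion theory, combined with a supersolution estimate. By the union bound and the reflection $y\mapsto -y$, it suffices to prove the one-sided estimate $\mathrm Q(Y>0)\leq\mathbb E^\mathrm Q[\iint_0^{\tilde T}\zeta^\vartheta_{\tilde T-s,b-y}\,\mu(\mathrm ds,\mathrm dy)]$, where $Y:=\int_0^{\tilde T}\int_b^\infty\tilde w_{s,y}\,\mathrm dy\,\mathrm ds$. Since $\zeta^\vartheta_{s,y}=\infty$ for $y\leq 0$, I may henceforth assume $\mu\big|_{[0,\tilde T]\times[b,\infty)}=0$ almost surely, as otherwise the right-hand side is already infinite.

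For each $\lambda>0$ let $U^\lambda(t,y)$ be the bounded nonnegative solution on $[0,\tilde T]\times\mathbb R$ of $\partial_t U=\partial_y^2 U-\tfrac{\vartheta^2}{2}U^2+\lambda\mathbf 1_{[b,\infty)}$ with $U(0,\cdot)\equiv 0$. Then $U^\lambda\uparrow U^\infty$ as $\lambda\to\infty$, with $U^\infty$ the minimal nonnegative solution on $\{y<b\}$ with zero initial data and blow-up $U(t,b^-)=\infty$ for $t>0$. Setting $\phi(s,y):=U^\lambda(\tilde T-s,y)$, one has $\partial_s\phi+\partial_y^2\phi=\tfrac{\vartheta^2}{2}\phi^2-\lambda\mathbf 1_{[b,\infty)}$. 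After the usual mollification to handle the noncompact support, I plug this $\phi$ into~\eqref{eq:W.1} and apply It\^o's formula to $e^{M_t}$, where
\[
M_t:=-\int\phi_{t,y}\tilde w_{t,y}\,\mathrm dy-\lambda\iint_0^t\mathbf 1_{[b,\infty)}(y)\tilde w_{s,y}\,\mathrm dy\,\mathrm ds+\iint_0^t\phi_{s,y}\,\mu(\mathrm ds,\mathrm dy).
\]
The PDE cancels the $\lambda\mathbf 1_{[b,\infty)}$ and $\tfrac{\vartheta^2}{2}\phi^2$ contributions to the drift of $M_t$, leaving the drift of $e^{M_t}$ as $\tfrac12 e^{M_t}\int\phi^2(\tilde\sigma^2-\vartheta^2\tilde w)\,\mathrm dy$, which is nonnegative thanks to the hypothesis $\tilde\sigma^2\geq\vartheta^2\tilde w$. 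Hence $(e^{M_t})_{t\in[0,\tilde T]}$ is a nonnegative local submartingale; the a priori bound $U^\lambda\leq\sqrt{2\lambda}/\vartheta$ together with the finite-mean assumption on $\mu$ turn it into a true submartingale, yielding
\[
\mathbb E^\mathrm Q\Bigl[\exp\Bigl(-\lambda Y+\iint_0^{\tilde T}U^\lambda(\tilde T-s,y)\,\mu(\mathrm ds,\mathrm dy)\Bigr)\Bigr]\geq 1. \qquad(\star)
\]

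Letting $\lambda\uparrow\infty$, the assumption $\mu|_{[b,\infty)}=0$ makes $A_\lambda:=\iint U^\lambda(\tilde T-s,y)\mu(\mathrm ds,\mathrm dy)$ increase monotonically to $A:=\iint_{y<b}U^\infty(\tilde T-s,y)\mu(\mathrm ds,\mathrm dy)$, and reverse Fatou applied to~$(\star)$ gives $\mathbb E^\mathrm Q[\mathbf 1_{\{Y=0\}}e^A]\geq 1$. Rearrangement then yields $\mathrm Q(Y>0)\leq\mathbb E^\mathrm Q[\mathbf 1_{\{Y=0\}}(e^A-1)]$. Assuming the claimed RHS is at most $1$ (otherwise the conclusion is trivial), one upgrades this to $\mathrm Q(Y>0)\leq\mathbb E^\mathrm Q[A]$ via the linearization $e^A-1\leq(e-1)A$ for $A\leq 1$, applied iteratively on a partition of $[0,\tilde T]$ into short subintervals on each of which $A$ is small. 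The final ingredient is a direct computation verifying that $\zeta^\vartheta_{t,y}$ is a classical supersolution on $(0,\infty)^2$ of $\partial_t v=\partial_y^2 v-\tfrac{\vartheta^2}{2}v^2$ with the singular boundary condition $v(t,0^+)=\infty$; the maximum principle then gives $U^\infty(t,y')\leq\zeta^\vartheta_{t,b-y'}$ for $y'<b$. \textbf{The principal difficulty} lies in converting the exponential bound~$(\star)$ into the claimed linear probability bound: because $\mu$ may be correlated with the noise driving $\tilde w$, the exponent $A$ cannot be simply factored out, and the iterative short-interval scheme must be coordinated with the decay of $\zeta^\vartheta$ in a quantitative way. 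Verifying that $\zeta^\vartheta$ is a supersolution is also computationally heavy, though routine.
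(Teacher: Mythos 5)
Your proposal follows the same general philosophy (log-Laplace duality plus a supersolution estimate for $\zeta^\vartheta$) but it organizes the argument in a way that creates a genuine gap in the transition from the exponential inequality to the claimed linear bound, a gap the paper's actual proof avoids by a different bookkeeping.

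The paper defines the \emph{single} semimartingale
$M^{(n)}_t := n\iint_0^t\tilde w_{s,y}\psi_y\,\mathrm ds\,\mathrm dy + \int\tilde w_{t,x}\phi^{(n)}_{t,x}\,\mathrm dx$
with $\phi^{(n)}$ solving the backward log-Laplace PDE with source $n\psi$ and zero terminal data. Expanding the second term via~\eqref{eq:W.1} shows $M^{(n)}\geq 0$, so $e^{-M^{(n)}}\leq 1$, and It\^o's formula applied to $e^{-M^{(n)}_t}$ yields, after cancelling the drift against the PDE and taking expectations,
$\mathbb E^{\mathrm Q}[1 - e^{-M^{(n)}_{\tilde T}}]\leq\mathbb E^{\mathrm Q}\bigl[\iint_0^{\tilde T}e^{-M^{(n)}_s}\phi^{(n)}_{s,y}\,\mu(\mathrm ds\,\mathrm dy)\bigr]\leq\mathbb E^{\mathrm Q}\bigl[\iint_0^{\tilde T}\phi^{(n)}_{s,y}\,\mu(\mathrm ds\,\mathrm dy)\bigr]$.
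Since $\phi^{(n)}_{\tilde T,\cdot}=0$, the left side is exactly $\mathbb E^{\mathrm Q}[1-\exp(-n\iint\tilde w\psi)]\to\mathrm Q(Y>0)$, and the right side is already linear in $\mu$. The crucial structural point is that the $\mu$-integral stays \emph{inside} the semimartingale $M^{(n)}$; the factor $-e^{-M^{(n)}_s}\phi^{(n)}_{s,y}\in[-\phi^{(n)}_{s,y},0]$ attached to it makes the $\mu$-term manifestly bounded by $\phi^{(n)}$, no exponential of $\iint\phi\,\mu$ ever appears.

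Your construction instead reaches the estimate $\mathbb E^{\mathrm Q}\bigl[\mathbf 1_{\{Y=0\}}e^{A}\bigr]\geq 1$ with $A=\iint U^\infty(\tilde T-s,y)\,\mu(\mathrm ds\,\mathrm dy)$, and then needs to downgrade $\mathbb E^{\mathrm Q}[\mathbf 1_{\{Y=0\}}(e^{A}-1)]$ to $\mathbb E^{\mathrm Q}[A]$. This step cannot be salvaged in general: there is no inequality bounding $\mathbb E[e^{A}-1]$ by a constant times $\mathbb E[A]$ for an unbounded nonnegative random variable $A$, and the ``iterative short-interval scheme'' you gesture at does not obviously restore it, because short time intervals make $\mu$ small in mean but not almost surely. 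Moreover, your limit $\lambda\uparrow\infty$ also quietly requires $\mathbb E^{\mathrm Q}[\mathbf 1_{\{Y>0\}}e^{-\lambda Y+A_\lambda}]\to 0$, which is unclear since $A_\lambda$ may diverge; and the direct verification that $\zeta^\vartheta$ is a supersolution of $\partial_t v=\partial_y^2 v-\tfrac{\vartheta^2}{2}v^2$ with a singular boundary is not what the paper does — it deduces the bound $\phi^{(n)}\leq\zeta^\vartheta$ in Lemma~\ref{thm:S.1} from Iscoe's near-boundary estimate for super-Brownian motion combined with optional stopping at a Brownian hitting time, which sidesteps that computation entirely. The correct fix to your argument is precisely the paper's re-packaging: include the $\mu$-integral inside the semimartingale before exponentiating, so the exponential stays in $[0,1]$ and the $\mu$-term exits the computation already linear.
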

	The proof of Proposition \ref{thm:S} will be given in Section \ref{sec:S}.
	In order to control the support of $w$ using the above proposition, we will investigate the expectation of $A_t$  
	under the new  
	probability measure  $\mathrm Q$ which is absolutely continuous with respect to the original probability measure.
	Recall  that $A_t$ is given in
	Lemma~\ref{thm:C.1}(2) and  can be considered as the amount of 
	mass
	of $v$ killed at the line $\{(s,y) \in \mathbb R_+\times \mathbb R: y = \mathrm vs, s\leq t\}$. 
	We will show that  
	  under the new probability $\mathrm Q$,  $v$ is still a weak solution to the SPDE \eqref{eq:H.1}, 
	 and, in fact,  for any such weak solution, we can derive  the  upper bound on the expectation of $A_t$ 
	using the following lemma. 
	  
\begin{lemma} \label{thm:W.3}
	Suppose that
$v$ is a $\Cunit$-valued weak solution to the SPDE \eqref{eq:H.1} defined on 
a filtered probability space 
	$(\Omega, \mathcal G, (\mathcal F_t)_{t\geq 0}, \mathrm Q)$ with $v_{0,\cdot} =   \tilde F $. 
	Let $(A_t)_{t\geq 0}$ be given as in
	Lemma~\ref{thm:C.1} (2).
	Then, 
	\[
		\mathbb E^\mathrm Q[A_t - A_r] \leq \varepsilon (t-r), \quad 0\leq r\leq t<\infty.
	\]
\end{lemma}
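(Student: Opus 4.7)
The plan is to reduce the estimate of $\mathbb E^\mathrm Q[A_t-A_r]$ to a deterministic calculation involving the profile $\varrho_{t,x}=F(x-\mathrm v t)$ from Lemma~\ref{thm:H.05}, which we handle by integration by parts. Applying Lemma~\ref{thm:C.1}(2) with $\psi_s=\mathbf 1_{(r,t]}(s)$ yields
\[
A_t-A_r=\iint_0^t \Pi_{s,y}[r<\rho\leq t]\,M^v(\mathrm ds\mathrm dy).
\]
The integrand is bounded by $1$, vanishes for $y\geq\mathrm v s$ (since $v_{s,y}=0$ there), and decays exponentially as $y\to-\infty$ by the first-passage density of Brownian motion to $\{y=\mathrm v s\}$. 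Combined with $v\in[0,1]$ and the boundedness of $\sigma(v)$, the Walsh-integral piece of $M^v$ has zero $\mathrm Q$-expectation, and Fubini gives
\[
\mathbb E^\mathrm Q[A_t-A_r]=\int v_{0,y}\Pi_{0,y}[r<\rho\leq t]\,\mathrm dy+\iint_0^t\mathbb E^\mathrm Q[f(v_{s,y})]\Pi_{s,y}[r<\rho\leq t]\,\mathrm ds\mathrm dy.
\]

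\textbf{Comparison $V\leq \varrho$.} Set $V_{s,y}:=\mathbb E^\mathrm Q[v_{s,y}]$. Lemma~\ref{thm:H.05} gives $f\leq\bar f$ on $[0,1]$, and the affinity of $\bar f$ upgrades Jensen to $\mathbb E^\mathrm Q[f(v_{s,y})]\leq\bar f(V_{s,y})$. Taking expectations in the mild formulation for $v$ gives the Volterra inequality
\[
V_{t,x}\leq\int G^{(\mathrm v)}_{0,y;t,x}v_{0,y}\,\mathrm dy+\iint_0^t G^{(\mathrm v)}_{s,y;t,x}\bar f(V_{s,y})\,\mathrm ds\mathrm dy,
\]
while the mild form of the linear PDE \eqref{eq:H.06} shows that $\varrho$ satisfies the same identity with equality and with $v_{0,\cdot}$ replaced by the pointwise larger $F$. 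Since $\bar f$ is affine and monotone, Picard iteration of the affine Volterra operator propagates this to $V\leq\varrho$. Combined with $v_{0,\cdot}\leq F$, this yields
\[
\mathbb E^\mathrm Q[A_t-A_r]\leq\int F(y)\Pi_{0,y}[r<\rho\leq t]\,\mathrm dy+\iint_0^t \bar f(\varrho_{s,y})\Pi_{s,y}[r<\rho\leq t]\,\mathrm ds\mathrm dy=:\mathcal J_{r,t}.
\]

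\textbf{Deterministic computation.} It remains to show $\mathcal J_{r,t}=\varepsilon(t-r)$. The function $\Psi_{s,y}:=\Pi_{s,y}[r<\rho\leq t]$ solves the backward heat equation $\partial_s\Psi+\partial_y^2\Psi=0$ on $D:=\{0\leq s\leq t,\,y<\mathrm v s\}$, with boundary value $\Psi(s,\mathrm v s^-)=\mathbf 1_{(r,t]}(s)$ and terminal value $\Psi(t,\cdot)=0$. Multiplying the PDE $\partial_s\varrho=\partial_y^2\varrho+\bar f(\varrho)$ from \eqref{eq:H.06} by $\Psi$ and integrating over $D$, the time integration by parts produces the initial contribution $-\int F(y)\Psi(0,y)\,\mathrm dy$ (the $s=t$ endpoint vanishes by $\Psi(t,\cdot)=0$, and the contribution along the moving boundary $y=\mathrm v s$ vanishes since $\varrho|_{y=\mathrm v s}=0$), while two spatial integrations by parts produce the boundary contribution $\int_0^t\Psi(s,\mathrm v s^-)\partial_y\varrho(s,\mathrm v s^-)\,\mathrm ds$ plus a bulk term $\iint\partial_y^2\Psi\cdot\varrho$ that cancels $-\iint\partial_s\Psi\cdot\varrho$ by the backward heat equation for $\Psi$. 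Using the explicit derivative $\partial_y\varrho(s,\mathrm v s^-)=F'(0^-)=-\varepsilon$ from \eqref{eq:U.5},
\[
\mathcal J_{r,t}=-\int_r^t\partial_y\varrho(s,\mathrm v s^-)\,\mathrm ds=\varepsilon(t-r).
\]

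\textbf{Main obstacle.} The principal technical difficulty is that $F$, and hence $\varrho$, grows exponentially as $y\to-\infty$, so the boundary terms at $-\infty$ in the integration by parts must be shown to vanish. This is absorbed by the Gaussian decay of $\Pi_{s,y}[\rho\leq t]$ and its $y$-derivative, inherited from the inverse-Gaussian first-passage density of drifted Brownian motion relative to the fixed-frame picture, which dominates the exponential growth of $\varrho$; a standard truncation of the spatial domain to $\{y>-N\}$ with Dirichlet data at $y=-N$ and passage to the limit $N\to\infty$ makes this argument rigorous. The same decay estimates also underlie the $L^2$-integrability needed for the zero-expectation step in the Setup paragraph and the pointwise convergence of the Picard iteration in the Comparison paragraph.
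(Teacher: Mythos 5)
Your argument is correct and reaches the same intermediate comparison $\mathbb E^\mathrm Q[v_{t,x}]\le\varrho_{t,x}$ and the same terminal identity $\varepsilon(t-r)$ as the paper, but along a genuinely different technical route. The paper establishes $\mathbb E^\mathrm Q[v_{t,x}]\le\varrho_{t,x}$ via a Feynman--Kac representation of both $\varrho$ and the intermediate majorant $\tilde v$, and it computes the boundary flux by testing the weak formulation against the bump functions $\phi^{(m)}_{s,x}=\varphi_{(x-\mathrm v s)m}$ and passing $m\to\infty$, thereby never having to control integration-by-parts boundary terms at $y=-\infty$. You instead obtain the comparison by a Gronwall/Picard argument on the Volterra inequality for $V=\mathbb E^\mathrm Q[v]$, and you compute $\mathcal J_{r,t}=\varepsilon(t-r)$ by Green's identity in the moving-boundary domain $D$ using the space-time harmonicity of $\Psi_{s,y}=\Pi_{s,y}[r<\rho\le t]$. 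This buys you a one-shot closed-form evaluation of the killing rate and makes the role of $F'(0^-)=-\varepsilon$ transparent, at the cost of having to justify the vanishing of the boundary contributions as $y\to-\infty$ (Gaussian decay of $\Psi$ and $\partial_y\Psi$ against the exponential growth of $\varrho$) and the regularity of $\Psi$ near the discontinuity of its boundary data at $s=r$.

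A few points deserve more care than you give them. First, you assert the Walsh-integral part of $M^v$ contributes zero expectation; this is in fact true (since $\Pi_{s,y}[r<\rho\le t]^2\sigma(v_{s,y})^2$ is deterministically bounded by an integrable Gaussian tail in $y$), but the paper sidesteps verifying $L^2$-integrability by writing $A_t-A_r\ge 0$, localizing the local martingale, and invoking Fatou, which yields directly the needed $\le$. Since you only need the inequality, Fatou is the cheaper route and removes the hidden integrability burden from the first display of your Setup paragraph. Second, in the comparison step, "Picard iteration of the affine Volterra operator propagates this to $V\le\varrho$" is too terse: one should note $D:=\varrho-V\ge -1$ (because $V\in[0,1]$ and $\varrho\ge 0$), so the Gronwall iterate on $m(s):=\sup_x(V_{s,x}-\varrho_{s,x})^+$ is finite and closes with $m(0)=0$; the paper's Feynman--Kac computation in Steps 4--5 of its proof delivers the same conclusion with essentially no analytic preconditions. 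Third, in the Green's identity you should observe that $\partial_y\Psi$ may be singular near the discontinuity point $(r,\mathrm v r)$ of the boundary data, but the problematic boundary term is $\partial_y\Psi\cdot\varrho$ evaluated on $\{y=\mathrm v s\}$, where $\varrho\equiv 0$ and is Lipschitz in $y$, so the product is controlled; this remark is needed to make the integration by parts rigorous. None of these gaps are fatal, but the paper's bump-function-plus-Fatou structure was chosen precisely to avoid each of them.
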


\begin{proof}
	\emph{Step 1.} 
	Let $\varrho$ be given as in \eqref{eq:H.04}.
	Note that from Lemma \ref{thm:H.05}, $\varrho$ is a solution to PDE \eqref{eq:H.06}. 
	We define 
	\[
	A^\varrho_t
	:= \iint_0^t \Pi_{s,y}[\rho \leq t] M^\varrho(\mathrm ds \mathrm dy), \quad t\geq 0,
	\] 
	the killing process of $\varrho$ at its boundary, where $M^\varrho(\mathrm ds\mathrm dy) := \varrho_{0,y} \delta_{0}(\mathrm ds)\mathrm dy+ \bar f (\varrho_{s,y})\mathrm ds \mathrm dy$.
	Similar to Lemma \ref{thm:C.1}, we can verify that $t\mapsto A^\varrho_t$ is a real-valued non-decreasing continuous function on $\mathbb R_+$, and for each $t\geq 0$ and $\phi \in \cC_\mathrm c^{1,2}(\mathbb R_+\times \mathbb R)$, it holds that
	\begin{equation}\label{eq:E.04}
		\begin{multlined}
			\int\phi_{t,x} \varrho_{t,x} \mathrm dx
			=  \iint_0^t \phi_{s,y} M^\varrho(\mathrm ds\mathrm dy) + \iint_0^t \varrho_{s,y}(\partial_s \phi_{s,y} + \partial^2_y\phi_{s,y}) \mathrm ds \mathrm dy   - \int_0^t  \phi_{s,\mathrm vs} \mathrm dA^\varrho_s.
		\end{multlined}
	\end{equation}
	
	\emph{Step 2.} We show that $A^\varrho_t =\varepsilon t$ for each $t\in \mathbb R_+$.
	To do this, we use an argument similar to the one we used for \eqref{eq:W.61}, and obtain from \eqref{eq:E.04} that
	\begin{align}
		&A^{\varrho}_t = \lim_{m\to \infty} \int_0^t  \mathrm ds \int_{\mathrm vs-\frac{1}{m}}^{\mathrm vs} \varrho_{s,y}
		\cdot	3\big(1+(y-\mathrm vs)m\big)\Big(-\mathrm vm \big(1+(y-\mathrm vs)m\big) + 2m^2\Big) \mathrm dy. 
	\end{align}
	Now we can verify from bounded convergence theorem that
	\begin{align}
		& A^\varrho_t = \lim_{m\to \infty} 
		\int_0^t \mathrm ds \int_{- 1}^0 F(u/m) \cdot 3(1+u)\big(-\mathrm v(1+u)+2m\big) \mathrm du 
		\\&  
		= \int_0^t\mathrm ds \int_{- 1}^0  F'(0-) \cdot 6 (1+u)u \mathrm du 
		= \varepsilon t.
	\end{align}
	
	For the following Steps 3-5, we fix an arbitrary $t\in \mathbb R_+$ and $x\in (-\infty, \mathrm vt]$.
	
	\emph{Step 3.}
	It holds that 
	$\mathbb E^\mathrm Q[v_{t,x}] \leq \mathbb E^\mathrm Q[\mathrm I]$ where
	\[
	\mathrm I
	:= \iint_0^t G^{\mathrm v}_{s,y;t,x} \big( v_{0,y} \delta_0(\mathrm ds) \mathrm dy + f(v_{s,y})\mathrm ds\mathrm dy\big).
	\]
	In fact note that almost surely $v_{t,x} = \mathrm{I} + \mathrm{II}_t$ where
	\[
	\mathrm {II}_u := \epsilon \iint_0^u G^{\mathrm v}_{s,y;t,x} \sigma(v_{s,y})W^v(\mathrm ds\mathrm dy), \quad u\geq 0
	\]
	is a local martingale.
	Therefore, we can choose a sequence of stopping time $(\rho_n)_{n\in \mathbb N}$ so that for each $n\in \mathbb N$, $(\mathrm {II}_{u\wedge \rho_n})_{u\geq 0}$ is a martingale; and almost surely $\rho_n \uparrow \infty$ when $n\uparrow \infty$.
	Now from the fact that $v_{t,x}$ is non-negative, we can verify from Fatou's lemma that  
	$
	\mathbb E^\mathrm Q[v_{t,x}] \leq \liminf_{n\to \infty} \mathbb E^\mathrm Q[\mathrm I + \mathrm {II}_{t\wedge \rho_n}] = \mathbb E^\mathrm Q[\mathrm I].
	$
	
	\emph{Step 4.} 
	We show that $\mathbb E^\mathrm Q[v_{t,x}] \leq \tilde v_{t,x}$ where
	\begin{align}
		&\tilde v_{t,x}: = \iint_0^t G^{\mathrm v}_{s,y;t,x} \big( v_{0,y} \delta_0(\mathrm ds) \mathrm dy +
		\bar f(\mathbb E^\mathrm Q[v_{s,y}])\mathrm ds\mathrm dy\big).
	\end{align}
	In fact, noticing from Lemma \ref{thm:H.05} that $\bar f \geq f$, we have  
	\[
	\tilde v_{t,x}= 
	\mathbb E^\mathrm Q\Big[\iint_0^t G^{\mathrm v}_{s,y;t,x} \big( v_{0,y} \delta_0(\mathrm ds) \mathrm dy + \bar f(v_{s,y})\mathrm ds\mathrm dy\big)\Big]\geq \mathbb E^\mathrm Q[\mathrm I]
	\]
	where $\mathrm I$ is given as in Step 3. 
	Now the desired result in this step follows from Step 3.
	
	\emph{Step 5.}
	It holds that $\mathbb E^\mathrm Q[v_{t,x}] \leq \varrho_{t,x}$.
	To see this, we first observe from Lemma \ref{thm:H.05} that $\varrho$ admits the following mild form
	\[
	\varrho_{t,x} = \iint_0^t G_{s,y;t,x}^{(\mathrm v)} \big(\varrho_{0,y} \delta_0(\mathrm ds) \mathrm dy + (\alpha \varrho_{s,y} + \beta) \mathrm ds\mathrm dy\big)
	\]
	where $\alpha := \theta(1-\theta)\mathrm v^2$ and $\beta := (1-\theta)\mathrm v \varepsilon$.
	Using Feynman-Kac formula (c.f. \cite[Lemma 1.5. on p.~1211]{Dynkin1993Superprocesses}) we have that 
	\[
	\varrho_{t,x} = e^{\alpha t} \iint_0^t G^{(\mathrm v)}_{s,y;t,x} e^{-\alpha s}\big(\varrho_{0,y} \delta_0(\mathrm ds) \mathrm dy +\beta \mathrm ds\mathrm dy\big).
	\]
	Similarly, using Feynman-Kac formula for $\widetilde v$, we get
	\[
	\tilde v_{t,x}
	:= e^{\alpha t}\iint_0^t G^{(\mathrm v)}_{s,y;t,x} e^{-\alpha s} \big( v_{0,y} \delta_0(\mathrm ds) \mathrm dy + ( -\alpha \tilde v_{s,y}+ \alpha \mathbb E^\mathrm Q[v_{s,y}] + \beta)\mathrm ds\mathrm dy\big).
	\]
	Observing from the above two equations and Step 4, we have that $\widetilde v_{t,x} \leq \varrho_{t,x}$.
	Using Step 4 again, we get the desired result in this step.
	
	\emph{Step 6.} We show that for any $0\leq r< t<\infty$, it holds that
	$\mathbb E^\mathrm Q[A_t-A_r]\leq A^\varrho_t-A^\varrho_r$.
	To do this, note that almost surely $0\leq A_t-A_r = \mathrm {III} + \mathrm {IV}_t$ where
	\begin{align}
		&\mathrm {III} := \iint_r^t \Pi_{s,y}[\rho \leq t] \big(v_{0,y} \delta_0(\mathrm ds) \mathrm dy + f(v_{s,y})\mathrm ds\mathrm dy\big);
		\\&\mathrm {IV}_u :=  \iint_r^{u\wedge t} \Pi_{s,y}[\rho \leq t] \sigma(v_{s,y})W^v(\mathrm ds\mathrm dy), \quad u\geq r.
	\end{align}
	Since $(\mathrm {IV}_u)_{u\geq r}$ is a local martingale, we can choose a sequence of stopping time $(\tilde \rho_n)_{n\in \mathbb N}$ so that for each $n\in \mathbb N$, $(\mathrm {IV}_{u\wedge \tilde \rho_n})_{u\geq r}$ is a martingale; and almost surely $\tilde \rho_n \uparrow \infty$ when $n\uparrow \infty$.
	From Fatou's Lemma we have $ \mathbb E^\mathrm Q[A_t - A_r] \leq \liminf_{n\to \infty} \mathbb E^\mathrm Q[\mathrm {III}+\mathrm {IV}_{t\wedge \tilde \rho_n}] = \mathbb E^\mathrm Q[\mathrm {III}].$
	From Lemma \ref{thm:H.05} that $\bar f \geq f$, Steps 1 and 5, we can verify that 
	\begin{align}
		\mathbb E^\mathrm Q[\mathrm {III}]
		\leq \iint_r^t \Pi_{s,y}[\rho \leq t] \big(\varrho_{0,y} \delta_0(\mathrm ds) \mathrm dy + \bar f(\varrho_{s,y})\mathrm ds\mathrm dy\big) 
		= A^\varrho_t - A^\varrho_r.
	\end{align}	
	The desired result in this step then follows.
	
	\emph{Final Step.}
	The desired result in this lemma follows from Steps 2 and 6.
\end{proof}

	As for showing that  $v$ is a weak solution to the SPDE \eqref{eq:H.1},
	 under  the new probability $\mathrm Q$, this will be done  with the help of the following lemma whose proof is standard and therefore is omitted (one can replicate the analogous classical proof for Brownian motions).
\begin{lemma} \label{thm:W.2}
	Suppose the conditions of Lemma \ref{thm:W.1} hold. 
		Further	suppose that there exists another $(\mathcal F_t)_{t\geq 0}$-adapted space-time white noise $W'$ which, under the probability $\mathrm P$, is independent of $W$.
	Then $W'$ is still a white noise under the probability $\mathrm Q$.  
	Moreover, for each $t\geq 0$ and $g\in \mathscr L_{\mathrm{loc}}^2$, it holds that
	\begin{equation}
		\mathrm Q\text{-}\iint_0^t g_{s,y} W'(\mathrm ds\mathrm dy)
		= \mathrm P\text{-}\iint_0^t g_{s,y} W'(\mathrm ds\mathrm dy)
	\quad \text{a.s.}
	\end{equation}
\end{lemma}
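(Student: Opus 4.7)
The plan is to combine Girsanov's theorem for continuous local martingales with the characterization of a white noise as a continuous orthogonal martingale measure. First I would introduce the density process $Z_t := \mathbb{E}^{\mathrm P}[\mathrm d\mathrm Q/\mathrm d\mathrm P \mid \mathcal F_t]$. Under $\mathrm P$, $Z$ is a continuous martingale, and applying It\^o's formula to the exponential appearing in the definition of $\mathrm d\mathrm Q/\mathrm d\mathrm P$ yields
\[
Z_t = 1 + \iint_0^t Z_s h_{s,y}\, W(\mathrm ds\, \mathrm dy),
\]
so $Z$ is realized as a Walsh stochastic integral driven by $W$ alone.

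Next, for any $A\in \mathcal B_F(\mathbb R)$, the process $t\mapsto W'_t(A)$ is a continuous $\mathrm P$-local martingale with quadratic variation $t\cdot \operatorname{Leb}(A)$. Since $W$ and $W'$ are $\mathrm P$-independent orthogonal martingale measures, one checks on simple integrands of the form $\mathbf 1_{(r,s]}\mathbf 1_{B}$ and $\mathbf 1_{(r',s']}\mathbf 1_{A}$, and then passes to the limit, to obtain $\langle W_\cdot(B), W'_\cdot(A)\rangle_t \equiv 0$ for all $B\in \mathcal B_F(\mathbb R)$. The Kunita--Watanabe identity applied to the representation of $Z$ then gives $\langle W'_\cdot(A), Z\rangle_t \equiv 0$. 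By the standard form of Girsanov's theorem, a continuous $\mathrm P$-local martingale that is $\mathrm P$-orthogonal to $Z$ remains a continuous $\mathrm Q$-local martingale with unchanged quadratic variation. Hence $W'_\cdot(A)$ is a continuous $\mathrm Q$-local martingale with $\langle W'_\cdot(A)\rangle_t^{\mathrm Q} = t\cdot \operatorname{Leb}(A)$, and by polarization $\langle W'_\cdot(A), W'_\cdot(B)\rangle_t^{\mathrm Q} = t\cdot \operatorname{Leb}(A\cap B)$. This verifies the defining properties of a white noise under $\mathrm Q$.

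For the integral identity I would first treat simple predictable integrands of the form $g_{s,y} = \sum_{i} c_i \mathbf 1_{(s_i,t_i]}(s)\mathbf 1_{A_i}(y)$ with $c_i$ bounded and $\mathcal F_{s_i}$-measurable: for such $g$, the $\mathrm P$- and $\mathrm Q$-Walsh integrals both coincide, by construction, with the pathwise sum $\sum_i c_i (W'_{t\wedge t_i}(A_i) - W'_{t\wedge s_i}(A_i))$. For a general $g\in \mathscr L^2_{\mathrm{loc}}$ a routine localization reduces matters to the case where $\iint_0^\infty g^2 \mathrm ds\, \mathrm dy$ is bounded, after which one approximates $g$ by simple integrands $g^{(n)}$ with $\iint_0^t (g - g^{(n)})^2 \mathrm ds\, \mathrm dy \to 0$ in $\mathrm P$-probability. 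Because $\mathrm Q \ll \mathrm P$, the same convergence holds in $\mathrm Q$-probability; the Burkholder--Davis--Gundy inequality then forces the Walsh integrals of $g^{(n)}$ to converge in both $\mathrm P$- and $\mathrm Q$-probability to the respective stochastic integrals. Since the approximating sequence is identical under the two equivalent measures, the $\mathrm P$- and $\mathrm Q$-limits must coincide almost surely.

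The main obstacle I anticipate is giving a clean justification of the orthogonality $\langle W'_\cdot(A), Z\rangle_\mathrm P \equiv 0$ in the Walsh framework, where $h$ may be an arbitrary predictable random field satisfying the Novikov-type condition. The cleanest route is to approximate $h$ by elementary predictable integrands $h^{(n)}$ so that the corresponding exponential martingales $Z^{(n)}$ converge to $Z$ uniformly on compacts in probability; for each $h^{(n)}$, the cross-variation with $W'_\cdot(A)$ vanishes because of the $\mathrm P$-independence of $W$ and $W'$ on disjoint spatial rectangles, and the vanishing persists in the limit via the Kunita--Watanabe inequality. Everything else reduces to standard approximation arguments.
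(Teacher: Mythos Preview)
Your proposal is correct and follows essentially the same route as the paper's (commented-out) argument: introduce the density martingale $Z$ as a Walsh integral against $W$, use independence of $W$ and $W'$ to get $\langle W'_\cdot(A), Z\rangle^{\mathrm P}\equiv 0$, and invoke Girsanov to conclude that $W'$ keeps its white-noise structure under $\mathrm Q$. The only cosmetic difference is in the identification of the integrals: the paper shows that the map $\Psi(g)=\mathrm P\text{-}\iint g\,\mathrm dW'$ lands in $\mathscr M_{\mathrm{loc}}^{\mathrm Q}$ and then appeals to the \emph{uniqueness} of the Walsh integral (linearity, action on elementary integrands, correct quadratic variation, compatibility with stopping) to conclude $\Psi$ equals the $\mathrm Q$-integral, whereas you rebuild the $\mathrm Q$-integral by approximation and use $\mathrm Q\ll\mathrm P$ to transfer convergence---both are standard and equally clean. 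Your worry about justifying $\langle W'_\cdot(A),Z\rangle^{\mathrm P}=0$ is not a real obstacle: independence of $W$ and $W'$ gives $\langle W_\cdot(B),W'_\cdot(A)\rangle^{\mathrm P}=0$ for all $B$, and since $Z-1$ is a Walsh integral against $W$, the vanishing cross-variation with $W'_\cdot(A)$ follows immediately for simple integrands and passes to the limit by Kunita--Watanabe; no approximation of $h$ is needed.
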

	
	We are now ready to give the proof of Proposition \ref{thm:W}.

\begin{proof}[Proof of Proposition \ref{thm:W}]
\emph{Step 1.} 
	Noticing from \eqref{eq:U.4} that $\nu \varepsilon L \leq 1/4$, and the fact that for any $x,y \in [0,1/4]$, 
\[ 
	|f(y) - f(x)|
	= |y^p - x^p|
	= \Big|\int_x^y p z^{p-1}\mathrm dz \Big| 
	\leq \int_0^{|y-x|} p z^{p-1}\mathrm dz 
	= | y - x |^p,
\]
	we have almost surely for each $(t,x)\in \mathbb R_+\times \mathbb R$,
\begin{align}
	& h_{t,x}
	:= \frac{f^w_{t,x}}{\epsilon \sigma^w_{t,x}} \mathbf 1_{\sigma^w_{t,x}>0, t\leq T}
	\\& \leq \frac{|f(v_{t,x}+w_{t,x}) - f(v_{t,x})|\mathbf 1_{x \in (-L, \mathrm v T + L), v_{t,x} + w_{t,x} \leq \nu \varepsilon L} }{\epsilon \sqrt{w_{t,x} / 2 }} \mathbf 1_{w_{t,x}>0, t\leq T}
	\\ &\leq \sqrt{2}\epsilon^{-1}w_{t,x}^{p-\frac{1}{2}} \mathbf 1_{x\in (-L, \mathrm vT + L), w_{t,x} \leq \nu \varepsilon L, t\leq T}
	\leq \sqrt{2} \epsilon^{-1} (\nu \varepsilon L)^{p - \frac{1}{2}} \mathbf 1_{x\in (-L, \mathrm vT + L), t\leq T}. 
\end{align}
	
\emph{Step 2.} 
We construct a probability measure $\mathrm Q$ on $(\Omega, \mathcal G)$ such that 
\begin{align} \label{eq:W.4}
	\mathrm d\mathrm Q 
	= \exp\Big\{-\iint_0^\infty h_{s,y} W^w(\mathrm ds\mathrm dy) - \frac{1}{2} \iint_0^\infty h^2_{s,y} \mathrm ds\mathrm dy\Big\} \mathrm d\mathrm P.
\end{align}
We can do this thanks to Step 1 that gives
\begin{equation} 
	\mathbb E^{\mathrm P}\Big[\exp\Big\{\frac{1}{2} \iint_0^\infty h_{s,y}^2\mathrm ds\mathrm dy\Big\}\Big]
	<\infty.
\end{equation}
	
\emph{Step 3.} 
	We verify 
that for any $\phi\in C^\infty_{\rm c}(\mathbb R_+\times \mathbb R)$ and $t\in [0,T]$, almost surely 
\begin{equation}
\begin{multlined}
	\int \phi_{t,x}w_{t,x} \mathrm dx 
= \iint_0^t 
w_{s,y}
(\partial_s \phi_{s,y} + \partial_y^2 \phi_{s,y}) \mathrm ds\mathrm dy  + \int_0^t \phi_{s,\mathrm vs}\mathrm dA_s + {}
\\ \mathrm Q\text{-}\iint_0^t \phi_{s,y}\epsilon \sigma^w_{s,y} \tilde W^w(\mathrm ds\mathrm dy)
\end{multlined}
\end{equation}
where $\tilde W^w$ is a white noise on the filtered probability space $(\Omega, \mathcal G, (\mathcal F_t)_{t\geq 0}, \mathrm Q)$ given as in Lemma \ref{thm:W.1} so that 
\[
	\mathrm Q\text{-}\iint_0^\cdot g_{s,y} \tilde W^w(\mathrm ds\mathrm dy) 
	=\mathrm P\text{-}\iint_0^\cdot g_{s,y}W^w(\mathrm ds\mathrm dy) + \iint_0^\cdot h_{s,y} g_{s,y} \mathrm ds\mathrm dy,
	\quad \text{a.s.}
	\quad g\in \mathscr L_\mathrm{loc}^2.
\]
\begin{note}
for each $t\geq 0$ and each Borel subset $A$ of $\mathbb R$ with finite Lebesgue measure,
\begin{align} 
	& \tilde W^w_t(A):=W^w_t(A) + \iint_0^t \mathbf 1_{y\in A}h_{s,y}\mathrm ds\mathrm dy.  
\end{align} 
In fact, from Lemma \ref{thm:W.1} we know that $\tilde W^w$ is indeed an $\mathcal F$-white noise \footnote{change the filtration notation here} under probability $\mathrm Q$.
	Also from Lemma \ref{thm:W.1} we have for each $\phi\in C^\infty_{\rm c}(\mathbb R_+\times \mathbb R)$ and $t\in [0,T]$ almost surely
\begin{align}
	&\epsilon \iint_0^t \phi_{s,x}\sigma^w_{s,x} \tilde W^w(\mathrm ds\mathrm dx)
	= \epsilon \iint_0^t \phi_{s,x} \sigma^w_{s,x} W^w(\mathrm ds\mathrm dx) + \epsilon \iint_0^t \phi_{s,x} \sigma^w_{s,x} h_{s,x}\mathrm ds\mathrm dx
	\\&	= \epsilon \iint_0^t \phi_{s,x} \sigma^w_{s,x} W^w(\mathrm ds\mathrm dx) + \iint_0^t \phi_{s,x} f^w_{s,x} \mathbf 1_{\sigma^w_{s,x}>0, s\leq  T}\mathrm ds\mathrm dx
	\\& = \epsilon \iint_0^t \phi_{s,x} \sigma^w_{s,x} W^w(\mathrm ds\mathrm dx) + \iint_0^t \phi_{s,x} f^w_{s,x} \mathrm ds\mathrm dx.
\end{align}
	From this and \eqref{eq:H.3} we get the desired result in this step.
\end{note}

	\emph{Step 4.} We will show that for each $t\geq 0$ and non-negative continuous function $\psi$ on $\mathbb R_+$ the following holds:
\begin{equation} \label{eq:W.5}
	\mathbb E^\mathrm Q\Big[\int_0^t \psi_s \mathrm dA_s\Big]
	\leq \varepsilon \int_0^t \psi_s \mathrm ds. 
\end{equation}
To see this, we verify from
Lemma~\ref{thm:W.2} that with respect to the filtered probability space $(\Omega, \mathcal G, (\mathcal F_t)_{t\geq 0},\mathrm Q)$:
\begin{itemize}
	\item $W^v$ is still a white noise;
	\item
		$v$ is still a weak solution to the SPDE \eqref{eq:H.1} with $v_{0,\cdot} =  \tilde F $;
	\item $(A_t)_{t\geq 0}$ is still the killing process of $v$; see
	Lemma~\ref{thm:C.1} (2).
\end{itemize}
	Therefore from
	Lemma~\ref{thm:W.3}, we have 
	$\mathbb E^\mathrm Q[A_t-A_r]\leq \varepsilon(t-r)$ 
	for each $0\leq r\leq t<\infty$. 
	From this we can verify that \eqref{eq:W.5} holds for each $t\geq 0$ and each non-negative $\psi\in\mathscr K$ where 
		\begin{equation}\label{eq:T.63}
		\begin{multlined}
			\mathscr K := \Big\{ \sum_{k\in \mathbb N} n_k \mathbf 1_{(t_k,t_{k+1}]}:  (n_k)_{k\in \mathbb N} \subset \mathbb R \text{ is bounded}, 
			\\ (t_k)_{k\in \mathbb N} \subset \mathbb R_+\text{ is unbounded and strictly increasing}\Big\}.
		\end{multlined}
	\end{equation}
	Now the desired result in this step follows from monotone convergence theorem and the fact that for any non-negative continuous function $\psi$ on $\mathbb R_+$ there exists a non-negative sequence $(\psi^{(n)})_{n\in  \mathbb N}\subset \mathscr K$ such that $\psi^{(n)}\uparrow \psi$ pointwise as $n\uparrow \infty$.
	
	\emph{Step 5.} We will show that $\mathrm Q(\tau_1 < T) \leq 2^{14}\gamma$.
	Note that almost surely
\begin{align}
	\epsilon \sqrt{w_{t,x}}
	\geq \epsilon \sigma^w_{t,x}
	\geq \frac{\epsilon}{\sqrt{2}} \sqrt{w_{t,x}},
	\quad t\geq 0, x\in \mathbb R.
\end{align}
	So from Step 3, Step 4, and Proposition \ref{thm:S}, we get that 
\begin{align}
		& \mathrm Q(\tau_1 < T) 
		\leq
		\mathbb E^\mathrm Q
		\Big[ \iint_0^T \big(\zeta_{T - s,(\mathrm vT+L)-x}^{\epsilon/\sqrt{2}} + \zeta^{\epsilon/\sqrt{2}}_{T -s, x - (-L)}\big)  \delta_{\mathrm vs}(\mathrm dx) \mathrm dA_s\Big]
		\\ &\leq \varepsilon  \int_0^T \big(\zeta_{T - s,(\mathrm vT+L) -\mathrm vs}^{\epsilon/\sqrt{2}} + \zeta_{T -s, \mathrm vs -  (-L)}^{\epsilon/\sqrt{2}}\big)  \mathrm ds
		\leq 2\varepsilon \int_0^T \zeta_{T -s, L}^{\epsilon/\sqrt{2}} \mathrm ds.
\end{align}
	Here in the last inequality, we used the fact that for any given $s\geq 0$,  the map $x\mapsto \zeta^{\epsilon/\sqrt{2}}_{s,x}$ is non-increasing on $\mathbb R$.
	Now we have
\begin{align}	
		&\mathrm Q(\tau_1 < T) 
		\leq  2\varepsilon \int_0^T \frac{2^9 {(T-s)}^{1/2}}{\epsilon^2  L^3} e^{- \frac{L^2}{2^4(T-s)}} \mathrm ds 
		=   \frac{2^{10}\gamma}{L^3}  \int_0^T s^{1/2}e^{- \frac{L^2}{8s}} \mathrm ds 
	      \\& \leq  \frac{2^{10}\gamma T^{5/2}}{L^3} \frac{2^4}{L^2}\int_{s=0}^{s=T} e^{- \frac{L^2}{2^4s}} \mathrm d(-\frac{L^2}{2^4s}) 
		\leq  \frac{2^{14}\gamma T^{5/2}}{L^5}  e^{- \frac{L^2}{2^4T}} 
		\leq  2^{14}\gamma. 
\end{align}	
	
\emph{Final Step.} 
	Noticing that $\tilde W^w$ is a white noise under $\mathrm Q$, we can verify that for each $q \in [1,\infty)$ the expectation of
\begin{equation}
	m^{(q)}
	:=\exp\Big\{q\iint_0^\infty h_{s,y} \tilde W^w (\mathrm ds\mathrm dy)- \frac{q^2}{2}\iint_0^\infty h_{s,y}^2 \mathrm ds\mathrm dy\Big\}
\end{equation}
	under $\mathrm Q$ equals to $1$. 
Also note from \eqref{eq:W.4} and Lemma \ref{thm:W.1} we have that
\begin{align}
	&\frac{\mathrm d \mathrm P}{\mathrm d \mathrm Q}
	= \exp\Big\{\iint_0^\infty h_{s,y}W^w(\mathrm ds\mathrm dy)+ \frac{1}{2} \iint_0^\infty h_{s,y}^2 \mathrm ds\mathrm dy\Big\}
	\\& = \exp\Big\{\iint_0^\infty h_{s,y}\tilde W^w(\mathrm ds\mathrm dy)- \frac{1}{2} \iint_0^\infty h_{s,y}^2 \mathrm ds\mathrm dy\Big\}
	= m^{(1)}.
\end{align}
	Now we can verify using Cauchy–Schwartz inequality that
\begin{align} 
	& \mathrm P(\tau_1 < T) 
	= \mathbb E^\mathrm Q[\mathbf 1_{\{\tau_1 < T\}} m^{(1)}] 
	\leq \mathrm Q(\tau_1 < T)^{\frac{1}{2}} \mathbb E^\mathrm Q \big[\big(m^{(1)}\big)^2 \big]^{\frac{1}{2}}
	\\&= \mathrm Q(\tau_1 < T)^{\frac{1}{2}} \mathbb E^\mathrm Q\Big[ m^{(2)} \exp\Big\{\iint_0^\infty h_{s,y}^2 \mathrm ds\mathrm dy\Big\}\Big]^{\frac{1}{2}}.
\end{align}
Finally, using \eqref{eq:U.1}, \eqref{eq:U.2}, \eqref{eq:U.3} and Steps 1, 5 we have that 
\begin{align}
	&\mathrm P(\tau_1 < T)
	\leq \mathrm Q(\tau_1 < T)^{\frac{1}{2}} \exp\{ (\mathrm vT+2L)T \epsilon^{-2} (\nu \varepsilon L)^{2p-1}\}
      \\&= \mathrm Q(\tau_1 < T)^{\frac{1}{2}} \exp\{ 3\gamma \kappa^{2p-2} \nu^{2p-1} \}
\leq 2^{7} \sqrt{\gamma}  \exp\{ 3\gamma \nu\}
	\leq 1/8.
	\qedhere
\end{align}
\end{proof}
\begin{note}
	We now give the proof of Lemma \ref{thm:W.3}.
\begin{proof}[Proof of Lemma \ref{thm:W.3}]
	\emph{Step 1.} 
	Let $\varrho$ be given as in \eqref{eq:H.04}.
	Note that from Lemma \ref{thm:H.05}, $\varrho$ is a solution to PDE \eqref{eq:H.06}. 
	We define 
\[
	A^\varrho_t
	:= \iint_0^t \Pi_{s,y}[\rho \leq t] M^\varrho(\mathrm ds \mathrm dy), \quad t\geq 0,
\] 
	the killing process of $\varrho$ at its boundary, where $M^\varrho(\mathrm ds\mathrm dy) := \varrho_{0,y} \delta_{0}(\mathrm ds)\mathrm dy+ \bar f (\varrho_{s,y})\mathrm ds \mathrm dy$.
	Similar to Lemma \ref{thm:C.1}, we can verify that $t\mapsto A^\varrho_t$ is a real-valued non-decreasing continuous function on $\mathbb R_+$; and for each $t\geq 0$ and $\phi \in C_\mathrm c^{1,2}(\mathbb R_+\times \mathbb R)$, it holds that
\begin{equation}\label{eq:E.04}
\begin{multlined}
	\int\phi_{t,x} \varrho_{t,x} \mathrm dx
	=  \iint_0^t \phi_{s,y} M^\varrho(\mathrm ds\mathrm dy) + \iint_0^t \varrho_{s,y}(\partial_s \phi_{s,y} + \partial^2_y\phi_{s,y}) \mathrm ds \mathrm dy   - \int_0^t  \phi_{s,\mathrm vs} \mathrm dA^\varrho_s.
\end{multlined}
\end{equation}
	
	\emph{Step 2.} We show that $A^\varrho_t =\varepsilon t$ for each $t\in \mathbb R_+$.
	To do this, we use an argument similar to the one we used for \eqref{eq:W.61}, and obtain from \eqref{eq:E.04} that
\begin{align}
		&A_t = \lim_{m\to \infty} \int_0^t  \mathrm ds \int_{\mathrm vs-\frac{1}{m}}^{\mathrm vs} \varrho_{s,y}\Big(-2\mathrm vm \big(1+(y-\mathrm vs)m\big) + 2m^2\Big) \mathrm dy.
\end{align}
	Now we can verify from bounded convergence theorem that
\begin{align}
	& A^\varrho_t = \lim_{m\to \infty} \int_0^t \mathrm ds \int_{- \frac{1}{m}}^0 \big(2m^2 - 2\mathrm v m(1+ ym) \big) F(y) \mathrm dy
	\\&  = \lim_{m\to \infty} \int_0^t \mathrm ds \int_{- 1}^0 \big(2m - 2\mathrm v \big(1+ z\big) \big) F\Big(\frac{z}{m}\Big) \mathrm dz 
	= \int_0^t\mathrm ds \int_{- 1}^0 2z F'(0-) \mathrm dz 
	= \varepsilon t.
\end{align}

For the following Steps 3-5, we fix an arbitrary $t\in \mathbb R_+$ and $x\in (-\infty, \mathrm vt]$.

\emph{Step 3.}
	It holds that 
	$\mathbb E^\mathrm Q[v_{t,x}] \leq \mathbb E^\mathrm Q[\mathrm I]$ where
	\[
	\mathrm I
	:= \iint_0^t G^{\mathrm v}_{s,y;t,x} \big( v_{0,y} \delta_0(\mathrm ds) \mathrm dy + f(v_{s,y})\mathrm ds\mathrm dy\big).
\]
	In fact note that almost surely $v_{t,x} = \mathrm{I} + \mathrm{II}_t$ where
\[
	\mathrm {II}_u := \epsilon \iint_0^u G^{\mathrm v}_{s,y;t,x} \sigma(v_{s,y})W^v(\mathrm ds\mathrm dy), \quad u\geq 0
\]
	is a local martingale.
	Therefore, we can choose a sequence of stopping time $(\rho_n)_{n\in \mathbb N}$ so that for each $n\in \mathbb N$, $(\mathrm {II}_{u\wedge \rho_n})_{u\geq 0}$ is a martingale; and almost surely $\rho_n \uparrow \infty$ when $n\uparrow \infty$.
	Now from the fact that $v_{t,x}$ is non-negative, we can verify from Fatou's lemma that  
$
	\mathbb E^\mathrm Q[v_{t,x}] \leq \liminf_{n\to \infty} \mathbb E^\mathrm Q[\mathrm I + \mathrm {II}_{t\wedge \rho_n}] = \mathbb E^\mathrm Q[\mathrm I].
$
	
	\emph{Step 4.} 
	It holds that $\mathbb E^\mathrm Q[v_{t,x}] \leq \tilde v_{t,x}$ where
	\begin{align}
		&\tilde v_{t,x}: = \iint_0^t G^{\mathrm v}_{s,y;t,x} \big( v_{0,y} \delta_0(\mathrm ds) \mathrm dy +
		\bar f(\mathbb E^\mathrm Q[v_{s,y}])\mathrm ds\mathrm dy\big).
	\end{align}
	In fact, noticing from Lemma \ref{thm:H.05} that $\bar f \geq f$, we have  
\[
	\tilde v_{t,x}= 
	\mathbb E^\mathrm Q\Big[\iint_0^t G^{\mathrm v}_{s,y;t,x} \big( v_{0,y} \delta_0(\mathrm ds) \mathrm dy + \bar f(v_{s,y})\mathrm ds\mathrm dy\big)\Big]\geq \mathbb E^\mathrm Q[\mathrm I]
\]
	where $\mathrm I$ is given as in Step 3. 
	Now the desired result in this step follows from Step 3.
	
	\emph{Step 5.}
	It holds that $\mathbb E^\mathrm Q[v_{t,x}] \leq \varrho_{t,x}$.
	To see this, we first observe from Lemma \ref{thm:H.05} that $\varrho$ admits the following mild form
\[
	\varrho_{t,x} = \iint_0^t G_{s,y;t,x}^{(\mathrm v)} \big(\varrho_{0,y} \delta_0(\mathrm ds) \mathrm dy + (\alpha \varrho_{s,y} + \beta) \mathrm ds\mathrm dy\big)
\]
	where $\alpha := \theta(1-\theta)\mathrm v^2$ and $\beta := (1-\theta)\mathrm v \varepsilon$.
	Using Feynman-Kac formula (c.f. \cite[Lemma 1.5. on p.~1211]{Dynkin1993Superprocesses}) we have that 
\[
	\varrho_{t,x} = e^{\alpha t} \iint_0^t G^{(\mathrm v)}_{s,y;t,x} e^{-\alpha s}\big(\varrho_{0,y} \delta_0(\mathrm ds) \mathrm dy +\beta \mathrm ds\mathrm dy\big).
\]
	Similarly, using Feynman-Kac formula for $\widetilde v$, we get
\[
	\tilde v_{t,x}
	:= e^{\alpha t}\iint_0^t G^{(\mathrm v)}_{s,y;t,x} e^{-\alpha s} \big( v_{0,y} \delta_0(\mathrm ds) \mathrm dy + ( -\alpha \tilde v_{s,y}+ \alpha \mathbb E^\mathrm Q[v_{s,y}] + \beta)\mathrm ds\mathrm dy\big).
\]
	Observing from the above two equations and Step 4, we have that $\widetilde v_{t,x} \leq \varrho_{t,x}$.
	Using Step 4 again, we get the desired result in this step.
	
	\emph{Step 6.} We show that for any $0\leq r< t<\infty$, it holds that
	$\mathbb E^\mathrm Q[A_t-A_r]\leq A^\varrho_t-A^\varrho_r$.
	To do this, note that almost surely $0\leq A_t-A_r = \mathrm {III} + \mathrm {IV}_t$ where
\begin{align}
	&\mathrm {III} := \iint_r^t \Pi_{s,y}[\rho \leq t] \big(v_{0,y} \delta_0(\mathrm ds) \mathrm dy + f(v_{s,y})\mathrm ds\mathrm dy\big);
	\\&\mathrm {IV}_u :=  \iint_r^{u\wedge t} \Pi_{s,y}[\rho \leq t] \sigma(v_{s,y})W^v(\mathrm ds\mathrm dy), \quad u\geq r.
\end{align}
	Since $(\mathrm {IV}_u)_{u\geq r}$ is a local martingale, we can choose a sequence of stopping time $(\tilde \rho_n)_{n\in \mathbb N}$ so that for each $n\in \mathbb N$, $(\mathrm {IV}_{u\wedge \tilde \rho_n})_{u\geq r}$ is a martingale; and almost surely $\tilde \rho_n \uparrow \infty$ when $n\uparrow \infty$.
	From Fatou's Lemma we have $ \mathbb E^\mathrm Q[A_t - A_r] \leq \liminf_{n\to \infty} \mathbb E^\mathrm Q[\mathrm {III}+\mathrm {IV}_{t\wedge \tilde \rho_n}] = \mathbb E^\mathrm Q[\mathrm {III}].$
	From Lemma \ref{thm:H.05} that $\bar f \geq f$, Steps 1 and 5, we can verify that 
\begin{align}
	\mathbb E^\mathrm Q[\mathrm {III}]
	\leq \iint_r^t \Pi_{s,y}[\rho \leq t] \big(\varrho_{0,y} \delta_0(\mathrm ds) \mathrm dy + \bar f(\varrho_{s,y})\mathrm ds\mathrm dy\big) 
	= A^\varrho_t - A^\varrho_r.
\end{align}	
	The desired result in this step then follows.
	
\emph{Final Step.}
	The desired result in this lemma follows from Steps 2 and 6.
\end{proof}
\end{note}

\section{proof of Proposition \ref{thm:S}} \label{sec:S}
	In this section we will give the proof of Proposition \ref{thm:S} following a strategy similar to that used in \cite[Proof of Proposition 3.2]{Tribe1995Large}.
	Notice that, in the special case when $\sigma = \sqrt{\tilde w}$, the solution $\tilde w$ to the SPDE \eqref{eq:W.1} can be considered as the density of a super-Brownian motion with space-time immigration $\mu$. 
	Next lemma deals with properties of the solutions to the so-called log-Laplace equations which play very important role 
	in studying properties of superprocesses (see e.g.~\cite{Iscoe1986AWeighted}).
	In the general case when the noise coefficient $\sigma$ is comparable to $\sqrt{\tilde w}$, we can still use this log-Laplace equation to obtain properties of the random field $\tilde w$.  
\begin{lemma}\label{thm:S.1}
	Let $\tilde T>0, \vartheta> 0$ and $\psi\in \cC_0^2(\mathbb R)$ be non-negative.
	There exists a unique non-negative $\phi \in \cC_{\mathrm b}^{1,2}([0,\tilde T] \times \mathbb R)$ such that
	\begin{equation} \label{eq:S.2}
		\begin{cases}
			\partial_t \phi_{t,x} =- \partial_x^2 \phi_{t,x} +  \frac{1}{2}(\vartheta \phi_{t,x})^2  - \psi_{x}, &\quad (t,x)\in [0,\tilde T]\times \mathbb R; 
			\\ \phi_{\tilde T,x} = 0,&\quad x\in \mathbb R.
		\end{cases}
	\end{equation}
	Furthermore, if $b\in \mathbb R$ and $\psi = 0$ on $(-\infty,b]$, then
\begin{equation}
\label{eq:S.2aa}
\phi_{t,x} \leq \zeta^\vartheta_{\tilde T-t,b-x}, \quad (t,x)\in [0,\tilde T]\times \mathbb R
\end{equation}
		where
	\begin{equation} \label{eq:W.15}
		\zeta_{s,y}^{\vartheta} =
		\begin{cases}
			0, & s\geq 0, y =\infty;
			\\ \frac{2^8 \sqrt{s}}{\vartheta^2 y^3} e^{- \frac{y^2}{2^4s}}, & s\geq 0,y>0;
			\\ \infty, & s\geq 0, y\leq 0.
		\end{cases}
	\end{equation}
\end{lemma}

\begin{proof}
	The existence and uniqueness for~\eqref{eq:S.2} is given in~\cite{Iscoe1986AWeighted}.
	\begin{note}
		\emph{Step 1.} We show the existence.
		Note that the one dimensional Brownian motion $B$ is an $\mathbb R$-valued Feller process with generator $\partial_x^2$, a closed operator on the Banach space $\cC_0(\mathbb R)$ with domain $\cC_0^2(\mathbb R)$; See \cite[Theorem 31.5]{Sato1999Levy} and \cite[Excisece 19.13]{Kallenberg2002Foundations} for example.   
		From $\psi \in \cC_0^2(\mathbb R)$ and \cite[Theorem 3.1]{Iscoe1986AWeighted}, we know that there exists a continuous map $t\mapsto \phi_t$ from $[0,T]$ to $\cC_0(\mathbb R)$ satisfying  
		\begin{itemize}
			\item it is continuous differentiable, i.e. there exists a continuous map $t\mapsto \dot \phi_t$ from $[0,T]$ to $\cC_0(\mathbb R)$ so that 
			$
			\lim_{t'\to t, t'\in [0,T]}\|\frac{\phi_{t'}-\phi_{t}}{t'-t}-\dot \phi_t\|_{\infty} = 0;
			$
			\item for each $t\in [0,T]$, it holds that $\phi_t\in C^2_0(\mathbb R)$ and that $\dot \phi_t = -\partial_x^2\phi_t +\frac{1}{2}(\theta \phi_t)^2 -\psi$.
		\end{itemize}	
		It can be verified that the map $\phi:(t,x)\mapsto \phi_t(x)$ belongs to $\cC_{\mathrm b}^{1,2}([0,T]\times\mathbb R)$ and solves the PDE \eqref{eq:S.2}.
		
		To see $\phi$ is non-negative we need a probabilistic representation of $\phi$ using theory of super Brownian motion.
		Let $M_F(\mathbb R)$ be the space of finite Borel measures on $\mathbb R$ equipped with the weak topology.
		It is known (see \cite[Section 2]{Li2010Measure-valued} for example) that there exists an $M_F(\mathbb R)$-valued Markov process $\{(X_t)_{t\geq 0}; (\mathbb P_{\mu})_{\mu \in M_F(\mathbb R)}\}$ whose transition kernel is characterized by
		\[
		\mathbb E_\mu\Big[\exp\Big\{\int g_x X_t(\mathrm dx)\Big\}\Big]
		= \exp\Big\{-\int V^g_{t,x} \mu(\mathrm dx)\Big\}, 
		\quad t\geq 0, g\in \mathrm b\mathscr B(\mathbb R),
		\]
		where for each $T \geq 0$ and $g\in \mathrm b\mathscr B(\mathbb R)$, $(V^g_{t,x})_{(t,x)\in [0,T]\times \mathbb R}$ is the unique non-negative bounded Borel function on $[0,T]\times \mathbb R$ satisfying that
		\[
		V^g_{t,x}
		= \int G_{0,y;t,x} g_y \mathrm dy -  \iint_0^t G_{s,y;t,x} ( V^g_{s, y})^2 \mathrm ds \mathrm dy, \quad (t,x)\in [0,T]\times \mathbb R.
		\]	
		Such process $X$ is known as the super-Brownian motion. 
		Now, according to \cite[Theorem 3.1]{Iscoe1986AWeighted}, $\phi$ has the representation
		\begin{equation} \label{eq:W.091}
			\frac{1}{2}\theta^2\phi_{s,y} = - \log \mathbb E_{s,\delta_y} \Big[\exp\Big\{- \int_0^{T} \mathrm dt \int \frac{1}{2}\theta^2\psi_x X_t(\mathrm dx)\Big\}\Big], \quad s\geq 0, y\in \mathbb R.
		\end{equation}
		It is clear from this representation that $\phi$ is non-negative.
		
		\emph{Step 2.} We show the uniqueness.
		Let $\phi$ be any non-negative $\cC_{\mathrm b}^{1,2}([0,T]\times \mathbb R)$ solution to \eqref{eq:S.2}.
		Fix $(s,y)\in [0,T]\times \mathbb R$.
		Let $(B_t)_{t\geq s}$ be a one-dimensional Brownian motion with generator $\partial_x^2$ initiated at time $s$ and position $y$ with its probability denoted as $\Pi_{s,y}$.
		Using Ito's formula \cite[Theorem 3.3 and Remark 1 on p.~147]{RevuzYor1999Continuous} we can derive that almost surely,
		\begin{equation}
			\phi_{t, B_t} - \phi_{s,B_s} = \int_s^t \Big( \frac{1}{2} \theta \phi^2_{r,B_r} - \psi_{B_r} \Big) \mathrm dr + \int_s^t \partial_x\phi_{r,x}|_{x=B_r}\mathrm d B_r,
			\quad t\in [s,T].
		\end{equation}
		From the fact that $\phi\in \cC_\mathrm b^{1,2}([0,T]\times \mathbb R)$, the second integral on the right hand side of the above equation is a martingale on $[s,T]$. 
		Using the optional sampling theorem \cite[Corollary 17.8]{Kallenberg2002Foundations}, for each optional time $\tilde \rho\in [s,T]$, defined on the probability space where $B$ is defined, we have
		\begin{equation} \label{eq:W.10}
			\phi_{s,y} = \Pi_{s,y} \Big[\phi_{\tilde \rho,B_{\tilde \rho}} - \int_s^{\tilde \rho} \Big( \frac{1}{2} \theta \phi^2_{r,B_r} - \psi_{B_r} \Big) \mathrm dr \Big].
		\end{equation}
		From \cite[Theorem 5.15]{Li2010Measure-valued} the uniqueness of $\phi$ follows.
		
	\end{note}
 Note that although the  proof of the upper bound~\eqref{eq:S.2aa} is also pretty standard (see e.g. derivation of (5) in the proof of Proposition 3.2 in \cite{Tribe1995Large}, or the relevant steps in 
	the proof of Lemma~2.6 in~\cite{MuellerMytnikRyzhik2019TheSpeed}),  we decided to include it for the sake of completeness.
	
	We give the upper bound for $\phi$ in~\eqref{eq:S.2aa} provided $\psi = 0$ on $(-\infty, b]$ for an arbitrary $b\in \mathbb R$.
	First, using the connection between solutions to \eqref{eq:S.2aa} and super-Brownian motion and  due to \cite[Theorem 1]{Iscoe1988OnTheSupports}
	we can derive 
	\begin{equation}\label{eq:W.11}
		\frac{1}{2}\vartheta^2\phi_{s,y} 
		\leq \frac{9}{(y-b)^2},
		\quad (s,y)\in [0,T]\times (-\infty, b).
	\end{equation}
	Now, let $(B_t)_{t\geq s}$ be a one-dimensional Brownian motion with generator $\partial_x^2$ initiated at time $s$ and position $y$; it  induces the probability measure $\Pi_{s,y}$  on the canonical path space. 
	Then, from the fact that $\phi\in \cC_\mathrm b^{1,2}([0,T]\times \mathbb R)$, we can use Ito's formula and the optional sampling theorem to get 
	\begin{equation} \label{eq:W.10a}
		\phi_{s,y} = \Pi_{s,y} \Big[\phi_{\tilde \rho,B_{\tilde \rho}} - \int_s^{\tilde \rho} \Big( \frac{1}{2} \vartheta \phi^2_{r,B_r} - \psi_{B_r} \Big) \mathrm dr \Big]
	\end{equation}
	for each optional time $\tilde \rho\in [s,T]$, defined on the probability space where $B$ is defined.
	Choose an arbitrary $z\in (y, b)$. 
	Denote by $\rho_z$ the first time for the Brownian motion $B$ hitting $\{z\}$. 
	Replacing $\tilde \rho$ in \eqref{eq:W.10a} by $T\wedge \rho_z$, we get from \eqref{eq:W.11} that
	\begin{equation}\label{eq:W.077}
		\phi_{s,y} 
		\leq \Pi_{s,y} [\phi_{T\wedge \rho_z,B_{T\wedge \rho_z}}]
		\leq \frac{18}{\vartheta^2(z - b)^2} \Pi_{s,y} (\rho_z<T).
	\end{equation}
	From the reflecting principle we have
	\begin{align} 
		&\Pi_{s,y} (\rho_z<T) 
		=2\Pi_{0,0}[B_{T-s} \geq z-y]
		=2\int_{z-y}^\infty \frac{1}{\sqrt{4\pi(T-s)}} e^{- \frac{u^2}{4(T-s)}} \mathrm du
		\\& \label{eq:W.078}\leq 2\int_{z-y}^\infty \frac{1}{\sqrt{4\pi(T-s)}} \frac{u}{z-y}e^{- \frac{u^2}{4(T-s)}} \mathrm du
		\leq \frac{2}{\sqrt{\pi}}\frac{\sqrt{T-s}}{z-y} e^{ - \frac{(z-y)^2}{4(T-s)}}.
	\end{align}
	Note that $z\in (y, b)$ is chosen arbitrarily. 
	So taking $z = \frac{y+b}{2}$ in \eqref{eq:W.077} and \eqref{eq:W.078}, we get 
	\begin{align}
		&\phi_{s,y}  
		\leq \frac{18}{\vartheta^2(z - b)^2}\frac{2}{\sqrt{\pi}}\frac{\sqrt{T-s}}{z-y} e^{ - \frac{(z-y)^2}{4(T-s)}}
		\leq \frac{2^8}{\vartheta^2}\frac{\sqrt{T-s}}{(b-y)^3} e^{ - \frac{(b-y)^2}{2^4(T-s)}}.
		\qedhere
	\end{align}
\end{proof}
	In order to study the property of ${\tilde w}$ using the above testing function $\phi$, we need the following 
	lemma. %

\begin{lemma} \label{thm:S.3}
	Under the conditions of Proposition \ref{thm:S}, it holds that 
\[
	\sup_{t\in [0,\tilde T]}\mathbb E^\mathrm Q\Big[\int {\tilde w}_{t,x} \mathrm dx\Big]< \infty.
\]
	Furthermore, \eqref{eq:W.1} holds almost surely for each $t\in [0,\tilde T]$ and $\phi\in \cC_{\mathrm b}^{1,2}([0,\tilde T]\times \mathbb R)$.
\end{lemma}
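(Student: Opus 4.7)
The plan is to first establish a mild (integral) representation of $\tilde w$, from which the $L^1$-bound in the first assertion is immediate; the extension of \eqref{eq:W.1} to $\cC_\mathrm b^{1,2}$ test functions then follows by a routine approximation using that bound. The principal technical hurdle will be the pathwise passage $R\to\infty$ below: the boundary integrals produced by a spatial cutoff against $\tilde w$ must vanish, and this relies on Gaussian decay in $y$ beating the growth permitted by $\Ctem$, an estimate that is clean pathwise but not in $\mathbb E^\mathrm Q$, so limits will be taken $\mathrm Q$-almost surely first and expectations only afterward.

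To derive the mild form, fix $(t,x) \in [0,\tilde T) \times \mathbb R$ and $\epsilon \in (0, 2(\tilde T - t))$, and apply \eqref{eq:W.1} to
\[
	\phi^{\epsilon,R}(s,y) := G_{s,y;t+\epsilon,x}\, \eta(s)\, \chi_R(y),
\]
where $\eta \in \cC_\mathrm c^\infty(\mathbb R_+)$ equals $1$ on $[0,t]$ and vanishes for $s \geq t+\epsilon/2$, and $\chi_R \in \cC_\mathrm c^\infty(\mathbb R)$ is a smooth bump increasing to $1$ pointwise with uniform $\cC^2$-bound. Since $(\partial_s + \partial_y^2) G_{s,\cdot;t+\epsilon,x} \equiv 0$ for $s < t+\epsilon$ and $\eta \equiv 1$ on $[0,t]$, the Laplacian contribution in \eqref{eq:W.1} at time $t$ reduces to boundary terms of the form $2\chi_R'(y)\,\partial_y G + \chi_R''(y)\,G$ supported near $|y|=R$; there, Gaussian decay of $G$ and $\partial_y G$ in $y$ dominates the at-most-exponential growth of $\tilde w \in \Ctem$, so these boundary integrals vanish pathwise as $R \to \infty$. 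The Walsh and $\mu$-integral terms converge by (localized) dominated convergence, and letting $\epsilon \downarrow 0$ via the continuity of $\tilde w$ yields the pathwise mild form
\[
	\tilde w_{t,x} = \iint_0^t G_{s,y;t,x}\, \tilde \sigma_{s,y}\, W(\mathrm ds\,\mathrm dy) + \iint_0^t G_{s,y;t,x}\, \mu(\mathrm ds\,\mathrm dy).
\]
Taking $\mathbb E^\mathrm Q$ annihilates the Walsh-martingale term, and Fubini applied to the non-negative integrand gives
\[
	\mathbb E^\mathrm Q\Big[\int \tilde w_{t,x}\, \mathrm dx\Big] = \mathbb E^\mathrm Q\Big[\iint_0^t \mu(\mathrm ds\,\mathrm dy)\int G_{s,y;t,x}\,\mathrm dx\Big] = \mathbb E^\mathrm Q[\mu([0,t]\times \mathbb R)],
\]
which is uniformly bounded in $t \in [0,\tilde T)$ by the finite-mean hypothesis on $\mu$ and extended to $t = \tilde T$ by path continuity, establishing the first assertion.

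For the second assertion, given $\phi \in \cC_\mathrm b^{1,2}([0,\tilde T]\times \mathbb R)$, approximate by $\phi^{(n)}_{s,y} := \phi_{s,y}\, \chi_n(y)$ with $\chi_n \in \cC_\mathrm c^\infty(\mathbb R)$ satisfying $\chi_n \to 1$ pointwise and uniform $\cC^2$-bound; after a smooth time-truncation if necessary, each $\phi^{(n)}$ lies in $\cC_\mathrm c^\infty(\mathbb R_+\times \mathbb R)$, so \eqref{eq:W.1} applies. Every term passes to the limit by dominated convergence: the integrals against $\tilde w$ are dominated by a constant multiple of $\tilde w$, which has finite $\mathbb E^\mathrm Q$-mass on $[0,\tilde T]\times \mathbb R$ by the first assertion; the Walsh integral converges by It\^o's isometry using $\tilde\sigma^2 \leq \tilde\vartheta^2 \tilde w$ together with the same $L^1$-bound; and the $\mu$-integral converges since $\mu$ is almost surely a finite measure.
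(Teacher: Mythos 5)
There is a genuine gap at the pivotal step ``Taking $\mathbb E^\mathrm Q$ annihilates the Walsh-martingale term.'' The random variable $\iint_0^t G_{s,y;t,x}\,\tilde\sigma_{s,y}\,W(\mathrm ds\,\mathrm dy)$ is, a priori, the terminal value of a \emph{local} martingale (in the upper limit of integration), and a local martingale need not have zero expectation. To upgrade it to a true martingale one would need
\[
	\mathbb E^\mathrm Q\Big[\iint_0^t G_{s,y;t,x}^2\,\tilde\sigma_{s,y}^2\,\mathrm ds\,\mathrm dy\Big] < \infty,
\]
and since $\tilde\sigma^2$ is only controlled by $\tilde\vartheta^2\tilde w$, this requires precisely the $L^1$-type bound on $\tilde w$ that you are trying to prove. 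As written, the argument is circular. The paper avoids this by observing that $\mathrm I_u := \iint_0^{u} G_{s,y;t,x}\,\mu(\mathrm ds\,\mathrm dy) + \iint_0^{u} G_{s,y;t,x}\,\tilde\sigma_{s,y}\,W(\mathrm ds\,\mathrm dy)$ is non-negative for all $u$ (it equals a smoothed version of $\tilde w$ for $u<t$ and $\tilde w_{t,x}$ for $u\geq t$), localizing the martingale term with a sequence of stopping times $\rho_n\uparrow\infty$, and then applying Fatou's lemma to conclude $\mathbb E^\mathrm Q[\tilde w_{t,x}]\leq\mathbb E^\mathrm Q[\iint_0^t G_{s,y;t,x}\,\mu(\mathrm ds\,\mathrm dy)]$ with an inequality, not an equality. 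Your mild-form derivation via the test function $G_{s,y;t+\epsilon,x}\,\eta(s)\,\chi_R(y)$ is a reasonable hands-on substitute for the paper's citation of Shiga's Theorem~2.1, and your treatment of the second assertion matches the paper's cut-off-and-limit argument; but the first assertion needs the Fatou-with-localization device to be closed. Replacing ``taking expectation annihilates'' with that argument would fix the proof.
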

\begin{proof}
	\emph{Step 1.}
	It is routine (c.f. \cite[Theorem 2.1]{Shiga1994Two}) to verify that 
	\[
	{\tilde w}_{t,x} 
	= \iint_0^t G_{s,y;t,x} \big(\tilde \sigma_{s,y}W(\mathrm ds\mathrm dy) + \mu(\mathrm ds\mathrm dy)\big),
	\quad \text{a.s.} 
	\quad (t,x)\in [0,\tilde T]\times \mathbb R.
	\]
	
	\emph{Step 2.}
	For an arbitrary fixed $(t,x)\in [0,\tilde T]\times\mathbb R$, we will show that
	\[
	\mathbb E^\mathrm Q[{\tilde w}_{t,x}]
	\leq \mathbb E^\mathrm Q\Big[\iint_0^t G_{s,y;t,x} \mu(\mathrm ds\mathrm dy)\Big].
	\]
	To do this, for each $r \geq 0$, define $\mathrm I_r:= \mathrm {II}_r + \mathrm {III}_r$ where
	\[ 
	\mathrm {II}_r
	:= 	\iint_0^r G_{s,y;t,x} \mu(\mathrm ds\mathrm dy); 
	\qquad \mathrm {III}_r 
	:= \iint_0^r G_{s,y;t,x} \tilde \sigma_{s,y} W(\mathrm ds\mathrm dy).
	\]
	We can verify that if $r\geq t$, then $\mathrm I_r = {\tilde w}_{t,x}$, and if $r\in [0,t)$, then from stochastic Fubini theorem we get 
	\[
	\mathrm I_r
	= \int G_{t,x;r,z}{\tilde w}_{r,z}\mathrm dz, \quad \text{a.s.}
	\]
	In particular, $(\mathrm I_r)_{r\geq 0}$ is a non-negative process.
	Note that $(\mathrm {III}_r)_{r\geq 0}$ is a local martingale.
	So there exists a sequence of stopping time $(\rho_n)_{n\in \mathbb N}$ such that for each $n\in \mathbb N$, $(\mathrm {III}_{r\wedge \rho_n})_{r\geq 0}$ is a martingale, and $\rho_n\uparrow \infty$  almost surely as $n \uparrow \infty$.
	Now for any fixed $r\geq 0$ we can verify from Fatou's lemma that $\mathrm Q[\mathrm I_r]\leq \liminf_{n\to \infty} \mathrm Q[\mathrm I_{r\wedge \rho_n}]\leq \mathrm Q[\mathrm {II}_r].$
	In particular $\mathrm Q[{\tilde w}_{t,x}] = \mathrm Q[\mathrm I_t] \leq \mathrm Q[\mathrm {II}_t]$ as desired.
	
	\emph{Step 3.}
	From Fubini's theorem we can verify from Step 2 that for each $t\in [0,\tilde T]$,
	\[
	\mathbb E^\mathrm Q \Big[\int {\tilde w}_{t,x}\mathrm dx \Big]
	\leq \mathbb E^\mathrm Q \Big[\iint_0^{t} \mu(\mathrm ds\mathrm dy) \int G_{s,y;t,x} \mathrm dx\Big]
	\leq \mathbb E^\mathrm Q\Big[\iint_0^{\tilde T} \mu(\mathrm ds\mathrm dy)\Big]<\infty.
	\]
	This proves the first part of the lemma.
	
	\emph{Step 4.} Let $g$ and sequence $(g_n)_{n\in \mathbb N}$ be $\mathbb R$-valued Borel functions on a Polish space $S$.
	We say $(g_n)_{n\in \mathbb N}$ converges to $g$ bounded pointwise if $(g_n)_{n\in \mathbb N}$ converges to $g$ pointwise, and $\sup_{n\in \mathbb N, s\in S}|g_n(s)|<\infty$. 
	Fix any $\phi\in \cC_{\mathrm b}^{1,2}([0,\tilde T]\times \mathbb R)$. Then it is easy to get that 
	there exists a sequence of $(\phi^{(n)}:n\in \mathbb N)$ in $\cC_\mathrm c^\infty([0,\tilde T]\times \mathbb R)$ 
	such that, $(\phi^{(n)})_{n\in \mathbb N}$, $(\partial_t\phi^{(n)})_{n\in \mathbb N}$, $(\partial_x\phi^{(n)})_{n\in \mathbb N}$ and $(\partial_x^2\phi^{(n)})_{n\in \mathbb N}$ converges bounded pointwise to $\phi$, $\partial_t \phi$, $\partial_x\phi$ and $\partial_x^2\phi$,  respectively.
	\begin{note}(To see this, one can take
		\[
		\phi^{(n)}_{t,x}:= \iint_0^{\tilde T} \phi_{s,y} a^{(n)}_{s,y} b^{(n)}_{t-s,x-y} \mathrm ds\mathrm dy, \quad (t,x)\in [0,\tilde T]\times \mathbb R, n\in \mathbb N
		\]
		where $(a^{(n)}:n\in \mathbb N)$ is a suitable sequence of smooth cut-off functions on $\mathbb R^2$ converging to $1$, and $(b^{(n)}: n \in \mathbb N)$ is a suitable sequence of mollifiers on $\mathbb R^2$ converging to the Dirac delta function.)
	\end{note}
	
	\emph{Final Step.} 
	From Steps 3, 4, bounded convergence theorem, \cite[Proposition 17.6]{Kallenberg2002Foundations} and the fact that $\tilde \sigma^2\leq \tilde \vartheta {\tilde w}$ on $[0,\tilde T]\times \mathbb R$, we can verify that \eqref{eq:W.1} holds almost surely for each $t\in [0,\tilde T]$ and $\phi\in \cC_{\mathrm b}^{1,2}([0,\tilde T]\times \mathbb R)$. 
\end{proof}
	
	We are now ready to give the proof of Proposition \ref{thm:S}.
	
\begin{proof}[Proof of Proposition \ref{thm:S}]
	\emph{Step 1.} We only need to prove the desired result for the case $-\infty=a<b< \infty $.
	In fact, in the case of $ a=-\infty, b = \infty$, nothing needs to be proved. 
	And if the desired result holds for the case $-\infty=a< b<\infty$, then by symmetry, it also holds for the case $-\infty < a< b= \infty$.
	  For the only remaining case  $-\infty < a < b < \infty$, we use
\[
	\mathrm Q\Big( \int_0^{\tilde T} \mathrm ds\int_{(a,b)^c}  {\tilde w}_{s,y}\mathrm dy > 0\Big) 
	\leq \mathrm Q\Big( \int_0^{\tilde T} \mathrm ds\int_{-\infty}^a  {\tilde w}_{s,y}\mathrm dy > 0\Big) +\mathrm Q\Big( \int_0^{\tilde T} \mathrm ds\int_{b}^\infty  {\tilde w}_{s,y}\mathrm dy > 0\Big).
\]

	\emph{Step 2.} Fix $b\in \mathbb R$ and a non-negative $\psi\in \cC_0^2(\mathbb R)$ with support $\{x\in \mathbb R: \psi_x>0\}=(b, \infty)$. 
	For each $n > 0$, let $\phi^{(n)} \in \cC_{\mathrm b}^{1,2}([0,\tilde T] \times \mathbb R)$ be given by Lemma \ref{thm:S.1} with $\psi$ replaced by $n \psi$ and $\vartheta$ from Proposition~\ref{thm:S}. 
	For any $n > 0$, define process
\[
		M^{(n)}_t
		:=  n \iint_0^t {\tilde w}_{s,y} \psi_{y}\mathrm ds\mathrm dy + \int {\tilde w}_{t,x} \phi_{t,x}^{(n)}\mathrm dx, 
	\quad t \in [0,\tilde T].
\]
	We note that
	\begin{align}
	 &\mathrm Q\Big(\int_0^{\tilde T}\mathrm ds\int_b^\infty {\tilde w}_{s,y}\mathrm dy > 0 \Big)
		= \mathrm Q\Big(\iint_0^{\tilde T} {\tilde w}_{s,y} \psi_{y}\mathrm ds\mathrm dy> 0 \Big)
		\\ &= \lim_{n \to \infty} \mathbb E^\mathrm Q \Big( 1 - \exp\Big\{- n\iint_0^{\tilde T} {\tilde w}_{s,y} \psi_{y}\mathrm ds\mathrm dy \Big\}\Big) 
		= \lim_{n \to \infty} \mathbb E^\mathrm Q( 1 - e^{- M_{\tilde T}^{(n)} }).
	\end{align}
	
	 \emph{Step 3.} 
	We will verify that 
	\[\mathbb E^\mathrm Q( 1 - e^{- M^{(n)}_{\tilde T}}) \leq \mathbb E^\mathrm Q \Big[\iint_0^{\tilde T} \phi^{(n)}_{s,y}\mu(\mathrm ds\mathrm dy)\Big], \quad n > 0.\]
	In fact, from Lemma \ref{thm:S.3} we have for each $t\in [0,\tilde T]$ almost surely
	\begin{equation} 
	\begin{multlined}
			M_t^{(n)} 
			\overset{\eqref{eq:W.1}}= n \iint_0^t {\tilde w}_{s,y} \psi_{y} \mathrm ds\mathrm dy + \iint_0^t {\tilde w}_{s,y}(\partial_y^2 \phi^{(n)}_{s,y} + \partial_s \phi_{s,y}^{(n)}) \mathrm ds\mathrm dy 
			\\ + \iint_0^t \tilde \sigma_{s,y}\phi_{s,y}^{(n)} W(\mathrm ds\mathrm dy) + \iint_0^t \phi_{s,y}^{(n)} \mu(\mathrm ds \mathrm dy). 
		\end{multlined}
	\end{equation}
	Therefore, we have almost surely
\[
		\langle M^{(n)} \rangle_t
		= \iint_0^t (\tilde \sigma_{s,y} \phi_{s,y}^{(n)} )^2 \mathrm ds\mathrm dy,
		\quad t\in [0,\tilde T].
\]
	Now, we use It\^o's formula and get that for any $t\in [0,\tilde T]$ almost surely,
	\begin{align}
		& e^{-M^{(n)}_t} -1
		=  \int_0^t(- e^{-M^{(n)}_s})\mathrm d M^{(n)}_s + \frac{1}{2} \int_0^t  e^{-M^{(n)}_s} \mathrm d\langle M^{(n)}\rangle_s
		\\&\begin{multlined}
			= \iint_0^t (-e^{- M^{(n)}_s}) \big(n {\tilde w}_{s,y} \psi_{y} +{\tilde w}_{s,y} (\partial_y^2\phi^{(n)}_{s,y} +\partial_s \phi^{(n)}_{s,y}) \big) \mathrm ds\mathrm dy 
			\\ + \iint_0^t (-e^{-M_s^{(n)} }\phi^{(n)}_{s,y}) \big(\tilde \sigma_{s,y}W(\mathrm ds\mathrm dy)+\mu(\mathrm ds\mathrm dy) \big) 
			 +  \frac{1}{2}\iint_0^t e^{-M_s^{(n)}}( \tilde \sigma_{s,y}\phi_{s,y}^{(n)} )^2\mathrm ds\mathrm dy
		\end{multlined}
		\\&\label{eq:S.4}\begin{multlined}
			\overset{\eqref{eq:S.2}}= \frac{1}{2}\iint_0^t e^{-M_s^{(n)}} (\phi^{(n)}_{s,y})^2 \big(\tilde \sigma_{s,y}^2 - \vartheta^2 {\tilde w}_{s,y}\big) \mathrm ds\mathrm dy
			\\+ \iint_0^t (-e^{-M_s^{(n)} }\phi^{(n)}_{s,y}) \tilde \sigma_{s,y}W(\mathrm ds\mathrm dy) +\iint_0^t (-e^{-M_s^{(n)} }\phi^{(n)}_{s,y}) \mu(\mathrm ds\mathrm dy).
		\end{multlined}
	\end{align}
	Note that the second integral on the right hand side of \eqref{eq:S.4} is a $L^2$-bounded martingale on $[0,\tilde T]$ since from Lemma \ref{thm:S.3},
\[
	\mathbb E^\mathrm Q\Big[\iint_0^{\tilde T} (-e^{-M_s^{(n)} }\phi^{(n)}_{s,y})^2 (\tilde \sigma_{s,y})^2 \mathrm ds\mathrm dy\Big]
	\leq \|\phi^{(n)}\|_\infty^2 {\tilde \vartheta}^2 \mathbb E^\mathrm Q\Big[\iint_0^{\tilde T} {\tilde w}_{s,y}\mathrm ds\mathrm dy\Big]<\infty.
\]
Noticing that $\tilde \sigma^2\geq \vartheta^2 {\tilde w}$ on $\mathbb R_+\times \mathbb R$, we can take expectation on \eqref{eq:S.4} and get that 
\begin{align}
		&\mathbb E^\mathrm Q[ 1 - e^{- M^{(n)}_{\tilde T} }] 
	      \\&	= \mathbb E^\mathrm Q \Big[ \frac{1}{2}\iint_0^{\tilde T} e^{-M_s^{(n)}} (\phi^{(n)}_{s,y})^2 \big( \vartheta^2 {\tilde w}_{s,y}-(\tilde \sigma_{s,y})^2\big) \mathrm ds\mathrm dy +  \iint_0^{\tilde T} e^{-M_s^{(n)} }\phi^{(n)}_{s,y} \mu(\mathrm ds\mathrm dy)\Big]
		\\& \leq \mathbb E^\mathrm Q \Big[ \iint_0^{\tilde T} e^{-M_s^{(n)} }\phi^{(n)}_{s,y} \mu(\mathrm ds\mathrm dy)\Big]
		\leq \mathbb E^\mathrm Q \Big[ \iint_0^{\tilde T} \phi^{(n)}_{s,y} \mu(\mathrm ds\mathrm dy)\Big].
	\end{align}
	\emph{Final step.} The desired result now follows from Steps 3, 4 and Lemma \ref{thm:S.1}. 
\end{proof}

\section{proof of Proposition \ref{thm:V}} \label{sec:V}

We first need the following lemma to control the small time fluctuation of certain random fields.  
This lemma is modified from \cite[Lemma 6.1]{MuellerMytnikQuastel2011Effect} in order to incorporate the small time intervals.
Its proof follows the lines of the proof of  \cite[Lemma 6.1]{MuellerMytnikQuastel2011Effect} and therefore is omitted.

\begin{lemma} \label{thm:V.1}
	Suppose that 
	\begin{enumerate}
		\item 
			$\tilde T >0$, $\tilde L >0$, $a\in \mathbb R$ are arbitrary and
		$
		\mathbf H: = [0, \tilde T] \times [a,a+\tilde L];
		$
		\item
			$(g_{s,y;t,x}:(s,y), (t,x)\in \mathbb R_+\times \mathbb R )$ and $(\eta_{t,x}: (t,x)\in \mathbb R_+\times \mathbb R)$ are deterministic non-negative functions satisfying
		\[
		B
		:= \sup_{(t',x'),(t,x)\in \mathbf H} \frac{\iint_0^\infty (g_{s,y; t',x'} - g_{s,y;t,x})^2 \eta_{s,y}\mathrm ds\mathrm dy}{|\frac{x'-x}{\tilde L}| + |\frac{t'-t}{\tilde T}|^{1/2}} < \infty;
		\]
		\item
			$W$ is a white noise defined on a filtered probability space $(\Omega, \mathcal G, (\mathcal F_t)_{t\geq 0}, \mathrm P)$;
\item
	$\tilde \sigma$ is a predictable random field on $\Omega$
  such that almost surely $\tilde \sigma^2 \leq \eta$ on $\mathbb R_+\times \mathbb R$;
\item
	$Z$ is a continuous random field on $\Omega$
 such that for all  $ (t,x)\in \mathbf H$,
\[
	Z_{t,x}
	= \iint_0^\infty g_{s,y;t,x} \tilde \sigma_{s,y}W(\mathrm ds\mathrm dy)
	\quad \text{a.s.}
\]
	\end{enumerate}
Then for each $z\geq 0$,
	\[
	\mathrm P\Big( \sup_{(t,x), (t',x')\in \mathbf H} |Z_{t',x'}-Z_{t,x}|>  z \sqrt{B}\Big)
	\leq 2^5 e^{- z^2/2^{12}}.
	\]
\end{lemma}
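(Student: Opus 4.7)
The plan is to reduce the statement to the pattern of a Kolmogorov--Chentsov type continuity theorem. First I would establish a subgaussian tail bound for a single increment $Z_{t',x'}-Z_{t,x}$ by viewing it as the terminal value of a continuous martingale whose quadratic variation is deterministically bounded, and then lift this pointwise bound to a uniform one over $\mathbf H$ via a dyadic chaining argument.

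For the single-pair estimate, fix $(t,x),(t',x')\in\mathbf H$ and write $\Delta g_{s,y}:=g_{s,y;t',x'}-g_{s,y;t,x}$ as well as $\rho:=|(x'-x)/\tilde L|+|(t'-t)/\tilde T|^{1/2}$. Then
\[
M_u:=\iint_{[0,u]\times\mathbb R}\Delta g_{s,y}\,\tilde\sigma_{s,y}\,W(\mathrm ds\mathrm dy),\qquad u\geq 0,
\]
is a continuous $L^2$-martingale with
\[
\langle M\rangle_\infty\leq \iint_0^\infty(\Delta g_{s,y})^2\eta_{s,y}\,\mathrm ds\,\mathrm dy\leq B\rho\quad\text{a.s.,}
\]
where I use $\tilde\sigma^2\leq\eta$ and the definition of $B$. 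Since this bound on $\langle M\rangle_\infty$ is \emph{deterministic}, the exponential supermartingale $\exp(\theta M_u-\tfrac{\theta^2}{2}\langle M\rangle_u)$ and Markov's inequality, optimized in $\theta>0$, give
\[
\mathrm P(|Z_{t',x'}-Z_{t,x}|>\lambda)\leq 2\exp\Big(-\frac{\lambda^2}{2B\rho}\Big).
\]

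For the chaining, for each $n\geq 0$ let $\mathbf H_n$ be the dyadic grid of points $(k\tilde T\,2^{-2n},\,a+j\tilde L\,2^{-n})$ with $0\leq k\leq 2^{2n}$ and $0\leq j\leq 2^n$; thus $|\mathbf H_n|\leq 4\cdot 2^{3n}$, and any two grid points differing by one step in a single coordinate satisfy $\rho\leq 2^{-n}$. For $n\geq 1$ and $q\in\mathbf H_n$ pick a nearest $\pi_{n-1}(q)\in\mathbf H_{n-1}$, so $\rho(q,\pi_{n-1}(q))\leq 2^{1-n}$. Any two $q,q'\in\bigcup_n\mathbf H_n$ can be joined by a telescoping chain, yielding
\[
|Z_{q'}-Z_q|\leq 2\sum_{n\geq 1}\max_{p\in\mathbf H_n}|Z_p-Z_{\pi_{n-1}(p)}|+D,
\]
with $D:=\max_{p,p'\in\mathbf H_0}|Z_p-Z_{p'}|$. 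Setting $\lambda_n:=\sqrt B\,2^{-n/2}(c_1\sqrt n+c_2 z)$, the single-pair bound and a union over the $\leq 4\cdot 2^{3n}$ relevant pairs at level $n$ show that the $n$-th maximum exceeds $\lambda_n$ with probability at most $8\cdot 2^{3n}\exp(-(c_1\sqrt n+c_2 z)^2/4)$. Choosing $c_1,c_2$ large enough (independent of $z,n$) simultaneously arranges $\sum_n\lambda_n\leq z\sqrt B/2$ and bounds the sum of probabilities by $2^5\exp(-z^2/2^{12})$; an analogous single-pair bound handles $D$. Since $Z$ is continuous on $\mathbf H$ by hypothesis (5) and $\bigcup_n\mathbf H_n$ is dense in $\mathbf H$, the supremum over $\mathbf H$ agrees with the supremum over this countable set, producing the stated inequality.

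The main obstacle is not the structure of the argument but the constant-tracking in the chaining step: one must tune $c_1,c_2$ so that the combinatorial prefactor $2^{3n}$ is absorbed by the subgaussian factor $e^{-c_1^2 n/4}$ while the residual tail in $z$ matches exactly $e^{-z^2/2^{12}}$ with the prefactor $2^5$. This bookkeeping is precisely what is carried out in the proof of Lemma 6.1 of \cite{MuellerMytnikQuastel2011Effect}; the only modifications needed here are the time and space scales $\tilde T$ and $\tilde L$ in place of the unit ones used there.
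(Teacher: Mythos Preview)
Your proposal is correct and follows essentially the same approach as the paper: the paper omits the proof, pointing to Lemma~6.1 of \cite{MuellerMytnikQuastel2011Effect}, and the (commented-out) argument there proceeds exactly as you describe---a subgaussian single-increment bound via the exponential martingale with deterministically bounded quadratic variation, followed by a parabolic dyadic chaining with $O(2^{3n})$ edges at level $n$ and levels $\lambda_n\asymp\sqrt{B}\,\sqrt{n\,2^{-n}}$. Your identification of the only nontrivial point, the constant bookkeeping that produces exactly $2^5 e^{-z^2/2^{12}}$, is accurate.
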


Next result is a simple corollary of  the above lemma.
\begin{corollary} \label{thm:V.2}
	Lemma \ref{thm:V.1} still holds if its conditions (1) and (2) are replaced by:
	\begin{itemize}
		\item [(1')] 
			$\tilde {\mathrm v} > 0$ and $a \in \mathbb R$ are arbitrary and
		\[
			\mathbf H:= \{(t,x)\in \mathbb R_+\times \mathbb R: t\in [0, \tilde{\mathrm v}^{-2}], x-{\tilde{\mathrm v} }t\in [a,a+ \tilde{\mathrm v}^{-1}]\};
		\]
	\item [(2')]
			$(g_{s,y;t,x}:(s,y), (t,x)\in \mathbb R_+\times \mathbb R )$ and $(\eta_{t,x}: (t,x)\in \mathbb R_+\times \mathbb R)$ are (deterministic) non-negative functions satisfying
		\[
		B
		:= \sup_{(t',x'),(t,x)\in \mathbf H} \frac{\tilde{\mathrm v}^{-1}\iint_0^\infty (g_{s,y; t',x'} - g_{s,y;t,x})^2 \eta_{s,y}\mathrm ds\mathrm dy}{|(x'-\tilde{\mathrm v}t')-(x- \tilde{\mathrm v}t)| + |t'-t|^{1/2}} < \infty.
		\]
	\end{itemize}
\end{corollary}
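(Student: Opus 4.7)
The plan is to reduce Corollary \ref{thm:V.2} to Lemma \ref{thm:V.1} by a linear change of coordinates that straightens the slanted parallelogram of condition $(1')$ into the axis-aligned rectangle required by condition $(1)$. Concretely, I would introduce the sheared variables $\tilde t := t$ and $\tilde x := x - \tilde{\mathrm v} t$; under this map, the region $\mathbf H$ in $(1')$ becomes the rectangle $\tilde{\mathbf H} := [0, \tilde{\mathrm v}^{-2}] \times [a, a + \tilde{\mathrm v}^{-1}]$, which is exactly of the form demanded by $(1)$ with parameters $\tilde T := \tilde{\mathrm v}^{-2}$ and $\tilde L := \tilde{\mathrm v}^{-1}$.

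Next, I would pull back the kernel and the random field through this substitution by setting
\[
\tilde g_{s,y;\tilde t,\tilde x} := g_{s,y;\tilde t, \tilde x + \tilde{\mathrm v}\tilde t}, \qquad \tilde Z_{\tilde t, \tilde x} := Z_{\tilde t, \tilde x + \tilde{\mathrm v}\tilde t}.
\]
Since the stochastic integral representation in assumption $(5)$ is pointwise in $(t,x)$, one immediately obtains $\tilde Z_{\tilde t, \tilde x} = \iint_0^\infty \tilde g_{s,y;\tilde t,\tilde x}\, \tilde\sigma_{s,y} W(\mathrm ds\, \mathrm dy)$ almost surely for each $(\tilde t, \tilde x) \in \tilde{\mathbf H}$. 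The predictability of $\tilde\sigma$ and the bound $\tilde\sigma^2 \leq \eta$ are preserved because the substitution acts only on the output variables $(t,x)$ and leaves the integration variables $(s,y)$ untouched.

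The crux of the argument is to check that the constant $\tilde B$ obtained by plugging $(\tilde g, \eta, \tilde{\mathbf H}, \tilde T, \tilde L)$ into condition $(2)$ coincides with the constant $B$ defined in $(2')$. This is a direct computation: under the substitution $\tilde x' - \tilde x = (x' - \tilde{\mathrm v} t') - (x - \tilde{\mathrm v} t)$ and $\tilde t' - \tilde t = t'-t$, so that the denominator in $(2)$ becomes
\[
\Bigl|\tfrac{\tilde x' - \tilde x}{\tilde L}\Bigr| + \Bigl|\tfrac{\tilde t' - \tilde t}{\tilde T}\Bigr|^{1/2} = \tilde{\mathrm v}\Bigl(|(x' - \tilde{\mathrm v} t') - (x - \tilde{\mathrm v} t)| + |t' - t|^{1/2}\Bigr),
\]
while the numerators agree verbatim since $\tilde g_{s,y;\tilde t,\tilde x} = g_{s,y;t,x}$ under the substitution. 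The prefactor $\tilde{\mathrm v}^{-1}$ baked into the definition of $B$ in $(2')$ is precisely what absorbs the $\tilde{\mathrm v}$ above, yielding $\tilde B = B$. Applying Lemma \ref{thm:V.1} to $\tilde Z$ on $\tilde{\mathbf H}$ and noting that $\sup_{(\tilde t,\tilde x),(\tilde t',\tilde x') \in \tilde{\mathbf H}} |\tilde Z_{\tilde t',\tilde x'} - \tilde Z_{\tilde t,\tilde x}|$ equals $\sup_{(t,x),(t',x') \in \mathbf H} |Z_{t',x'} - Z_{t,x}|$ then delivers the stated tail bound with the same constants $2^5$ and $2^{12}$.

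I do not expect any genuine obstacle: the proof is purely a bookkeeping exercise confirming that the asymmetric rescaling $\tilde L = \tilde{\mathrm v}^{-1}$, $\tilde T^{1/2} = \tilde{\mathrm v}^{-1}$ in Lemma \ref{thm:V.1} is matched by the single factor $\tilde{\mathrm v}^{-1}$ appearing in $(2')$. The mild subtlety to verify is that the sheared rectangle $\tilde{\mathbf H}$ is indeed of the form $[0,\tilde T] \times [a,a+\tilde L]$ with no further constraint, so Lemma \ref{thm:V.1} applies directly without any boundary adjustment.
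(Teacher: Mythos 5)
Your proposal is correct and is exactly the natural change-of-variables reduction the paper implicitly intends (it presents the statement as a "simple corollary" without written proof). The shear $(t,x)\mapsto(t,x-\tilde{\mathrm v}t)$ maps $\mathbf H$ of $(1')$ onto the rectangle $[0,\tilde{\mathrm v}^{-2}]\times[a,a+\tilde{\mathrm v}^{-1}]$, the denominator of $(2)$ picks up exactly the factor $\tilde{\mathrm v}$ that the prefactor $\tilde{\mathrm v}^{-1}$ in $(2')$ cancels, and the sup of $|Z_{t',x'}-Z_{t,x}|$ is invariant under the bijection, so Lemma~\ref{thm:V.1} applies with the same constants.
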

	In order to control the quantity $B$ in
	Lemma~\ref{thm:V.1} and Corollary~\ref{thm:V.2}
	we will be using the following analytical lemma.
\begin{lemma}\label{thm:V.3}
For any $\tilde{\mathrm v}>0$ and $(t,x), (t',x')\in \mathbb R_+ \times \mathbb R$ satisfying
\begin{equation}\label{eq:V.4} 
	t,t'
	\in [0,\tilde{\mathrm v}^{-2}]; 
	\quad x -\tilde{\mathrm v}t, x'-\tilde{\mathrm v}t' 
	\in (-\infty, 0]; 
	\quad |(x'-\tilde{\mathrm v}t')- (x - \tilde{\mathrm v}t)| 
	\leq \tilde{\mathrm v}^{-1},
\end{equation}
it holds that
\begin{equation}
\begin{multlined}
	\iint_0^\infty (G^{(\tilde {\mathrm v})}_{s,y;t',x'} - G^{(\tilde{\mathrm v})}_{s,y;t,x})^2 e^{-\tilde{\mathrm v}(y - \tilde{\mathrm v}s)} \mathrm ds\mathrm dy
	\\ \leq 2^9 e^{-\tilde{\mathrm v} (x-\tilde{\mathrm v}t)}\big(|(x'-\tilde{\mathrm v}t')-(x - \tilde{\mathrm v}t)|+|t'-t|^{1/2}\big).
\end{multlined}
\end{equation}
\end{lemma}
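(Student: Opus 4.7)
The plan is to first convert the left-hand side into an integral involving only the free Gaussian kernel $G$, and then bound the resulting difference by separating the dependence on $t$ and on $x$. The main anticipated obstacle is the time-increment estimate, where the naive derivative bound is not integrable at the diagonal $s=t$, together with the bookkeeping needed to meet the explicit prefactor $2^9$.

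For the first reduction, since the barrier $\{y=\tilde{\mathrm v}s\}$ moves linearly, I change coordinates by setting $\tilde y := y - \tilde{\mathrm v}s$ and $\tilde x := x - \tilde{\mathrm v}t$. Viewing $B_r - \tilde{\mathrm v}r$ as a Brownian motion (with generator $\partial_x^2$) with drift $-\tilde{\mathrm v}$ starting from $\tilde y$ at time $s$, Girsanov's theorem followed by the reflection principle for standard Brownian motion killed at $0$ yields the explicit formula, valid for $\tilde y, \tilde x \leq 0$,
\[
	G^{(\tilde{\mathrm v})}_{s,y;t,x} = e^{-\frac{\tilde{\mathrm v}}{2}(\tilde x - \tilde y) - \frac{\tilde{\mathrm v}^2}{4}(t-s)}\bigl(G_{s,\tilde y;t,\tilde x} - G_{s,\tilde y;t,-\tilde x}\bigr),
\]
a representation already used in \cite[Proof of Lemma 6.2]{MuellerMytnikQuastel2011Effect}. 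Substituting this into the integral and changing variables $y\mapsto \tilde y$ (with unit Jacobian), the weight $e^{-\tilde{\mathrm v}\tilde y}$ exactly cancels the square of the $\tilde y$-dependent Girsanov factor. Introducing
\[
	\Phi_{s,\tilde y}(t,\tilde x) := e^{-\frac{\tilde{\mathrm v}}{2}\tilde x - \frac{\tilde{\mathrm v}^2}{4}(t-s)}\bigl(G_{s,\tilde y;t,\tilde x} - G_{s,\tilde y;t,-\tilde x}\bigr),
\]
the left-hand side reduces to $\int_0^\infty\!\!\int_{-\infty}^0 [\Phi_{s,\tilde y}(t',\tilde x') - \Phi_{s,\tilde y}(t,\tilde x)]^2\,d\tilde y\,ds$, and the surviving $e^{-\tilde{\mathrm v}\tilde x/2}$ factor in $\Phi$ produces, upon squaring, exactly the $e^{-\tilde{\mathrm v}(x-\tilde{\mathrm v}t)}$ prefactor claimed on the right-hand side.

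To estimate this reduced integral, the next step is to interpolate via $\Phi(t',\tilde x)$ and bound separately the resulting space-increment and time-increment pieces. For the space term, Cauchy--Schwarz gives $[\Phi(t',\tilde x')-\Phi(t',\tilde x)]^2 \leq |\tilde x'-\tilde x|\int_{\tilde x}^{\tilde x'}|\partial_\xi \Phi_{s,\tilde y}(t',\xi)|^2\,d\xi$, reducing matters, after Fubini, to bounding $\sup_\xi \iint (\partial_\xi \Phi_{s,\tilde y}(t',\xi))^2\,d\tilde y\,ds$. This is a direct Gaussian computation: the $\tilde y$ integral of the squared Gaussian contributes a factor of order $(t'-s)^{-1/2}$, which is integrable in $s$ on $[0,t']$, and the $e^{-\tilde{\mathrm v}^2(t'-s)/2}$ factor controls the long-time tail. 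For the time-increment term, the naive bound via $\partial_t\Phi$ is untenable because $\partial_t G$ is not square-integrable near $s=t$; instead one writes, for $s<t<t'$, $G_{s,\tilde y;t',\tilde x} - G_{s,\tilde y;t,\tilde x} = \int (G_{t,z;t',\tilde x}-\delta_{\tilde x}(z))G_{s,\tilde y;t,z}\,dz$ and applies the classical H\"older-$1/2$-in-time $L^2$ estimate for the heat semigroup, yielding the $|t'-t|^{1/2}$ order.

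The principal difficulty will be the careful constant tracking through these Gaussian integrals to reach the explicit bound $2^9$. Notably, the reflection term $G_{s,\tilde y;t,-\tilde x}$ must be retained throughout rather than crudely discarded: its partial cancellation with $G_{s,\tilde y;t,\tilde x}$ near the boundary is what keeps $\partial_{\tilde x}\Phi$ square-integrable and, together with the Girsanov factor, produces the exponential decay $e^{-\tilde{\mathrm v}\tilde x}$ on the right-hand side. Since the hypothesis \eqref{eq:V.4} forces both endpoints to lie below the moving barrier and within one diffusive scale of each other, the two contributions will assemble cleanly into the asserted inequality.
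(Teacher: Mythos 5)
Your reduction to the quantity $\iint[\Phi_{s,\tilde y}(t',\tilde x')-\Phi_{s,\tilde y}(t,\tilde x)]^2\,d\tilde y\,ds$ via Girsanov and reflection is correct, and it is the same starting point as the paper. Interpolating first in space and then in time is also a legitimate framework. However, the treatment of the space increment contains a fatal scaling error.

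You assert that after Cauchy--Schwarz, one only has to bound $\sup_\xi\iint(\partial_\xi\Phi_{s,\tilde y}(t',\xi))^2\,d\tilde y\,ds$, and that "the $\tilde y$ integral of the squared Gaussian contributes a factor of order $(t'-s)^{-1/2}$, which is integrable." That estimate applies to $\int G^2\,d\tilde y$, not to $\int(\partial_\xi G)^2\,d\tilde y$. Since $\partial_\xi G_{s,\tilde y;t',\xi}=-\frac{\xi-\tilde y}{2(t'-s)}G_{s,\tilde y;t',\xi}$, a direct computation gives $\int(\partial_\xi G_{s,\tilde y;t',\xi})^2\,d\tilde y\sim(t'-s)^{-3/2}$, which is \emph{not} integrable near $s=t'$; the same divergence persists for the reflected combination. (The reflection term does make $\Phi$ itself vanish at $\xi=0$, but at that point the two derivative contributions \emph{add} rather than cancel, and even for $\xi<0$ the two Gaussians become disjoint as $s\uparrow t'$, so no cancellation saves the $s$-integral.) Hence $\partial_\xi\Phi\notin L^2(ds\,d\tilde y)$ and the proposed Cauchy--Schwarz step produces $\infty\geq\text{(LHS)}$, which is vacuous. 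In other words, you correctly recognised that the $L^2$-in-$(s,y)$ time-regularity of $G$ is only H\"older-$1/2$ (so one cannot differentiate in $t$), but overlooked that the same phenomenon happens in space: the $L^2$ space increment scales as $\iint|G(\cdot,t,\xi')-G(\cdot,t,\xi)|^2\sim|\xi'-\xi|$, linearly and not quadratically, precisely because the spatial derivative is not square-integrable. The linear bound must therefore be obtained by estimating the difference directly, not by passing through a derivative. That is exactly what the paper does: after Step 1's reflection decomposition, Step 2 separates the variation of the Girsanov prefactor $(\gamma-1)G_{\cdot;t',iz'}$ (piece $\mathrm{II}_i$) from the variation of the free kernel $G_{\cdot;t',iz'}-G_{\cdot;t,iz}$ (piece $\mathrm{III}_i$), and the latter is bounded by the paper's Lemma~\ref{thm:B.3} (equivalently \cite[Lemma 6.2(1)]{Shiga1994Two}), which is proved by a direct Gaussian computation on the squared difference and yields the correct $|x'-x|+|t'-t|^{1/2}$ scaling.

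Two further, smaller points. First, before squaring one cannot simply extract a single factor $e^{-\tilde{\mathrm v}\tilde x}$: the two terms carry $e^{-\tilde{\mathrm v}\tilde x/2}$ and $e^{-\tilde{\mathrm v}\tilde x'/2}$ respectively, so cross terms with $e^{-\tilde{\mathrm v}(\tilde x+\tilde x')/2}$ appear, and one must exploit $|\tilde x'-\tilde x|\leq\tilde{\mathrm v}^{-1}$ (the paper does this via a WLOG assumption $\frac{\tilde{\mathrm v}}{2}(z'-z)+\frac{\tilde{\mathrm v}^2}{4}(t'-t)\geq 0$) to absorb these into $e^{-\tilde{\mathrm v}\tilde x}$ with a bounded constant. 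Second, your final intuition that the reflection term "keeps $\partial_{\tilde x}\Phi$ square-integrable" is incorrect for the reason above. To repair the argument, replace the derivative-based space estimate by a direct Gaussian estimate of the squared difference, analogous to~\eqref{wq:10_1a} in the proof of Lemma~\ref{thm:B.3}.
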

\begin{proof}
	Let us fix an arbitrary $\tilde{\mathrm v}>0$ and arbitrary $(t,x), (t',x')\in \mathbb R_+\times \mathbb R$ satisfying \eqref{eq:V.4}. 
	Define $z := x -\tilde{\mathrm v}t$ and $z' := x'- \tilde{\mathrm v} t'$.
	By the symmetry between $(t,x)$ and $(t',x')$, we can assume without loss of generality that $\frac{\tilde{\mathrm v}}{2}(z'-z) + \frac{\tilde{\mathrm v}^2}{4}(t'-t) \geq 0$.
	
	\emph{Step 1.} 
	Note that one can give the precise expression of $G^{(\tilde{\mathrm v})}$ using the reflection principle and Girsanov transformation for the Brownian motion (see \cite[Proof of Lemma 6.2]{MuellerMytnikQuastel2011Effect}).
	In fact, for each $(s,y)\in \mathbb R_+\times \mathbb R$, we have 
	\[
	G^{( \tilde{\mathrm v})}_{s,y;t,x} 
	= \rho^{(1)}_{s,y-\tilde{\mathrm v} s;t,z} - \rho^{(-1)}_{s,y - \tilde{\mathrm v} s; t, z}
	\]
	where \[
	\rho^{(i)}_{s,y;t,z} 
	:= e^{-\frac{\tilde{\mathrm v}}{2}(z - y) - \frac{\tilde{\mathrm v}^2}{4}(t-s)} G_{s,y;t,i z}\mathbf 1_{y,z\leq 0}, \quad i\in \{1,-1\}.
	\]
	Now from the fact that the squares of the sum of two numbers is bounded by twice the sum of the squares of those two numbers, we have
	\begin{equation}
		(G^{(  \tilde{\mathrm v} ) }_{s,y;t',x'} - G^{( \tilde{\mathrm v} )}_{s,y;t,x})^2
		\leq 2\sum_{i = 1,-1}(\rho^{(i)}_{s,y-\tilde{\mathrm v} s;t',z'} - \rho^{(i)}_{s,y-\tilde{\mathrm v}s;t,z})^2,
		\quad (s,y)\in \mathbb R_+\times \mathbb R.
	\end{equation}

	\emph{Step 2.}
	We show that for each $i\in \{-1,1\}$ we have
	\begin{align}
		& \mathrm I_i :=
		\iint_0^\infty   (\rho^{(i)}_{s,y;t',z'} - \rho^{(i)}_{s,y;t,z})^2 e^{-\tilde{\mathrm v} (y-z)} \mathrm ds\mathrm dy 
		\leq  4 (\mathrm {II}_i + \mathrm {III}_i) 
	\end{align}
	where
	\begin{equation}
		\mathrm  {II}_i
		:= \iint_0^\infty  \big((\gamma - 1) G_{s,y;t',i z'}\big)^2 \mathrm ds\mathrm dy;
		\quad \mathrm {III}_i 
		:= \iint_0^\infty  (G_{s,y;t',i z'}- G_{s,y;t,i z})^2 \mathrm ds\mathrm dy
	\end{equation}
	and	
	$
	\gamma 
	:= e^{-\frac{\tilde{\mathrm v}}{2}(z'-z) - \frac{\tilde{\mathrm v}^2}{4}(t'-t)}.
	$
	In fact we can verify that
	\begin{align}
		&\mathrm I_i 
		= \iint_0^\infty   e^{-\tilde{\mathrm v}(z-y)- \frac{\tilde{\mathrm v}^2}{2}(t-s)} \mathbf 1_{y\leq 0}(\gamma G_{s,y;t',i z'}- G_{s,y;t,i z})^2 e^{-\tilde{\mathrm v}(y-z)} \mathrm ds\mathrm dy 
		\\& \leq e^{\frac{\tilde{\mathrm v}^2}{2}|t'-t|} \iint_0^\infty  (\gamma G_{s,y;t',i z'}- G_{s,y;t,i z})^2 \mathrm ds\mathrm dy.
	\end{align}
	The desired result in this step then follows from \eqref{eq:V.4} that $\tilde{\mathrm v}^2|t-t'|\leq 1$.
	
	\emph{Step 3.} 
	We show that for each $i\in \{-1,1\}$ we have 
	\[
	\mathrm {II}_i 
	\leq ( |z'-z|+ |t'-t|^{1/2}) /4
	\]
	where $\mathrm {II}_i$ is given in Step 2.
	In fact,
	\begin{align}
		&\mathrm {II}_i		
		= (\gamma - 1)^2 \iint_0^{t'} \frac{e^{-\frac{(i z'-y)^2}{2(t'-s)}}}{4\pi (t'-s)} \mathrm ds\mathrm dy
		= (\gamma - 1)^2 \frac{\sqrt{t'}}{\sqrt{2\pi}}
		\\&\leq \Big(\frac{\tilde{\mathrm v}}{2}(z'-z) +\frac{\tilde{\mathrm v}^2}{4}(t'-t)\Big)^2 \frac{\tilde{\mathrm v}^{-1}}{\sqrt{2\pi}}
		\leq 2\Big(\frac{\tilde{\mathrm v}}{2}(z'-z)\Big)^2 \frac{\tilde{\mathrm v}^{-1}}{\sqrt{2\pi}}+2\Big(\frac{\tilde{\mathrm v}^2}{4}(t'-t)\Big)^2 \frac{\tilde{\mathrm v}^{-1}}{\sqrt{2\pi}}
		\\ &=  \frac{\tilde{\mathrm v}|z'-z|}{2\sqrt{2\pi}} |z'-z| + \frac{\tilde{\mathrm v}^3|t'-t|^{3/2} }{8\sqrt{2\pi}}  |t'-t|^{1/2}.
	\end{align}
	Here, in the first inequality, we used the fact that $\frac{\tilde{\mathrm v}}{2}(z'-z) + \frac{\tilde{\mathrm v}^2}{4}(t'-t) \geq 0$.
	The desired result in this step then follows from \eqref{eq:V.4} that $\tilde{\mathrm v}|z-z'|\leq 1$ and $\tilde{\mathrm v}^2|t-t'|\leq 1$.
	
	\emph{Step 4.}
	We note from \cite[Lemma 6.2(1)]{Shiga1994Two} that there exists a universal constant $\tilde C>0$, independent of our choice of $(t,x), (t',x')$ and $\tilde{\mathrm v}$,  such that 	$
	\mathrm {III}_i
	\leq \tilde C ( |z'-z|+ |t'-t|^{1/2})
	$ for each $i\in \{-1,1\}$.
	In fact, one can take $\tilde C = 2^7$ (c.f. Lemma \ref{thm:B.3}).
	
	\emph{Final Step.}
	From Step 1, we know that
	\begin{align} 
		&\iint_0^\infty (G^{(\tilde{\mathrm v})}_{s,y;t',x'} - G^{(\tilde{\mathrm v})}_{s,y;t,x})^2 \frac{e^{-\tilde{\mathrm v}(y - \tilde{\mathrm v}s)} }{e^{-\tilde{\mathrm v} (x-\tilde{\mathrm v}t)}} \mathrm ds\mathrm dy
		\\& \leq \sum_{i\in \{-1,1\}} \iint_0^\infty (\rho^{(i)}_{s,y-\tilde{\mathrm v} s;t',z'} - \rho^{(i)}_{s,y-\tilde{\mathrm v} s;t,z})^2\frac{e^{-\tilde{\mathrm v}(y - \tilde{\mathrm v}s)} }{e^{-\tilde{\mathrm v} z}} \mathrm ds\mathrm dy
		= \sum_{i\in \{-1,1\}} \mathrm I_i.
	\end{align}
	The desired result in this Lemma then follows from Steps 2, 3 and 4.
\end{proof}
	We are now ready to give the proof of Proposition \ref{thm:V}.
\begin{proof}[Proof of Proposition \ref{thm:V}]
	\emph{Step 1.}
Define 
\[\mathrm I_{t,x} := \varepsilon kL e^{-\theta \mathrm v(x - \mathrm vt)} \mathbf 1_{x\leq \mathrm vt}, \quad(t,x) \in \mathbb R_+\times \mathbb R.\]
Let $\bar v$ be as in  Proposition \ref{thm:C}. Then by part (4) of that proposition we have that $\bar v \geq v$ on $\mathbb R_+\times \mathbb R$ almost surely.
	Therefore, in order to prove Proposition \ref{thm:V}, we only have to show that $\mathrm P(\tilde \tau_3 < T)< 1/8$ holds with 
\begin{equation}
\tilde \tau_3 
:= \inf\{t\in [0,T]: \bar v_{t,x}\geq F(x - \mathrm vt)+ \mathrm I_{t,x} \text{ for some } x\in (-\infty, \mathrm vt]\}.
\end{equation}

\emph{Step 2.}
	Define $\tilde Z_{t,x} := e^{-\alpha t}(\bar v_{t,x} - \varrho_{t,x})$ for each $(t,x)\in \mathbb R_+\times \mathbb R$ where $\varrho$ is given in \eqref{eq:H.04} and $\alpha := \theta (1-\theta)\mathrm v^2$.
	Then it can be verified from Lemma \ref{thm:H.05} that 
\begin{equation} 
	\tilde Z_{t,x} 
	=  \iint_0^t G_{s,y;t,x}^{(\mathrm v)}\epsilon e^{-\alpha s} \sigma(\bar v_{s,y}) W^{\bar v}( \mathrm ds\mathrm dy),
	\quad \text{a.s.}
	\quad (t,x)\in \mathbb R_+\times \mathbb R.
\end{equation}
	From this we immediately get that 
\[
\tilde \tau_3 
= \inf\{t\in [0,T]: \tilde Z_{t,x} \geq e^{-\alpha t}\mathrm {I}_{t,x} \text{ for some } x\in (-\infty, \mathrm vt]\}.
\]

\emph{Step 3.}
We show that almost surely 
\[
	\epsilon e^{-\alpha t} \sigma(\bar v_{t,x}) \leq \epsilon \sigma(\bar v_{t,x})\leq  \sqrt{\eta_{t,x}}, \quad (t,x) \in [0,\tilde \tau_3]\times \mathbb R
\]
	where \[\eta_{t,x} := 2kL \epsilon^2 \varepsilon e^{-\theta \mathrm v(x - \mathrm vt)}\mathbf 1_{x\leq \mathrm vt}, \quad (t,x)\in \mathbb R_+\times \mathbb R.\] 
	In fact, almost surely for each $(t,x) \in [0,\tilde \tau_3]\times \mathbb R$,
\begin{align}
	&\sigma(\bar v_{t,x})^2 
	\leq \bar v_{t,x} 
	\leq F(x - \mathrm vt)+\mathrm I_{t,x}
      \\& \leq \frac{\varepsilon}{\theta \mathrm v}(e^{-\theta \mathrm v(x-\mathrm v t)}-1) \mathbf 1_{x\leq \mathrm vt} + \varepsilon kL e^{-\theta \mathrm v(x-\mathrm v t)}  \mathbf 1_{x\leq \mathrm vt}
	\\&\leq (2 + k)L  \varepsilon e^{-\theta \mathrm v(x-\mathrm v t)}\mathbf 1_{x\leq \mathrm vt}.
\end{align}
Note from \eqref{eq:U.2} and \eqref{eq:U.6} that $k\geq 2$.
	The desired result in this step follows.

\emph{Step 4.}
From Step 3 we can verify that almost surely $Z = \tilde Z$ on $[0,\tilde \tau_3]\times \mathbb R$ where $Z$ is a continuous random field so that 
\begin{equation} 
	Z_{t,x}   
	= \iint_0^t G_{s,y;t,x}^{(\mathrm v)} \Big( \sqrt{\eta_{s,y}} \wedge \big( \epsilon e^{-\alpha s} \sigma(\bar v_{s,y})\big) \Big) W^{\bar v}( \mathrm ds\mathrm dy)
	\quad \text{a.s.} 
	\quad (t,x)\in \mathbb R_+\times \mathbb R.
\end{equation}
Thus from Step 2, we get that 
\[\tilde \tau_3 = \inf \{t\in [0,T]: Z_{t,x} \geq e^{- \alpha t} \mathrm I_{t,x} \text{ for some } x\leq \mathrm vt\}, 
\quad 
	{\rm a.s.}
\] 
\emph{Step 5.}
Define \[
	\Gamma_n:= \{(t,x)\in [0,T]\times \mathbb R: x- \mathrm vt \in (-nL, -(n-1)L]\}, \quad  n\in \mathbb N.
	\]
	We can verify from \eqref{eq:U.3} that for each $n\in \mathbb N$ and $(t,x)\in \Gamma_n$, 
\[
	e^{-\alpha t}\mathrm I_{t,x}
	\geq e^{-\alpha T} \varepsilon kL e^{\theta \mathrm v(n-1)L} 
	= C_\theta \varepsilon k L e^{\theta n} 
	=: \mathrm {II}_n 
\] 
where $C_\theta := e^{-\theta(2-\theta)}$.

\emph{Step 6.}
For each $n\in \mathbb N$, we can
get  from Lemma \ref{thm:V.3} and \eqref{eq:U.3} that
\begin{align} 
	B_n
	= \sup_{(t,x),(t',x')\in \Gamma_n} \frac{\mathrm v^{-1}\iint_0^\infty (G_{s,y;t,x}^{(\mathrm v)} - G_{s,y;t',x'}^{(\mathrm v)})^2 \eta_{s,y} \mathrm ds\mathrm dy}{ |(x'-\mathrm v t') - (x-\mathrm vt)| + |t'-t|^{1/2}}  
	\leq 2^{10} k L^2 \epsilon^2 \varepsilon e^n.
\end{align}
	In fact, for $(t,x), (t',x') \in \Gamma_n$, since \eqref{eq:V.4} holds, we have from Lemma \ref{thm:V.3} that 
\begin{align} 
 &\iint_0^\infty (G_{s,y;t,x}^{(\mathrm v)} - G_{s,y;t',x'}^{(\mathrm v)})^2 \eta_{s,y} \mathrm ds\mathrm dy
 \leq 2^2kL \epsilon^2 \varepsilon \iint_0^\infty (G_{s,y;t,x}^{(\mathrm v)} - G_{s,y;t',x'}^{(\mathrm v)})^2 e^{- \mathrm v(y - \mathrm vs)}\mathrm ds\mathrm dy
\\& \leq 2kL \epsilon^2 \varepsilon 2^9 e^{-\mathrm v (x-\mathrm vt)}(|(x'-\mathrm vt')-(x - \mathrm vt)|+|t'-t|^{1/2})
\\& \leq 2^{10} kL \epsilon^2 \varepsilon  e^{n\mathrm v L}(|(x'-\mathrm vt')-(x - \mathrm vt)|+|t'-t|^{1/2}).
\end{align}
Noting  from \eqref{eq:U.3}  that $\mathrm vL = 1$, the desired  result in this step follows.

\emph{Step 7.}
From Step 6, \eqref{eq:U.3} that $\varepsilon = \gamma \epsilon^2$, \eqref{eq:U.6} that $\gamma k = \mathcal K$, and Corollary \ref{thm:V.2} we can
obtain
\begin{align} 
	&\mathrm P \Big( \sup_{\Gamma_n} Z \geq \mathrm {II}_n \Big) 
      \leq \mathrm P \Big( \sup_{\Gamma_n} Z \geq 2^{-5}C_\theta \sqrt{\varepsilon/\epsilon^2} \sqrt{k}  e^{(\theta-1/2) n} \sqrt{B_n}\Big) 
      \\&\leq \mathrm P \Big( \sup_{\Gamma_n} Z \geq 2^{-5}C_\theta \sqrt{\mathcal K}   e^{(\theta-1/2) n} \sqrt{B_n}\Big) 
      \\&\leq 2^5 \exp( -2^{-22} C_\theta^2 \mathcal K   e^{(2\theta-1) n} ).
\end{align}

\emph{Final Step.}
Using Steps 4, 5 and 7, we can verify that
\begin{align} 
&\mathrm P(\tilde \tau_3 < T) 
\leq \mathrm P\Big(\exists (t,x) \in \bigcup_{n=1}^\infty \Gamma_n: Z_{t,x} \geq e^{-\alpha t}\mathrm I_{t,x}\Big) 
	\\  &\leq \sum_{n=1}^\infty \mathrm P\Big(\sup_{\Gamma_n} Z \geq \mathrm {II}_n \Big) 
	\leq 2^5 \sum_{n=1}^\infty \exp( -2^{-22} C_\theta^2 \mathcal K e^{(2\theta-1) n} )
			\leq 1/8
\end{align}
where we used \eqref{eq:U.13} in the last inequality.
			\begin{note}
			where the constants $C_{\ref{thm:V.1}}$ and
$
\tilde C 
:=C_\theta^2 \gamma (\theta - \frac{1}{2})^2 /(2^2 C_{\ref{thm:V.3}}C_{\ref{thm:V.1}} )
>0
$
are independent of the choice of $\epsilon \in (0,\epsilon_0)$. 
Note that the fixed $k$ is chosen arbitrarily, so we can chose $k$ large enough, independent of the choice of $\epsilon \in (0,\epsilon_0)$, so that the right hand side of \eqref{eq:V.5} is strictly less than $1/8$.
\end{note}
\end{proof}

\begin{note}
\begin{proof}
\emph{Step 1.} Let us introduce the parabolic dyadic lattices of $\mathbf H$.
	For each $(t,x)$ and $(t',x')$ in $\mathbb R^+ \times \mathbb R$, define their parabolic distance as
	\[
	d_{t',x'; t,x} = |x'-x| + |t-t'|^{1/2}.
	\]
	For each $n\in \mathbb Z$, $i\in \mathbb Z_+ \cap [0,4^n]$ and $j\in \mathbb Z_+ \cap [0,2^n]$ define $t_{n;i,j}$ and $x_{n;i,j}$ as the solution of 
	\[
	\frac{t}{T} = \frac{i}{4^n},\quad \frac{x-\mathrm vt-a}{L} = \frac{j}{2^n}.
	\]
	Define a lattice on $\mathbf H$ by
	\[
	\mathbf G_n 
	:= \{(t_{n;i,j},x_{n;i,j}): i\in \mathbb Z_+ \cap [0,4^n), j\in \mathbb Z_+ \cap [0,2^n)\},
	\]
	and its edges $\mathbf E_n = \mathbf E_n^I \cup \mathbf E_n^J$ where 
	\begin{align}
		&\mathbf E_n^I
		:= \Big\{\big((t_{n;i,j},x_{n;i,j}), (t_{n;i+1,j},x_{n;i+1,j})\big); i\in \mathbb Z_+ \cap [0,4^n), j\in \mathbb Z_+ \cap [0,2^n)\Big\};
		\\&\mathbf E_n^J
		:= \Big\{\big((t_{n;i,j},x_{n;i,j}), (t_{n;i,j+1},x_{n;i,j+1})\big); i\in \mathbb Z_+ \cap [0,4^n), j\in \mathbb Z_+ \cap [0,2^n)\Big\}.
	\end{align}
	For any pair $e=((t,x),(t',x'))\in \mathbf H$, define
	\[
	d_e := |x'-x| + |t-t'|^{1/2},\quad Z_e := Z_{t',x'} - Z_{t,x}.
	\]
	Then for any $e\in \mathbf E_n^I$, we have 
	\begin{align}
		&d_e = |x_{n;i+1,j}-x_{n;i,j}| + |t_{n;i+1,j}-t_{n;i,j}|^{1/2}
		\\&=  \Big|\Big(L\frac{j}{2^n}+a+\mathrm vT\frac{i+1}{4^n}\Big) - \Big(L\frac{j}{2^n}+a+\mathrm vT\frac{i}{4^n}\Big)\Big| + \Big|T\frac{i+1}{4^n}-T\frac{i}{4^n}\Big|^{1/2}
		\\& = L(\frac{1}{4^n} + \frac{1}{2^n}) \leq L/2^{n-1}.
	\end{align}
	And for any $e\in \mathbf E_n^J$, we have 
	\begin{align}
		&d_e = |x_{n;i,j+1}-x_{n;i,j}| + |t_{n;i,j+1}-t_{n;i,j}|^{1/2}
		\\&=  \Big|\Big(L\frac{j+1}{2^n}+a+\mathrm vT\frac{i}{4^n}\Big) - \Big(L\frac{j}{2^n}+a+\mathrm vT\frac{i}{4^n}\Big)\Big| + \Big|T\frac{i}{4^n}-T\frac{i}{4^n}\Big|^{1/2}
		\\& = L/2^n \leq L/2^{n-1}.
	\end{align}
	Define
	\[
	\mathbf G = \bigcup_{n=1}^\infty \mathbf G_n; \quad \mathbf E = \bigcup_{n=1}^\infty \mathbf E_n.
	\]
	It is elementary to see that for any $(t,x),(t',x')\in \mathbf G$ there exists $\mathbf E_{t,x;t',x'}$, a finite subset of $\mathbf E$, such that (1) the edges in  $\mathbf E_{t,x;t',x'}$ connects $(t,x)$ and $(t',x')$; and (2) 
	\begin{align}
		\# (\mathbf E_{t,x;t',x'} \cap \mathbf E_n) \leq 8, \quad n \in \mathbb N.
	\end{align}  
	
	\emph{Step 2.}
		We show that for any $(t,x), (t',x')\in \mathbf H$ and $c\geq 0$, 
		\begin{align}
			\mathrm P(Z_{t',x'} - Z_{t,x} \geq c)  \leq \exp\Big\{ -\frac{c^2}{2} B^{-1} d_{t,x;t',x'}^{-1}\Big\}.
		\end{align}
	\begin{align} 
		& 1 = E\Big[\exp\Big\{ \theta  (Z_{t',x'} - Z_{t,x}) - \frac{1}{2} \theta^2 \iint_0^\infty ( g_{s,y; t,x} -  g_{s,y; t',x'} )^2 \sigma_{s,y}^2 \mathrm ds\mathrm dy\Big\}\Big] 
		\\& \geq E\Big[\exp\Big\{ \theta  (Z_{t',x'} - Z_{t,x}) - \frac{1}{2} \theta^2 \iint_0^\infty ( g_{s,y; t,x} -  g_{s,y; t',x'} )^2 \eta_{s,y} \mathrm ds\mathrm dy\Big\}\Big].
	\end{align}
	Therefore
	\begin{align} 
		&  E[\exp \{ \theta  (Z_{t',x'} - Z_{t,x}) \} ] 
		\leq \exp\Big\{ \frac{1}{2} \theta^2 \iint_0^\infty ( g_{s,y; t,x} -  g_{s,y; t',x'} )^2 \eta_{s,y} \mathrm ds\mathrm dy\Big\}
		\\& \leq \exp\Big\{ \frac{1}{2} \theta^2 Bd_{t',x';t,x}\Big\}.
	\end{align}
	By Chebyshev's inequality we have for any $\theta \geq 0$
	\begin{align}
		& P(Z_{t',x'} - Z_{t,x} \geq c) = P[\exp\{\theta (Z_{t',x'} - Z_{t,x} )\} \geq e^{\theta c}]
		\leq P[\exp\{\theta (Z_{t',x'} - Z_{t,x} )\}] / e^{\theta c}
		\\ &\leq \exp\Big\{ \frac{1}{2} \theta^2 Bd_{t',x';t,x} - \theta c\Big\}
	\end{align}
	Optimizing on $\theta$ we get the desired result.
	
	\emph{Step 3.}
		We show that for any $\rho > 8$, we have
		\[
		P(A^c) \leq  \frac{2}{\frac{\rho}{8}-1}
		\]
		where
		\[
		A:= \bigcap_{n\in \mathbb N}\bigcap_{e\in \mathbf E_n}\{Z_{e}\leq c_n\}; 
		\quad c_n = \sqrt{2(\log \rho) n BL/2^{n-1}}, \quad n\in \mathbb N.
		\]
	\begin{align}
		&P(A^c) \leq \sum_{n\in \mathbb N} \sum_{e\in \mathbf E_n} P(Z_e>c_n) 
		\leq \sum_{n\in \mathbb N}  \sum_{e\in \mathbf E_n}  \exp\Big\{ -\frac{c_n^2}{2} B^{-1} d_{e}^{-1}\Big\} 
		\\&\leq \sum_{n\in\mathbb N} (\# \mathbf E_n) \exp\Big\{ -\frac{c_n^2}{2} B^{-1} L^{-1}2^{n-1}\Big\} 
		\leq 2 \sum_{n\in \mathbb N} 2^{3n}\exp\Big\{ -\frac{c_n^2}{2} B^{-1} L^{-1}2^{n-1}\Big\} 
		\\&\leq 2 \sum_{n\in \mathbb N} 8^n\rho^{-n} 
		= \frac{2}{\frac{\rho}{8}-1}.
	\end{align}
	
	\emph{Step 4.}
	We show that almost surely on event $A$, we have 
		\[
		\sup_{(t,x),(t',x')\in \mathbf H} |Z_{t',x'} - Z_{t,x}| \leq 8 \sqrt{2(\log \rho) B} \sum_{n\in \mathbb N} \sqrt{n2^{-n}}.
		\]
	\[
	\sup_{(t,x),(t',x')\in \mathbf H} |Z_{t',x'} - Z_{t,x}| = \sup_{(t,x),(t',x')\in \mathbf H} Z_{t',x'} - Z_{t,x} = \sup_{(t,x),(t',x')\in \mathbf G} Z_{t',x'} - Z_{t,x},
	\]
	so we only need to verify that for any $(t,x)$ and $(t',x')$ in $\mathbf G$,
	\begin{align}
		&Z_{t',x'} - Z_{t,x} = \sum_{e\in \mathbf E_{t,x; t',x'}} Z_e
		= \sum_{n\in \mathbb N} \sum_{e\in \mathbf E_{t,x;t',x'}\cap \mathbf E_n} Z_e
		\leq 8\sum_{n\in \mathbb N} c_n
		\\& \leq  8 \sqrt{2(\log \rho) BL} \sum_{n\in \mathbb N} \sqrt{n/2^{n-1}}.
	\end{align}
	
	\emph{Final Step.} From Step 3 and Step 4 we know that for any $e^{k^2}:=\rho\geq 16$, we have 
	\begin{align}
		&\mathbf P\Big(\sup_{(t,x),(t',x')\in \mathbf H} |Z_{t',x'} - Z_{t,x}| > c^{-1/2} k \sqrt{BL}\Big)
		\leq \mathbf P(A^c) 
		\\&\leq \frac{16}{\rho-8}
		= 16\rho^{-1} (1+\frac{8}{\rho-8})
		\\&\label{eq:T.951}\leq 32 \rho^{-1},
	\end{align}
	where $c^{-1/2}:= 8 \sqrt{2}\sum_{n\in \mathbb N} \sqrt{n/2^{n-1}}$.
	Note that $32\rho^{-1}\geq 1$ when $\rho \in (1,16)$, therefore \eqref{eq:T.951} is actually true for all $\rho \geq 1$.
	Finally, taking $l = c^{-1/2}k$, we get the desired result.
	
\end{proof}
\end{note}

\begin{note}
\begin{proof}[Proof of Lemma \ref{thm:V.1}]
\emph{Step 1.} 
	For each $(t,x)$ and $(t',x')$ in $\mathbb R^+ \times \mathbb R$, define their parabolic distance as
\[
	d_{t',x'; t,x} 
	= \Big|\frac{x'-x}{\tilde L}\Big| + \Big|\frac{t-t'}{\tilde T}\Big|^{1/2}.
\]
	For each $n\in \mathbb Z$, define the parabolic dyadic lattice on $\mathbf H$, 
	\[
	\mathbf G_n 
	:= \Big\{(t,x)\in \mathbf H: \exists i\in \mathbb Z_+, j \in \mathbb Z_+, \text{ s.t. } \frac{t}{\tilde T} = \frac{i}{4^n}, \frac{x-a}{\tilde L} = \frac{j}{2^n}\Big\},
	\]
	and its edges
\[
	\mathbf E_n
	:= \Big\{e=\big((t',x'),(t,x)\big): (t',x'),(t,x)\in \mathbf G_n \text{ s.t. } d_e:=d_{t',x';t,x} = \frac{1}{2^n}\Big\}.
\]
	Define
\[
	\mathbf G
	= \bigcup_{n=1}^\infty \mathbf G_n; \quad \mathbf E = \bigcup_{n=1}^\infty \mathbf E_n.
\]
	It is elementary to see that for any $(t,x),(t',x')\in \mathbf G$ there exists $\mathbf E_{t,x;t',x'}$, a finite subset of $\mathbf E$, such that
\begin{statement}
\item \label{eq:V.51} 
	the edges in  $\mathbf E_{t,x;t',x'}$ connects $(t,x)$ and $(t',x')$; and
\item \label{eq:V.52}
	for each $n\in \mathbb N$, $ \# (\mathbf E_{t,x;t',x'} \cap \mathbf E_n) \leq 8.$  
\end{statement}
	Also for each $e\in \mathbf E$ with $e =\big ((t,x), (t',x')\big)$, we define $ Z_e := Z_{t',x'} - Z_{t,x}.$ 
	
	\emph{Step 2.}
		We show that for any $(t,x), (t',x')\in \mathbf H$ and $c\geq 0$, 
		\begin{align}
			\mathrm P(Z_{t',x'} - Z_{t,x} \geq c)  \leq \exp\Big\{ -\frac{c^2}{2} B^{-1} d_{t,x;t',x'}^{-1}\Big\}.
		\end{align}
	Note that from the exponential martingale, for any $\lambda \geq 0$, 
\begin{align} 
	& 1 = \mathbb E\Big[\exp\Big\{ \lambda (Z_{t',x'} - Z_{t,x}) - \frac{1}{2} \lambda^2 \iint_0^\infty ( g_{s,y; t,x} -  g_{s,y; t',x'} )^2 \sigma_{s,y}^2 \mathrm ds\mathrm dy\Big\}\Big] 
	\\& \geq \mathbb E\Big[\exp\Big\{ \lambda (Z_{t',x'} - Z_{t,x}) - \frac{1}{2} \lambda^2 \iint_0^\infty ( g_{s,y; t,x} -  g_{s,y; t',x'} )^2 \eta_{s,y} \mathrm ds\mathrm dy\Big\}\Big].
\end{align}
	Therefore, for any $\lambda \geq 0$,
\begin{align} 
	& \mathbb E[\exp \{ \lambda (Z_{t',x'} - Z_{t,x}) \} ] 
	\leq \exp\Big\{ \frac{1}{2} \lambda^2 \iint_0^\infty ( g_{s,y; t,x} -  g_{s,y; t',x'} )^2 \eta_{s,y} \mathrm ds\mathrm dy\Big\}
	\\& \leq \exp\Big\{ \frac{1}{2} \lambda^2 Bd_{t',x';t,x}\Big\}.
\end{align}
	By Chebyshev's inequality we have for any $\lambda \geq 0$
\begin{align}
	& \mathrm P(Z_{t',x'} - Z_{t,x} \geq c) 
	= \mathrm P[\exp\{\lambda (Z_{t',x'} - Z_{t,x} )\} \geq e^{\lambda c}]
	\\& \leq \mathbb E[\exp\{\lambda (Z_{t',x'} - Z_{t,x} )\}] / e^{\lambda c}
	\leq \exp\Big\{ \frac{1}{2} \lambda^2 Bd_{t',x';t,x} - \lambda c\Big\}.
\end{align}
	Optimizing on $\lambda$ we get the desired result.
	
\emph{Step 3.}
	We show that for any $\beta > 8$, we have
\[
	\mathrm P(A^c)
	\leq  \frac{16}{\beta-8}
\]
	where
\[
	A
	:= \bigcap_{n\in \mathbb N}\bigcap_{e\in \mathbf E_n}\{Z_{e}\leq c_n\}; 
	\quad c_n
	= \sqrt{2(\log \beta) n B2^{-n}}, \quad n\in \mathbb N.
\]
	In fact, 
\begin{align}
	&\mathrm P(A^c) \leq \sum_{n\in \mathbb N} \sum_{e\in \mathbf E_n} P(Z_e>c_n) 
	\leq \sum_{n\in \mathbb N} (\# \mathbf E_n) \exp\Big\{ -\frac{c_n^2}{2} B^{-1} d_{e}^{-1}\Big\} 
	\\&\leq 2 \sum_{n\in \mathbb N} 2^{3n}\exp\Big\{ -\frac{c_n^2}{2}  B^{-1} 2^n\Big\} 
	= 2 \sum_{n\in \mathbb N} 8^n\beta^{-n} 
	= \frac{2}{\frac{\beta}{8}-1}.
\end{align}
	
	\emph{Step 4.} We show that almost surely on event $A$, we have 
	\begin{equation}
		\sup_{(t,x),(t',x')\in \mathbf H} |Z_{t',x'} - Z_{t,x}| \leq 8 \sqrt{2(\log \beta) B} \sum_{n\in \mathbb N} \sqrt{n2^{-n}}.
		\end{equation}
	In fact, from the continuity of $Z$,
\[
	\sup_{(t,x),(t',x')\in \mathbf H} |Z_{t',x'} - Z_{t,x}|
	= \sup_{(t,x),(t',x')\in \mathbf H} Z_{t',x'} - Z_{t,x}
	= \sup_{(t,x),(t',x')\in \mathbf G} Z_{t',x'} - Z_{t,x},
\]
	Note from \eqref{eq:V.51} and \eqref{eq:V.52}, almost surely on event $A$, for any $(t,x)$ and $(t',x')$ in $\mathbf G$,
\begin{align}
	&Z_{t',x'} - Z_{t,x}
	= \sum_{e\in \mathbf E_{t,x; t',x'}} Z_e
	= \sum_{n\in \mathbb N} \sum_{e\in \mathbf E_{t,x;t',x'}\cap \mathbf E_n} Z_e
	\leq 8\sum_{n\in \mathbb N} c_n
	\\& \leq  8 \sqrt{2(\log \beta) B} \sum_{n\in \mathbb N} \sqrt{n2^{-n}}.
\end{align}
	Thus the desired result in this step follows.
	
\emph{Final Step.}
	Define $\exp(k^2):= \beta$. 
	From Steps 3 and 4, for any $\beta \geq 16$, we get 
\begin{align}
	&\mathrm P\Big(\sup_{(t,x),(t',x')\in \mathbf H} |Z_{t',x'} - Z_{t,x}| > c^{-1/2} k \sqrt{B}\Big)
	\\&\leq \mathrm P(A^c) 
	\leq \frac{16}{\beta-8}
	= 16\beta^{-1} (1+\frac{8}{\beta-8})
	\\&\label{eq:T.951}\leq 32 \beta^{-1},
\end{align}
	where $c^{-1/2}:= 8 \sqrt{2}\sum_{n\in \mathbb N} \sqrt{n2^{-n}}$.
	Note that $32\beta^{-1}\geq 1$ when $\beta \in (1,16)$, therefore \eqref{eq:T.951} actually holds for all $\beta \geq 1$.
	Finally take $z: = c^{-1/2}k$, we get the desired result since
\[
	32 \beta^{-1} = 32 e^{-k^2} = 32 e^{- c z^2} \leq 32 e^{- z^2/2^{12}}.
	\qedhere
\]
\end{proof}
\end{note}

\section{Proof of Proposition \ref{thm:B}} \label{sec:B} 
We will need the following analytical lemma.
\begin{lemma} \label{thm:B.3}
	For any $\tilde {\mathrm v}>0$ and $(t,x), (t',x')\in \mathbb R_+\times \mathbb R$ satisfying 
	\[t,t' \in [0, \tilde{\mathrm v}^{-2}]; \quad x, x' \in [-2 \tilde {\mathrm v}^{-1}, 2 \tilde {\mathrm v}^{-1}]\]
	it holds that
\[
	\iint_0^\infty (G_{s, y;t', x'} - G_{s, y;t, x})^2e^{-\tilde {\mathrm v} y}\md s\md y
	\leq 2^{7} (|x'-x| + |t'-t|^{1/2}).
\]
\end{lemma}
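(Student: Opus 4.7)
The plan is to first rescale to eliminate $\tilde{\mathrm v}$, then split the squared difference into spatial and temporal parts by a triangle inequality, and finally bound each part via the weighted Plancherel identity, reducing the problem to explicit Gaussian computations in Fourier space.

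Under the parabolic rescaling $(\tilde s,\tilde y,\tilde t,\tilde x):=(\tilde{\mathrm v}^2 s,\tilde{\mathrm v}y,\tilde{\mathrm v}^2 t,\tilde{\mathrm v}x)$ a direct computation gives $G_{s,y;t,x}=\tilde{\mathrm v}\,G_{\tilde s,\tilde y;\tilde t,\tilde x}$, $\md s\md y=\tilde{\mathrm v}^{-3}\md\tilde s\md\tilde y$, and $e^{-\tilde{\mathrm v}y}=e^{-\tilde y}$, so both sides of the inequality carry the same factor $\tilde{\mathrm v}^{-1}$. It thus suffices to prove the statement for $\tilde{\mathrm v}=1$, $t,t'\in[0,1]$, $x,x'\in[-2,2]$. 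Writing $G_r(z):=e^{-z^2/(4r)}/\sqrt{4\pi r}$ so that $G_{s,y;t,x}=G_{t-s}(x-y)\mathbf 1_{s<t}$, I would apply $(a-b)^2\leq 2(a-c)^2+2(c-b)^2$ with intermediate configuration $(t',x)$ to reduce matters to bounding the spatial piece $\iint(G_{t'-s}(x'-y)-G_{t'-s}(x-y))^2 e^{-y}\md y\md s$ by $C|x-x'|$ and the temporal piece $\iint(G_{t'-s}(x-y)-G_{t-s}(x-y))^2 e^{-y}\md y\md s$ by $C|t'-t|^{1/2}$. The key tool is the weighted Plancherel identity $\int|f|^2 e^{-y}\md y=(2\pi)^{-1}\int|\hat f(\xi-i/2)|^2\md\xi$ together with $\widehat{G_r(x-\cdot)}(\xi-i/2)=e^{-x/2-i\xi x}e^{-r(\xi-i/2)^2}$, whose modulus $e^{-x/2+r/4-r\xi^2}$ gives the bounded scalar prefactor $e^{-x+r/2}\leq e^{5/2}$, leaving only spectral integrals against the Gaussian weight $e^{r/2-2r\xi^2}$.

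For the spatial piece, Plancherel produces, with $\alpha:=x-x'$, an integral of the form $\int\md\xi\,|1-e^{-i(\xi-i/2)\alpha}|^2\int_0^{t'}e^{r/2-2r\xi^2}\md r$. Using the decomposition
\[
|1-e^{-i(\xi-i/2)\alpha}|^2=(1-e^{-\alpha/2})^2+2e^{-\alpha/2}(1-\cos(\xi\alpha)),
\]
the first summand integrates to $O(\alpha^2)=O(|\alpha|)$ because $\int_0^1\int e^{r/2-2r\xi^2}\md\xi\md r$ is finite by $\int_0^1 r^{-1/2}\md r<\infty$; for the oscillatory summand, the Gaussian identity $\int(1-\cos(\xi\alpha))e^{-2r\xi^2}\md\xi=\sqrt{\pi/(2r)}(1-e^{-\alpha^2/(8r)})$ followed by the substitution $u=\alpha^2/(8r)$ reduces the $r$-integral to $\frac{|\alpha|}{2\sqrt 2}\int_0^\infty(1-e^{-u})u^{-3/2}\md u=|\alpha|\sqrt{\pi/2}$, giving an $O(|\alpha|)$ contribution. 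For the temporal piece, the slice $s\in(t\wedge t',t\vee t')$ on which one kernel vanishes contributes $\int_0^\delta(e^{-x+r/2}/\sqrt{8\pi r})\md r=O(\sqrt\delta)$ with $\delta:=|t'-t|$; on the remaining range, Plancherel gives $\int_0^{t\wedge t'}\md r\int e^{r/2-2r\xi^2}|e^{-\delta(\xi-i/2)^2}-1|^2\md\xi$, which I would split at the natural scale $|\xi|=\delta^{-1/2}$. On $|\xi|\leq\delta^{-1/2}$ the inequality $|e^w-1|^2\leq|w|^2 e^{2|w|}$ yields $|e^{-\delta(\xi-i/2)^2}-1|^2\lesssim\delta^2(\xi^2+1)^2$, and integrating $\delta^2\xi^4$ up to $\delta^{-1/2}$ against $\xi^{-2}$ (from $\int_0^1 e^{-2r\xi^2}\md r$) gives $O(\sqrt\delta)$; on $|\xi|>\delta^{-1/2}$ the trivial bound $|e^{-\delta(\xi-i/2)^2}-1|^2\leq 4$ combined with $\int_0^1 e^{r/2-2r\xi^2}\md r\leq C\xi^{-2}$ yields $\int_{\delta^{-1/2}}^\infty\xi^{-2}\md\xi=O(\sqrt\delta)$.

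The main obstacle is the spectral split in the temporal piece, where the cancellation encoded in $|e^{-\delta(\xi-i/2)^2}-1|^2$ must be exploited at low frequencies $|\xi|\lesssim\delta^{-1/2}$ to obtain the half-power $\sqrt\delta$ instead of a naive $O(1)$ bound. The spatial estimate and the scaling reduction are essentially routine Gaussian bookkeeping, and the constant $2^7$ is generous and should follow by tracking absolute constants through the scaling and the two separately-bounded pieces.
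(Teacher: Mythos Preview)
Your proof is correct and follows a genuinely different route from the paper's. Both arguments begin the same way: reduce to $\tilde{\mathrm v}=1$ (the paper does this by pulling out $e^{\tilde{\mathrm v}x}\geq e^{-2}$ and translating, you by parabolic rescaling---these are equivalent), then split into a spatial and a temporal increment via the triangle inequality. From there the paper stays in physical space: it expands $(G_{t-s}(x'-y)-G_{t-s}(x-y))^2$ into three Gaussians, completes the square in $y$ against the weight $e^{-y}$, evaluates each Gaussian integral explicitly, and then bounds the resulting one-dimensional $s$-integrals by elementary estimates such as $|1-e^{-z}|\leq z$ and $\int_{z^2}^\infty s^{-3/2}\,\md s=2/z$. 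Your approach instead passes to Fourier space via the complex-shifted Plancherel identity, turning the heat kernel into a multiplier $e^{-r(\xi-i/2)^2}$ and the spatial/temporal increments into factors $1-e^{-i(\xi-i/2)\alpha}$ and $e^{-\delta(\xi-i/2)^2}-1$ respectively; the work then becomes bounding spectral integrals against the Gaussian weight $e^{-2r\xi^2}$, with the temporal piece handled by a frequency split at $|\xi|\sim\delta^{-1/2}$.

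The paper's route is more elementary---no contour shift, just repeated use of $\int e^{-ay^2+by}\,\md y=\sqrt{\pi/a}\,e^{b^2/(4a)}$---and makes the explicit constant $2^7$ easy to track. Your Fourier approach is cleaner conceptually and would generalize more readily to other semigroups, but the constant bookkeeping is less transparent: the prefactor $e^{-x+r/2}\leq e^{5/2}\approx 12$, the factor $2$ from the triangle inequality, and the various $e^{2|w|}$ and $\sqrt\pi$ factors make it unclear that $2^7$ is actually achieved without further care. This is not a real gap, since in the paper the lemma is only used to feed a constant into Lemma~\ref{thm:V.1}, and the downstream argument has ample slack; but if you want the stated constant you should either track more carefully or note that any absolute constant suffices for the application.
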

Note that the upper bound in the above lemma is uniform in $\tilde {\mathrm v}$.
\begin{proof}
	Let us fix an arbitrary $\tilde {\mathrm v}>0$.
	First note that 
	\begin{align} 
		&\iint_0^\infty (G_{s, y;t', x'} - G_{s, y;t, x})^2e^{-\tilde {\mathrm v} y}\md s\md y
		\\&\leq 2\iint_0^\infty (G_{s, y;t, x'} - G_{s, y;t, x})^2e^{-\tilde {\mathrm v} y}\md s\md y+ 2\iint(G_{s, y;t', x'} - G_{s, y;t, x'})^2e^{-\tilde {\mathrm v} y}\md s\md y 
		\\&=:  
		2\mathrm{I}+2\mathrm{II}, \quad (t,x), (t',x')\in \mathbb R_+\times \mathbb R. 
	\end{align}
	To finish the proof it is sufficient to show that 
	\begin{align}
		\label{wq:10_1a}
		\mathrm I
		&\leq 2^{6} |x'-x|, \quad t\in [0, \tilde{\mathrm v}^{-2}],\; x, x' \in [-2 \tilde {\mathrm v}^{-1}, 2 \tilde {\mathrm v}^{-1}],
		\\
		\label{wq:10_1b}
		\mathrm {II}
		&\leq 2^{6} |t'-t|^{1/2}, \quad t, t'\in [0, \tilde{\mathrm v}^{-2}], \;
		x' 
		\in [-2 \tilde {\mathrm v}^{-1}, 2 \tilde {\mathrm v}^{-1}].
	\end{align}
	We will prove only~\eqref{wq:10_1a}, and leave the proof of~\eqref{wq:10_1b}, which is tedious but not much different, to the reader. 
	
	To prove~\eqref{wq:10_1a} we assume without loss of generality that $z:=x' - x\geq 0$.
	Note that
	\begin{align}
		&2^{-3}\mathrm I
		\leq e^{\tilde{\mathrm v}x}\mathrm I 
		= \iint(G_{s, y;t, x'} - G_{s, y;t, x})^2e^{-\tilde{\mathrm v} (y - x)}\md s \md y
		= \iint(G_{s, y;t, z} - G_{s, y;t, 0})^2e^{-\tilde{\mathrm v} y}\md s \md y. 
	\end{align}
	From the expression of $G$ in \eqref{eq:P.3}, we have
	\begin{align}
		& 2^{-3}\mathrm {I} 
		\leq \iint_0^t \frac{1}{4\pi s}(e^{-\frac{(y-z)^2}{4s}} - e^{-\frac{y^2}{4s}})^2e^{-\mathrm vy}\md s\md y\\
		& = \iint_0^t \frac{1}{4\pi s}\big( e^{-\frac{y^2}{2s}} - 2e^{-\frac{y^2+ (y-z)^2}{4s}} + e^{-\frac{(y-z)^2}{2s}} )e^{-\mathrm vy}\md s\md y\\
		& = \int_0^t \frac{\mathrm ds}{4\pi s} \int \big( e^{-\frac{y^2}{2s} - \mathrm vy} - 2e^{-\frac{y^2}{2s} + (\frac{z}{2s} - \mathrm v) y - \frac{z^2}{4s}} + e^{-\frac{y^2}{2s} + (\frac{z}{s} - \mathrm v) y - \frac{z^2}{2s}} )\md y.
	\end{align}
	From the fact that
	\begin{equation} \label{eq:TEMP}
		\int e^{-ay^2 + by} \mathrm dy 
		=  \int e^{-a(y - \frac{b}{2a})^2 + \frac{b^2}{4a}} \mathrm dy 
		=  e^{\frac{b^2}{4a}}\int e^{-ay^2 } \mathrm dy 
		= \sqrt{\frac{\pi}{a}}e^{\frac{b^2}{4a}},
		\quad a> 0, b\in \mathbb R,
	\end{equation}
	we can get
	\begin{align}
		&2^{-3}\mathrm {I}
		\leq \int_0^t \frac{\mathrm ds}{4\pi s}  \Big( \sqrt{2s\pi} e^{\frac{s}{2} \mathrm v^2 }- 2 e^{- \frac{z^2}{4s}} \sqrt{2s\pi}e^{ \frac{s}{2}(\frac{z}{2s} - \mathrm v)^2} + e^{- \frac{z^2}{2s}}\sqrt{2s\pi}e^{ \frac{s}{2}(\frac{z}{s} - \mathrm v)^2} \Big)
		\\& \leq  \frac{1}{2}\int_0^t \frac{e^{\frac{\mathrm v^2}{2}s}}{\sqrt{2\pi}} ( 1 - 2 e^{- \frac{z^2}{8s}  - \frac{z\mathrm v}{2} } + e^{- z \mathrm v } )\frac{\mathrm ds}{\sqrt{s} } 
		\\& \leq \frac{1}{2}\int_0^{z^2 \wedge t}  ( 1 + e^{- z \mathrm v } )\frac{\mathrm ds}{\sqrt{s} }  + \frac{1}{2}\int_{z^2 \wedge t}^t  ( 2|1 -  e^{- \frac{z^2}{8s}  - \frac{z\mathrm v}{2} }| + |e^{- z \mathrm v } - 1| )\frac{\mathrm ds}{ \sqrt{s} }. 
	\end{align}
	Now using the fact that $|1- e^{-z}| \leq z$ for $z\in \mathbb R_+$, we have
	\begin{align}
		&2^{-3}\mathrm {I}
		\leq ( 1 + e^{- z \mathrm v } ) z  + \mathbf 1_{z^2 \leq t} \int_{z^2}^t  \Big(\frac{z^2}{8s}  +\mathrm vz\Big)\frac{\mathrm ds}{ \sqrt{s} } 
		\\&\leq 2z  + \int_{z^2}^\infty  \frac{z^2}{8s}  \frac{\mathrm ds}{ \sqrt{s} }+ \int_0^t  \mathrm vz\frac{\mathrm ds}{ \sqrt{s} } 
		= (2+\frac{1}{4} + 2\mathrm v\sqrt{t})z    
		\leq 2^3z.    
	\end{align}
	This gives us~\eqref{wq:10_1a}. As we have mentioned we omit the proof of~\eqref{wq:10_1b} and thus we are done.
	\begin{note}
		\emph{Step 2.} 
		show that for any $t, t'\in [0, \tilde{\mathrm v}^{-2}]$ and $x \in [-2 \tilde {\mathrm v}^{-1}, 2 \tilde {\mathrm v}^{-1}]$, it holds that 
		\[
		\mathrm {II}
		:=\iint(G_{s, y;t', x} - G_{s, y;t, x})^2e^{-\tilde {\mathrm v} y}\md s\md y
		\leq 2^{5} |t'-t|.
		\]
		To prove the result in this step, we assume without loss of generality that $h:=t' - t\geq 0$.
		Note that
		\begin{align} 
			&2^{-3}\mathrm {II}
			\leq e^{\tilde{\mathrm v}x}\mathrm {II}
			= \iint_0^\infty (G_{s, y;t', x} - G_{s, y;t, x})^2e^{-\tilde {\mathrm v} (y-x)}\md s\md y
			\\&= \iint_0^\infty (G_{s, y;t', 0} - G_{s, y;t, 0})^2e^{-\tilde {\mathrm v} y}\md s\md y.
		\end{align}
		From the expression of $G$ in \eqref{eq:P.3}, we have
		\begin{align}
			&2^{-3}\mathrm {II}\leq \iint_0^{t'} \Big( \frac{e^{-\frac{y^2}{4(t'-s)}}}{\sqrt{4\pi (t'-s)}} - \frac{e^{-\frac{y^2}{4(t-s)}}}{\sqrt{4\pi (t-s)}}\mathbf 1_{s<t} \Big)^2 e^{-\tilde {\mathrm v} y}\md s\md y
			\\&= \iint_{-h}^{t} \Big( \frac{e^{-\frac{y^2}{4(s+h)}}}{\sqrt{4\pi (s+h)}} - \frac{e^{-\frac{y^2}{4s}}}{\sqrt{4\pi s}}\mathbf 1_{0<s} \Big)^2 e^{-\tilde {\mathrm v} y}\md s\md y
			= \mathrm {III} + \mathrm {IV}
		\end{align}
		where
		\[
		\mathrm {III}
		:= \iint_{0}^{h} \Big( \frac{e^{-\frac{y^2}{4 s}}}{\sqrt{4\pi s}} \Big)^2 e^{-\tilde {\mathrm v} y} \mathrm ds \mathrm dy;
		\quad \mathrm {IV}
		:= \iint_0^t\Big( \frac{e^{-\frac{y^2}{4(s+h)}}}{\sqrt{4\pi (s+h)}} - \frac{e^{-\frac{y^2}{4s}}}{\sqrt{4\pi s}}\Big)^2 e^{-\tilde {\mathrm v} y} \mathrm ds \mathrm dy.
		\]
		Note from \eqref{eq:TEMP} we have
		\[
		\mathrm {III}
		= \int_{0}^{h} \frac{\mathrm ds}{4\pi s}\int e^{-\frac{y^2}{2 s} - \tilde{\mathrm v}y}    \mathrm dy
		= \int_{0}^{h} \frac{e^{\frac{1}{2}s \tilde {\mathrm v}^2}\mathrm ds}{2 \sqrt{2\pi s}}
		\leq \frac{e^{\frac{1}{2}h \tilde {\mathrm v}^2}}{\sqrt{2\pi}}\sqrt{h}	
		\leq \sqrt{h}.
		\]
		To bound $\mathrm {IV}$ we note that
		\begin{align} 
			& \mathrm {IV}
			= \frac{1}{4\pi}\iint_0^t    \Big( \frac{ e^{-\frac{y^2}{2(s+h)}-\tilde {\mathrm v} y} }{s+h} - \frac{ 2e^{-\frac{(2s+h)y^2}{4s(s+h)}-\tilde {\mathrm v} y} }{\sqrt{s(s+h)}  }+ \frac{   e^{-\frac{y^2}{2s}-\tilde {\mathrm v} y}}{s  }\Big)  \mathrm ds \mathrm dy
			\\&= \frac{1}{2\sqrt{2\pi}}\int_0^t    \Big( \frac{e^{\frac{1}{2}(s+h)\tilde{\mathrm v}^2}}{\sqrt{s+h}} - \frac{2 e^{\frac{s(s+h)}{2s+h} \tilde{\mathrm v}^2 }}{\sqrt{s+h/2}  }+ \frac{ e^{\frac{1}{2}s \tilde {\mathrm v}^2} }{\sqrt{s}  }\Big)  \mathrm ds 
			\\&\leq \frac{1}{2\sqrt{2\pi}}\int_0^t   e^{\frac{1}{2}s \tilde {\mathrm v}^2} \Big( \frac{e^{\frac{1}{2}h\tilde{\mathrm v}^2}}{\sqrt{s+h}} - \frac{2 }{\sqrt{s+h}  }+ \frac{ 1 }{\sqrt{s}  }\Big)  \mathrm ds 
			= \mathrm {V} + \mathrm {VI}
		\end{align}
		where
		\begin{align} 
			&\mathrm V:= \frac{1}{2\sqrt{2\pi}}\int_0^t   e^{\frac{1}{2}s \tilde {\mathrm v}^2} \frac{e^{\frac{1}{2}h\tilde{\mathrm v}^2} - 1}{\sqrt{s+h}}  \mathrm ds,
			\quad \mathrm {VI} :=\frac{1}{2\sqrt{2\pi}}\int_0^t   e^{\frac{1}{2}s \tilde {\mathrm v}^2} \Big( \frac{ 1 }{\sqrt{s}  } - \frac{1 }{\sqrt{s+h} } \Big)  \mathrm ds. 
		\end{align}
		From the fact that $e^x - 1 \leq x e^x$ for each $x\geq 0$, we get 
		\begin{align} 
			&\mathrm V \leq \frac{e^{\frac{1}{2}h\tilde{\mathrm v}^2} - 1}{2}\int_0^t   \frac{1}{\sqrt{s+h}}  \mathrm ds
			\leq \frac{1}{2}h\tilde{\mathrm v}^2e^{\frac{1}{2}h\tilde{\mathrm v}^2}  \sqrt{t'}
			\leq \sqrt{h}.
		\end{align}
		We can also verify that
		\begin{align}
			&\mathrm {VI} 
			\leq  \frac{1}{2} \int_0^t\Big( \frac{1}{\sqrt{s}} - \frac{1}{\sqrt{s+h}} \Big)\mathrm ds 
			= \frac{1}{2} \Big(\int_0^t \frac{1}{\sqrt{s}} \mathrm ds - \int_h^{t+h} \frac{1}{\sqrt{s}} \mathrm ds \Big) 
			\leq \frac{1}{2} \int_0^h \frac{1}{\sqrt{s}} \mathrm ds 
			= \sqrt{h}. 
		\end{align}
		To sum up, we have
		\begin{equation} 
			2^{-3}\mathrm {II}
			\leq \mathrm {III} + \mathrm {IV}
			\leq \mathrm {III} + \mathrm {V} + \mathrm {VI}
			\leq 3 \sqrt{h}
		\end{equation}
		which implies the desired result in this step.
	\end{note}
	\begin{note}
		\emph{Final Step.} From Steps 1 and 2, we have
		\begin{align} 
			&\iint_0^\infty (G_{s, y;t', x'} - G_{s, y;t, x})^2e^{-\tilde {\mathrm v} y}\md s\md y
			\\&\leq 2\iint_0^\infty (G_{s, y;t', x'} - G_{s, y;t', x})^2e^{-\tilde {\mathrm v} y}\md s\md y+ 2\iint(G_{s, y;t', x} - G_{s, y;t, x})^2e^{-\tilde {\mathrm v} y}\md s\md y
			\\&\leq 2^7 (|x'-x|+ |t'-t|^{1/2}).
			\qedhere
		\end{align}
	\end{note}
\end{proof}
Let us now give the proof of Proposition \ref{thm:B}.
\begin{proof}[Proof of Proposition \ref{thm:B}]
\emph{Step 1.}
it is easy to see that
on the event $\{\tau_1 \geq T, \tau_3 \geq T\}$, 
the following holds almost surely: for each $(s,y)\in [0, \tau_2 \wedge T]\times \mathbb R$,
\begin{align}
	&v_{s,y} 
	\leq F(s - \mathrm vy) +  \varepsilon k L e^{-\theta \mathrm v (x - \mathrm vt)} \mathbf 1_{x\leq \mathrm vt}
	\leq 2k \varepsilon L e^{-\theta \mathrm v (y - \mathrm vs)}, 
      \\& f^w_{s,y} 
      \leq w_{s,y}^p \mathbf 1_{y\in [ - L,  \mathrm vT + L], w_{s,y}\leq \nu \varepsilon L}
      \leq  (\nu \varepsilon L)^p,
	\\&\sigma^w_{s,y} 
      = \sigma(v_{s,y}+w_{s,y})^2 - \sigma(v_{s,y})^2 
      \leq w_{s,y}
      \leq \nu \varepsilon L \mathbf 1_{y\in [-L, \mathrm vT + L]}.
\end{align}

\emph{Step 2.}
Note that $v+w$ admits the following mild form (c.f. \cite[Theorem 2.1]{Shiga1994Two}): 
\begin{equation}
\begin{multlined}
	v_{t,x} + w_{t,x} 
	= \iint_0^t  G_{s,y;t,x} \Big( v_{0,y}  \mathrm dy\delta_0(\mathrm ds)+ \big(f(v_{s,y}) + f^w_{s,y}\big)\mathrm ds\mathrm dy  + {}
	\\ \epsilon \sigma(v_{s,y}) W^v(\mathrm ds\mathrm dy) + \epsilon \sigma^w_{s,y} W^w(\mathrm ds\mathrm dy) \Big), 
	\quad {\rm a.s.} \quad \forall (t,x)\in (0,\infty)\times \mathbb R. 
\end{multlined}	
\end{equation}
Therefore, almost surely on the event $\{\tau_1 \geq T, \tau_3 \geq T\}$, we have $v + w = \tilde u $ on $[0,\tau_2\wedge T]\times \mathbb R$. 
	  Here, $\tilde u := \sum_{i=1}^5 Z^{(i)}$ where $\{Z^{(i)}: i = 1,\dots, 5\}$ is a list of continuous random fields defined 
	  so that for each $(t,x)\in \mathbb R_+\times \mathbb R$, 
\begin{align}
	& Z^{(1)}_{t,x}
	= 
	\mathbf 1_{t = 0}\tilde F(x)+ \mathbf 1_{t>0}\int  G_{0,y;t,x}  \tilde F(y)  \mathrm dy,
     \\ & Z^{(2)}_{t,x} 
	= 	
     \iint_0^t G_{s,y;t,x} f\big(v_{s,y} \wedge (2k \varepsilon L e^{-\theta \mathrm v (y - \mathrm vs)} ) \big) \mathrm ds\mathrm dy,
	\quad {\rm a.s.} 
     \\ & Z^{(3)}_{t,x}
     = 
     \iint_0^tG_{s, y;t, x} \Big(f^w_{s,y} \wedge \big((\nu \varepsilon L)^p \mathbf 1_{y \in [-L, \mathrm v T + L]}\big)\Big)\md s\md y,
	\quad {\rm a.s.} 
     \\ & Z^{(4)}_{t,x} 
     = 
     \epsilon \iint_0^t G_{s,y; t,x} \sigma\big(v_{s,x}\wedge (2k \varepsilon L e^{-\theta \mathrm v (y - \mathrm vs)} )\big) W^v(\mathrm ds\mathrm dx),
	\quad {\rm a.s.} 
      \\& Z^{(5)}_{t,x}
      = 
      \epsilon\iint_0^tG_{s,y; t,x} \big(\sigma^w_{s,y} \wedge (\nu \varepsilon L \mathbf 1_{y\in [-L, \mathrm vT + L]}) \big) 
      W^w(\mathrm ds\mathrm dx), 
	\quad {\rm a.s.} 
\end{align}

\emph{Step 3.}
Clearly, $\tau_2 = \tilde \tau_2$ holds almost surely on the event $\{\tau_1\geq T, \tau_3 \geq T\}$ where
$
	\tilde \tau_2
	:= \inf \{t \in [0,T]: \tilde u_{t,x} \geq \nu \varepsilon L \text{ for some } x\in [-L, \mathrm vT + L]\}.
$

\emph{Step 4.}
	We will show that
\[
	\sup_{(t,x)\in \mathbf H} Z^{(1)}_{t,x} \leq \nu_1\varepsilon L.
\]
	where $\nu_1 := 2^3$ and  $\mathbf H :=[0,T]\times [-L, \mathrm vT + L].$ 
Note that from \eqref{eq:U.5}, \eqref{eq:P.3} and \eqref{eq:U.3}, for any $(t,x)\in \mathbf H$, we have
\begin{align} 
	&Z^{(1)}_{t,x} 
	\leq \int G_{0,y;t,x} \frac{\varepsilon}{\theta \mathrm v} e^{-\theta \mathrm v y} \mathrm dy
	= \frac{\varepsilon}{\theta \mathrm v} e^{-\theta \mathrm v x}\int \frac{e^{-\frac{y^2}{4t} + \theta \mathrm vy}}{\sqrt{4\pi t}}  \mathrm dy
	= \frac{\varepsilon}{\theta \mathrm v} e^{-\theta \mathrm v x}e^{\theta^2\mathrm v^2 t}
	\\&\leq \theta^{-1}e^{\theta^2+\theta} \varepsilon L 
	\leq \nu_1 \varepsilon L.
\end{align}

\emph{Step 5.}
	We will show that
\[
\sup_{(t,x)\in \mathbf H} Z^{(2)}_{t,x} \leq \nu_2 \varepsilon 
L,  
\quad \text{a.s.}
\]
where $ \nu_2 := 2^4k.$
Note that from \eqref{eq:U.1} that $\kappa^{p-1}\leq 1$, \eqref{eq:P.3} and \eqref{eq:U.3} we can verify that,
\begin{align} 
	&Z^{(2)}_{t,x}
	\leq \iint_0^t G_{s,y;t,x} (2k \varepsilon L e^{-\theta \mathrm v (y - \mathrm vs)} )^p  \mathrm ds\mathrm dy 
      \leq (2k \varepsilon L)^p\int_0^t e^{p \theta \mathrm v^2 s} \mathrm ds \int G_{s,y;t,x} e^{-p \theta \mathrm v y}   \mathrm dy 
      \\&=(2k \varepsilon L)^p e^{-p\theta \mathrm v x}e^{p^2\theta^2 \mathrm v^2 t}\int_0^t e^{(p\theta \mathrm v^2- p^2\theta^2 \mathrm v^2) s} \mathrm ds  
      \leq (2k \varepsilon L)^p e^{-p\theta \mathrm v x} t e^{p\theta \mathrm v^2 t}   
      \\&\leq (2k \varepsilon L)^p e^{p\theta \mathrm v L} T e^{p\theta \mathrm v^2 T}   
      = 2^p k^p e^{2 p\theta } \kappa^{p-1} \varepsilon L    
      \leq 2^4 k \varepsilon L    
      = \nu_2 \varepsilon L, \quad \forall (t,x) \in
      \mathbf H, \quad {\rm a.s.}     
\end{align}

\emph{Step 6.}
	We will show that
\[
\sup_{(t,x)\in \mathbf H} Z^{(3)}_{t,x} \leq \nu_3\varepsilon 
L,  
\quad \text{a.s.}
\]
where $\nu_3 := \nu^p$.
In fact, from \eqref{eq:P.3} and \eqref{eq:U.3} we can verify that 
\begin{align} 
	&Z^{(3)}_{t,x}
	\leq \iint_0^t G_{s,y;t,x} (\nu \varepsilon L)^p \mathrm ds\mathrm dy 
      = (\nu \varepsilon L)^p\int_0^t \mathrm ds \int G_{s,y;t,x} \mathrm dy 
      \\& 
      \leq T(\nu \varepsilon L)^p
      = \nu^p \kappa^{p-1} \varepsilon L
      \leq \nu_3 \varepsilon L,  \quad \forall (t,x) \in 
      \mathbf H, \quad {\rm a.s.}
\end{align}

\emph{Step 7.}
	We will show that
\[
	\mathrm P\Big( \sup_{(t,x)\in \mathbf H} Z^{(4)}_{t,x} > \nu_4\varepsilon L \Big) \leq 2^{-4}
\]
where $\nu_4:=2^{13} \mathcal K^{1/2} \gamma^{-1}$. 
First note that almost surely for each $(s,y)\in \mathbb R_+\times \mathbb R$, 
\[
	\epsilon^2 \sigma\big(v_{s,y}\wedge (2k \varepsilon L e^{-\theta \mathrm v (y - \mathrm vs)} )\big)^2 
	\leq 2 \epsilon^2 k \varepsilon L e^{-\theta \mathrm v (y - \mathrm vs)}\mathbf 1_{y\leq \mathrm vs} 
	=: \eta^{(4)}_{s,y}.
\]
Then note that for each $(t,x), (t',x') \in \mathbf H$, using Lemma \ref{thm:B.3}, 
\begin{align}
	&\frac{1}{2\epsilon^2 k \varepsilon L}\iint_0^\infty (G_{s,y;t',x'} -G_{s,y;t,x})^2 \eta^{(4)}_{s,y} \mathrm ds\mathrm dy	
	\\&\leq \iint_0^\infty (G_{s,y;t',x'} -G_{s,y;t,x})^2 e^{-\mathrm v(y - \mathrm vs)} \mathrm ds\mathrm dy
	\leq 2^7 (|x'-x| + |t'-t|^{1/2}). 
\end{align}
Therefore,
\begin{align} 
	B^{(4)} 
	:= \sup_{(t,x),(t',x')\in \mathbf H} \frac{\iint_0^\infty (G_{s,y;t',x'} - G_{s,y;t,x})^2 \eta^{(4)}_{s,y} \mathrm ds\mathrm dy}{\big| \frac{x'-x}{3L}\big| + \big|\frac{t'-t}{T}\big|^{1/2}}  
	\leq 2^{10} \epsilon^2 k \varepsilon L^2
	=: \tilde B^{(4)}.
\end{align}
Taking $z = 2^8$, we get from Lemma \ref{thm:V.1} that
\begin{equation} 
	\mathrm P\Big(\sup_{(t,x)\in \mathbf H} Z^{(4)}_{t,x} > z \sqrt{\tilde B^{(4)}}\Big) 
	\leq \mathrm P\Big(\sup_{(t,x),(t',x')\in \mathbf H} |H^{(4)}_{t',x'}- H^{(4)}_{t,x}| > z \sqrt{B^{(4)}}\Big) 
	\leq 2^5 e^{- z^2/2^{12}}
	\leq 2^{-4}.
\end{equation}
To finish this step we note that
\begin{equation} 
	z \sqrt{\tilde B^{(4)}}
	= 2^{13} \sqrt{\epsilon^2 k \varepsilon L^2}
	= \nu_4 \varepsilon L.
\end{equation}

\emph{Step 8.}
	We will show that
\[
	\mathrm P\Big( \sup_{(t,x)\in \mathbf H} Z^{(5)}_{t,x} > \nu_5\varepsilon L \Big) \leq 2^{-4}
\]
with $\nu_5 = \nu/4$.
First note that almost surely for each $(s,y)\in \mathbb R_+\times \mathbb R$, 
\[
	\epsilon^2 \big(\sigma^w_{s,y} \wedge (\nu \varepsilon L \mathbf 1_{y\in [-L, \mathrm vT + L]}) \big)^2 
	\leq \epsilon^2 \nu^2 \varepsilon^2 L^2
	=: \eta^{(5)}_{s,y}.
\]
Then note that for each $(t,x), (t',x') \in \mathbf H$, using \cite[Lemma 6.2(1)]{Shiga1994Two} (c.f. Lemma \ref{thm:B.3}), 
\begin{align}
	\frac{1}{\epsilon^2 \nu^2 \varepsilon^2 L^2}\iint_0^\infty (G_{s,y;t',x'} -G_{s,y;t,x})^2 \eta^{(5)}_{s,y} \mathrm ds\mathrm dy	
	\leq 2^7 (|x'-x| + |t'-t|^{1/2}). 
\end{align}
Therefore,
\begin{align} 
	&B^{(5)} 
	:= \sup_{(t,x),(t',x')\in \mathbf H} \frac{\iint_0^\infty (G_{s,y;t',x'} - G_{s,y;t,x})^2 \eta^{(5)}_{s,y} \mathrm ds\mathrm dy}{\big| \frac{x'-x}{3L}\big| + \big|\frac{t'-t}{T}\big|^{1/2}}  
	\leq 2^9 \epsilon^2 \nu^2 \varepsilon^2 L^3
	=: \tilde B^{(5)}.
\end{align}
Taking $z = 2^8$, we get from Lemma \ref{thm:V.1} that
\begin{equation} 
	\mathrm P\Big(\sup_{(t,x)\in \mathbf H} Z^{(5)}_{t,x} > z \sqrt{\tilde B^{(5)}}\Big) 
	\leq \mathrm P\Big(\sup_{(t,x),(t',x')\in \mathbf H} |Z^{(5)}_{t',x'}- Z^{(5)}_{t,x}| > z \sqrt{B^{(5)}}\Big) 
	\leq 2^5 e^{- z^2/2^{12}}
	\leq 2^{-4}.
\end{equation}
To finish this step we note from \eqref{eq:U.4} that
\begin{equation} 
	z \sqrt{\tilde B^{(5)}}
	= 2^{13} \sqrt{\epsilon^2 \nu^2 \varepsilon^2 L^3}
	= 2^{13} \sqrt{\epsilon^2 L}\nu  \varepsilon L
	\leq \nu_5 \varepsilon L.
\end{equation}

\emph{Final step.}
	We note from \eqref{eq:U.2} and \eqref{eq:U.6} that
\begin{equation} 
	 \sum_{i=1}^5 \nu_i = 2^3 + 2^4 k + \nu^p + 2^{13} \mathcal K^{1/2} \gamma^{-1} + \nu/4 
	 \leq \nu.
\end{equation}
	Also note from Steps 2 and 8 that
\begin{align} 
	&\mathrm P(\tau_2 < T, \tau_1\geq T, \tau_3 \geq T) 
	= \mathrm P(\tilde \tau_2 < T, \tau_1 \geq T, \tau_3 \geq T)
	\leq \mathrm P(\tilde \tau_2 < T)
      \\&= \mathrm P(\{\tilde u \leq \nu \varepsilon L \text{ on }[0,T]\times [-L, \mathrm vT + L]\}^c)
      \\&\leq \mathrm P\Big(\bigcup_{i=1}^5 \{ Z^{(i)} \leq  \nu_i \varepsilon L \text{ on }[0,T]\times [-L, \mathrm vT + L]\}^c\Big).
\end{align}
	Now from Steps 3-7, we have
\begin{align} 
	&\mathrm P(\tau_2 < T, \tau_1\geq T, \tau_3 \geq T) 
      \\&\leq \sum_{i=1}^5 \mathrm P( \{ Z^{(i)} \leq  \nu_i \varepsilon L \text{ on }[0,T]\times [-L, \mathrm vT + L]\}^c)
      \leq 2^{-3}.
      \qedhere
\end{align}
\end{proof}

\subsection*{Acknowledgments} 
The work of the authors  was supported in part by ISF grants No.~1704/18 and No.~1985/22.  The first author is a Zuckerman Postdoctoral Scholar, and this work was supported in part by the Zuckerman STEM Leadership Program.
Most of this research was done while the third author was a Postdoc at the Technion—Israel Institute of Technology, supported in part by a fellowship of the Israel Council for Higher Education. %
We thank Eyal Neuman, Zenghu Li and Hugo Panzo for very helpful conversations.
We are grateful to Lenya Ryzhik for
generously sharing his deep understanding of various aspects of the FKPP equations. 
We also thank the referees for the helpful comments and suggestions. %

\begin{note}
Let $(\Omega, \mathcal G, \mathcal F = (\mathcal F_t)_{t\geq 0}, \mathrm P)$ be a \emph{usual probability space}, i.e. (1) $(\Omega, \mathcal G, P)$ is a complete probability space; (2) $(\mathcal F_t)_{t\geq 0}$ is a family of $\sigma$-field on $\Omega$  such that for each $0\leq s\leq t<\infty$, $\mathcal N:= \{A\in \mathcal G: P(A) = 0\} \subset \mathcal F_s\subset \mathcal F_t \subset \mathcal G$.
Denote by $\mathscr M_{\rm loc}$ the space of continuous local martingales.
For any continuous semi-martingale $M$, denote by $\langle M \rangle$ its quadratic variation. 
For any continuous semi-martingale $M, N$, denote by $\langle M, N\rangle$ their quadratic covariation.
For more details about those concept, we refer our readers to \cite[Chapter 17]{Kallenberg2002Foundations}.

We say $f$ is a \emph{random field} if it is a real-valued stochastic process indexed by $\mathbb R_+\times \mathbb R$.
We say a random field $f$ is progressive if it is $\mathcal P \times \mathcal B(\mathbb R)$-measurable where $\mathcal P$ is the progressive $\sigma$-field on $\Omega \times [0,\infty)$ with respect to the filtration $\mathcal F$.
We say $f\in \mathscr L^2_{\rm loc}$ if it is a progressive random field on $\Omega$ such that
\[
\iint_0^t f^2_{s,y} \mathrm ds\mathrm dy < \infty, \quad t\geq 0, {\rm a.s.}
\]

Let $\mathcal B_F(\mathbb R)$ be the collection of Borel subsets of $\mathbb R$ whose Lebesgue measure are finite.
We say $\{(W_s(A))_{A \in \mathcal B_F(\mathbb R),s\in \mathbb R_+}; \mathrm P\}$ is an $\mathcal F$-white noise if it is a stochastic process indexed by $\mathcal B_F(\mathbb R)\times \mathbb R_+$ such that 
(1)  for disjoint $A,B\in \mathcal B_F(\mathbb R^d)$, almost surely, for each $t\geq 0$, $W_t(A\cup B) = W_t(A) + W_t(B)$; and
(2) for any $A \in \mathcal B_F(\mathbb R)$, $t\mapsto W_t(A)$ is a Brownian motion such that for each $t\geq 0$, $\langle W(A) \rangle_t = t \operatorname{Leb}(A)$. 

For an $\mathcal F$-white noise $W$, we say
\begin{align} \label{eq:WSI.4}
 f \mapsto \iint_0^\cdot f_{s,y}W(\mathrm ds\mathrm dy) 
\end{align}
 is \emph{Walsh's stochastic integral for $W$} if it is a map from $\mathscr L_{\rm loc}^2$ to $\mathscr M_{\rm loc}$ such that (1) for any $f, g \in \mathscr L_{{\rm loc}}^2$ we have
 \begin{align} 
 &  \iint_0^t (f_{s,y} + g_{s,y}) W(\mathrm ds\mathrm dy) 
 = \iint_0^t f_{s,y} W(\mathrm ds\mathrm dy) +  \iint_0^t g_{s,y} W(\mathrm ds\mathrm dy) , 
 \quad t\geq 0, {\rm a.s.};
 \end{align}  
(2) for any $f\in \mathscr L_{{\rm loc}}^2$,  
\begin{align}
& \Big\langle \iint_0^\cdot f_{s,y}W(\mathrm ds\mathrm dy) \Big\rangle_t = \iint_0^t f^2_{s,y}\mathrm ds\mathrm dy, 
\quad t\geq 0, {\rm a.s.};
\end{align}
and (3) for any $A \in \mathcal B_F(\mathbb R)$ and real-valued progressive process $h$ with 
\begin{align} 
& \int_0^t h_s^2 \mathrm ds < \infty, \quad t\geq 0, {\rm a.s.} 
\end{align}
it holds that 
\begin{equation} \label{eq:WSI.5}
\iint_0^t h_s \mathbf 1_{y\in A} W(\mathrm ds\mathrm dy) = \int_0^t h_s \mathrm d W_s(A), \quad t\geq 0, {\rm a.s.}
\end{equation}
where the right hand side of \eqref{eq:WSI.5} is Ito's stochastic integral for Brownian motion. 
For any $\mathcal F$-white noise $W$, using the argument in \cite[Section II.5]{Perkins2002Dawson-Watanabe}, it can be verified that this map \eqref{eq:WSI.4}, the Walsh's stochastic integral for $W$, exists and is unique.

\begin{lemma} 
	Suppose that $W$ is a $\mathcal F$-white noise and
\[ f\in \mathscr D^\mathrm P:= \bigg\{h\in \mathscr L_{\rm loc}^2: \mathrm P\Big[\exp\Big\{\frac{1}{2} \iint_0^\infty h_{s,y}^2\mathrm ds\mathrm dy\Big\}\Big]< \infty\bigg\}
\]\
	Then 
\[
	\mathcal E_t^f
	:= \exp\Big\{\iint_0^t f_{s,y}W(\mathrm ds\mathrm dy) - \frac{1}{2} \iint_0^t f_{s,y}^2 \mathrm ds\mathrm dy\Big\}, 
	\quad t\geq 0
\]
	is a uniformly integrable martingale. 
	Moreover, under probability $\mathrm Q$, which is given by $\mathrm d\mathrm Q =  \mathcal E_\infty^f \mathrm d \mathrm P$, the map 
\begin{align} \label{eq:WSI.61}
& g\in \mathscr L_{\rm loc}^2 \mapsto \iint_0^\cdot g_{s,y}W(\mathrm ds\mathrm dy)  - \iint_0^\cdot f_{s,y} g_{s,y} \mathrm ds\mathrm dy
\end{align}
	is a Walsh's stochastic integral for the $\mathcal F$-white noise 
\begin{align} \label{eq:WSI.62}
& W_t(A) - \iint_0^t \mathbf 1_{y\in A}f_{s,y}\mathrm ds\mathrm dy, \quad t\geq 0, A \in \mathcal B_F(\mathbb R).  
\end{align} 
\end{lemma}
\begin{proof}
	According to \cite[Theorem 18.23]{Kallenberg2002Foundations}, we know that $(\mathcal E^f_t)_{t\geq 0}$ is a uniformly integrable martingale (under probability $\mathrm P$).
	Therefore the probability $\mathrm Q$ is well-defined.
	Fix an arbitrary $g\in \mathscr L_{\mathrm{loc}}^2$. 
	Let us write
	\begin{equation} 
	 \widetilde W_t(g):= \iint_0^t g_{s,y} W(\mathrm ds\mathrm dy) - \iint_0^t f_{s,y}g_{s,y} \mathrm ds\mathrm dy, \quad t\geq 0.
	\end{equation} 
	Then, it can be verified from \cite[Theorem 18.19~\& Lemma 18.21]{Kallenberg2002Foundations} that 
\begin{equation} \label{eq:WSI.63}
	\widetilde W(g) \in \mathscr M^\mathrm Q_{\rm loc},
\end{equation} 
	where $\mathscr M_{\rm loc}^\mathrm Q$ is the space of continuous local martingales with respect to probability $\mathrm Q$. 
	It can also be verified from \cite[Theorem 18.20]{Kallenberg2002Foundations} that 
\begin{equation}\label{eq:WSI.64}
	\langle \widetilde W(g) \rangle_t^\mathrm Q = \iint_0^t  g_{s,y}\mathrm ds\mathrm dy, \quad t\geq 0,
\end{equation}
	where $\langle \cdot \rangle^\mathrm Q$ stands for the qudratic variation of semimartingales with respect to probability $\mathrm Q$.
	Now with the help of \eqref{eq:WSI.63} and \eqref{eq:WSI.64}, we can verify that \eqref{eq:WSI.62} is indeed an $\mathcal F$-white noise (under probability $\mathrm Q$), and \eqref{eq:WSI.61} is the corresponding Walsh's stochastic integral.
	We omit the details.
\end{proof}
\end{note}
\begin{note}
For $s\geq 0$ and $y\in \mathbb R$, let $\{(B_t)_{t\geq s};\Pi_{s,y}\}$ be a Brownian motion with generator $\Delta$ initiated at time $s$ and position $y$.
In other word
\[
\int G_{s,y;t,x} \varphi_x\mathrm dx = \Pi_{s,y}[\varphi_{B_t}], \quad 0\leq s<t<\infty, y \in \mathbb R, \varphi \in \mathcal B_b(\mathbb R),
\] 
where
\begin{align} \label{eq:SPDE.02}
	G_{s,y;t,x} := \frac{e^{-\frac{(x-y)^2}{4(t-s)}}}{\sqrt{4\pi (t-s)}}\mathbf 1_{0\leq s < t< \infty}, \quad (s,y),(t,x) \in \mathbb R_+\times \mathbb R.
\end{align}
Let $(\Omega, \mathcal G, \mathcal F = (\mathcal F_t)_{t\geq 0}, \mathrm P)$ be a usual probability space.
We say $\sigma \in \mathscr P_\mathrm e$ if $\sigma$ is a real-valued progressive random field on $\Omega$ such that, almost surely, for each $T\geq 0$, there exists $c>0$ such that for each $0\leq t\leq T$ and $x\in \mathbb R$, $|\sigma_{t,x}| < ce^{c|x|}$.
According to \cite[Lemma 6.2(ii)]{Shiga1994Two}, we have for any $c \in \mathbb R$ and $T>0$,
\begin{align} \label{eq:SPDE.03}
	\sup_{0\leq s<t\leq T} \sup_{x\in \mathbb R} e^{-c |x|}\int_\mathbb R G_{s,y;t,x} e^{c |y|}\mathrm dy
	< \infty.
\end{align}

\begin{lemma} \label{thm:SC.4}
	Suppose that $\sigma \in \mathscr P_\mathrm e$.
	Then, the following three random fields
\begin{align} 
	&\Big(\int G_{0,y;t,x}\sigma_{0,y}\mathrm dy\Big)_{(t,x)\in \mathbb R_+\times \mathbb R}, 
	\qquad \Big( \iint_0^t G_{s,y;t,x} \sigma_{s,y}\mathrm ds\mathrm dy\Big)_{(t,x)\in \mathbb R_+\times \mathbb R}, 
	\\\text{and}\quad 
	&\Big(\iint_0^t  G_{s,y;t,x}^2 \sigma_{s,y} \mathrm ds \mathrm dy\Big)_{(t,x)\in \mathbb R_+\times \mathbb R}.
\end{align}
	are all in $\mathscr P_{\mathrm e}$.
\end{lemma}
\begin{proof}
	Fix $T>0$.
	Since $\sigma \in \mathscr P_\mathrm e$ we know that almost surely there exists $c>0$ such that $|\sigma_{s,y}| < ce^{c|y|}$ for $s\in [0,T]$ and $y\in \mathbb R$.
	Therefore, almost surely there exists a $C_0>0$ such that for all $0\leq s< t\leq T$ and $x \in \mathbb R$,
\begin{align}
	\int G_{s,y;t,x}\sigma_{s,y}\mathrm dy
	\leq c\int G_{s,y;t,x} e^{c|y|}\mathrm dy
	\overset{\eqref{eq:SPDE.03}}\leq C_0 e^{c|x|}.
\end{align}
	Also, almost surely for all $t\in [0,T]$ and $x\in \mathbb R$,
\begin{align}
	\iint_0^t G_{s,y;t,x} \sigma_{s,y}\mathrm ds\mathrm dy 
	\leq \int_0^t C_0 e^{c|x|} \mathrm ds \leq T C_0 e^{c|x|}
\end{align}
	and
\begin{align}
	&\iint_0^t G_{s,y;t,x}^2 \sigma_{s,y}\mathrm ds\mathrm dy 
	\overset{\eqref{eq:SPDE.02}}\leq \iint_0^t \frac{G_{s,y;t,x}}{\sqrt{4\pi (t-s)}} \sigma_{s,y}\mathrm ds\mathrm dy 
	\\&\leq \int_0^t \frac{C_0 e^{c|x|}}{\sqrt{4\pi(t-s)}} \mathrm ds 
	\leq \sqrt{\frac{T}{\pi}} C_0 e^{c|x|}.
	\qedhere
\end{align}
\end{proof}

Now let $W$ be an $\mathcal F$-white noise. 
Let $f, \sigma\in \mathscr P_\mathrm e$. 
We say $u$ is a \emph{(mild) solution to SPDE}
\begin{equation} \label{eq:SPDE.1}
	\partial_t u_{t,x} = \Delta_x u_{t,x} + f_{t,x} + \sigma_{t,x} \dot W_{t,x}, \quad t\geq 0, x\in \mathbb R,
 \end{equation}
if $u \in \mathscr P_\mathrm e$ is continuous and satisfies that for each $t> 0$ and $x\in \mathbb R$, almost surely
\begin{align} \label{eq:SPDE.15}
u_{t,x} 
= \iint_0^t G_{s,y;t,x} M^u(\mathrm ds\mathrm dy)
\end{align}
where
\[
	M^u(\mathrm ds\mathrm dy) 
	= u_{0,y} \delta_0(\mathrm ds) \mathrm dy + f_{s,y}\mathrm ds\mathrm dy + \sigma_{s,y}W(\mathrm ds\mathrm dy).
\]
Note that from Lemma \ref{thm:SC.4} the right hand side of \eqref{eq:SPDE.15} is always well-defined.

\begin{lemma} \label{thm:SC.45}
	Suppose that $u$ is a solution to SPDE \eqref{eq:SPDE.1}. 
	Then for each $\phi \in \cC_{\rm c}^{1,2}([0,\infty)\times \mathbb R)$, almost surely we have
\begin{align}
	\int \phi_{t,y} u_{t,y} \mathrm dy
	= \iint_0^t (\partial_s \phi_{s,x} +\partial_x^2 \phi_{s,x})u_{s,x}\mathrm ds\mathrm dx + \iint_0^t \phi_{s,x} M^u(\mathrm ds\mathrm dx).
\end{align}
\end{lemma}

We omit the proof of Lemma \ref{thm:SC.45} since it is similar to the Lemma \ref{thm:SC.5} below.

	Let us fix a $\mathrm v > 0$.
	Let $\tau_s := \inf\{t \geq s : B_t \geq \mathrm vt \}$. 
	It can be verified that for each $0\leq s< t< \infty$ and $y < \mathrm vs$ there exists a (unique) continuous map $x \mapsto G^{(\mathrm v)}_{s,y;t,x}$ from $(-\infty,\mathrm v t)$ to $(0,\infty)$ such that
\begin{equation} \label{eq:SPDE.155}
	\int_{-\infty}^{\mathrm vt} G^{(\mathrm v)}_{s,y;t,x} \varphi(x)\mathrm dx = \Pi_{s,y}[\varphi( B_t); t < \tau_s], \quad \varphi \in \mathcal B_b(\mathbb R).
\end{equation}
	Let us also define that $G^{\mathrm v}_{s,y;t,x} = 0$ on $\{(s,y;t,x): 0\leq s< t, y < \mathrm vs, x < \mathrm vt\}^c$.
	It can be verified that
\begin{align} \label{eq:SPDE.16}
	0 \leq G^{\mathrm v}_{s,y;t,x} \leq G_{s,y;t,x}, \quad (s,y),(t,x)\in \mathbb R_+\times \mathbb R.
\end{align}

	Let $W$ be an $\mathcal F$-white noise. Suppose that $f,\sigma \in \mathscr P_\mathrm e$ are supported on $\{(t,x) \in \mathbb R_+\times \mathbb R: x<\mathrm vt\}$.
	We say $v$ is a \emph{solution to SPDE}
\begin{equation} \label{eq:SPDE.2}
\begin{cases}
\partial_t v_{t,x} = \partial_x^2 v_{t,x} + f_{t,x} + \sigma_{t,x} \dot W_{t,x}, &\quad t\geq 0, x < \mathrm vt,\\
v_{t,x} = 0, &\quad t\geq 0, x\geq \mathrm vt,
\end{cases}
\end{equation}
if $v\in \mathscr P_\mathrm e$ is a continuous, is supported on $\{(t,x) \in \mathbb R_+\times \mathbb R: x<\mathrm vt\}$, and satisfies that for each $t> 0$ and $x\in \mathbb R$, almost surely 
\begin{align} \label{eq:SPDE.21}
&  v_{t,x} 
	= \iint_0^t G^{(\mathrm v)}_{s,y;t,x} M^v(\mathrm ds\mathrm dy),
\end{align}
	where 
$
 M^v(\mathrm ds\mathrm dy) = v_{0,y} \delta_0(\mathrm ds) \mathrm dy + f_{s,y}\mathrm ds\mathrm dy + \sigma_{s,y}W(\mathrm ds\mathrm dy).
$
Note that from Lemma \ref{thm:SC.4} and \eqref{eq:SPDE.16} the right hand side of \eqref{eq:SPDE.21} is always well-defined.

\begin{lemma} \label{thm:SC.5}
	Suppose that $v$ is a solution to SPDE \eqref{eq:SPDE.2}. 
	Then for each $\phi \in \cC_{\mathrm c}^{\infty}([0,\infty)\times \mathbb R)$ and $t\geq 0$, almost surely,
\begin{equation} \label{eq:SPDE.25}
	\begin{multlined}
		\int \phi_{t,y} v_{t,y} \mathrm dy
		= \iint_0^t \phi_{s,y}M^v(\mathrm ds\mathrm dy) + \iint_0^t  (\partial_r\phi_{t,x}+\partial_x^2\phi_{r,x}) v_{r,x} \mathrm dr\mathrm dx 
		\\ - \iint_0^t \Pi_{s,y}[\phi_{\tau, B_\tau}; t\geq \tau] M^v(\mathrm ds\mathrm dy)
	\end{multlined}
\end{equation}
\end{lemma}
\begin{proof}
\emph{Step 1.} 
	We show that for any $t\geq 0$ almost surely,
\begin{equation} \label{eq:SPDE.29}
\int v_{t,x} \phi_{t,x} \mathrm dx 
=   \iint_0^t M^v(\mathrm ds\mathrm dy) \int G^{(\mathrm v)}_{s,y;t,x} \phi_{t,x} \mathrm dx. 
\end{equation}
	In fact, since $\phi \in \cC_c^\infty([0,\infty)\times \mathbb R)$, there exists a $K>0$ and $T>0$ such that $\phi$ is supported on $[0,T]\times [-K,K]$.
	From $\sigma \in \mathscr P_\mathrm e$ we know $\sigma^2\in \mathscr P_\mathrm e$.
	Now we can verify that almost surely
\begin{align}
	&\int \mathrm dx \iint_0^t  (\phi_{t,x} G^{(\mathrm v)}_{s,y;t,x} \sigma_{s,y})^2 \mathrm ds\mathrm dy
	\leq \|\phi\|_\infty^2\int_{-K}^K \mathrm dx \iint_0^t (G_{s,y;t,x}^{(\mathrm v)} \sigma_{s,y})^2 \mathrm ds\mathrm dy
	\\&\overset{\eqref{eq:SPDE.16}}\leq \|\phi\|_\infty^2\int_{-K}^K \mathrm dx \iint_0^t G_{s,y;t,x}^2 \sigma_{s,y}^2 \mathrm ds\mathrm dy
	\overset{\text{Lemma \ref{thm:SC.4}}}<\infty.
\end{align}
	This allows us to use the stochastic Fubini theorem (cf. \cite[Lemma 2.4]{Iwata1987AnInfinite}) and conclude that almost surely
\begin{align}\label{eq:SPDE.291}
	\int \mathrm dx \iint_0^t \phi_{t,x} G_{s,y;t,x}^{(\mathrm v)} \sigma_{s,y} W(\mathrm ds\mathrm dy)
	= \iint_0^t \sigma_{s,y} W(\mathrm ds\mathrm dy) \int \phi_{t,x} G_{s,y;t,x}^{(\mathrm v)} \mathrm dx.
\end{align}
	Note that from the fact that $f\in \mathscr P_e$, almost surely
\begin{align}
	\int \mathrm dx \iint_0^t |\phi_{t,x} G^{(\mathrm v)}_{s,y;t,x} f_{s,y}|\mathrm ds\mathrm dy
	\leq \|\phi\|_\infty \int_{-K}^K dx \iint_0^t G_{s,y;t,x}^{(\mathrm v)} |f_{s,y}|\mathrm ds\mathrm dy \overset{\eqref{eq:SPDE.16},\text{Lemma \ref{thm:SC.4}}}< \infty.
\end{align}
	This allows us to use Fubini theorem and conclude that almost surely
\begin{align}\label{eq:SPDE.292}
	\int \mathrm dx \iint_0^t \phi_{t,x} G^{(\mathrm v)}_{s,y;t,x} f_{s,y}\mathrm ds\mathrm dy
	= \iint_0^t f_{s,y}\mathrm ds\mathrm dy \int \phi_{t,x} G^{(\mathrm v)}_{s,y;t,x} \mathrm dx.
\end{align}
	Note that from the fact that $v\in \mathscr P_e$, almost surely
\begin{align}
	\int \mathrm dx \iint_0^t |\phi_{t,x} G^{(\mathrm v)}_{s,y;t,x} v_{0,y}|\delta_0(\mathrm ds)\mathrm dy
	\leq \|\phi\|_\infty \int_{-K}^K dx \int G_{0,y;t,x}^{(\mathrm v)} |v_{0,y}|\mathrm dy 
	\overset{\eqref{eq:SPDE.16},\text{Lemma \ref{thm:SC.4}}}< \infty.
\end{align}
	This allows us to use Fubini theorem and conclude that almost surely
\begin{align}\label{eq:SPDE.293}
	\int \mathrm dx \iint_0^t \phi_{t,x} G^{(\mathrm v)}_{s,y;t,x} v_{0,y}\delta_0(\mathrm ds)\mathrm dy
	= \iint_0^t v_{0,y}\delta_0(\mathrm ds)\mathrm dy\int \phi_{t,x} G^{(\mathrm v)}_{s,y;t,x} \mathrm dx.
\end{align}
	Now we have almost surely
\begin{align}
	&\int v_{t,x} \phi_{t,x} \mathrm dx 
	\overset{\eqref{eq:SPDE.21}}= \int \mathrm dx \iint_0^t  \phi_{t,x} G^{(\mathrm v)}_{s,y;t,x} M^v(\mathrm ds\mathrm dy)
	\\&\overset{\eqref{eq:SPDE.291},\eqref{eq:SPDE.292},\eqref{eq:SPDE.293}}=\iint_0^t M^v(\mathrm ds\mathrm dy) \int G^{(\mathrm v)}_{s,y;t,x} \phi_{t,x} \mathrm dx. 
\end{align}

\emph{Step 2.}
	We show that for any $t\geq 0$ almost surely
\begin{equation} 
	\iint_0^t  (\partial_r\phi_{t,x}+\partial_x^2\phi_{r,x}) v_{r,x} \mathrm dr\mathrm dx
	=\iint_0^t M^v(\mathrm ds\mathrm dy) \iint_s^t G^{(\mathrm v)}_{s,y;r,x}(\partial_r\phi_{r,x} + \partial_x^2\phi_{r,x}) \mathrm dr\mathrm dx.
\end{equation}
	Define 
$
	\psi_{r,x} := \partial_r \phi_{r,x} + \partial_x^2 \phi_{r,x}
$
	for each $(r,x)\in \mathbb R_+ \times \mathbb R$.
	Since $\psi \in C^\infty_c(\mathbb R_+ \times \mathbb R)$, there exists a $K>0$ and $T>0$ such that $\psi$ is supported on $[0,T]\times [-K,K]$.
	We can verify almost surely that 
\begin{align}
	&\iint_0^t \mathrm dr\mathrm dx\iint_0^t (\mathbf 1_{s\leq r}\psi_{r,x}G^{(\mathrm v)}_{s,y;r,x}\sigma_{s,y})^2\mathrm ds\mathrm dy
	\\&\leq \|\psi\|_\infty^2 \int_{-K}^K \mathrm dx\int_0^t \mathrm dr\iint_0^r (G^{(\mathrm v)}_{s,y;r,x}\sigma_{s,y})^2\mathrm ds\mathrm dy
	\overset{\eqref{eq:SPDE.16},\text{Lemma \ref{thm:SC.4}}}< \infty.
\end{align}
	This allows us to use the stochastic Fubini theorem and obtain that almost surely
\begin{equation}\label{eq:SPDE.311}
	\iint_0^t \mathrm dr\mathrm dx\iint_0^r \psi_{r,x}G^{(\mathrm v)}_{s,y;r,x}\sigma_{s,y}\dot W_{s,y}\mathrm ds\mathrm dy
	= \iint_0^t \sigma_{s,y}\dot W_{s,y}\mathrm ds\mathrm dy\iint_s^t \psi_{r,x}G^{(\mathrm v)}_{s,y;r,x} \mathrm dr\mathrm dx
\end{equation}
	We can verify that almost surely
\begin{align}
	&\iint_0^t \mathrm dr\mathrm dx\iint_0^t \mathbf 1_{s\leq r} |\psi_{r,x}G^{(\mathrm v)}_{s,y;r,x}f_{s,y}| \mathrm ds\mathrm dy
	\\&\leq \|\psi\|_\infty^2 \int_{-K}^K \mathrm dx\int_0^t \mathrm dr\iint_0^r G^{(\mathrm v)}_{s,y;r,x} |f_{s,y}|\mathrm ds\mathrm dy
	\overset{\eqref{eq:SPDE.16},\text{Lemma \ref{thm:SC.4}}} < \infty.
\end{align}
	This allows us to use Fubini theorem and obtain that almost surely
\begin{align}\label{eq:SPDE.312}
	& \iint_0^t \mathrm dr\mathrm dx\iint_0^r \psi_{r,x}G^{(\mathrm v)}_{s,y;r,x}f_{s,y}\mathrm ds\mathrm dy
	= \iint_0^t f_{s,y}\mathrm ds\mathrm dy\iint_s^t \psi_{r,x}G^{(\mathrm v)}_{s,y;r,x} \mathrm dr\mathrm dx.
\end{align}
	We can also verify that almost surely
\begin{align}
	&\iint_0^t \mathrm dr\mathrm dx\iint_0^t \mathbf 1_{s\leq r} |\psi_{r,x}G^{(\mathrm v)}_{s,y;r,x}v_{0,y}| \delta_0(\mathrm ds)\mathrm dy
	\\&\leq \|\psi\|_\infty^2 \int_{-K}^K \mathrm dx\int_0^t \mathrm dr\int G^{(\mathrm v)}_{0,y;r,x} |v_{0,y}|\mathrm dy
	\overset{\eqref{eq:SPDE.16},\text{Lemma \ref{thm:SC.4}}} < \infty.
\end{align}
	This allows us to use Fubini theorem and obtain that almost surely
\begin{equation}\label{eq:SPDE.313}
	\iint_0^t \mathrm dr\mathrm dx\iint_0^r \psi_{r,x}G^{(\mathrm v)}_{s,y;r,x}v_{0,y} \delta_0(\mathrm ds) \mathrm dy 
	= \iint_0^t v_{0,y} \delta_0(\mathrm ds) \mathrm dy \iint_s^t \psi_{r,x}G^{(\mathrm v)}_{s,y;r,x} \mathrm dr\mathrm dx.
\end{equation}
	Now we have almost surely
\begin{align}
	&\iint_0^t  \psi_{r,x} v_{r,x} \mathrm dr\mathrm dx
	\overset{\eqref{eq:SPDE.21}}= \iint_0^t  \mathrm dr\mathrm dx \iint_0^r \psi_{r,x} G^{(\mathrm v)}_{s,y;r,x} M^v(\mathrm ds\mathrm dy) 
	\\&\overset{\eqref{eq:SPDE.311},\eqref{eq:SPDE.312},\eqref{eq:SPDE.313}}= \iint_0^t  M^v(\mathrm ds\mathrm dy)  \iint_s^t G^{(\mathrm v)}_{s,y;r,x} \psi_{r,x} \mathrm dr\mathrm dx.
\end{align}

\emph{Step 3.}
	We show that for each $s\geq 0$ and $y\in \mathbb R$,
	\begin{equation} \label{eq:SPDE.3}
		\int G^{(\mathrm v)}_{s,y;t,x}\phi_{t,x}\mathrm dx+\Pi_{s,y}[\phi_{\tau, B_{\tau}}; t\geq \tau] 
		= \phi_{s,y}+ \iint_s^t G^{(\mathrm v)}_{s,y;r,x} (\partial_r \phi_{r,x}+\partial_x^2\phi_{r,x}) \mathrm dr\mathrm dx.
	\end{equation} 
	In fact, according to Ito's formula \cite[Theorem 3.3 and Remark 1 on p.~147]{RevuzYor1999Continuous}, we know that under probability $\Pi_{s,y}$,
	\begin{align} 
		& \phi_{t,B_t} - \phi_{s,y} - \int_s^t (\partial_r \phi_{r,x} + \partial_x^2 \phi_{r,x})|_{x=B_r} \mathrm dr, \quad t\geq s 
	\end{align}
	is a zero-mean martingale.
	Then, according to optional sampling theorem \cite[Theorem 7.29]{Kallenberg2002Foundations} we have 
	\begin{align} 
		&  \Pi_{s,y}[\phi_{t\wedge \tau, B_{t\wedge \tau}}] 
		= \phi_{s,y} + \int_s^t \Pi_{s,y}[(\partial_r\phi_{r,x}+\partial_x^2\phi_{r,x})|_{x=B_r}; r< \tau]\mathrm dr.
	\end{align}
	The desired result in this step then follows from \eqref{eq:SPDE.155} and above.
	
\emph{Step 4.}
	We note from the fact $\phi\in \cC_c(\mathbb R_+\times \mathbb R)$ and that $v,f,\sigma$ are locally finite, the following stochastic integral
\[
	\iint_0^t \phi_{s,y} M^v(\mathrm ds dy)
	= \iint_0^t \phi_{s,y} v_{0,y} \delta_0(\mathrm ds) \mathrm dy  + \iint_0^t \phi_{s,y}  f_{s,y}\mathrm ds\mathrm dy  + \iint_0^t \phi_{s,y} \sigma_{s,y}W(\mathrm ds\mathrm dy)
\]
	is well defined for each $t\geq 0$.

\emph{Final Step.}
	Finally we can verify that for each $t\geq 0$ almost surely, 
\begin{align} 
	&  \int v_{t,x} \phi_{t,x} \mathrm dx 
	\overset{\text{Step 1}}=   \iint_0^t M^v(\mathrm ds\mathrm dy) \int G^{(\mathrm v)}_{s,y;t,x} \phi_{t,x} \mathrm dx 
	\\ &\overset{\text{Step 3}}=   \iint_0^t M^v(\mathrm ds\mathrm dy) \Big(\phi_{s,y} + \iint_s^t G_{s,y;r,x}^{(\mathrm v)} (\partial_r\phi_{t,x}+\partial_x^2\phi_{r,x}) \mathrm dr\mathrm dx - \Pi_{s,y}[\phi_{\tau, B_\tau}; t\geq \tau]\Big) 
	\\ &\begin{multlined}
	\overset{\text{Steps 2 and 4}}= \iint_0^t \phi_{s,y}M^v(\mathrm ds\mathrm dy) + \iint_0^t  (\partial_r\phi_{t,x}+\partial_x^2\phi_{r,x}) v_{r,x} \mathrm dr\mathrm dx 
	\\ - \iint_0^t \Pi_{s,y}[\phi_{\tau, B_\tau}; t\geq \tau] M^v(\mathrm ds\mathrm dy)
	\end{multlined}
\end{align}
	as desired.
\end{proof}
\begin{remark}
	Someone might what to show that there exists an adapted real-valued process $(\dot A_r)_{r\geq 0}$, such that for any $t\geq 0$ almost surely,
\begin{align} \label{eq:SPDE.32}
	& \iint_0^t \Pi_{s,y}[\phi_{\tau, B_\tau}; t>\tau] M^v(\mathrm ds\mathrm dy) = \int_0^t \phi_{r,\mathrm vr} \dot A_r \mathrm dr.
\end{align}
	Indeed, for each $s\geq 0$ and $y \in (-\infty, \mathrm vs)$, let us denote by $(p_{s,y;r})_{r\geq s}$ the density function of $\tau$ under probability $\Pi_{s,y}$, then it \emph{seems that} almost surely,
\begin{align}
	&\iint_0^t \Pi_{s,y}[\phi_{\tau, B_\tau}; t>\tau] M^v(\mathrm ds\mathrm dy) 
	= \iint_0^t M^v(\mathrm ds\mathrm dy) \int_s^t \phi_{r,\mathrm vr} p_{s,y;r} \mathrm dr
	\\& \label{eq:SPDE.35}\overset{\text{Stochastic Fubini}}= \int_0^t\phi_{r,\mathrm vr} \Big(\iint_0^r p_{s,y;r} M^v(\mathrm ds\mathrm dy)\Big)  \mathrm dr.
\end{align}
	So one might want to define 
\[
	\dot A_r := \iint_0^r p_{s,y;r} M^v(\mathrm ds\mathrm dy), \quad t\geq 0.
\]
	However, we gonna show that this definition of $\dot A$ might be a problem.
	Actually, using reflection principle, one can calculate the precise expression for the density of $\tau$:
\[
	p_{s,y;r} = \frac{(\mathrm vs - y) \exp\{\frac{-(\mathrm vr-y)^2}{4(r-s)}\}}{\sqrt{4\pi(r-s)^3}}\mathbf 1_{0\leq s<r, y<\mathrm vs}. 
\] 
	Then one can find $\sigma\in \mathscr P_\mathrm e$ such that 
\[
	\iint_0^r p_{s,y;r}^2 \sigma^2_{s,y} \mathrm ds\mathrm dy = \infty.
\]
	In this cases, the stochastic integral
\[
	\iint_0^r p_{s,y;r} \sigma_{s,y} \dot W^v_{s,y}\mathrm ds\mathrm dy
\]
	is not well defined.
	One can even find an ideal $\sigma^2$ which is bounded and compactly supported and linearly decreasing to $0$ at the boundary line $\{(s,y): y = \mathrm vs\}$ so that integral above is not well defined.
	For example, we can take
\[
	\sigma^2_{s,y} := (\mathrm vs - y) \mathbf 1_{\mathrm vs - y\in [0,K]}
\]
	for some constant $K>0$.
	Then 
\begin{align}
	& 4\pi \iint_0^r p_{s,y;r}^2 \sigma_{s,y}^2 \mathrm ds\mathrm dy 
	=  \int_{0}^{r} \mathrm ds \int  \frac{(\mathrm vs - y)^3 \exp\{\frac{-(\mathrm v(r-s)+(\mathrm vs-y))^2}{2(r-s)}\}}{(r-s)^3} \mathbf 1_{\mathrm vs-y\in [0,K]}  \mathrm dy
	\\& = \int_{0}^{r} \mathrm ds \int  \frac{x^3 \exp\{\frac{-(\mathrm v(r-s)+x)^2}{2(r-s)}\}}{(r-s)^3} \mathbf 1_{x\in [0,K]}  \mathrm dx
	=   \int_0^K \mathrm dx \int_{0}^{r} \frac{x^3 \exp\{\frac{-(\mathrm v(r-s)+x)^2}{2(r-s)}\}}{(r-s)^3} \mathrm ds 
	\\& =  \int_0^K \mathrm dx \int_{0}^{r} \frac{x^3 \exp\{\frac{-(\mathrm vu+x)^2}{2 u}\}}{u^3} \mathrm du
	= \int_0^K \mathrm dx\int_0^r  \frac{x^3 e^{-\frac{\mathrm v^2}{2} u} e^{-\mathrm vx} e^{-\frac{x^2}{2u}}}{u^3}  \mathrm du
	\\& \geq e^{-\frac{\mathrm v^2 r}{2} - \mathrm vK}\int_0^K x^3 \mathrm dx\int_0^t  \frac{e^{-\frac{x^2}{2u}}}{u^3}  \mathrm du
	= e^{-\frac{\mathrm v^2 r}{2} - \mathrm vK}\int_0^K x^3 \cdot \frac{2(x^2 + 2r) e^{-\frac{x^2}{2r}}}{x^4 r}\mathrm dx
	\\&\geq 2e^{-\frac{\mathrm v^2 r}{2} - \mathrm vK-\frac{K^2}{2r}}\Big(\int_0^K \frac{x}{r}\mathrm dx+\int_0^K \frac{2}{x }\mathrm dx\Big)
	= \infty.
\end{align}
\end{remark}

	For any real valued function $h$ on $\mathbb R$, define $L_1(h) := \inf \{x\in \mathbb R: h_x \neq 1\}$, $\tilde L(h):=\inf\{x\in \mathbb R: h_x \neq 0\}$ and $R_0(h) := \sup \{x\in \mathbb R: h_x \neq 0\}$.
	We say a real-valued function $h$ on $\mathbb R$ has \emph{compact interface} (resp. \emph{compact support}) if $|L_1(h) - R_0(h)| < \infty$ (resp. $|\tilde L(h) - R_0(h)|<\infty$).
	We say a real-valued function $h$ on $\mathbb R_+ \times \mathbb R$ has \emph{locally compact interface} (resp. \emph{locally compact support}) if  for all $T>0$, $\sup_{0\leq t\leq T}|L_1(h_{t,\cdot}) - R_0(h_{t,\cdot})| < \infty$ (resp. $\sup_{0\leq t\leq T}|\tilde L(h_{t,\cdot}) - R_0(h_{t,\cdot})| < \infty$). We say a real-valued random field $h$ has \emph{locally compact interface} (resp. \emph{locally compact support}), if almost surely, $h$ has locally compact interface (resp. \emph{locally compact support}).

\begin{lemma}
	Suppose that $v$ is a non-negative solution to SPDE \eqref{eq:SPDE.2} which has locally compact interface. 
	Also suppose that $f$ and $\sigma$ both have locally compact support.
	Then the corresponding killing process $A$ given by Lemma \ref{thm:SC.5} is a non-decreasing process.
\end{lemma}

\begin{proof}
	Since $(A_t)_{t\geq 0}$ is a continuous process, we only have to proof that for each $0\leq s < t < \infty$, almost surely $A_s\leq A_t$.
	We first claim that for each $t\geq 0$ almost surely, 
\begin{align} \label{eq:SPDE.4}
& A_t =  - \int (v_{t,x} - v_{0,x})\mathrm dx + \iint_0^t \big(f_{s,y} \mathrm ds\mathrm dy + \sigma_{s,y} W(\mathrm ds\mathrm dy)\big).
\end{align}
	Let $\varphi$ be a real function on $\mathbb R$ with compact interface which satisfies that
\begin{equation}
\varphi_x =
\begin{cases} 
{\rm smooth}, & x \in [0,\infty),
\\ -x(x+2), & x \in [-1,0],
\\ 1, & x \in (-\infty,-1].
\end{cases}
\end{equation}
We also claim that for each $t\geq 0$,
\begin{equation}\label{eq:SPDE.41}
\begin{multlined}
	-  \iint_0^t (\partial_s \varphi^{(m)}_{s,x} + \partial_x^2 \varphi^{(m)}_{s,x}) v_{s,x}\mathrm ds\mathrm dx
	\xrightarrow[m\to \infty]{\text{in probability}}
	 \\ - \int (v_{t,x} - v_{0,x})\mathrm dx + \iint_0^t \big(f_{s,y} \mathrm ds\mathrm dy + \sigma_{s,y} W(\mathrm ds\mathrm dy)\big)
\end{multlined}
\end{equation}
	where for each $s\geq 0, x\in \mathbb R$, and $m \in \mathbb N$, $\varphi^{(m)}_{s,x} := \varphi_{(x-\mathrm vs)m}$.  
	It is obvious that for each $t\geq 0$, almost surely, 
\begin{align}
	& {\rm LHS~of~\eqref{eq:SPDE.41}}
	= -\int_0^t \mathrm ds \int_{\mathrm vs - \frac{1}{m}}^{\mathrm vs} (\partial_s \phi^{(m)}_{s,x} + \partial_x^2 \phi^{(m)}_{s,x}) v_{s,x} \mathrm dx 
\\&=\int_0^t \mathrm ds \int_{\mathrm vs - \frac{1}{m}}^{\mathrm vs} \Big(2m^2 - 2\mathrm v m\big(1+ (x-\mathrm vs)m\big) \Big) v_{s,x} \mathrm dx
\\&=: J_t^{(m)}.
\end{align}

Now let us fix arbitrary $0\leq s<  t< \infty$.
Then from \eqref{eq:SPDE.4}, \eqref{eq:SPDE.41} and \cite[Lemma 4.2]{Kallenberg2002Foundations}, we know that there exists a (deterministic) sequence $(m_k)_{k\in \mathbb N} \subset \mathbb N$ such that $m_k \uparrow \infty$ when $k\uparrow \infty$ and that almost surely
\begin{align} 
& J_s^{(m_k)} \xrightarrow[k\to \infty]{} A_s, \quad J_t^{(m_k)} \xrightarrow[k\to \infty]{} A_t.
\end{align}
This implies that almost surely
\begin{align} 
& A_t - A_s  = \lim_{k\to \infty} \int_s^t \mathrm ds \int_{\mathrm vs - \frac{1}{m_k}}^{\mathrm vs} \Big(2m_k^2 - 2\mathrm v m_k\big(1+ (x-\mathrm vs)m_k\big) \Big) v_{s,x} \mathrm dx\geq 0
\end{align}
as desired.

\begin{proof*}[Proof of \eqref{eq:SPDE.4}]
	Since almost surely the both side of \eqref{eq:SPDE.4} are continuous in $t\geq 0$, we only have to proof that for each $t>0$ almost surely \eqref{eq:SPDE.4} is valid.
	Let us fix an arbitrary $t\geq 0$.
	Define $h\in C^\infty_{\rm c}(\mathbb R)$ by
\[
	h_{x}
	=  \begin{cases} 0, & x\in (-\infty, -1], \\ \text{smooth}, & x\in [-1,0],\\ 1, & x\in [0,\infty). \end{cases}	
\]
	For each $s\geq 0, x\in \mathbb R$ and $n\in \mathbb N$, define
\begin{equation} \label{eq:SPDE.5}
\phi^{(n)}_{s,x} 
= \begin{cases}
	1 - h_{x-\mathrm vs - 1}, & s\geq 0, x \in [\mathrm vs, \infty)\\
	1, & s\geq 0, x \in [-n, \mathrm vs] \\ 
	h_{x+n}, &s\geq 0, x \in (- \infty, -n]. \\
\end{cases}
\end{equation}
	Then we have almost surely, for each $n\in \mathbb N$,
	\begin{align} 
	& \begin{multlined}	
	A_t
	\overset{\eqref{eq:SPDE.25}}= -\int (\phi^{(n)}_{t,x} v_{t,x} - \phi^{(n)}_{0,x}v_{0,x}) \mathrm dx +\iint_0^t (\partial_s \phi^{(n)}_{s,x} +\Delta_x  \phi^{(n)}_{s,x})v_{s,x}\mathrm ds\mathrm dx 
	\\+ \iint_0^t \phi^{(n)}_{s,x}( f_{s,x}\mathrm ds\mathrm dx + \sigma_{s,x} W(\mathrm ds\mathrm dx))
	\end{multlined}
	\\ \label{eq:SPDE.54}&=: - I_1 + I_2 + I_3.
	\end{align}
	Notice that $v$ has locally finite interface, also notice that $f,\sigma$ have locally finite support. 
	Therefore there exists a random variable $N_t>0$ such that
\begin{align}  \label{eq:SPDE.55}
& v_{s,x} = 1,\quad  f_{s,x} = 0,\quad \sigma_{s,x} = 0, \quad 0\leq s\leq t, x\leq - N_t, \quad {\rm a.s.}
\end{align}
	Due to the fact that, almost surely, $v, f$ and $\sigma$ are all supported on $\{(s,x) \in [0,\infty)\times \mathbb R: x<\mathrm vs\}$, we have
\begin{align} \label{eq:SPDE.56}
&  v_{s,x} = 0,\quad  f_{s,x} = 0,\quad \sigma_{s,x} = 0, \quad s\geq 0, x\geq \mathrm vs, \quad {\rm a.s.}
\end{align}
	Now we can verify that almost surely for each $n > N_t$,
\begin{align} 
& I_1 
=  \int (\phi^{(n)}_{t,x} v_{t,x} - \phi^{(n)}_{0,x}v_{0,x}) \mathrm dx
= \int \Big( \phi^{(n)}_{t,x} ( v_{t,x} - v_{0,x})  + (\phi^{(n)}_{t,x} - \phi^{(n)}_{0,x}) v_{0,x} \Big)\mathrm dx
 \\ & =\Big(\int_{-\infty}^{-n} + \int_{-n}^{\mathrm vt} + \int_{\mathrm vt}^\infty \Big)  \phi^{(n)}_{t,x} ( v_{t,x} - v_{0,x}) \mathrm dx +\Big(\int_{-\infty}^0 + \int_0^\infty \Big)   (\phi^{(n)}_{t,x} - \phi^{(n)}_{0,x}) v_{0,x} \mathrm dx
\\\label{eq:SPDE.57}&\overset{\eqref{eq:SPDE.5}, \eqref{eq:SPDE.5},\eqref{eq:SPDE.55}}= \int (v_{t,x} - v_{0,x})\mathrm dx,
\end{align}
and 
\begin{align} 
& I_2 =  \int_0^t \mathrm ds \Big(\int_{\mathrm vs}^\infty + \int_{- n}^{\mathrm vs} + \int_{-\infty}^{-n}\Big) (\partial_s  \phi^{(n)}_{s,x} +\Delta_x  \phi^{(n)}_{s,x})v_{s,x} \mathrm dx  
\\ \label{eq:SPDE.58}& \overset{\eqref{eq:SPDE.5}, \eqref{eq:SPDE.55}, \eqref{eq:SPDE.56}}=  \int_0^t \mathrm ds \int_{-\infty}^{-n} (\partial_s \phi^{(n)}_{s,x} +\Delta_x \phi^{(n)}_{s,x}) \mathrm dx  
= 0,
\end{align}
and also
\begin{align} 
& I_3 
= \iint_0^t (\mathbf 1_{x\in (-\infty, -n]} + \mathbf 1_{x\in (-n,\mathrm vs]} + \mathbf 1_{x\in (\mathrm vs,\infty)}) \phi_{s,x}^{(n)}\Big(f_{s,x}\mathrm ds\mathrm dx + \sigma_{s,x} W(\mathrm ds\mathrm dx)\Big)
\\\label{eq:SPDE.59}& \overset{\eqref{eq:SPDE.5}, \eqref{eq:SPDE.55}, \eqref{eq:SPDE.56}}= \iint_0^t  \Big(f_{s,x}\mathrm ds\mathrm dx + \sigma_{s,x} W(\mathrm ds\mathrm dx)\Big).
\end{align}
The desired result then follows from \eqref{eq:SPDE.54}, \eqref{eq:SPDE.57}, \eqref{eq:SPDE.58} and \eqref{eq:SPDE.59}.
\end{proof*}
\begin{proof*}[Proof of \eqref{eq:SPDE.41}]
	Fix the time $t>0$. Define
\begin{align} 
& \varphi_{s,x}^{(m,n)} = 
\begin{cases}
\varphi_{m(x-\mathrm vs)}, &s\geq 0, x\in [-n , \infty)\\
h_{x+n}, &s\geq 0, x\in (-\infty, -n].
\end{cases}  
\end{align}
	Using an argument similar to the proof of \eqref{eq:SPDE.4}, we know that almost surely, for each $m\geq 1$ and $n \geq N_t$,
\begin{align}
&\begin{multlined}	
	0 \overset{\eqref{eq:SPDE.25}}= - \int (\varphi_{t,x}^{(m,n)} v_{t,x} - \varphi_{0,x}^{(m,n)}v_{0,x}) \mathrm dx  + \iint_0^t (\partial_s \varphi_{s,x}^{(m,n)}  + \Delta_x \varphi_{s,x}^{(m,n)})v_{s,x}\mathrm ds\mathrm dx \\ 
	+ \iint_0^t \varphi_{s,x}^{(m,n)} \Big(f_{s,x}\mathrm ds\mathrm dx+\sigma_{s,x}W(\mathrm ds\mathrm dx)\Big)
\end{multlined}
\\& \label{eq:SPDE.65}\begin{multlined}
= - \int (\varphi_{t,x}^{(m)} v_{t,x} - \varphi_{0,x}^{(m)}v_{0,x}) \mathrm dx  + \iint_0^t (\partial_s \varphi_{s,x}^{(m)}  + \Delta_x \varphi_{s,x}^{(m)})v_{s,x}\mathrm ds\mathrm dx 
\\ + \iint_0^t \varphi_{s,x}^{(m)} \Big(f_{s,x}\mathrm ds\mathrm dx+\sigma_{s,x}W(\mathrm ds\mathrm dx)\Big).
\end{multlined}
\end{align}
	Note that, using dominated convergence theorem, we have almost surely
\begin{align}\label{eq:SPDE.66}
	\int (\varphi_{t,x}^{(m)} v_{t,x} - \varphi_{0,x}^{(m)}v_{0,x}) \mathrm dx
	\xrightarrow[m\to \infty]{} \int ( v_{t,x} - v_{0,x}) \mathrm dx,
\end{align} 
\begin{align} \label{eq:SPDE.67}
&  \iint_0^t \varphi_{s,x}^{(m)} f_{s,x}\mathrm ds\mathrm dx 
\xrightarrow[m\to \infty]{} \iint_0^t f_{s,x}\mathrm ds\mathrm dx,
\end{align}
and 
\begin{align} \label{eq:SPDE.68}
&  \iint_0^t (\varphi_{s,x}^{(m)} \sigma_{s,x} - \sigma_{s,x})^2\mathrm ds\mathrm dx 
\xrightarrow[m\to \infty]{} 0.
\end{align}
According to \eqref{eq:SPDE.68} and \cite[Proposition 17.6]{Kallenberg2002Foundations}, we have that 
 \begin{align} \label{eq:SPDE.69}
 &  \iint_0^t \varphi_{s,x}^{(m)} \sigma_{s,x} W(\mathrm ds\mathrm dx)
 \xrightarrow[m\to \infty]{\text{in probability}}   \iint_0^t  \sigma_{s,x} W(\mathrm ds\mathrm dx).
 \end{align}
 The desired result then follows from \eqref{eq:SPDE.65}, \eqref{eq:SPDE.66}, \eqref{eq:SPDE.67} and \eqref{eq:SPDE.69}.

\end{proof*}
\end{proof}

\end{note}

\begin{thebibliography}{99}

	\bibitem{AguirreEscobedo}
	J. Aguirre and M. Escobedo, 
	\emph{A Cauchy problem for $u_t - \Delta u = u^p$  with $0 < p < 1$. Asymptotic behaviour of solutions.}
	Ann. Fac. Sci. Toulouse Math. (5) \textbf{8} (1986/87), no. 2, 175--203.

	\bibitem{AthreyaTribe2000Uniqueness}
	S. Athreya and R. Tribe,
	\emph{Uniqueness for a class of one-dimensional stochastic PDEs using moment duality.} 
	Ann. Probab. \textbf{28} (2000), no. 4, 1711--1734.	 
	
	\bibitem{BacaerN}
	N. Baca\"er,
	\emph{A Short History of Mathematical Population Dynamics.}
	Springer-Verlag London, Ltd., London, 2011. x+160 pp. ISBN: 978-0-85729-114-1

	\bibitem{Bramson1983Convergence}
	M. Bramson,
	\emph{Convergence of solutions of the Kolmogorov equation to travelling waves.}
	Mem. Amer. Math. Soc. \textbf{44} (1983), no. 285, iv+190 pp. 
	
	\bibitem{BrunetDerrida1997Shift}
	E. Brunet and B. Derrida,
	\emph{Shift in the velocity of a front due to a cutoff.}
	Phys. Rev. E (3) \textbf{56} (1997), no. 3, part A, 2597--2604.
	
	\bibitem{BrunetDerrida2001Effiect}
	\bysame,
	\emph{Effect of microscopic noise on front propagation.}
	J. Statist. Phys. \textbf{103} (2001), no. 1--2, 269--282.
	
	\bibitem{ChristophSchreiber2000Scaled}
	G. Christoph, K. Schreiber, 
	\emph{Scaled Sibuya distribution and discrete self-decomposability.}
	Stat. Probab. Lett. \textbf{48} (2000), no. 2, 181--187.
	
	\bibitem{ConlonDoering2005travelling}
	J. G. Conlon and C. R. Doering,
	\emph{On travelling waves for the stochastic Fisher--Kolmogorov--Petrovsky--Piscunov equation.}
	J. Stat. Phys. \textbf{120} (2005), no. 3--4, 421--477. 

	\bibitem{PratoZabczyk2013Stochastic}
	G. Da Prato and J. Zabczyk, 
	\emph{Stochastic equations in infinite dimensions.}
	Second edition. Encyclopedia of Mathematics and its Applications, \textbf{152}. Cambridge University Press, Cambridge, 2014. xviii+493 pp. ISBN: 978-1-107-05584-1

	\bibitem{DoeringMuellerSmereka2003Interacting}
	C. R. Doering, C. Mueller and P. Smereka,
	\emph{Interacting particles, the stochastic Fisher--Kolmogorov--Petrovsky--Piscounov equation, and duality.} 
	Stochastic systems: from randomness to complexity (Erice, 2002). Phys. A \textbf{325} (2003), no. 1-2, 245--259.

	\bibitem{Dynkin1993Superprocesses}
	E. B. Dynkin, 
	\emph{Superprocesses and partial differential equations.} 
	Ann. Probab. (1993), no. 3, 1185--1262.
	

	\bibitem{Fisher}
	R. A. Fisher,
	\emph{The wave of advance of advantageous genes.}
	Ann. Eugen. \textbf{7} (1937), 355--369.
	
	\bibitem{Iscoe1986AWeighted}
	I. Iscoe,
	\emph{A weighted occupation time for a class of measured-valued branching processes.}
	Probab. Theory Relat. Fields \textbf{71} (1986), no. 1, 85--116.
	
	\bibitem{Iscoe1988OnTheSupports}
	\bysame,
	\emph{On the supports of measure-valued critical branching Brownian motion.} 
	Ann. Probab. \textbf{16} (1988), no. 1, 200--221.
	
	\bibitem{Iwata1987AnInfinite}
	K. Iwata,
	\emph{An infinite dimensional stochastic differential equation with state space $C(\mathbb R)$.}
	Probab. Theory Related Fields \textbf{74} (1987), no. 1, 141--159.
	
	\bibitem{Kallenberg2002Foundations}
	O. Kallenberg,
	\emph{Foundations of modern probability.}
	Second edition. Probability and its Applications (New York). Springer-Verlag, New York, 2002. xx+638 pp. ISBN: 0-387-95313-2 
	
	\bibitem{Kallenberg2017Random}
	\bysame,
	\emph{Random measures, theory and applications.}
	Probability Theory and Stochastic Modelling, 77. Springer, Cham, 2017. xiii+694 pp. ISBN: 978-3-319-41596-3; 978-3-319-41598-7 

	\bibitem{KPP}
	A. Kolmogorov, I. Petrovsky, N. Piskunov: 
	\emph{\'Etude de l’\'equation de la diffusion avec croissance de la quantit\'e de mati\`ere et son application \`a un probl\`eme biologique.}
	Moscow Univ. Math. Bull. \textbf{1} (1937), 1--25.

	\bibitem{Li2010Measure-valued}
	Z. Li, 
	\emph{Measure-valued branching Markov processes.} 
	Probability and its Applications (New York). Springer, Heidelberg, 2011. xii+350 pp. ISBN: 978-3-642-15003-6 	
	
	\bibitem{McKean1}
	H. P. McKean, 
	\emph{Application of Brownian motion to the equation of Kolmogorov-Petrovskii-Piskunov.}
	Comm. Pure Appl. Math. \textbf{28} (1975), no. 3, 323--331.
	
	\bibitem{McKean2}
	\bysame, 
	\emph{A Correction to: ``Application of Brownian motion to the equation of Kolmogorov-Petrovski-Piskunov" (Comm. Pure Appl. Math. 28 (1975), no. 3, 323--331).}
	Comm. Pure Appl. Math. \textbf{29} (1976), no. 5, 553--554.
	
	\bibitem{MuellerMytnikQuastel2011Effect}
	C. Mueller, L. Mytnik and J. Quastel,
	\emph{Effect of noise on front propagation in reaction-diffusion equations of KPP type.}
	Invent. Math. \textbf{184} (2011), no. 2, 405--453.
	
	\bibitem{MuellerMytnikRyzhik2019TheSpeed}
	C. Mueller, L. Mytnik and L. Ryzhik, 
	\emph{The speed of a random front for stochastic reaction-diffusion equations with strong noise.} 
	Comm. Math. Phys. \textbf{384} (2021), no.2, 699--732.

	\bibitem{MuellerSowers}
	C. Mueller, R. B. Sowers, 
	\emph{Random travelling waves for the KPP equation with noise.} 
	J. Funct. Anal. \textbf{128} (1995), no.2, 439--498.

	\bibitem{RevuzYor1999Continuous}
	D. Revuz and M. Yor,
	\emph{Continuous martingales and Brownian motion.}
	Third edition. Grundlehren der Mathematischen Wissenschaften [Fundamental Principles of Mathematical Sciences], 293. Springer-Verlag, Berlin, 1999. xiv+602 pp. ISBN: 3-540-64325-7
	
	\bibitem{Sato1999Levy}
	K.-I. Sato, 
	\emph{L\'evy processes and infinitely divisible distributions.} 
	Translated from the 1990 Japanese original. Revised edition of the 1999 English translation. Cambridge Studies in Advanced Mathematics, 68. Cambridge University Press, Cambridge, 2013. xiv+521 pp. ISBN: 978-1-107-65649-9 	
	
	\bibitem{Sharpe1988General}
	M. Sharpe, 
	\emph{General theory of Markov processes.} 
	Pure and Applied Mathematics, 133. Academic Press, Inc., Boston, MA, 1988. xii+419 pp. ISBN: 0-12-639060-6 
	
	\bibitem{Shiga1988Stepping}
	T. Shiga,
	\emph{Stepping stone models in population genetics and population dynamics.}
	Stochastic processes in physics and engineering (Bielefeld, 1986), 345–355, Math. Appl., 42, Reidel, Dordrecht, 1988. 

	\bibitem{Shiga1994Two}
	\bysame,
	\emph{Two contrasting properties of solutions for one-dimensional stochastic partial differential equations.}
	Canad. J. Math. \textbf{46} (1994), no. 2, 415--437.

	\bibitem{Tribe1995Large}
	R. Tribe,
	\emph{Large time behavior of interface solutions to the heat equation with Fisher-Wright white noise.}
	Probab. Theory Related Fields \textbf{102} (1995), no. 3, 289--311.
	
	\bibitem{Walsh1986An-introduction}
	J. Walsh,
	\emph{An introduction to stochastic partial differential equations.} 
	\'Ecole d'\'et\'e de probabilit\'es de Saint-Flour, XIV—1984, 265–439, Lecture Notes in Math., 1180, Springer, Berlin, 1986.

\end{thebibliography}
\end{document}